\numberwithin{equation}{section} 
\subjclass[2020]{Primary 37A15, 43A65; Secondary 46H25, 06E15.}
\newcommand{\olim}{\operatorname*{o-lim}}
\newcommand\logeq{\mathrel{\vcentcolon\Leftrightarrow}}
\DeclareMathAlphabet{\mathpzc}{OT1}{pzc}{m}{it}
\newcommand{\Om}{\Omega}
\newcommand{\vom}{\omega}
\newcommand{\noi}{\noindent}
\newcommand{\smsk}{\smallskip}
\newcommand{\emdf}{\bf}
\newcommand{\vanish}[1]{\relax}
\newcommand{\abs}[1]{\vert #1 \vert}           
\newcommand{\Stoneanalgebra}{Stone algebra}
\newcommand{\norm}[1]{\Vert #1 \Vert}           
\newcommand{\car}{\mathbf{1}}
\newcommand{\Ce}{\mathrm{C}}
\newcommand{\vphi}{\varphi}
\newcommand{\veps}{\varepsilon}
\DeclareMathOperator{\HS}{HS}
\newcommand{\sprod}[2]{(#1|#2)}
\newcommand{\cls}[1]{\overline{#1}}
\newcommand{\spann}{\operatorname{span}}
\newcommand{\konj}[1]{\overline{#1}}
\newcommand{\KH}{Kaplansky--Hilbert}
\newcommand{\ohne}{\setminus}
\newcommand{\BL}{\mathscr{L}}
\newcommand{\calB}{\mathcal{B}}
\newcommand{\calN}{\mathcal{N}}
\newcommand{\calT}{\mathcal{T}}
\newcommand{\tensor}{\otimes}
\newcommand{\Ell}[1]{\mathrm{L}^{#1}}
\newcommand{\Id}{\mathrm{I}}
\newcommand{\B}{\mathbb{B}}
\newcommand{\dann}{\Rightarrow}
\newcommand{\gdw}{\Leftrightarrow}
\newcommand{\Homeo}{\mathrm{Homeo}}
\newcommand{\nach}{\circ}
\newcommand{\ocl}{\operatorname{ocl}}
\newcommand{\dprod}[2]{\langle #1,#2\rangle}
\renewcommand{\ds}{\mathrm{ds}}
\newcommand{\wm}{\mathrm{wm}}
\newcommand{\rev}{\mathrm{rev}}
\newcommand{\cl}{\mathrm{cl}}
\newcommand{\Kro}{\mathrm{Kro}}
\newcommand{\alg}{\mathrm{alg}}
\newcommand{\scrE}{\mathscr{E}}
\newcommand{\bfX}{\mathbf{X}}
\newcommand{\bfY}{\mathbf{Y}}
\newcommand{\bfZ}{\mathbf{Z}}
\newcommand{\dps}{\displaystyle}
\theoremstyle{definition}
\newtheorem{remarks}[theorem]{Remarks}
\title[Group Representations on  KH-Modules and the FZ-Theorem]{A Decomposition
  Theorem for Unitary Group Representations on Kaplansky--Hilbert Modules and the 
Furstenberg--Zimmer Structure Theorem}
\dedicatory{To  Rainer Nagel in tribute to his tremendous achievements within and outside mathematics}
\begin{document}

\begin{abstract}
In this paper, 
a decomposition theorem for 
(covariant) unitary group representations on Kaplansky--Hilbert
modules over \Stoneanalgebra{s} is established, which 
generalizes the well-known Hilbert space case (where it coincides with the
decomposition of Jacobs, deLeeuw and Glicksberg). 

The proof rests heavily on the operator theory on Kaplansky--Hilbert
modules, in particular the  spectral theorem for Hilbert--Schmidt 
homomorphisms on such modules.

As an application, a generalization of the  celebrated 
  Furstenberg--Zimmer structure theorem to the case of measure-preserving 
actions of arbitrary groups on arbitrary probability spaces is established.
\end{abstract}

\maketitle

\tableofcontents

\section*{Introduction}

The Furstenberg--Zimmer structure theorem is one of the central results in the
structure theory of measure-preserving systems. It was conceived by Zimmer in
\cite{Zimm1977} and, independently, by Furstenberg \cite{Furs1977}   in his seminal work on an ergodic-theoretic proof of Szemerédi's theorem.  

The fundamental insight at the heart of the Furstenberg--Zimmer theorem is the following
{\em dichotomy}: a $G$-system\footnote{Here and everywhere else in this paper,
  $G$ is an arbitrary group.}  $\bfX$ is either {\em relatively weakly mixing} with respect
to a given factor $\bfY$, or there exists  a non-trivial intermediate factor
$\bfZ$ 
which has {\em relative discrete spectrum} with respect to
$\bfY$. (One can use {\em compact} extensions instead, but that does not make a big
difference, see the notes to Part III on page \pageref{s.notesIII}.)
This dichotomy was reformulated heuristically by Tao 
in terms of {\em structure}
(rel. discrete spectrum)  and (pseudo)-{\em randomness} (rel.~weak mixing).

\medskip
In order  to understand this result (as well as our contribution in this paper),
it is helpful to consider   the ``non-relative''
situation first. Recall that a measure-preserving system $\bfX$ has {\em
  discrete spectrum} if $\Ell{2}(\uX)$ is generated by the finite-dimensional
invariant
subspaces, while it is said to be {\em weakly mixing}, if the product system
on $\uX \times \uX$ is ergodic, i.e., the fixed space of the corresponding
Koopman representation on $\Ell{2}(\uX \times \uX)$ is trivial. 

The link between elements of the fixed space of the product dynamics
and finite-dimensional invariant
subspaces consists in the following observation: if $e_1, \dots, e_n$ is an
orthonormal basis of an invariant subspace, then $\sum_{j=1}^n e_j \tensor
\konj{e_j}$ is an element of this fixed space. 
And if $\sum_{j=1}^n x_j \tensor \konj{y_j}$ is a non-zero element
of the product fixed space 
then $\spann\{x_1, \dots, x_n\}$ contains a nontrivial invariant subspace. 

The said dichotomy is hence  a corollary of the following
purely operator-theoretic  ``key lemma''.

\renewcommand{\mytheoremname}{Key Lemma}
  \begin{theorem*}
Let $T \colon G \rightarrow \mathscr{L}(H)$ be a unitary representation of a
group $G$  on a Hilbert space $H$. 
Then 
	\begin{align*}
		\Bigl\{\sum_{j=1}^n e_j \otimes \overline{e_j}\,\Big| \,  e_1, \dots, e_n \in H \textrm{ orthonormal basis of a $T$-invariant subspace}\Bigr\}  
	\end{align*}
	spans a dense subspace of the fixed space $\fix(T \otimes \overline{T}) \subset H \otimes H^*$.
  \end{theorem*}
\renewcommand{\mytheoremname}{Theorem}

\noi
(Here, $\konj{T}$ is the contragredient representation on the dual
Hilbert space $H^*$ and $\overline{x}(y) \coloneqq (y|x)$ for $x, y \in H$. If $H = \Ell{2}(\uX)$ and $T$
comes from a measure-preserving action of $G$ on the probability space $\uX$,
then---under the natural identification $H^*  = \Ell{2}(\uX)$ and 
$H \tensor H^* = \Ell{2}(\uX \times \uX)$---the representation
$T \tensor \konj{T}$ is simply the Koopmanization of the product dynamics.)

\medskip
The proof of the key lemma 
consists in two observations. The first is the identity
\begin{equation}\label{eq.HS_T}
    \fix(T \tensor \konj{T}) = \HS_T(H), \tag{1}
\end{equation}
under the natural  identification $\HS(H) \cong H \tensor H^*$ of the space of Hilbert--Schmidt operators
on $H$ with the tensor product $H \tensor H^*$,
where 
\[   \HS_T(H) \coloneq \{ A\in \HS(H)\,|\, T_t A = AT_t \quad \text{for all $t\in G$}\}
\]
is the space of $T$-intertwining Hilbert--Schmidt operators on  $H$. The
second is
\begin{equation}\label{eq.HS_T-approx}   
\HS_T(H) = \cls{\spann}\bigl\{ A\in \HS_T(H) \, |\, \text{$A$ is of
  finite rank}\bigr\}, \tag{2}
\end{equation}
a consequence of the  spectral theorem for  self-adjoint Hilbert--Schmidt  operators.

\medskip
Apart from the dichotomy result, the key lemma also accounts (with a simple proof) for
the following  decomposition of the Hilbert space into a
``discrete-spectrum'' part and a ``weakly mixing'' part. 

\begin{corollary*}\label{cor-intro}
  Let $T \colon G \rightarrow \mathscr{L}(H)$ be a unitary group
        representation 
on a Hilbert space $H$. Then $H = H_{\mathrm{ds}} \oplus H_{\mathrm{wm}}$
orthogonally 
with closed invariant subspaces 
    \begin{align*}
        H_{\ds} &\coloneq  \cl\bigcup
\{ M\subseteq H \,|\, \text{$M$ finitely-generated, $T$-invariant
          subspace}\}, \textrm{ and }\\
 H_{\wm} &\coloneq \{ x\in H \,|\, x\tensor \konj{x} \perp \fix(T \tensor
\konj{T})\}. 
  \end{align*}
\end{corollary*}

This decomposition (in slightly different form) has been applied to 
various fields (see, e.g.,  \cite[Chap.~20]{EFHN2015}, \cite{MoreRichRobe2019},
\cite{BatkGrohKunszSchr2012}) and can be seen as a special case of
the famous Jacobs--de Leeuw--Glicksberg decomposition (see the notes to Part II on
page \pageref{s.notesII} below).

\medskip
Furstenberg in \cite{Furs1977} employed these ideas and modified them in order to cover a ``relative''
situation in the presence of an extension $\bfX \to \bfY$ 
of dynamical systems, where elements of $\Ell{\infty}(\uY)$ take the
role of the scalar field and $\Ell{2}(\uX)$ becomes a module over
it. However, the technical realization of his  program   
lacks the simple  and elegant structural divison into  a purely 
functional-analytic, representation-theoretic part and its application
to dynamical systems. Rather, it exhibits
a strong reliance on measure-theoretic tools: 
disintegration of measures, 
measurable Hilbert bundles, almost-everywhere arguments almost
everywhere.
(The same applies to Zimmer's alternative approach to the dichotomy
theorem   in \cite{Zimm1977}   as well as  to Glasner's presentation in \cite{Glas}.) 
As a consequence, the classical version of the  Furstenberg--Zimmer theorem is restricted  to actions of Borel
groups on  standard Lebesgue spaces. (We shall  call this the {\em separability
  restriction} in the following.)

\vanish{
On the other hand, the whole matter has had from
the very beginning on  also a strong functional-analytic component, e.g. 
through the concept of  {\em Hilbert modules} already used by Furstenberg. 
This component was strengthened by the work of Conze and Lesigne
\cite{ConzLesi1984},  and then by Tao in his influential blog notes
\cite[Chap.~2]{Tao2009}. Kerr and Li
in \cite{KerrLi2016} prove the  Furstenberg--Zimmer theorem in an (almost) 
purely functional-analytic way,
thereby effectively dispensing with the separability restriction. Recently, 
Duvenhage and King \cite{DuveKing2021} have established a version of the Furstenberg--Zimmer theorem in the
non-commutative setting (however, as they use direct integral theory, under the
separability restriction). 
}

\medskip
The goal of our paper is to develop a natural
functional-analytic (measure-theory free) framework that allows to derive the Furstenberg--Zimmer{} theorem 
in a manner which is {\em completely parallel}  to the non-relative
case sketched above (and, as a byproduct, to free  the theorem from the
separability restriction). It is based, firstly,  on the observation that the space
\[ \Ell{2}(\uX|\uY) \coloneq  \{ f\in \Ell{2}(\uX) \, |\, \E_\uY \abs{f}^2 \in
\Ell{\infty}(\uY)\},
\]
  already used by Tao in \cite{Tao2009}, is a so-called {\em \KH{} module} (KH-module)
over the Stone algebra $\A = \Ell{\infty}(\uY)$. And, secondly, that these modules
are the ``correct'' generalization of Hilbert spaces to
modules in the sense that each result from Hilbert space theory has its
natural analogue for \KH-modules. (A theory called  ``Boolean--valued analysis''
explains this phenomenon, see also the comments to Part I on page
\pageref{s.notesI}.)

Applied to our situation, the KH-module analogue of the Hilbert space
results from above---our main result in a nutshell---reads as follows (see Theorem
\ref{mainthm} for the ``full version'').

\renewcommand{\mytheoremname}{Theorem A}
\begin{theorem*}
Let $E$ be a \KH{} module over a Stone algebra $\A$, 
let $S \colon G \to \Aut(\A)$ be a representation of a group $G$ as
automorphisms on $\A$ and $T\colon G \to \End(E)$ an $S$-covariant unitary
representation of $G$ on $E$. Then
\begin{align*}
		\Bigl\{\sum_{j=1}^n e_j \otimes \overline{e_j} \,\Big| \, e_1, \dots, e_n \in E \textrm{ suborthonormal basis of a $T$-invariant KH-submodule}\Bigr\}  
	\end{align*}
spans an order-dense $\fix(S)$-submodule of $\fix(T \otimes \overline{T})$.
Moreover, $E$ decomposes orthogonally
into $T$-invariant KH-submodules $E = E_{\ds} \oplus E_{\wm}$, where
\[   E_{\ds} = \ocl\bigcup
\{ M\subseteq E \,|\, \text{$M$ finitely-generated, $T$-invariant
         submodule}\}
\]
and $E_{\wm} = \{ x \in E \,|\, x\tensor \konj{x} \perp \fix(T \tensor
\konj{T})\}$.
\end{theorem*}
\renewcommand{\mytheoremname}{Theorem}

\noi
Here, $\ocl$ denotes ``order-closure'' and refers---as well as the
term ``order-dense''---to
the concept of ``order-convergence'', which is the right translation of norm
convergence into the KH-module setting. 
(See Chapter \ref{c.covrepKH} and the explanations of ``Key
Concepts'' below.) 
In the case $\A =
\C$, where the Hilbert module $E$ is  just an ordinary Hilbert space,
it coincides with ordinary norm convergence. Hence, Theorem A
generalizes the Hilbert space results from above.

\medskip
Theorem A will be proved (as Theorem \ref{mainthm}) in Part II. 
The proof is completely parallel to the Hilbert space case $\A = \C$
sketched
above. In particular, it rests on KH-analogues of \eqref{eq.HS_T} and
\eqref{eq.HS_T-approx}, see \cref{invariance} and \cref{keylemma}. 
 
In Part I,  we provide the necessary background on Stone algebras and
\KH{} modules, in particular the spectral theorem for self-adjoint
Hilbert--Schmidt homomorphisms (Theorem \ref{spt-KH}). 

Part III contains the application to extensions of dynamical systems. 
Our exposition differs from conventional ones in that we, following
the approach in \cite{EFHN2015}, exclusively work
in the functional-analytic category with the corresponding notion of
(Markov) isomorphisms (cf.~the introductory remarks in Chapter
\ref{s.extensions}
and Definition \ref{d.extension}).  

The main link between the abstract KH-module results and the 
dynamical systems world
is the isomorphism 
\[     \Ell{2}(\uX|\uY) \tensor \Ell{2}(\uX|\uY) \cong\Ell{2}(\uX \times_\uY
  \uX|\uY),
\]
where $\uX\times_\uY \uX$ is the relatively independent joining (Proposition
\ref{joiningvstensor}). In this context we define 
{\em couplings} and {\em joinings} and prove their relation
to intertwining Markov operators (Proposition \ref{coupling} and
 Lemma \ref{coup=join}). This happens  
in an elegant and brief  functional-analytic way and is, possibly, 
of independent interest.

On this conceptual basis, the application to
extensions of dynamical systems is then completely analogous to the
``non-relative'' case. It results in the definition 
of the relative Kronecker factor and the
characterizations of the corresponding Kronecker subspace $\scrE(\uX|\uY)$ 
(Proposition \ref{Kronecker}) as well as 
 its orthogonal complement (Propositions \ref{Kronecker-orth}
and \ref{Kronecker-orth-amen}). Again in a nutshell, the results can
be
summarized as follows (see \cref{fixgenLinfty}, \cref{Kronecker} und
\cref{Kronecker-orth}
for the full versions).

\renewcommand{\mytheoremname}{Theorem B}
\begin{theorem*}
Let $J\colon (\uY;S) \to (\uX;T)$ be an extension of measure-preserving 
$G$-systems. Then 
\begin{align*}
		\Bigl\{\sum_{j=1}^n e_j \otimes \overline{e_j} \,\Big| \, e_1, \dots, e_n \in \mathrm{L}^2(\uX|\uY) \textrm{ suborthonormal basis of an invariant KH-module}\Bigr\}  
	\end{align*}
spans an $\Ell{2}$-dense $\fix(S)$-submodule of $\fix(T \times_\uY T)$. Moreover,  $\Ell{2}(\uX)$ decomposes orthogonally into
\[  \Ell{2}(\uX) = \scrE(\uX|\uY) \oplus\scrE(\uX|\uY)^\perp,
\]
where 
	\begin{align*}
		\dps \scrE(\uX|\uY)
\coloneqq \, \cl_{\Ell{2}} \bigcup \{ M \subseteq \mathrm{L}^2(\uX)\mid M \text{ finitely-generated, $T$-invariant $\Ell{\infty}(\uY)$-submodule}\}
	\end{align*}
and for $f\in \Ell{2}(\uX)$:
\[ f\perp \scrE(\uX|\uY) \quad\gdw\quad 
\inf_{t\in G} \max_{g\in F} \norm{\E_\uY\bigl((T_tf)
  g\bigr)}_{\Ell{2}} = 0 \quad \text{for each finite} \,\, F\subseteq \Ell{\infty}(\uX).
\]

Finally, $\scrE(\uX|\uY)$ is a $T$-invariant closed unital sublattice of $\Ell{2}(\uX)$.  \end{theorem*}
\renewcommand{\mytheoremname}{Theorem}

The Furstenberg--Zimmer dichotomy  (Theorem \ref{fuzimmer1}) as well as
the full Furstenberg--Zimmer structure theorem (Theorem \ref{fuzimmer2})
are then mere corollaries.

\medskip
Our exposition is monographic in style. 
Each part has at its end an own chapter
with notes and remarks, comprising commented references to the literature.

\medskip
\subsection*{Key Concepts}
In the following we shall discuss some of the key concepts in more detail and  highlight
some particular points. 

A (pre-)Hilbert module is, roughly speaking, a space $E$ endowed with an inner product 
$(\cdot|\cdot)$ that takes
values in a commutative unital C$^*$-algebra $\A$. By the Gelfand--Naimark
theorem, $\A \cong \Ce(\Om)$ for some compact Hausdorff space $\Om$, and
$\A$ carries a canonical lattice structure. 
As a consequence, 
one obtains a ``lattice-valued norm'' on $E$, defined by 
$\abs{x} \coloneq \sqrt{(x|x)} \in \A_+$, i.e.,   $E$ is naturally a
``lattice-normed space'' (a   concept  extensively studied by Kusraev and
his school, see \cite{Kusr2000}). Along with the  order structure on $\A_+$ 
there come natural (non-topological) notions of ``order-convergence''
and ``order-completeness'' on $\A$ as well as on $E$, in general coarser than
their norm-analogues. 

Whereas in the largest part of the literature on Hilbert modules these
order-based notions play no role, here they feature prominently. 
The reason is that if $\uY$ is a probability space then $\Ell{\infty}(\uY)$
is order-complete,  and a sequence $(f_n)_n$ in $\Ell{\infty}(\uY)$ 
order-converges if and only if it is uniformly
bounded and almost everywhere convergent (Lemma \ref{ordervsl2}). 
Hence, order-convergence
is a generalization of  bounded a.e.-convergence of sequences to convergence of nets.

Now, order-completeness of $\A= \Ce(\Om)$ for a compact space $\Om$ means that $\Om$
is  Stonean, i.e., extremally disconnected (Proposition \ref{charawstaralg}). Stonean spaces 
may appear exotic on a first encounter, but they are quite natural objects
from a functional analyst's point of view \cite{DDLS2016,GroevRoo2016}.
The same applies to  
order-convergence, which may be a cryptic notion in the beginning. 
In  $\Ce(\Om)$ it is related to pointwise convergence on the Stonean space $\Om$
in a peculiar way, involving the notion of (topological) almost everywhere
convergence (Lemma \ref{charorder}).

\medskip
Order-completeness of a Hilbert module $E$ over a Stonean algebra $\A$ 
means that $E$ is a \KH{} module (KH-module). This concept was
introduced---under the name ``AW$^*$-module'' and with a different
but equivalent definition---by Kaplansky in \cite{Kapl1953}. 
Our definition follows Kusraev \cite{Kusr2000} and has the advantage that 
(1) one can avoid the concept of a
``mixing'' and (2) the parallelism with conventional Hilbert space theory 
becomes strikingly apparent. Since, as far as we can see,  a coherent and sufficiently
complete
account of KH-module theory is missing in the literature, we 
took the opportunity to sketch this theory in Chapter \ref{c.KHM} (with
explicit proofs reduced to a minimum). 

In contrast to the cursory treatment of the general theory, 
we included a quite detailed proof of the spectral theorem
for self-adjoint Hilbert--Schmidt homomorphisms  on KH-modules
(Theorem \ref{spt-KH}),
the key auxiliary result in the paper. 
This theorem can be derived from the
spectral theorem for self-adjoint ``cyclically compact'' operators
obtained by Gönüllü in  \cite{Gonu2016}, cf.\,Remark \ref{spt-cycp}.    
However, cyclical compactness is a technically involved concept, whence
we decided to present an alternative proof, based on 
the {\em Hilbert bundle representation}.

\medskip
A Hilbert module $E$ over $\A = \Ce(\Om)$ can be 
identified with the space $\Gamma(H)$ of continuous sections of
a (canonically constructible) topological Hilbert  
bundle $H$ over $\Om$. (This establishes a categorical equivalence
reminiscent of the Serre--Swan duality, see \cref{equiv}). 
By virtue of such a Hilbert bundle 
representation, one can literally 
employ already known Hilbert space results to prove their KH-module analogues,
and we apply this method at one decisive point in the proof of the spectral
theorem (Section \ref{s.bundleview}).

Note that our use of Hilbert bundles  differs from the classical one, where only {\em measurable}
Hilbert bundles (equivalently: direct integrals) 
were used (for example by Zimmer \cite{Zimm1976} and Glasner
\cite[Chap.~9]{Glas}), of course under the  usual separability
restriction.
In our topological  approach we dispense with this
restriction and, what is more, we have to say ``almost everywhere'' almost 
nowhere.

\medskip

\medskip
\subsection*{Related Approaches and Further Applications}

In \cite{KerrLi2016}, Kerr and Li prove the  Fursten\-berg--Zimmer structure theorem effectively without
separability restriction. Following Tao \cite{Tao2009} 
they employ  the rather technical concept of conditional
compactness.  Also, instead of developing a structural
result like our Theorem \ref{mainthm}, they work 
  ad hoc with the Hilbert module  $\Ell{2}(\uX|\uY)$ which, however, is {\em not } the same as ours. (Theirs is norm-closed but not order closed.)
As a result, they only obtain  ``approximate statements'' and the 
parallels with the non-relative case are not very apparent.  

\medskip
The deeper roots of our approach lie in the conviction that  
the theory of measure-preserving systems 
often is better studied in certain categories of functional-analytic
objects (like AL-spaces
with a distinguished quasi-interior point)
than in the category  of classical ergodic theory 
(probability spaces and  measurable point mappings). This conviction is old and goes
back at least to the work of our teacher and mentor, Rainer Nagel, on
ergodic theory in the 1970s and '80s (see, e.g. \cite{DernNagePalm1987}).  
It is present in a
for a long time  neglected paper by Ellis \cite{Elli1987} and it pervades the book
\cite{EFHN2015} which, however, does not make explict use of the language of
category theory (on purpose).

In a series of papers
\cite{JamnTao2020bpre,JamnTao2019pre,JamnTao2020apre}
Jamneshan and Tao develop an alternative point-free
approach to ergodic theory based on abstract measure theory.  
Combining this perspective with conditional analysis, Jamneshan
develops an uncountable Furstenberg--Zimmer structure theory in a
parallel work \cite{Jamn2020pre}.   In a work of Jamneshan and Spaas this is extended even further to the setting of dynamical systems on von Neumann algebras (see \cite{JaSp2022}).   See also the notes  to Part I on
page \pageref{s.notesI}. We also mention that in a recent article of Eisner (see \cite{Eis}) the decomposition presented in Theorem B is stated for $\Z$-actions and used to give a new ergodic theoretic proof of Szemerédi's theorem.

\medskip

The techniques developed here form the basis for several more recent works: In \cite{EdKr2022} a theorem of Lindenstrauss on the connection between topological measurably distal systems is generalized to $\Z$-actions on general probability spaces;  and \cite{Ede2022} contains a Furstenberg--Zimmer structure theorem for stationary random walks. An \enquote{uncountable version} of Austin's Mackey--Zimmer type representation theorem for ergodic extensions with relative discrete spectrum is established in \cite{EJK2023}. Finally, a follow-up article \cite{HaKr2023} on KH-dynamical systems is in preparation,  see also the notes to Part III on page \pageref{s.notesIII}.

\medskip

\subsection*{Notation and Terminology}

The set of natural numbers, in our use, is $\N \coloneq \{1, 2, 3, \dots\}$,  and
$\N_0 \coloneq \N \cup\{0\}$. Generic probability spaces are denoted
by $\uX, \uY, \uZ$ where, e.g.,  $\uX = ( X, \Sigma_\uX,
\mu_\uX)$. Integration with respect to $\mu_\uX$ is denoted by 
\[   \int_\uX f \coloneq \int_X f \, \mathrm{d}\mu_\uX \qquad (f\in \Ell{1}(\uX)).
\]
Here, $\Ell{p}(\uX) = \Ell{p}(\uX; \C)$ denotes the space of
equivalence classes of $\C$-valued $p$-integrable functions.

If $E,F$ are normed spaces, then $\BL(E;F)$ denotes the space of all
bounded linear operators $E \to F$. We abbreviate the
linear span of some subset $M$ of a vector space $E$ with
$\spann(M)$. If $E$ is also an $\A$-module, where $\A$ is some
C$^*$-algebra, then $\spann_\A(M)$ denotes the $\A$-linear span of $M$.

We abbreviate ``compact Hausdorff topological space'' by speaking
simply of  a ``compact space''. If $\Om$ is compact, then $\Ce(\Om) =
\Ce(\Om;\C)$ is the space of $\C$-valued continuous functions on
$\Om$. The group of homeomorphisms on $\Om$ is $\Homeo(\Om)$. For
$f\in \Ce(\Om)$ we abbreviate $[\, f \neq 0\,] \coloneq \{ \vom\in \Om
\,|\, f(\vom)\neq 0\}$ and likewise for expressions as  $[\, f = 0\,]$
or $[\, f \leq 0\,]$ and so on.

The generic unit in a commutative C$ ^*$-algebra is $\car$, and the
same is used for the function constantly equal to $1$ on whatever set
is considered. More generally, a characteristic function (= indicator function) 
of a set $A$ is denoted by $\car_A$. 

The closure of a set $M$ (with
respect to some given topology) is denoted by $\cl(M)$ or $\cls{M}$.
If necessary we specify the respective topology by indexing (like $\cl_{\Ell{2}}$).
Nets are indexed like $(x_\alpha)_\alpha$ as long as the underlying 
directed index set is not needed  explicitly .

\subsection*{Acknowledgements}

We thank Rainer Nagel for long-lasting support and many inspiring discussions. 
We thank Terence Tao and Asgar Jamneshan for becoming interested in our
work and for their readiness to exchange ideas on these topics, and Patrick Hermle and Sascha Trostorff for pointing out mistakes in an earlier version of this article. We are also grateful towards the anonymous referees for their valuable feedback.

Nikolai Edeko and Henrik Kreidler are grateful to the MFO 
for providing an opportunity to work on this project in a stimulating atmosphere.
Henrik Kreidler also  acknowledges the finanical support from the DFG (project number  451698284).

Parts of this work were conceived when Markus Haase spent a research semester at
UNSW, Sydney.  He gratefully acknowledges the kind invitation by Fedor
Sukochev, inspiring discussions with Thomas Scheckter 
and the financial support by the DFG (project number 431663331). He
also thanks U\u{g}ur Gönüllü for some helpful conversations about cyclical compactness.

\part{Stone Algebras and Kaplansky--Hilbert Modules}\label{p.KHM}

Our general goal is to provide a conceptual view on the dichotomy at the heart of the Furstenberg--Zimmer structure
theory and the underlying decomposition. This part therefore gathers all the relevant
terminology and background knowledge about Hilbert modules that generalize concepts and results from regular Hilbert
space theory. Since many Hilbert
space results do not carry over to general Hilbert modules, we focus on Kaplansky--Hilbert modules which are, in many ways,
a well-behaved generalization of Hilbert spaces. Since KH-modules are defined in terms of a notion of 
order-completeness, order structures play an essential role and replace measure-theoretic almost everywhere
notions. As we will see in Part III, the conditional $\uL^2$-space
$\uL^2(\uX|\uY)$ is indeed a Kaplansky--Hilbert module over $\uL^\infty(\uY)$ and the reader is encouraged 
to think of this example along the way.

\section{Lattice-Normed Spaces and Stone Algebras}

\medskip

\subsection{Hilbert Modules}\label{s.HM}

A pre-Hilbert space is a vector space $H$ over the field $\C$ together with a mapping $(\cdot | 
\cdot) \colon H \times H \to \C$ obeying certain rules. In order to  arrive at the definition of a pre-Hilbert module, the field $\C$ is replaced by a unital commutative $\uC^*$-algebra $\A$. Recall here that, by the Gelfand--Naimark 
representation theorem, any such $\A$ may be thought of as $\uC(\Om)$  for a compact space $\Om$ (see \cite[Sec.s 1.4 and 
1.5]{Dixm1977} or \cite[Sec.~4.4]{EFHN2015}). In particular, concepts such as complex conjugation, the modulus and square 
roots 
are defined in any unital (commutative) C*-algebra via the continuous functional calculus.

We now collect some basic notions of Hilbert module theory; for more detailed information we refer to \cite{Lanc1995}.

\begin{definition}\label{HM.d.HM}
  Let $\A$ be a unital commutative $\uC^*$-algebra. A unital\footnote{ This means that $\car \cdot x = x$ for every $x \in E$. } $\A$-module $E$ equipped with a mapping
    \begin{align*}
      \left( \cdot | \cdot \right) \colon E \times E \to \A
    \end{align*}
is called  a {\emdf pre-Hilbert module over $\A$} if the following conditions are satisfied.
    \begin{itemize}
      \item[1)] For $x \in E$ we have $(x|x) \geq 0$. Moreover, $(x|x) = 0$ if and only if $x = 0$.
      \item[2)] The map $(\cdot |y)\colon  E \to \A, \, x \mapsto (x|y)$ is $\A$-linear for every $y \in E$.
      \item[3)] $\overline{(x|y)} = (y|x)$ for all $x,y \in E$.\footnote{ Complex conjugation is given by the involution in $\A$. }
    \end{itemize}    
\end{definition}

\noi
In a pre-Hilbert module $E$ the {\emdf Cauchy--Schwarz inequality}
  \begin{align*}
    |(x|y)| \leq \sqrt{(x|x)} \cdot \sqrt{(y|y)}
  \end{align*}
  holds for all $x, y \in E$.\footnote{This is due to the fact
          that $\A$ is commutative and can be proved as in \cite[page 49]{DuGi1983} by using the Cauchy-Schwarz inequality for scalar-valued positive-sesquilinear forms; in general one only has a weaker inequality, cf.\ \cite[Prop. 1.1.]{Lanc1995}.} 
As a consequence, by
  \begin{align*}
    \|x\| \coloneqq \|(x|x)^{\frac{1}{2}}\|_\A = \|(x|x)\|_\A^{\frac{1}{2}} \quad \textrm{for } x \in E
  \end{align*}
a norm $\|\cdot\|$ is defined on $E$.  The pre-Hilbert module $E$ is called a {\emdf Hilbert module}, if it is complete with respect to this norm. Note that a (pre-)Hilbert module
over $\A = \C$ is nothing but a usual (pre-)Hilbert space.

One says that  $x, y \in E$ are {\emdf orthogonal}  if $(x|y) = 0$, and for a subset $M\subset E$ we define 
the {\emdf orthogonal complement} $M^\perp$ as
\begin{align*}
  M^\perp \coloneqq \{x\in E \mid (x|y) = 0 \textrm{ for every } y \in M\}.
\end{align*}

  Given pre-Hilbert modules $E$ and $F$,   an $\A$-linear map  $T\colon E \to  F$ 
is called a {\emdf module homomorphism}. The space of
{\em bounded} module-homomorphisms is
\[  \Hom(E;F)
\]
with $\End(E) \defeq \Hom(E;E)$. Obviously, $\Hom(E;F)$ is a closed subspace of
$\mathscr{L}(E;F)$ (even with respect to the weak operator topology) and,
naturally, an $\A$-module.

A module homomorphism
$T\colon E \to F$ is called {\emdf $\A$-isometric} if
  \begin{align*}
    (Tx|Ty) = (x|y) \quad \textrm{for all } x,y \in E.
  \end{align*} 
By polarization, this is equivalent to $(Tx|Tx) = (x|x)$ for every $x \in E$. Clearly,  every $\A$-isometric homomorphism
is (norm)-isometric, and hence  bounded and injective.

\begin{examples}\label{exhilbert}\mbox{}
  \begin{enumerate}[(1)]
  \item \label{exhilbert3}
 Let $\Om$ be a compact space and $H$ a Hilbert space. The space $\uC(\Om;H)$ of continuous 
 maps from $\Om$ to $H$ equipped with the pointwise scalar product defines a Hilbert module over $\uC(\Om)$.

 \item  Let $\Om$ be a non-finite compact space and consider $\uC(\Om)$ as a Hilbert module over itself. If $\vom \in \Om$ is an 
  accumulation point of $\Om$, then 
    \begin{align*}
      I_\vom \coloneqq \{ f \in \uC(\Om)\mid f(\vom)=0\}
    \end{align*}
  is a closed submodule of $\uC(\Om)$ with $I_\vom^\perp = \{0\}$.
\end{enumerate}
\end{examples}

The last example shows that a closed submodule of a Hilbert module need not be  orthogonally complemented (see \cite[page 7]{Lanc1995}).
This is just one of many  parts of Hilbert space theory 
that, contrary to what one might think at first,  do {\em not} have immediate generalizations to Hilbert modules in general. The situation is better when one considers 
the  subclass of Kaplansky--Hilbert modules, to be introduced in 
Chapter \ref{c.KHM} below.

\medskip

\subsection{Lattice-Normed Spaces}\label{s.lns}

Any Hilbert module $E$ over a unital commutative C*-algebra $\A$ admits a \enquote{vector valued norm} 
  \begin{align*}
    |\cdot| \colon E \to \A_+, \quad x \mapsto (x|x)^{\frac{1}{2}}
  \end{align*}
  where $\A_+$ denotes the cone of the positive elements of $\A$.
        This turns $E$ into a so-called lattice-normed space,
        see \cite[Chap.~2]{Kusr2000}.

\begin{definition}\label{lns.d.lns}
  Let $\A$ be a unital commutative $\uC^*$-algebra. A vector space $E$ equipped with a mapping
    \begin{align*}
      |\cdot|\colon E \to \A_+, \quad x \mapsto |x|
    \end{align*}
  is a {\emdf lattice-normed space} (over $\A$) if the following conditions are satisfied for all $x, y \in E$ and $\lambda \in 
\C$.
    \begin{enumerate}[1)]
      \item $|x| = 0$ if and only if $x = 0$;
      \item $|\lambda x| = |\lambda| \cdot |x|$;
      \item $|x + y| \leq |x| + |y|$.
    \end{enumerate}
If, in addition,  $E$ is a unital $\A$-module, and 2) holds for all $x\in E$ and all
$\lambda\in \A$, then $E$ is called a {\emdf lattice-normed module}.  
\end{definition}

A lattice-normed space $E$, like a pre-Hilbert module, carries the natural
norm
\[  \norm{x} \coloneq \norm{\abs{x} }_\A \qquad (x\in E).
\]
If $E$ is a lattice-normed module, one has $\norm{f x} \le \norm{f}_\A
\norm{x}$ for $f\in \A$, $x\in E$, hence $E$ is a normed module. 

Note that if $\A = \C$, a lattice-normed space is nothing else than a
normed space.

\begin{examples}\mbox{}
\begin{enumerate}[(1)]  
\item Let $\A$ be a unital commutative $\uC^*$-algebra.
 As mentioned before, each pre-Hilbert module over $\A$ defines a lattice-normed module.  

\item  Every unital commutative $\uC^*$-algebra $\A$ is a lattice-normed space, where
  the lattice-norm is given by the usual modulus map $\A \to \A, \, f \mapsto |f|$ defined via functional calculus.

\item 
  Let $\Om$ be a compact space and $H$ be a Hilbert space. Consider the Hilbert module $\uC(\Om;H)$ of \cref{exhilbert}, \cref{exhilbert3}. Then 
  the vector-valued norm is given by
  \begin{align*}
    \uC(\Om;H) \to \uC(\Om),\quad F \mapsto \bigl(\vom \mapsto \|F(\vom)\|\bigr).
  \end{align*}
\end{enumerate}
\end{examples}

For vector lattices there is a well-established  concept of
\emph{order-convergence} \cite[Subsection 1.3.4]{Kusr2000} 
generalizing the dominated almost everywhere convergence 
of a sequence of integrable functions on a 
probability space \cite[Subsection 1.4.11]{Kusr2000}. 
With the  lattice-norm replacing the modulus, the notion of order-convergence can be extended to lattice-normed 
spaces as follows \cite[Subsection 2.1.5]{Kusr2000}.

\begin{definition}\label{deforderconv}
  Let $E$ be a lattice-normed space over a unital commutative $\uC^*$-algebra $\A$. 
  A net $(f_i)_{i \in I}$ in $\A$ {\emdf decreases to $0$}, if
        \[ i\le j \quad\Rightarrow\quad 0 \le f_{j} \le f_{i}\quad \text{and}\quad \inf\{ f_i \mid i \in I\} = 0.
        \]        
        A net $(x_\alpha)_{\alpha}$ in $E$      
        {\emdf order-converges} (or: is {\emdf order-convergent}) to  $x \in E$ (in symbols: $\olim_\alpha x_\alpha = x$),  if there is a net $(f_i)_{i \in I}$ in 
      $\A$ decreasing to zero 
                        and satisfying
                        \[ \forall\, i \in I\,\ \exists\, \alpha_i \colon \quad |x_{\alpha} - x|\leq f_i\qquad  (\alpha \geq \alpha_i).
                        \]
 A net $(x_\alpha)_{\alpha\in A}$ in $E$      
                        is {\emdf order-Cauchy}, if the net $(x_{\alpha} - x_{\beta})_{(\alpha,\beta)\in A\times A}$ order-converges to zero.\footnote{Here, $A \times A$ is equipped with the product direction, i.e., $(\alpha_1,\alpha_2) \leq (\beta_1,\beta_2)$ for $(\alpha_1,\alpha_2), (\beta_1,\beta_2) \in A \times A$ precisely when $\alpha_1 \leq \beta_1$ and $\alpha_2 \leq \beta_2$. }
    
A subset $M \subset E$ of $E$ is {\emdf order-bounded}, if there is $f\in \A_+$ such that $|x| \le f$ for all $x\in M$. It is
{\emdf order-closed} in $E$, if the order-limit of every
order-convergent net in $M$ is also contained in $M$. The {\emdf order-closure} 
$\ocl(M) =\overline{M}^{\mathrm{o}}$ of $M \subseteq
E$ is the smallest order-closed subset of $E$ containing $M$. (In general, it is possible that not every element of $\ocl(M)$ is the limit of an order-convergent net in $M$. However, in the situations interesting to us this is indeed the case, see \cref{order-closure} below.) We say that $M$ is
{\emdf order-dense} in $E$, if $\ocl(M) = E$.

A mapping $f\colon E \to F$ between lattice-normed spaces is {\emdf order-continuous} if\\
$\olim_{\alpha} x_\alpha = x$ in $E$ implies 
$\olim_{\alpha} f(x_\alpha) = f(x)$
whenever $(x_\alpha)_\alpha$ is a net in and $x$ an element of $E$.
\end{definition}

\begin{remarks}
  \begin{enumerate}[(1)]
    \item Recall that in a normed space $E$ a net $(x_\alpha)_{\alpha}$ converges to an element $x \in E$ if and only if for every 
$n \in \N$ there is $\alpha(n)$ such that $\|x_\alpha - x\| \leq \frac{1}{n}$ for every $\alpha \geq \alpha(n)$. Hence,  
\cref{deforderconv} is obtained by replacing the scalar-valued  norm by  a vector-valued norm and the sequence $(\frac{1}{n})_{n \in \N}$ by  a net 
decreasing to zero.

\item   The order-limit of a net is unique (if it exists). Each order-convergent net is order-Cauchy and  each  order-Cauchy net is eventually
  order-bounded.
   
\item In a lattice-normed space, the vector space operations as well as the lattice-norm are order-continuous. 
In a pre-Hilbert module, the inner product and the module product are order-continuous.

\item One has $\olim_\alpha x_\alpha =x$ in a lattice-normed space $E$ if and only if
  $\olim_\alpha |x-x_\alpha| = 0$ in $\A$, and a similar statement holds for order-Cauchy nets.
\end{enumerate}
\end{remarks}

It is natural to ask and important to understand how order-convergence and norm-convergence in a pre-Hilbert module are related.

\begin{lemma}\label{lns.l.order-norm}
  Let $E,F$ be lattice-normed modules over a unital, commutative C$^*$-algebra $\A$.
  Then the following assertions hold:
  \begin{enumerate}[(i)]
  \item If $(x_\alpha)_\alpha$ is a net in $E$ and $x\in E$, then
    $\lim_\alpha x_\alpha= x$ implies $\olim_\alpha x_\alpha = x$.

    \item Order-closed subsets of $E$ are norm-closed, norm-dense subsets are order-dense. 
 
    \item A subset of $E$ is order-bounded if and only if it is norm-bounded.
      
    \item A module homomorphism $A\colon E \to F$ is bounded  if and only if it is
      order-continuous if and only if  it is order-bounded, i.e., there exists $c > 0$ such that
    $\abs{Ax}\le c \abs{x}$ for alle $x\in E$. In this case,
    the latter estimate is true with $c = \norm{A}$.

    \item If $\A = \C$, then order-convergence is the same as norm-convergence.
       \end{enumerate}
   \end{lemma}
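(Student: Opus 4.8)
The plan is to prove the five assertions of Lemma \ref{lns.l.order-norm} in a sequence that exploits dependencies between them, treating (iv) as the main technical hurdle and deriving the rest either directly from the definitions or as corollaries.

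First I would handle (i). Suppose $\lim_\alpha x_\alpha = x$ in norm. Set $f_n \coloneq \norm{x_\alpha - x}$-type bounds; more precisely, for each $n\in\N$ choose $\alpha(n)$ with $\norm{x_\alpha - x} \le \tfrac1n$ for $\alpha \ge \alpha(n)$. The obstacle here is that $\abs{x_\alpha - x} \in \A_+$ need not be dominated by a \emph{scalar} multiple of $\car$ in a way that decreases to $0$ as an $\A$-net. The clean fix is to pass to the countable net $(g_n)_{n\in\N}$ in $\A$ given by $g_n \coloneq \tfrac1n \car$, which decreases to $0$ in $\A$ (using that $\A = \Ce(\Om)$ and $\tfrac1n\car \downarrow 0$ pointwise, hence in order since the infimum in $\Ce(\Om)$ is the pointwise one here). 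Since $\abs{x_\alpha - x} \le \norm{x_\alpha - x}\,\car \le \tfrac1n\car = g_n$ for $\alpha \ge \alpha(n)$, this exhibits order-convergence with the required dominating net, proving (i).

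Next, (ii) and (iii) follow formally. For the norm-closed part of (ii): if $M$ is order-closed and $x_\alpha \to x$ in norm with $x_\alpha \in M$, then by (i) $\olim_\alpha x_\alpha = x$, so $x\in M$; the norm-dense/order-dense claim is the contrapositive packaging of this, since the order-closure contains the norm-closure. For (iii): norm-boundedness is $\sup_\alpha \norm{\abs{x}}_\A < \infty$, and order-boundedness asks for a single $f\in\A_+$ dominating all $\abs{x}$. One direction is immediate ($\abs{x}\le f \Rightarrow \norm{\abs{x}}_\A \le \norm{f}_\A$); for the converse I would take $f \coloneq (\sup_\alpha \norm{x_\alpha})\,\car$, using $\abs{x}\le\norm{x}\,\car$ pointwise, which works precisely because $\A$ is unital and $\car$ serves as a universal scalar bound.

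The hard part will be (iv), the equivalence of boundedness, order-continuity, and order-boundedness for a module homomorphism $A\colon E\to F$, together with the sharp constant $c=\norm{A}$. I would prove order-bounded $\Rightarrow$ order-continuous $\Rightarrow$ bounded $\Rightarrow$ order-bounded, closing the loop. The implication order-bounded $\Rightarrow$ order-continuous is direct from the definition: if $\abs{Ax}\le c\abs{x}$ and $\olim x_\alpha = x$ with dominating net $(f_i)\downarrow 0$, then $\abs{A x_\alpha - A x} = \abs{A(x_\alpha - x)} \le c\abs{x_\alpha - x} \le c f_i$, and $(cf_i)\downarrow 0$, giving $\olim Ax_\alpha = Ax$. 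For bounded $\Rightarrow$ order-bounded with $c=\norm{A}$, the key trick---and the genuinely module-theoretic step---is to use $\A$-linearity to localize: for fixed $x\in E$ and any $f\in\A$ one has $\abs{A(fx)} = \abs{f}\,\abs{Ax}$ and $\norm{A(fx)} \le \norm{A}\,\norm{fx}$, and I would test against appropriately chosen $f$ (e.g. spectral projections or continuous cutoffs localizing to where $\abs{Ax} - \norm{A}\abs{x}$ is large on $\Om$) to force the pointwise estimate $\abs{Ax} \le \norm{A}\abs{x}$ in $\A_+ = \Ce(\Om)_+$. This is the step where merely scalar boundedness must be upgraded to a \emph{lattice} inequality, and the commutative C$^*$-structure ($\A\cong\Ce(\Om)$, functional calculus, the pointwise interpretation of the order) is exactly what makes it go through. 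The remaining order-continuous $\Rightarrow$ bounded I would obtain by contraposition using (i): an unbounded $A$ admits norm-null $x_n$ with $\norm{Ax_n}$ large, and (i) converts $x_n\to 0$ into order-convergence, contradicting order-continuity.

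Finally, (v) is a special case assembled from the preceding: when $\A=\C$ the lattice-norm is the ordinary modulus, a net in $\A=\C$ decreasing to $0$ is just a sequence (or net) of nonnegative reals with infimum $0$, and order-convergence unwinds to the $\veps$-$N$ definition of norm-convergence. I would simply note that (i) already gives one inclusion, and for the converse observe that a dominating net $(f_i)\downarrow 0$ in $\C$ forces $\norm{x_\alpha - x} \le f_i \to 0$, so the two notions coincide. I expect (v) to be essentially a one-line consequence once (i) is in place.
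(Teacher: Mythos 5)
Your proposal is correct, and its overall architecture coincides with the paper's proof: (i) via $\abs{x_\alpha - x}\le \norm{x_\alpha - x}\,\car$ together with the net $\tfrac1n\car$ decreasing to $0$; (ii) and (iii) as formal consequences; (iv) as the same cycle of three implications; (v) as a triviality. The one place you genuinely diverge is the crux of (iv), namely bounded $\Rightarrow$ order-bounded with $c=\norm{A}$. The paper's argument is purely algebraic: fix $\epsilon>0$, put $f = (\abs{x}+\epsilon\car)^{-1}$ (invertible, since $\abs{x}+\epsilon\car\ge\epsilon\car$), observe $\abs{fx}\le\car$, hence $\abs{f}\,\abs{Ax}=\abs{A(fx)}\le c\,\car$, i.e.\ $\abs{Ax}\le c(\abs{x}+\epsilon\car)$, and let $\epsilon\to 0$. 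Your localization-on-$\Om$ argument can be made to work, but note two things. First, ``spectral projections'' are not available: $\A$ is only a unital commutative C$^*$-algebra here, not a Stone algebra, so characteristic functions of subsets of $\Om$ generally do not lie in $\A$; you must use Urysohn-type continuous cutoffs. Second, the cutoff argument needs an extra ingredient you do not mention: having found $\omega_0$ with $\abs{Ax}(\omega_0)>c\abs{x}(\omega_0)+\delta$, you must shrink the neighborhood $U$ so that $\abs{x}$ varies by less than, say, $\delta/(2(c+1))$ on $U$ before comparing $\norm{A(fx)}$ with $c\norm{fx}$; otherwise the supremum of $f\abs{x}$ may be attained far from $\omega_0$ and no contradiction results. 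The paper's inverse trick avoids all of this and needs no pointwise reasoning at all; this is what it buys you.

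A second, smaller point: in your step ``order-continuous $\Rightarrow$ bounded'' you say that (i) converts $x_n\to 0$ into order-convergence, ``contradicting order-continuity.'' That is not yet a contradiction: order-continuity is precisely what \emph{gives} $\olim_n Ax_n=0$, and order-convergence does not control norms in general. The contradiction with $\norm{Ax_n}\to\infty$ comes from the additional fact that an order-convergent net is eventually order-bounded, hence by (iii) eventually norm-bounded; this is exactly how the paper closes the argument, and you have all the pieces, but the step must be said.
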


   \begin{proof}
     (i)\ Without loss of generality we may suppose that $x=0$. 
     By definition of the norm on $E$, $\abs{x_\alpha} \le \norm{x_\alpha}
     \car$. It follows that
     $\olim_{\alpha} \norm{x_\alpha}\car = 0$ in $\A$, and hence
     $\olim_{\alpha} \abs{x_\alpha} = 0$ as well.

       \smsk\noi
       (ii)\ follows from (i) and 
       (iii) and (v)\ are obvious.

       \smsk\noi
       (iv)\
       Suppose first that $A$ is continuous and let $c \defeq \norm{A}$. Then $\abs{x} \le \car$ implies $\abs{Ax}\le c$.
       We claim that $\abs{Ax}\le c \abs{x}$ for all $x\in E$.
       To prove this, fix $\epsilon > 0$ and define $f \defeq
       (\abs{x} + \epsilon)^{-1}$. Then $\abs{f x} \le \car$ and hence $\abs{Ax}  \le c (\abs{x} + \epsilon)$. Letting $\epsilon \to 0$ proves the claim.

       Next, suppose that there is $c\ge 0$ with $\abs{Ax}\le c\abs{x}$ for all $x\in E$. Then $\olim_\alpha x_\alpha = x$ implies $\olim_\alpha Ax_\alpha = Ax$ since
$\abs{Ax_\alpha - Ax} = \abs{A(x_\alpha - x)}
         \le c \abs{x_\alpha - x}$.
           Hence, $A$ is order-continuous.
           
           Finally, suppose that $A$ is order-continuous
           but not continuous. Then there is a sequence $(x_n)_n$ in $E$ such that $\norm{x_n} \to 0$ but $\norm{Ax_n} \to \infty$. By (i), $\olim_n x_n = 0$ and hence $\olim_n Ax_n = 0$. But order-convergent nets are eventually order-bounded and hence, by (iii), norm-bounded. This is a contradiction.  
\end{proof}

\medskip

\subsection{Stone Algebras and Stonean Spaces}\label{s.sal}

Every order-convergent net in a lattice-normed space is order-Cauchy, but the converse may fail. This leads to the notion of ``order-completeness'', to be introduced
next. 

\begin{definition}
  A lattice-normed space $E$ is {\emdf (order-)complete} 
if every order-Cauchy net in  $E$ is order-convergent in $E$.
  A commutative unital $\uC^*$-algebra $\A$ is a {\emdf
    \Stoneanalgebra}\footnote{In lattice theory
the term `` Stone algebra'' has a different meaning. Since the
Stone algebras from lattice theory do not show up in this work,
there is no danger of confusion.}
if it is order-complete (as a lattice-normed space over itself).
\end{definition}

\vanish{
\Stoneanalgebra{}s are just commutative $\mathrm{AW}^*$-algebras as
introduced and studied by I.\  Kaplansky  in \cite{Kapl1951}. The term
``Stone algebra'' was introduced by Wright in \cite{Wrig1969} and 
is also used by Kusraev in \cite{Kusr2000}.
}

\medskip
A compact space $\Omega$ is called a {\emdf Stonean
  space}\footnote{The term ``Stonian space'' is also in use.}
if it is extremally disconnected, i.e., if the closure of every
open subset is open.  And a unital commutative C$^*$-algebra (or rather: a Banach lattice) is
called {\emdf Dedekind complete} if each subset of real elements,
bounded from above, has a supremum.  
In view of the Gelfand--Naimark representation theorem, the following result is a complete characterization of \Stoneanalgebra{}s.

\begin{proposition}\label{charawstaralg}
For a compact space $\Om$ the following assertions are equivalent.
	\begin{enumerate}[label={\upshape(\alph*)}]
      \item $\uC(\Om)$ is a \Stoneanalgebra.
      \item $\uC(\Om)$ is Dedekind complete.
      \item $\Om$ is a Stonean space.
    \end{enumerate}
\end{proposition}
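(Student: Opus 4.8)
The plan is to prove the three-way equivalence by establishing the cycle (a) $\Rightarrow$ (b) $\Rightarrow$ (c) $\Rightarrow$ (a), exploiting the fact that order-completeness, Dedekind completeness, and extremal disconnectedness are really the same phenomenon seen through three different lenses. The bridge between the analytic notions (a) and (b) is that in a commutative $\uC^*$-algebra $\uC(\Om)$ the lattice structure on the self-adjoint (real) part is exactly the pointwise order on real-valued continuous functions, so \emph{order}-convergence of a net in the $\uC^*$-algebra sense and \emph{lattice}-completeness of that order should translate into one another. The topological content sits entirely in (c), and the equivalence (b) $\Leftrightarrow$ (c) is a classical Stone-type theorem which I would treat as the heart of the matter.

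First I would dispatch (a) $\Rightarrow$ (b). Given a nonempty set $S$ of real elements of $\uC(\Om)$ bounded above by some $g$, I would consider the net of finite suprema $\bigl(\sup F\bigr)_F$ indexed by finite subsets $F \subseteq S$, ordered by inclusion. This is an increasing, order-bounded net (each finite supremum exists trivially in $\uC(\Om)$ by pointwise max, being a continuous function), and I would argue it is order-Cauchy: its "tail differences" are dominated by a net decreasing to zero, precisely because the increments $g - \sup F$ form a downward-directed family bounded below by $0$. Order-completeness then yields an order-limit, and one checks this limit is the least upper bound of $S$, giving Dedekind completeness. The only subtlety is verifying the order-Cauchy property cleanly; I expect to phrase it via Remark item (4), namely that order-convergence of the differences to $0$ is what is needed, using that $\uC(\Om)$ is a lattice and the decreasing net $(g - \sup F)_F$ witnesses the bound.

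The implication (b) $\Rightarrow$ (c) is the classical and most substantial step. Assuming $\uC(\Om)$ is Dedekind complete, I would show $\Om$ is extremally disconnected by taking an open set $U \subseteq \Om$ and proving $\cl(U)$ is open. The standard device is to look at the family of continuous functions $f$ with $0 \le f \le \car$ that vanish off $U$ (or better, the upward-directed family of such $f$), form its supremum $h$ in the Dedekind-complete lattice, and then argue that $h$ must be the characteristic-like function whose support recovers $\cl(U)$; the key point is that Dedekind completeness forces $h$ to be idempotent, i.e. $h = h^2$, so $h = \car_V$ for a clopen $V$, and one identifies $V = \cl(U)$. The hard part will be this step: controlling the supremum of a family of continuous functions and showing it is itself continuous with clopen support requires care, since a pointwise supremum of continuous functions is only lower semicontinuous in general—the lattice supremum in $\uC(\Om)$ differs from the pointwise one, and reconciling these (via the "regularization" of the lower-semicontinuous pointwise sup to its continuous envelope) is exactly where extremal disconnectedness is forced. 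I anticipate needing the characterization that the lattice supremum equals the continuous regularization of the pointwise supremum precisely when such a regularization exists, which is available in $\uC(\Om)$ only under Dedekind completeness.

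Finally, for (c) $\Rightarrow$ (a) I would show that extremal disconnectedness gives order-completeness of $\uC(\Om)$ as a lattice-normed space over itself. Given an order-Cauchy net, I would use that on a Stonean space the continuous regularization (taking a lower-semicontinuous function to the interior of the closure of its hypograph, or equivalently passing through clopen sets) always lands back in $\uC(\Om)$, so that suprema and infima of order-bounded families exist inside $\uC(\Om)$; this lets me extract an order-limit. Concretely, I would reduce an order-Cauchy net to a monotone order-bounded one and produce its limit as the regularized pointwise supremum, which is continuous precisely because $\Om$ is Stonean. This closes the cycle. Throughout I would lean on \cref{lns.l.order-norm} to move freely between order- and norm-statements where convenient, and on the Gelfand--Naimark identification $\A \cong \uC(\Om)$ already invoked in the statement so that it suffices to argue entirely within $\uC(\Om)$.
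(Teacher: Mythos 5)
Your architecture differs from the paper's: the paper proves only (a)$\Rightarrow$(b) in detail, citing the classical equivalence (b)$\Leftrightarrow$(c) (Schaefer, Kusraev) and the implication (c)$\Rightarrow$(a) (Kusraev, via Banach bundles), whereas you propose to prove the whole cycle yourself. Your sketches of (b)$\Rightarrow$(c) and (c)$\Rightarrow$(a) follow standard classical lines and could be carried out. The problem sits exactly in the step (a)$\Rightarrow$(b) --- the one step the paper actually proves --- where your argument has a genuine gap.

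You reduce, as the paper does, to showing that the increasing net of finite suprema $(\sup F)_F$ (more generally, any increasing order-bounded net) is order-Cauchy, and you propose to witness this by the decreasing family $(g - \sup F)_F$ for a \emph{single} upper bound $g$, noting it is ``bounded below by $0$''. But the definition of order-Cauchy demands a dominating net that decreases \emph{to zero}, i.e.\ with infimum equal to $0$; decreasing and bounded below by $0$ is not enough. The infimum of $(g - \sup F)_F$ is $g - \sup M$ (when the latter exists), which is nonzero unless $g$ happens to be the least upper bound --- precisely the object whose existence you are trying to establish, so the argument is circular. (Trivial example: $M = \{0\}$ and $g = \car$; your dominating net is constantly $\car$.) This is not a cosmetic subtlety but the entire mathematical content of the implication, and the paper's proof is built around resolving it: one lets the dominating net range over \emph{all} upper bounds. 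Concretely, with $B \coloneq \{h \in \uC(\Om;\R) \mid e_\alpha \le h \text{ for all } \alpha\}$, directed by the reverse order, one forms the decreasing net $f(\alpha,h) = 2(h - e_\alpha)$ on $A \times B$ and shows $\inf_{(\alpha,h)} f(\alpha,h) = 0$ by a bootstrapping argument: if $0 \le s \le 2(h - e_\alpha)$ for all $(\alpha,h)$, then $h - \tfrac{s}{2}$ is again an upper bound, hence so is $h - k\tfrac{s}{2}$ for every $k \in \N$, which (by the Archimedean property of $\uC(\Om)$) forces $s = 0$; the tail estimate $\abs{e_{\alpha_1} - e_{\alpha_2}} \le 2(h - e_\alpha)$ for $\alpha_1, \alpha_2 \ge \alpha$ then closes the proof. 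Without this device, or an equivalent one, your (a)$\Rightarrow$(b) does not go through.
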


\begin{proof}
The equivalence of (b) and (c) is standard, see for instance
\cite[Prop.~7.7]{Schaefer1974} 
or \cite[Thm.~1.5.9]{Kusr2000}. The implication 
(c)$\dann$(a) is (admittedly not very explicitly) \cite[2.4.8]{Kusr2000}, 
applied with $\mathscr{X}=\Om\times\R$, the trivial Banach bundle over $\Om$.
(One has to know that $\Om$ is Stonean if and only if it is the Stone--\v{C}ech
compactification of each of its dense subsets, see \cite[Problem 6M]{GillmanJerisonRoCF}.) 

For the implication (a)$\dann$(b) let
 $M \subset \uC(\Om)$ be a set of real-valued functions which is bounded 
  above by an element $g \in \uC(\Om)$. We consider the set $\mathcal{P}_{\mathrm{fin}}(M)$ of all finite subsets of $M$. Then
  \begin{align*}
    \mathcal{P}_{\mathrm{fin}}(M) \to \uC(\Om), \quad N \mapsto \sup(N) 
  \end{align*}
  is an increasing net in $\uC(\Om)$ bounded by $g$. If it order-converges, then
  its order-limit is necessarily the  supremum of $M$. In view of (a)
it hence suffices to show that every increasing net $(e_\alpha)_{\alpha}$ in $\uC(\Om;\R)$ which is order-bounded from above
 is order-Cauchy. Let $(e_\alpha)_{\alpha
  \in A}$ be such a net and consider the non-empty set
    \begin{align*}
      B \coloneqq \{h \in \uC(\Om;\R)\mid  e_\alpha \leq h
                  \textrm{ for every } \alpha \in A \}.
    \end{align*}
    Then $B$ is directed when equipped with the reverse order. With the product order, we therefore obtain a directed set $I = A \times B$ and a decreasing net
      \begin{align*}
        f \colon A\times B \to \uC(\Om), \quad (\alpha,h) \mapsto 2(h-e_\alpha).
      \end{align*}       
    We show that $\inf_{(\alpha,h)} f(\alpha,h) = 0$. It is clear that $f(\alpha,h) \geq 0$ for every $(\alpha,h) \in A \times 
B$. 
Now take a real-valued function $s \in  \uC(\Om)$ with $0 \leq s \leq f(\alpha,h)$  for all $(\alpha, h) \in A \times B$. Then 
$e_\alpha \leq h - \frac{s}{2}$ for all $\alpha \in A$ and therefore $h - \frac{s}{2} \in B$ for every $h \in B$. But then $h - k\frac{s}{2} \in B$ for every $k \in \N$, which implies $s = 0$.
    
    To finish the proof, take $(\alpha,h) \in A\times B$ and observe that
      \begin{align*}
        |e_{\alpha_1} - e_{\alpha_2}| \leq |e_{\alpha_1} - h| + |h - e_{\alpha_2}| \leq 2 (h - e_\alpha)
      \end{align*}
    for $\alpha_1, \alpha_2 \geq \alpha$.
\end{proof}

\vanish{
\begin{proof}
The equivalence of (b) and (c) is standard, see for instance \cite[Prop.~7.7]{Schaefer1974} or \cite[Thm.~1.5.9]{Kusr2000}.

  \smsk\noi
Suppose that (b) holds and take an order-Cauchy net $(e_\alpha)_{\alpha}$ in $\uC(\Om)$. Decomposing into real and imaginary 
parts we may suppose that $e_\alpha$ is real-valued for every $\alpha \in A$.
By passing to a subnet if necessary we may suppose that 
$(e_\alpha)_{\alpha \in A}$ is order-bounded.  Since
$\uC(\Om)$ is Dedekind complete, the limit superior
\[  e \coloneqq \limsup_{\alpha} e_\alpha = \inf_{\alpha} \sup_{\alpha' \geq  \alpha} e_{\alpha'}
\]
exists in $\uC(\Om)$. We claim
that  $\olim_{\alpha} e_\alpha = e$. To prove this claim,  choose a net $(f_i)_{i\in I}$ and $\alpha_i \in A$ for $i \in I$ as in 
\cref{deforderconv}. Then
    \begin{align*}
                  e_{\alpha} - f_i \leq e_{\alpha'} \leq e_{\alpha} + f_i
                  \qquad (i \in I, \, \alpha,\alpha' \geq \alpha_i).
                  \end{align*}
  This yields
    \begin{align*}
      e_{\alpha} - f_i \leq \sup_{\alpha' \geq \alpha} e_{\alpha'} \leq e_{\alpha} + f_i \qquad (i \in I,\, \alpha \ge \alpha_i).
    \end{align*}
                Next, define
                \[ h_\alpha \coloneqq \sup_{\alpha' \geq \alpha} e_{\alpha'} - e
                  \quad\text{and}\quad h_{\alpha,i} \coloneqq h_\alpha + f_i
                \]
                for $(\alpha,i) \in A \times I$. Then $(h_{\alpha, 
i})_{(\alpha,i) \in A \times I}$ decreases to zero. Fix $(\alpha,i) \in A \times I$ and observe that
    \begin{align*}
      |e - e_{\alpha}| \leq  h_{\alpha} + \Bigl|\sup_{\alpha' \geq \alpha} e_{\alpha'} - e_{\alpha}\Bigr| \leq h_\alpha + 
f_i \le  h_{\alpha_i,i}
    \end{align*}
  for every $\alpha \geq \alpha_i$. This shows the claim.

        \smsk\noi
  Finally, supppose that (a) holds and let $M \subset \uC(\Om)$ be a set of real-valued functions which is bounded 
  above by an element $g \in \uC(\Om)$. We consider the set $\mathcal{P}_{\mathrm{fin}}(M)$ of all finite subsets of $M$. Then
  \begin{align*}
    \mathcal{P}_{\mathrm{fin}}(M) \to \uC(\Om), \quad N \mapsto \sup(N) 
  \end{align*}
  is an increasing net in $\uC(\Om)$ bounded by $M$. If it order-converges then
  its order-limit is necessarily the  supremum of $M$. In view of (a)
it hence suffices to show that every increasing and from above order-bounded net
$(e_\alpha)_{\alpha}$ in $\uC(\Om;\R)$ is order-Cauchy. To this aim, let $(e_\alpha)_{\alpha
  \in A}$ be such a net and consider the non-empty set
    \begin{align*}
      B \coloneqq \{h \in \uC(\Om;\R)\mid  e_\alpha \leq h
                  \textrm{ for every } \alpha \in A \}.
    \end{align*}
    Then $B$ is directed when equipped with the reverse order. With the product order, we therefore obtain a directed set $I = A \times B$ and a decreasing net
      \begin{align*}
        f \colon A\times B \to \uC(\Om), \quad (\alpha,h) \mapsto 2(h-e_\alpha).
      \end{align*}       
    We show that $\inf_{(\alpha,h)} f(\alpha,h) = 0$. It is clear that $f(\alpha,h) \geq 0$ for every $(\alpha,h) \in A \times 
B$. 
Now take a real-valued function $s \in  \uC(\Om)$ with $s \leq f(\alpha,h)$ for all $(\alpha, h) \in A \times B$. Then 
$e_\alpha \leq \frac{s}{2} + h$ for all $\alpha \in A$ and therefore $\frac{s}{2} + h \in B$ for every $h \in B$. But then 
$\frac{s}{2} + h \leq h$ which implies $s \leq 0$.
    
    To finish the proof, take $(\alpha,h) \in A\times B$ and observe that
      \begin{align*}
        |e_{\alpha_1} - e_{\alpha_2}| \leq |e_{\alpha_1} - h| + |h - e_{\alpha_2}| \leq 2 (h - e_\alpha)
      \end{align*}
    for $\alpha_1, \alpha_2 \geq \alpha$.
\end{proof}

We remark that  the proof of the equivalence of (a) and (b) in \cref{charawstaralg} readily extends to arbitrary  Archimedean vector lattices. 
}

As a consequence of Proposition \ref{charawstaralg} we note the following 
simplified description of order-convergence when $\A$ is a \Stoneanalgebra.

\begin{lemma}\label{lem:easynetchar}
  Let $E$ be a lattice-normed space over a \Stoneanalgebra{} $\A$, let $(x_\alpha)_\alpha$ be a net in $E$ and $x\in E$. 
  Then the following assertions hold:
  \begin{enumerate}[(i)]
  \item $\olim_\alpha x_\alpha = x$ if and only if there is an index $\alpha_0$ and a net $(f_\alpha)_{\alpha\ge \alpha_0}$ in $\A$ decreasing to $0$ with
    \[   \abs{x- x_\alpha} \le f_\alpha \qquad (\alpha \ge \alpha_0).
     \]
   \item $(x_\alpha)_\alpha$ is order-Cauchy if and only if
     there is $\alpha_0$ and a net $(f_\alpha)_{\alpha\ge \alpha_0}$ in $\A$ decreasing to $0$ with
    \[   \abs{x_\beta - x_\gamma} \le f_\alpha \qquad (\beta, \gamma \ge \alpha \ge \alpha_0).
     \]
    \end{enumerate}
    \end{lemma}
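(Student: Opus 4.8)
The plan is to treat the two ``if'' implications as essentially formal and to concentrate the real work on the two ``only if'' implications, where the order-completeness of $\A$ (equivalently, its Dedekind completeness, by \cref{charawstaralg}) is the decisive ingredient. For the ``if'' part of (i), given $\alpha_0$ and a net $(f_\alpha)_{\alpha \ge \alpha_0}$ decreasing to $0$ with $\abs{x - x_\alpha} \le f_\alpha$, I would simply re-read \cref{deforderconv} using $I \defeq \{\alpha : \alpha \ge \alpha_0\}$ itself as the index set of the decreasing net: for the index $\beta \in I$ take the witness $\alpha_\beta \defeq \beta$, and then monotonicity gives $\abs{x - x_\alpha} \le f_\alpha \le f_\beta$ for all $\alpha \ge \beta$. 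The ``if'' part of (ii) is identical after testing along the diagonal $(\delta,\delta)$ in $A \times A$. Neither of these uses completeness.

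For the ``only if'' part of (i), suppose $\olim_\alpha x_\alpha = x$, witnessed by a net $(f_i)_{i\in I}$ decreasing to $0$ together with indices $\alpha_i$ such that $\abs{x - x_\alpha} \le f_i$ for $\alpha \ge \alpha_i$. Fixing one index $i_0$ and setting $\alpha_0 \defeq \alpha_{i_0}$, the family $\{\abs{x - x_\beta} : \beta \ge \alpha\}$ is order-bounded above by $f_{i_0}$ for every $\alpha \ge \alpha_0$. This is exactly the point where I would invoke Dedekind completeness to define
\[ g_\alpha \defeq \sup_{\beta \ge \alpha} \abs{x - x_\beta} \in \A_+ \qquad (\alpha \ge \alpha_0). \]
By construction $\abs{x - x_\alpha} \le g_\alpha$, and $(g_\alpha)_{\alpha \ge \alpha_0}$ is decreasing since the supremum ranges over a shrinking final segment. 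The remaining point is $\inf_{\alpha} g_\alpha = 0$: for any $i$ and any $\alpha \ge \alpha_i$ every term of the supremum is $\le f_i$, so $g_\alpha \le f_i$; hence a lower bound $0 \le h \le g_\alpha$ (valid for all $\alpha \ge \alpha_0$) satisfies $h \le f_i$ for every $i$, after using directedness of $A$ to pick an index above both $\alpha_0$ and $\alpha_i$, and $\inf_i f_i = 0$ then forces $h = 0$.

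Part (ii) runs entirely parallel. Unwinding the order-convergence of $(x_\alpha - x_\beta)_{(\alpha,\beta)\in A\times A}$ to $0$ and using directedness of $A$ to merge each pair of witnesses into a single $\gamma_i$ with $\abs{x_\beta - x_\gamma} \le f_i$ whenever $\beta,\gamma \ge \gamma_i$, I would set $g_\alpha \defeq \sup_{\beta,\gamma \ge \alpha} \abs{x_\beta - x_\gamma}$, again well-defined by \cref{charawstaralg} for $\alpha \ge \alpha_0 \defeq \gamma_{i_0}$, and verify the three properties verbatim. The only genuine (and mild) obstacle throughout is guaranteeing that these suprema exist; this is precisely what the Stone-algebra hypothesis delivers via \cref{charawstaralg}, and everything else is bookkeeping with directed sets.
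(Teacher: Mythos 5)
Your proof is correct and is essentially the paper's own argument: the paper likewise takes $\alpha_0$ so that the tail is order-bounded and defines the witnessing net as the tail suprema $f_\alpha \coloneq \sup_{\beta\ge\alpha}\abs{x-x_\beta}$, whose existence is exactly the Dedekind completeness of $\A$ (Proposition \ref{charawstaralg}), with the converse implication being trivial. The only cosmetic difference is that the paper deduces (ii) from (i) (applied to the difference net on $A\times A$, restricted to the diagonal), whereas you rerun the same supremum construction directly; the content is identical.
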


    \begin{proof}
For the proof of (i), suppose that $x= \olim_\alpha x_\alpha$. Then  choose $\alpha_0$ such that the net $(x_\alpha)_{\alpha\ge \alpha_0}$ is order-bounded and define $f_\alpha \defeq \sup_{\beta\ge \alpha} \abs{x-x_\beta}$. It is easy to see that $\inf_{\alpha \ge \alpha_0} f_\alpha = 0$. The
        converse implication is trivial. Assertion (ii) follows from
        (i). 
        \end{proof}

\medskip
How are order convergence
in a \Stoneanalgebra{} $\Ce(\Om)$  
and pointwise convergence in the function space $\Ce(\Om)$ 
related? The following is the basic observation, noted by Wright in
\cite[Lem.~1.1]{Wrig1969}.

Recall that a subset $A$ of a compact space $\Om$ is  
{\emdf residual} if it contains a countable intersection of dense open sets.
Since compact spaces have the Baire property, residual subsets are
dense.  Similarly   to the terminology in measure theory, one says that 
a statement about elements $\vom$ of $\Om$ holds {\emdf almost everywhere},
when it holds for all elements of a residual subset of $\Om$.

\begin{lemma}[Wright]\label{Wright1.1}
Let $(f_i)_i$ be a family of elements in the  \Stoneanalgebra{}  $\A
= \Ce(\Om;\R)$, bounded from below, and 
$f = \inf_{i\in
  I} f_i$ its order-infimum. Then 
\[    f(\vom) = \inf_i f_i(\vom)\quad \text{almost everywhere}.
\]
\end{lemma}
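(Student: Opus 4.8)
The plan is to compare the continuous order-infimum $f = \inf_i f_i$ with the pointwise infimum $g(\vom) := \inf_i f_i(\vom)$, which a priori is only a (possibly discontinuous) function on $\Om$. Since $f \le f_i$ in the lattice order means $f(\vom) \le f_i(\vom)$ at every point, one immediately gets $f \le g$ pointwise on all of $\Om$. The entire content of the lemma is therefore the reverse inequality on a residual set, i.e.\ that the ``defect set'' $D := \{\vom \mid f(\vom) < g(\vom)\}$ is meager.

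First I would observe that $g$, being a pointwise infimum of continuous functions, is upper semicontinuous: each $\{f_i < c\}$ is open, so $\{g < c\} = \bigcup_i \{f_i < c\}$ is open. Hence $g - f \ge 0$ is upper semicontinuous, so each level set $C_n := \{\vom \mid g(\vom) - f(\vom) \ge \tfrac1n\}$ is closed, and $D = \bigcup_n C_n$. It therefore suffices to show that every $C_n$ is nowhere dense; then $D$ is meager and its complement, which is contained in $\{f = g\}$, is residual (as $\Om$ has the Baire property).

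The key step --- and the point where the Stonean hypothesis enters --- is to show that each closed set $C_n$ has empty interior. Suppose toward a contradiction that some nonempty open $V \subseteq C_n$ exists. Since $f_i \ge g$ everywhere, the estimate $f_i - f \ge g - f \ge \tfrac1n$ holds on $V$, and by continuity of each $f_i - f$ this inequality persists on the closure $W := \cls{V}$, so $f_i \ge f + \tfrac1n$ on $W$ for every $i$. Because $\Om$ is extremally disconnected, $W$ is clopen, whence $\car_W \in \Ce(\Om;\R)$ and the function $\tilde f := f + \tfrac1n \car_W$ is continuous. One checks $\tilde f \le f_i$ for all $i$ (on $W$ using $f_i \ge f + \tfrac1n$, off $W$ using $f \le f_i$), so $\tilde f$ is a lower bound for the family; but $\tilde f \ge f$ with strict inequality on the nonempty set $W$, contradicting that $f$ is the \emph{greatest} lower bound.

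The main obstacle is really this single move of manufacturing the strictly larger lower bound $\tilde f$, which works only because extremal disconnectedness upgrades $\cls{V}$ from merely closed to clopen --- precisely what makes $\car_W$ continuous and keeps $\tilde f$ inside $\Ce(\Om;\R)$. Without the Stonean property one could not add a positive ``bump'' on $W$ while staying in $\Ce(\Om)$, and the argument would collapse. The remaining verifications (upper semicontinuity of $g$, the decomposition $D = \bigcup_n C_n$, and that a countable union of nowhere dense closed sets has residual complement) are routine.
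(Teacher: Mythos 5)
Your proof is correct, and it fills a genuine gap in the text: the paper states this lemma without proof, deferring to Wright \cite[Lem.~1.1]{Wrig1969}, so there is no in-paper argument to compare against. Your route --- observe that the pointwise infimum $g$ is upper semicontinuous, write the defect set $\{f < g\}$ as the countable union of the closed sets $C_n = \{g - f \ge \tfrac1n\}$, and show each $C_n$ has empty interior by manufacturing a continuous lower bound strictly above $f$ --- is essentially the classical Baire-category proof of this fact, and every step checks out (in particular, the passage from $f_i - f \ge \tfrac1n$ on $V$ to the same inequality on $\cls{V}$, and the contradiction with $f$ being the \emph{greatest} lower bound, are both sound). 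One correction to your closing commentary, though: it is not true that the argument ``would collapse'' without extremal disconnectedness. On any compact Hausdorff space one can take a Urysohn function $\phi$ with $0 \le \phi \le \tfrac1n$, $\phi = 0$ off $V$, and $\phi(\vom_0) = \tfrac1n$ at some $\vom_0 \in V$; then $f + \phi$ is continuous, is a lower bound of the family (on $V$ because $f_i \ge g \ge f + \tfrac1n$ there, off $V$ trivially), and exceeds $f$ at $\vom_0$, giving the same contradiction. So the conclusion holds in any $\Ce(\Om;\R)$ whenever the order-infimum happens to exist; what the Stonean hypothesis genuinely buys is that such infima exist at all (Dedekind completeness, cf.\ Proposition \ref{charawstaralg}). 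Your clopen-closure trick is a clean substitute for Urysohn, but it is a convenience, not the crux.
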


Employing \cref{Wright1.1} we obtain the following result, which entails
a characterization of order-convergence for countable nets, cf.\ \cite[Subsection 0.3.3]{Gutm1993a}.

\begin{lemma}\label{charorder}
Let $\Om$ be a Stonean space. Consider the following statements for a
net $(e_\alpha)_{\alpha \in A}$ in $\uC(\Om)$ and $e  \in \uC(\Om)$.
\begin{enumerate}[(a)]
\item $(e_\alpha)_{\alpha}$ order-converges to $e$.
\item $(e_\alpha)_{\alpha}$ is eventually bounded  and 
$\lim_{\alpha} e_\alpha(\vom) = e(\vom)$ almost everywhere.
\end{enumerate}
    Then {\rm (a)} $\Rightarrow${\rm (b)}. If the index set $A$ is countable, then {\em (a)} $\Leftrightarrow$ {\rm (b)}.
\end{lemma}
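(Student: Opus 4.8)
The plan is to use Wright's lemma (\cref{Wright1.1}) as the bridge between order-theoretic infima/suprema in $\uC(\Om)$ and their pointwise counterparts on $\Om$, exploiting that a residual subset of the Stonean space $\Om$ is dense and that a countable intersection of residual sets is again residual. The Dedekind completeness of $\uC(\Om)$ (\cref{charawstaralg}) will be used to produce the suprema needed in the converse direction.

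For the implication (a)$\Rightarrow$(b) I would first invoke \cref{lem:easynetchar}(i) to obtain an index $\alpha_0$ and a net $(f_\alpha)_{\alpha\ge\alpha_0}$ in $\A=\uC(\Om)$ decreasing to $0$ with $\abs{e-e_\alpha}\le f_\alpha$ for $\alpha\ge\alpha_0$. Since $f_\alpha\le f_{\alpha_0}$, the estimate $\abs{e_\alpha}\le\abs{e}+f_{\alpha_0}$ shows that $(e_\alpha)_\alpha$ is eventually bounded. For the pointwise statement I apply \cref{Wright1.1} once to the net $(f_\alpha)$: because $\inf_\alpha f_\alpha=0$, one has $\inf_\alpha f_\alpha(\vom)=0$ almost everywhere, and as $(f_\alpha(\vom))_\alpha$ is a decreasing net of reals this forces $f_\alpha(\vom)\to 0$ almost everywhere. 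The domination $\abs{e(\vom)-e_\alpha(\vom)}\le f_\alpha(\vom)$ then yields $e_\alpha(\vom)\to e(\vom)$ almost everywhere. Note that this direction needs no countability.

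For the converse under the countability hypothesis I would set $g_\alpha\coloneqq\sup_{\beta\ge\alpha}\abs{e_\beta-e}$, which exists in $\uC(\Om)$ by Dedekind completeness (\cref{charawstaralg}) once $\alpha\ge\alpha_0$, where $\alpha_0$ witnesses eventual boundedness. The net $(g_\alpha)_{\alpha\ge\alpha_0}$ is decreasing, nonnegative, and dominates $\abs{e_\alpha-e}$, so by the definition of order-convergence it suffices to prove $\inf_\alpha g_\alpha=0$. Here I apply the supremum form of \cref{Wright1.1} (via $\sup=-\inf(-\,\cdot\,)$) once for each $\alpha$ to get a residual set $R_\alpha$ on which $g_\alpha(\vom)=\sup_{\beta\ge\alpha}\abs{e_\beta(\vom)-e(\vom)}$; intersecting these countably many sets with the convergence set from (b) produces a single residual---hence dense---set $R^{\ast}$ on which both the identities and $e_\beta(\vom)\to e(\vom)$ hold. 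On $R^{\ast}$ one reads off $\inf_\alpha g_\alpha(\vom)=0$. Finally, any lower bound $h\in\uC(\Om;\R)$ of $(g_\alpha)_\alpha$ satisfies $h(\vom)\le\inf_\alpha g_\alpha(\vom)=0$ on the dense set $R^{\ast}$, whence $h\le 0$ by continuity; thus $0$ is the greatest lower bound and $\inf_\alpha g_\alpha=0$, which gives (a).

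The crux---and the reason the equivalence requires countability of $A$---is the passage from the pointwise identities $g_\alpha(\vom)=\sup_{\beta\ge\alpha}\abs{e_\beta(\vom)-e(\vom)}$ back to the order statement: each of these identities holds only off a meagre set depending on $\alpha$, and only countably many such exceptional sets may be discarded while keeping a dense (residual) set on which to run the pointwise-to-order argument. Everything else is a routine combination of the domination estimates with the density of residual sets and the continuity of the functions involved; the one point to handle carefully is the existence of the suprema defining $g_\alpha$, which is exactly where the Stonean (Dedekind complete) hypothesis enters.
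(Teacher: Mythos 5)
Your proof is correct and is essentially the paper's own argument: the implication (a)$\Rightarrow$(b) is identical (Lemma \ref{lem:easynetchar} plus Wright's Lemma \ref{Wright1.1} plus domination), and for the converse you form the same decreasing net of tail suprema $g_\alpha = \sup_{\beta\ge\alpha}\abs{e_\beta - e}$ and use countability of $A$ to intersect the residual sets on which Wright's Lemma identifies these suprema pointwise. The only (interchangeable) difference is the final step: the paper applies Wright's Lemma once more to identify the order-infimum $\inf_\alpha g_\alpha$ pointwise and concludes that it vanishes on a dense set, whereas you argue directly that every continuous lower bound of $(g_\alpha)_\alpha$ is $\le 0$ by density and continuity.
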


\begin{proof}
By passing to $e_\alpha - e$ we may suppose $e= 0$. 

Suppose that (a) holds and pick by Lemma \ref{lem:easynetchar} an
index $\alpha_0$ and a net $(f_\alpha)_{\alpha \ge \alpha_0}$ decreasing to $0$ with 
$\abs{e_\alpha} \le f_\alpha$ for $\alpha \ge \alpha_0$. Then 
$\norm{e_\alpha} \le \norm{f_{\alpha_0}}$ for $\alpha \ge \alpha_0$. 
Moreover, pick a residual subset  $D\subseteq \Om$  such that 
$\inf_{\alpha\ge \alpha_0} f_\alpha(\vom) = 0$ for all $\vom \in
D$ (\cref{Wright1.1}). Then 
$\limsup_{\alpha} \abs{e_\alpha (\vom)} \le \inf_{\alpha\ge \alpha_0}
f_\alpha(\vom) = 0$ for all $\vom \in D$. This proves (b).

\smsk\noi
Conversely, suppose that $A$ is countable and that (b) holds. Let $\alpha_0 \in A$ with $\sup_{\alpha \geq 
\alpha_0} \|e_\alpha\| < \infty$. For every $\alpha \in A$ with $\alpha \geq \alpha_0$ we write $f_\alpha \coloneqq \sup_{\beta
\geq \alpha} |e_\alpha|$. Then $(f_\alpha)_\alpha$ is decreasing and
it suffices to show that $f \coloneq  \inf_{\alpha \geq \alpha_0}
f_\alpha = 0$. 

By hypothesis, the set 
$D \coloneq \{ \vom \,|\, \lim_{\alpha} e_\alpha(\vom) = 0\}$ is
residual. Moreover, by \cref{Wright1.1}, the sets 
\[  D' \coloneq \Bigl\{ \vom \,\big|\, \inf_\alpha
f_\alpha(\vom)= f(\vom) \Bigr\} \quad \text{and}\quad 
D_\alpha \coloneq \Bigl\{ \vom\,\big|\, \sup_{\beta \ge \alpha}
\abs{e_\beta(\vom)} = f_\alpha(\vom)\Bigr\}\quad(\alpha \ge \alpha_0)
\]
are all residual. Obviously, $f$ vanishes on the set 
$D''\coloneq D \cap D'\cap \bigcap_{\alpha\ge \alpha_0} D_\alpha$. But since
$A$ is countable, $D''$ is residual and hence dense in $\Om$. Since
$f$ is continuous, $f= 0$ as desired.
\end{proof}

Lattice-normed modules over \Stoneanalgebra{}s have particularly nice
properties. The following is a first example.

\begin{lemma}\label{order-closure}
Let $E$ be a lattice-normed module over the \Stoneanalgebra{} $\A$, and let $M
\subseteq E$ be a submodule. Then its order-closure satisfies
\begin{align*} \ocl(M) 
= \{ x\in E\,|\, \text{there is a net $(x_\alpha)_\alpha$ in $M$
    with}\,\,
\olim_{\alpha} x_\alpha = x\}.
\end{align*}
Moreover, $\ocl(M)$ is an order-closed submodule of $E$. 
\end{lemma}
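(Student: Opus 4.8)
The plan is to introduce
$\widehat{M} \coloneqq \{\, x\in E \mid \olim_\alpha x_\alpha = x \text{ for some net } (x_\alpha)_\alpha \text{ in } M\,\}$
and to prove that it is an order-closed submodule. Since $M \subseteq \widehat M \subseteq \ocl(M)$ — the first inclusion via constant nets, the second because $\ocl(M)$ is order-closed by its very definition — this settles everything at once: the displayed equality then follows from the minimality of $\ocl(M)$, and the ``moreover'' clause (whose order-closedness part is definitional) is the statement that $\widehat M$ is a submodule.

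First I would verify that $\widehat M$ is a submodule. Given $x = \olim_\alpha x_\alpha$ and $x' = \olim_{\alpha'} x'_{\alpha'}$ with representing nets in $M$, I pass to the product index set and consider $(x_\alpha + x'_{\alpha'})_{(\alpha,\alpha')}$, which lies in $M$; the triangle inequality $\abs{(x+x') - (x_\alpha + x'_{\alpha'})} \le \abs{x - x_\alpha} + \abs{x' - x'_{\alpha'}}$, together with the fact that for bounding nets $(f_i)_i, (f'_{i'})_{i'}$ decreasing to $0$ the net $(f_i + f'_{i'})_{(i,i')}$ again decreases to $0$, gives $\olim (x_\alpha + x'_{\alpha'}) = x + x'$. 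For $f\in\A$ the estimate $\abs{fx - f x_\alpha} = \abs f\,\abs{x - x_\alpha}$ shows $fx = \olim f x_\alpha \in \widehat M$. (Equivalently one invokes the order-continuity of addition and module multiplication recorded in the remarks after \cref{deforderconv}.) This step is routine.

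The heart of the matter is to show that $\widehat M$ is order-closed. Let $(y_\beta)_{\beta\in B}$ be a net in $\widehat M$ with $\olim_\beta y_\beta = y$; I must produce a net in $M$ order-converging to $y$. By \cref{lem:easynetchar} I fix $\beta_0$ and a net $(g_\beta)_{\beta\ge\beta_0}$ decreasing to $0$ with $\abs{y-y_\beta}\le g_\beta$, and for each $\beta$ a net $(x^\beta_\alpha)_\alpha$ in $M$ and $(f^\beta_\alpha)_\alpha$ decreasing to $0$ with $\abs{y_\beta - x^\beta_\alpha}\le f^\beta_\alpha$, so that $\abs{y - x^\beta_\alpha}\le g_\beta + f^\beta_\alpha$. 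The naive move is to diagonalize over $B\times\prod_\beta A_\beta$, which reduces the claim to the vanishing of a certain order-infimum of the combined bounds. I expect this to be the one genuinely delicate point, and the main obstacle is precisely here: order-convergence is \emph{not} topological, the classical iterated-limit theorem fails, and for arbitrary bounding families this infimum need not vanish. The way around it is to exploit the Stonean structure of $\A = \Ce(\Om)$: by \cref{Wright1.1} the relevant order-infima are computed as pointwise infima almost everywhere, and for almost every $\vom$ one has $f^\beta_\alpha(\vom)\downarrow 0$; hence $\Om$ is covered, up to a nowhere-dense set, by clopen sets on each of which a \emph{single} inner approximant already satisfies $\abs{y_\beta - x^\beta_\alpha} < \varepsilon$. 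Finitely patching such approximants with orthogonal idempotents $p\in\A$ (which keeps the result in $M$, since $M$ is a submodule, and controls the error via $\abs{\sum_j p_j(y - x^\beta_{\alpha_j})} = \sum_j p_j\abs{y - x^\beta_{\alpha_j}}$) and indexing the construction by the error level, by $\beta$, and by refinements of the clopen partition, yields a net in $M$ whose distance to $y$ is order-dominated by a net decreasing to $0$ — the crux being to control the nowhere-dense exceptional sets via the Baire/Stonean structure, to which order-convergence is insensitive.

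Granting this third step, $\widehat M$ is an order-closed submodule containing $M$, hence contains $\ocl(M)$; combined with $\widehat M\subseteq\ocl(M)$ this gives $\ocl(M) = \widehat M$ and the full statement. All the remaining bookkeeping is a formal manipulation of decreasing bounding nets, so I would concentrate the real effort on the patching argument of the third step.
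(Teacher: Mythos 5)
Your overall architecture (define $\widehat M$ as the set of order-limits of nets in $M$, show it is an order-closed submodule, and conclude by minimality of $\ocl(M)$) is sound, and your first step is routine as you say; but the third step, which you yourself identify as the crux, has a genuine gap that your sketch does not close. The construction ``index by the error level, by $\beta$, and by refinements of the clopen partition'' does not produce a directed index set whose error bounds decrease to zero. The trouble is that the dense open sets $U_{\beta,\varepsilon} \coloneq \bigcup_\alpha [\, f^\beta_\alpha < \varepsilon \,]$ vary with $(\beta,\varepsilon)$: a clopen set that is ``good'' for $(\beta_1,\varepsilon_1)$ need not be contained in $U_{\beta,\varepsilon}$ for larger $\beta$ or smaller $\varepsilon$ --- it only meets it in a relatively dense open subset --- so an old partition cannot be refined into a new admissible one, and the exceptional clopen leftovers $Q$ do not shrink monotonically along any natural order on your triples. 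If instead you weaken the order so as to ignore the partition data, every tail contains patched elements with $Q = \Om$, so no net decreasing to $0$ eventually dominates the errors. Note also that what your Wright-lemma/local-patching step actually establishes is only that $\inf_{z\in M}\abs{y-z} = 0$; but that part is easy and needs neither \cref{Wright1.1} nor any patching, since $\inf_{z\in M}\abs{y-z} \le g_\beta + \inf_\alpha f^\beta_\alpha = g_\beta$ for every $\beta$. The genuinely hard part is converting ``infimum zero'' into an actual net in $M$ order-converging to $y$, and that is precisely what your construction does not deliver.

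The paper resolves exactly this point with a two-part argument worth comparing with. First, it shows that $N \coloneq \{x \in E \mid \inf_{z \in M}\abs{x-z} = 0\}$ is order-closed by a one-line triangle inequality ($\abs{y-z} \le \abs{y-y_\alpha} + \abs{y_\alpha - z}$, then take infima over $z$), so $\ocl(M) \subseteq N$; your $\widehat M$ likewise sits inside $N$. Second --- and this is where the Stonean structure enters, but only once and only for pairs --- given $x$ with $\inf_{z\in M}\abs{x-z}=0$, it preorders $M$ itself by $z \preceq z'$ iff $\abs{x-z'} \le \abs{x-z}$ and proves this preorder directed: for $z_1, z_2 \in M$ one picks a clopen $A \subseteq \Om$ with $A \subseteq [\abs{x-z_1} \le \abs{x-z_2}]$ and $A^c \subseteq [\abs{x-z_2} \le \abs{x-z_1}]$ and patches $z \coloneq \car_A z_1 + \car_{A^c} z_2 \in M$, which gives $\abs{x-z} = \abs{x-z_1} \wedge \abs{x-z_2}$. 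Then $M$, so directed, is itself a net whose bounds $\abs{x-z}$ decrease with infimum $0$, hence $\olim_{z\in M} z = x$. This sidesteps all index bookkeeping: the directed set is $M$, and directedness needs only patching two elements at a time. If you reorganize your third step along these lines (infimum zero via the triangle inequality, then directedness via pairwise patching), your proof closes and essentially coincides with the paper's.
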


\begin{proof}
The inclusion $\supset$ is trivial. For the converse fix $x\in \ocl(M)$. 
We first claim that $\inf_{z\in M} \abs{x-z} = 0$. To prove this claim, it
suffices to show that $N \coloneq \{ x\in E\,|\, \inf_{z\in M} \abs{x-z} = 0\}$ is
order-closed (as it obviously contains $M$). 

Let $(y_\alpha)_\alpha$ be a net in $N$ and $y = \olim_\alpha y_\alpha$.
Then, for each $z\in M$ and each index $\alpha$
\[     \abs{y - z} \le \abs{y- y_\alpha} + \abs{y_\alpha - z}.
\]
This implies $\inf_{z\in M} \abs{y - z} \le \abs{y - y_\alpha}$ for each
$\alpha$, hence $y\in N$ as well. It follows that $N$ is order-closed, and 
hence $x\in N$.

Next, we claim that the module $M$ is directed with respect to
\[    z \le z'\quad \stackrel{\mathrm{def}}{\Longleftrightarrow} \quad
\abs{x- z'} \le \abs{x- z}.
\]
To see this, fix $z_1, z_2\in M$ and denote $f_j \coloneq \abs{x - z_j}$ for $j = 1,
2$.  By representing $\A = \Ce(\Om)$ for a Stonean space $\Om$, we find a 
clopen subset $A\subseteq \Om$ with  $A \subseteq [f_1 \le f_2]$ and
$A^c \subseteq [f_2 \le f_1]$. Define $z \coloneq \car_A z_1 + \car_{A^c}z_2 \in M$. 
Then 
\[ \abs{x - z} = \car_A \abs{x - z_1} + \car_{A^c} \abs{x - z_2} = f_1 \wedge
  f_2,
\]
 and hence $z\ge z_1, z_2$.  Finally, observe that $x = \olim_{z\in M} z$ with
 respect to the direction on $M$. This establishes the inclusion $\subseteq$.

The proof of the remaining assertion is straightforward.
\end{proof}

\medskip

\section{Kaplansky--Hilbert  Modules}\label{c.KHM}

Having \Stoneanalgebra{}s at our disposal, we can now introduce \KH{} modules (see also \cite[Theorem 4.1]{Frank95} for further equivalent definitions).

\begin{definition}
An order-complete lattice-normed module over a Stone algebra $\A$ is
called a {\emdf Kaplansky--Banach module} (over $\A$).   
A {\emdf Kaplansky--Hilbert module} (in short: KH-module) is  an
order-complete pre-Hilbert module $E$ over a
  \Stoneanalgebra{}.

A submodule of  a Kaplansky--Hilbert module $E$ 
is called a {\emdf Kaplansky--Hilbert submodule} (KH-submodule) of $E$ if it is order-closed in $E$.
\end{definition}

\vanish{
Kaplansky--Hilbert modules were  initially introduced as {\emdf
  $\mathrm{AW}^*$-modules} by I.\ Kaplansky in \cite{Kapl1953} 
with a slighly different, but equivalent, definition. See also \cite[Sec.~7.4]{Kusr2000}. The name itself was coined by Wright in \cite{Wrig1969KHM}.
}

By \cite[Thm.s 2.2.3 and 7.1.2]{Kusr2000} a Kaplansky--Banach
module is automatically complete with respect to the norm.
We shall focus on \KH{} modules. The only Kaplansky--Banach module
interesting to us and not being a KH-module is the space $\Hom(E;F)$
of bounded module-homomorphisms between KH-modules, see Section
\ref{s.hom} below.

\begin{examples}\label{statdardexaw}\mbox{}
\begin{enumerate}[(1)]  
\item Our standard example for a KH-module is $\Ell{2}(\uX|\uY)$, when 
$\uX \to \uY$ is an extension of probability spaces, see Definition  \ref{conditionall2}
and Proposition \ref{p.condL2-KH} below. Other examples of KH-modules are described in 
\cite[Subsection 7.4.8]{Kusr2000}.

\item \label{conterex:aw} 
Let $\Om$ be a Stonean space and $H$ a Hilbert space. Then the Hilbert
module $\uC(\Om;H)$ over $\uC(\Om)$ (see \cref{exhilbert}, 
\cref{exhilbert3}) is a Kaplansky--Hilbert module if and only if $\Om$ is finite
or $H$ is finite-dimensional \cite[Subsection 2.3.3]{Kusr2000}.
\end{enumerate}
\end{examples}

In view of the preceding example it is useful to remark that every pre-Hilbert module over a 
\Stoneanalgebra{} admits an {\em order-completion}, see \cref{s.ocp} below.

\medskip

\subsection{Support of Elements and Modules}\label{s.supp}
Let $\A$ denote a fixed \Stoneanalgebra. When convenient,
we suppose without loss of generality $\A = \Ce(\Om)$ for a fixed Stonean space $\Om$. 

The elements of $\B \coloneq \{ p\in \A \,|\, p^2 = p\}$ are called {\emdf idempotents}. 
Endowed with the lattice structure coming from $\A$, the set  $\B$ is a complete Boolean algebra
with complementation $p^c = \car - p$ and meet $p\wedge q = pq$.    
An element $x\in E$ is called {\emdf normalized} if $\abs{x}\in \B$. The {\emdf
  normalization} of $x\in E$ is 
\[    
\frac{x}{\abs{x}} \coloneq \olim_{\veps\searrow 0} \frac{1}{\abs{x} + \veps} x
\]
and its {\emdf support} is
\[  \supp(x) \coloneq   p_x \coloneq \Bigl| \frac{x}{\abs{x}} \Bigr| = \olim_{\veps\searrow 0} \frac{\abs{x}}{\abs{x} + \veps}.  
\]
The following lemma 
collects the basic properties of these concepts.

\begin{lemma}\label{supp.l.supp}
Let $E$ be a Kaplansky--Banach module over a \Stoneanalgebra{}
$\A$. Then the normalization and the support of $x\in E$ are well-defined
and have the following properties:
\begin{enumerate}[(i)]
\item $\frac{x}{\abs{x}}$ is normalized and satisfies $\abs{x} \frac{x}{\abs{x}} = x$. 

\item  $\supp(x) = \min\{ p \in \B\,|\, px= x\} = \supp(\abs{x}) =
  \supp(\frac{x}{\abs{x}})$.

\item  $\text{$x$ is normalized}
\quad\Leftrightarrow\quad \abs{x} x= x
\quad\Leftrightarrow\quad x = \frac{x}{\abs{x}} 
\quad\Leftrightarrow\quad \abs{x} = \supp(x)$.
\item If $E$ is a KH-module, then $(x|\frac{x}{\abs{x}}) = \abs{x}$.
\end{enumerate}
\end{lemma}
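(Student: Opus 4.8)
The plan is to fix the Gelfand representation $\A = \Ce(\Om)$ with $\Om$ Stonean, abbreviate $a \coloneqq \abs{x} \in \A_+$, and reduce everything to the two explicit nets $g_\veps \coloneqq \frac{a}{a+\veps}$ and $y_\veps \coloneqq \frac{1}{a+\veps}\,x$ for $\veps \searrow 0$. First I would establish that the order-limits in the definitions exist at all. The net $(g_\veps)_\veps$ is increasing as $\veps \searrow 0$ and bounded above by $\car$, so by Dedekind completeness of $\A$ (\cref{charawstaralg}) it order-converges to $p \coloneqq \sup_\veps g_\veps = \supp(x)$. For the normalization I would use the (pointwise-free) one-line identity $\abs{y_\delta - y_\veps} = g_\delta - g_\veps$, valid for $\delta \le \veps$. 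Combined with $g_\delta, g_{\delta'} \le p$ this gives $\abs{y_\delta - y_{\delta'}} \le 2(p - g_\veps)$ whenever $\delta,\delta' \le \veps$, and since $p - g_\veps$ decreases to $0$, \cref{lem:easynetchar}(ii) shows that $(y_\veps)_\veps$ is order-Cauchy; order-completeness of the Kaplansky--Banach module $E$ then supplies $\frac{x}{\abs{x}} \coloneqq \olim_\veps y_\veps$. Order-continuity of the lattice norm (remark after \cref{deforderconv}) yields $\abs{\frac{x}{\abs{x}}} = \olim_\veps g_\veps = p$, which already identifies the support of the normalization and is the first half of (i).

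The algebraic heart of the argument, and the step I expect to be the main obstacle, is the identity $pa = a$ together with $p \in \B$. For the former I would observe that $\frac{a^2}{a+\veps} = a - \veps\,\frac{a}{a+\veps} \ge a - \veps\car$, so any upper bound of the net $\bigl(\frac{a^2}{a+\veps}\bigr)_\veps$ dominates $a - \veps\car$ for every $\veps$; since $\A = \Ce(\Om)$ is Archimedean this forces $\sup_\veps \frac{a^2}{a+\veps} = a$, and multiplying the defining net for $p$ by $a$ order-continuously gives $pa = \sup_\veps g_\veps\, a = \sup_\veps \frac{a^2}{a+\veps} = a$. Idempotency of $p$ I would read off from \cref{Wright1.1}: almost everywhere $p(\vom) = \sup_\veps \frac{a(\vom)}{a(\vom)+\veps} \in \{0,1\}$, so $p^2 - p$ is continuous and vanishes on a dense set, hence is zero. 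Now (i) follows: $p \in \B$ says $\frac{x}{\abs{x}}$ is normalized, and $pa = a$ gives $\abs{x - px} = (\car - p)a = 0$, i.e.\ $px = x$; passing to the limit in $a\,y_\veps = g_\veps\, x$ then yields $\abs{x}\,\frac{x}{\abs{x}} = px = x$.

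With these tools in hand the remainder is bookkeeping. For (ii) the minimality comes from: if $q \in \B$ satisfies $qx = x$, then $qa = a$, so $(\car - q)g_\veps = 0$ and $g_\veps \le q$ for every $\veps$, whence $p \le q$; the equalities $\supp(x) = \supp(\abs{x}) = \supp(\tfrac{x}{\abs{x}})$ hold because the support depends on $x$ only through $\abs{x}$, and because $\abs{\tfrac{x}{\abs{x}}} = p$ is idempotent with $\supp(p) = p$. For (iii) I would run the four conditions around a cycle using $\supp(x) = \sup_\veps \frac{a}{a+\veps}$ and the norm identities $\abs{\abs{x}\,x} = a^2$ and $\abs{\tfrac{x}{\abs{x}}} = p$: e.g.\ $a \in \B$ gives $\sup_\veps \frac{a}{a+\veps} = a$ by the same Archimedean estimate (now with $a^2 = a$), so $\abs{x} = \supp(x)$, while $\abs{x}x = x$ forces $a^2 = a$ and recovers normalizedness; the step $x = \frac{x}{\abs{x}}$ uses that the support acts as identity on the normalization.

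Finally, (iv) is the only assertion involving the inner product, which is why it needs the full KH-module hypothesis rather than merely the Kaplansky--Banach structure used for (i)--(iii). Here I would invoke order-continuity of $(\cdot\,|\,\cdot)$ (again the remark after \cref{deforderconv}) and compute $(x\,|\,\tfrac{x}{\abs{x}}) = \olim_\veps (x\,|\,y_\veps) = \olim_\veps \tfrac{1}{a+\veps}(x|x) = \olim_\veps \tfrac{a^2}{a+\veps} = a = \abs{x}$, reusing the supremum identity $\sup_\veps \tfrac{a^2}{a+\veps} = a$ from the second paragraph.
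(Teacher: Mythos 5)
Your proof is correct and follows essentially the same route as the paper: the same nets $\frac{\abs{x}}{\abs{x}+\veps}$ and $\frac{x}{\abs{x}+\veps}$, the same order-Cauchy identity between them, and the same bookkeeping for (i)--(iv). The only cosmetic difference is that where the paper reads off $p_x=\car_{\cls{[\abs{x}>0]}}\in\B$ and $p_x\abs{x}=\abs{x}$ directly from the Gelfand picture, you derive these two facts via an Archimedean estimate and Wright's Lemma \ref{Wright1.1}, which is a slightly more self-contained rendering of the same step.
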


\begin{proof}
Observe that $\veps \mapsto \frac{\abs{x}}{\abs{x} + \veps}$ is bounded by $\car$ and increasing
as $\veps \searrow 0$. Hence, it order-converges to its supremum $p_x$. Identifying $\A= \Ce(\Om)$
we see that $p_x = \car_{\cls{[\abs{x}> 0]}} \in \B$ and $\abs{x} = p_x\abs{x}$.
Moreover, as 
\[    \Bigl|\frac{x}{\abs{x} + \veps} - \frac{x}{\abs{x} + \delta}\Bigr|
=  \Bigl|\frac{\abs{x}}{\abs{x} + \veps} - \frac{\abs{x}}{\abs{x} + \delta}\Bigr| \qquad (\delta, \veps > 0),
\]
the net $\Bigl(\frac{x}{\abs{x} + \veps}\Bigr)_\veps$ is order-Cauchy and hence order-convergent.

\noi
(i), (ii)\ Note that $\abs{x - p_xx} = (\car- p_x)\abs{x}= \abs{x} - p_x\abs{x}=0$ and hence $p_x x = x$. It follows that $\abs{x} \frac{x}{\abs{x}} = \olim_\veps \frac{\abs{x}}{\abs{x} + \veps} x
= p_x x = x$. If  $q\in \B$  with $qx = x$, then $\abs{x} = q\abs{x}$. This implies
$\frac{\abs{x}}{\abs{x}+ \veps} = q \frac{\abs{x}}{\abs{x} + \veps}$ for all $\veps > 0$ and hence
$p_x = q p_x \leq q$. 

\noi
(iii)\ If $x$ is normalized, then 
$\abs{x}x = \abs{x} \abs{x} \frac{x}{\abs{x}} = \abs{x}\frac{x}{\abs{x}} = x$. 
If $\abs{x}x = x$ then $\frac{x}{\abs{x} + \veps} = \frac{\abs{x}}{\abs{x}+\veps}x$ for each $\veps > 0$ and hence $\frac{x}{\abs{x}} = p_x x = x$. If $x = \frac{x}{\abs{x}}$, then $\abs{x} = 
\abs{\frac{x}{\abs{x}}} = p_x$ and if $\abs{x} = p_x$ then $x$ is
normalized. 

\noi
(iv)\ If $E$ is a KH-module, then $(x|\frac{x}{\abs{x}}) = (\abs{x}
  \frac{x}{\abs{x}}|\frac{x}{\abs{x}}) = \abs{x} p_x = \abs{x}$.
\end{proof}

With the help of supports one
can easily derive the following:
\[     \abs{x} \wedge \abs{y} = 0\quad \Rightarrow\quad \abs{x+y} = \abs{x} + \abs{y} \qquad(x, y\in E).
\]
(The identity  $\abs{x} \wedge \abs{y} = 0$ is equivalent to $p_x p_y
= 0$, from which it follows that $\abs{x} + \abs{y} = p_x\abs{x + y} +
p_y\abs{x+y} \le \abs{x+y}$.)

\vanish{(Proof: $\abs{p_xy} =  p_x \abs{y} =\olim_\veps \frac{\abs{x}\abs{y}}{\abs{x}+\veps} = 0$.
Hence $p_xy = 0$ and $\abs{x+y} = p_x\abs{x+ y} + p_x^c\abs{x+y} = p_x\abs{x} + p_x^c\abs{y}
= \abs{x} + \abs{y}$ as claimed.)} The idempotent
\[ \supp(E) \coloneq p_E \coloneq \sup\{p_x \,|\, x\in E\} = \inf\{ q \in \B \,|\, q^cE = \{0\}\}
\]
is called the {\emdf support} of $E$. The following is \cite[Lem.~5]{Kapl1953}, but due
to our different definition of a Kaplansky--Banach module we give a different proof here.

\begin{lemma}\label{realizesupp}
Let $E$ be a Kaplansky--Banach module. 
Then there is a normalized element $x\in E$ such that $p_x = p_E$.
\end{lemma}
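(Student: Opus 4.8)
The plan is to realize the support $p_E$ by an \emph{exhaustion} argument: I would glue together a maximal ``orthogonal'' system of normalized elements into a single normalized element, using order-completeness of $E$ to form the relevant order-sum and Zorn's lemma to secure maximality. The whole construction is carried out most transparently in the Stonean picture $\A = \Ce(\Om)$.

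First I would establish a \emph{summation principle}. Let $(x_i)_{i\in I}$ be a family of normalized elements of $E$ with pairwise disjoint supports $p_i \coloneqq p_{x_i}$ (so $p_i p_j = 0$ for $i\neq j$), and set $q \coloneqq \sup_{i\in I} p_i$. For a finite $F\subseteq I$ write $s_F \coloneqq \sum_{i\in F} x_i$ and $q_F \coloneqq \sup_{i\in F} p_i$. Using the implication $\abs{u}\wedge\abs{v}=0 \Rightarrow \abs{u+v} = \abs{u}+\abs{v}$ noted after \cref{supp.l.supp} inductively, one finds $\abs{s_F} = \sum_{i\in F} p_i = q_F$, and for finite sets $F \subseteq F_1, F_2$,
\[
  \abs{s_{F_1} - s_{F_2}} = \sup_{i\in F_1 \triangle F_2} p_i \le \sup_{i\notin F} p_i = q - q_F .
\]
The net $(q_F)_F$ increases to $q$, so $(q - q_F)_F$ decreases to $0$; hence $(s_F)_F$ is order-Cauchy, and by order-completeness of $E$ it order-converges to some $x \coloneqq \olim_F s_F$. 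Since the lattice-norm is order-continuous, $\abs{x} = \olim_F q_F = q$; as $q$ is an idempotent, $x$ is normalized with $p_x = \abs{x} = q$.

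Now consider the set of all families of normalized elements of $E$ with pairwise disjoint supports, partially ordered by inclusion. Unions of chains are again such families, so Zorn's lemma yields a maximal one, $(x_i)_{i\in I}$. Let $x$ be its order-sum from the summation principle, so $x$ is normalized with $p_x = q \coloneqq \sup_i p_i \le p_E$. The crux of the argument is to show $q = p_E$. Assuming $q \neq p_E$, the definition of $p_E$ gives some $y\in E$ with $p_y \not\le q$, i.e.\ $p_y \wedge q^c \neq 0$, where $q^c = \car - q$. Set $y' \coloneqq q^c y$; then $\abs{y'} = q^c\abs{y}$, so $y'\neq 0$, and passing to $\Ce(\Om)$ (with $q^c = \car_A$ for a clopen $A$) one computes $p_{y'} = q^c \wedge p_y \le q^c$. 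By \cref{supp.l.supp}(ii) the normalization $y'/\abs{y'}$ has support $p_{y'}$, which is nonzero and disjoint from every $p_i \le q$. Adjoining it therefore enlarges $(x_i)_{i\in I}$, contradicting maximality. Hence $q = p_E$, and $x$ is the desired normalized element. The two genuinely delicate points are the order-Cauchy estimate in the summation principle (which relies on increasing order-bounded nets order-converging to their supremum) and the verification that cutting $y$ down by the idempotent $q^c$ produces an element with support below $q^c$; both are settled cleanly in the representation $\A = \Ce(\Om)$.
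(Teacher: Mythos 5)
Your proof is correct; it reaches the paper's conclusion by a structurally different exhaustion argument. The paper applies Zorn's lemma directly to the set $\calN = \{x\in E \,|\, \abs{x}\in \B\}$ of normalized elements, partially ordered by restriction, $x\le y :\Leftrightarrow x = p_x y$; a chain $K$ is then \emph{itself} an order-Cauchy net (for $x,y\ge a$ in $K$ one has $\abs{y-x}\le \abs{y-a}+\abs{x-a} = p_a^c(\abs{x}+\abs{y})\le 2p_a^c q$), so order-completeness makes its limit an upper bound, and maximality of the resulting element $x$ is exploited by exactly the gluing you use at the end: $z \coloneq x + p_x^c\frac{y}{\abs{y}}$ is normalized with $p_x z = x$, hence $z=x$ and $p_y \le p_x$ for every $y\in E$. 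You instead run Zorn over families with pairwise disjoint supports, where upper bounds of chains come for free (unions), and concentrate all the analysis in your summation principle, which afterwards assembles the maximal family into a single element. The ingredients are the same in both proofs---disjoint-support additivity of the lattice-norm, an order-Cauchy estimate by ``tails of idempotents'', order-completeness, and complementary-support gluing---but yours is a two-step argument where the paper's is one. What your route buys: the summation principle is a reusable statement (it is precisely the ``mixing'' construction along a family of disjoint idempotents from the Kusraev/Boolean-valued literature, which the paper's definitions deliberately avoid but which you have now derived within them), and it parallels the way suborthonormal bases are produced in \cref{exorth}. What the paper's route buys: brevity, and no need for the Stonean detour---for instance, your identity $p_{q^c y} = q^c \wedge p_y$ follows directly from the definition of the support, since $\frac{q^c\abs{y}}{q^c\abs{y}+\veps} = q^c\,\frac{\abs{y}}{\abs{y}+\veps}$ for every $\veps>0$.
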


\begin{proof}
Define a partial order on $\calN \coloneq \{ x\in E\,|\, \abs{x}\in \B\}$   by $x\le y \logeq x= p_xy$.
Let $K\subseteq \calN$ be any chain and $q \coloneq \sup_{a\in K} \abs{a} \in \B$. 
Then, for any $a,x,y\in K$ and $x,y \ge a$
\[   \abs{y-x}\le \abs{y-a} + \abs{x-a} = p_a^c (\abs{x} + \abs{y})\le 2 p_a^cq.
\]
This shows that the increasing net $(x)_{x\in K}$ is order-Cauchy. Hence it converges, and its limit
is an upper bound for $K$. Zorn's Lemma yields a maximal element $x$ in $\calN$. 

Let $y \in E$ be arbitrary and define $z \coloneq x + p_x^c\frac{y}{\abs{y}}$. Then 
$z\in \calN$ and $p_x z = x$, hence $z=x$ by maximality. This implies $p_x^c p_y = 0$, i.e.,
$p_y \le p_x$. It follows that $p_x= p_E$ as desired.  
\end{proof}

\medskip
If $\A = \Ce(\Om)$, then $\supp(x) = \car_{\cls{[\abs{x} \neq 0]}}$ for
$x\in E$. In particular, for $x\in \Ce(\Om)$  one has
$x\neq 0$ almost everywhere (see Section \ref{s.sal} before Lemma \ref{Wright1.1}) if and only if $\supp(x) = \car$.
Consequently, one can speak of statements holding almost everywhere
even without explicit recourse to the representation $\A
=\Ce(\Om)$.\label{almosteverywhereinalgebra}  Here is an illustration.

\begin{lemma}\label{ntf-ae}
Let $f,g\in \A$  with $fg = 0$ and $f\neq 0$ almost everywhere. Then $g=0$.
\end{lemma}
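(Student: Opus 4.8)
The plan is to reduce the statement to a density-plus-continuity argument on the Stonean spectrum, since the whole content of the lemma is that an almost-everywhere statement about a continuous function on a Stonean space is a statement on a dense set. First I would fix the representation $\A = \Ce(\Om)$ with $\Om$ Stonean, as provided by Gelfand--Naimark together with \cref{charawstaralg}. The hypothesis that $f \neq 0$ almost everywhere should then be rephrased using the remark immediately preceding the lemma: for $f \in \Ce(\Om)$ one has $f \neq 0$ almost everywhere if and only if $\supp(f) = \car$, i.e.\ $\car_{\cls{\set{f \neq 0}}} = \car$, which is to say $\cls{\set{f \neq 0}} = \Om$. In other words, the open set $\set{f \neq 0}$ is dense in $\Om$.

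Next I would exploit the relation $fg = 0$ pointwise on $\Om$. For every $\vom \in \set{f \neq 0}$ one has $f(\vom) \neq 0$ in $\C$, while at the same time $f(\vom)\, g(\vom) = (fg)(\vom) = 0$; since $\C$ is a field, this forces $g(\vom) = 0$. Hence the continuous function $g$ vanishes on the dense subset $\set{f \neq 0}$ of $\Om$, and therefore vanishes on its closure, which is all of $\Om$. Consequently $g = 0$ in $\A$, as claimed.

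I do not expect any genuine obstacle here. The only point requiring care is the correct unpacking of \emph{almost everywhere} as density of $\set{f \neq 0}$, which is exactly what the support formula $\supp(f) = \car_{\cls{\set{f \neq 0}}}$ together with the characterization of a.e.-nonvanishing delivers; after that, the conclusion is the elementary fact that a continuous function determined to vanish on a dense set vanishes identically. This is precisely the kind of argument the remark advertises, namely that on a Stonean space one can reason ``almost everywhere'' without any recourse to measure theory, working purely with residual (hence dense) sets and continuity.
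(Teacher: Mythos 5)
Your proof is correct, but it takes a genuinely different route from the paper's. You pass to the representation $\A = \Ce(\Om)$ and argue pointwise: you unpack ``$f\neq 0$ almost everywhere'' as density of the open set $[\,f\neq 0\,]$, observe that $g$ must vanish there since $\C$ has no zero divisors, and conclude by continuity. The paper instead gives a one-line, representation-free argument using the support calculus of Lemma \ref{supp.l.supp}: from $fg=0$ one gets $\frac{f}{\abs{f}+\veps}\,g=0$ for every $\veps>0$, hence $\frac{f}{\abs{f}}\,g=0$ after taking order limits, and therefore
\[
0 \;=\; \Bigl|\frac{f}{\abs{f}}\,g\Bigr| \;=\; p_f\,\abs{g} \;=\; \abs{g},
\]
because the hypothesis ``$f\neq 0$ almost everywhere'' says exactly that $p_f=\supp(f)=\car$. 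Both arguments are sound and short; the difference lies in what the lemma is for. It appears immediately after the remark that one can speak of almost-everywhere statements ``without explicit recourse to the representation $\A=\Ce(\Om)$,'' and the paper's proof is designed to illustrate precisely that intrinsic, order-theoretic style of reasoning. Your argument is the more elementary and transparent one, but it reinstates the spatial representation that the lemma is advertising one can avoid --- mathematically harmless, just off-message relative to the point being made.
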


\begin{proof}
Since $fg = 0$, one has $\frac{f}{\abs{f}}g = 0$ and hence $0 = p_f \abs{g}
  = \abs{g}$. 
\end{proof}

\medskip

\subsection{Bounded Module Homomorphisms}\label{s.hom}

Let $E,F$ be lattice-normed  modules over a \Stoneanalgebra{} $\A$. 
Recall from \cref{lns.l.order-norm} that a module homomorphism $T\colon E \to F$ is
bounded if and only if it is order-continuous. The space
$\Hom(E;F)$ of all bounded module homomorphisms is a unital $\A$-module
in a canonical way. 

We now turn $\Hom(E;F)$ into a lattice-normed module. To this end, define
the {\emdf operator lattice-norm} of $T\in \Hom(E;F)$ by 
\[ \abs{T} \coloneq \sup_{\abs{x}\le \car} \abs{Tx}.
\]
Here the supremum is taken in $\A_+$, and this supremum exists 
since the set $\{ \abs{Tx}\,|\, x\in E,\, \abs{x}\le \car\}$ is bounded from above, e.g., by
$\norm{T}\:\car$. The following identities and inequalities are analogous to their well-known counterparts
in Banach space theory.

\begin{proposition}\label{completehom}
Let $E, F$ be lattice-normed  modules over a \Stoneanalgebra{} $\A$. Then
      \begin{align*}
        \left| \cdot \right| \colon \Hom(E;F) \to \A_+,
                          \quad T \mapsto \abs{T} = \sup_{|x| \leq \car} |Tx|
      \end{align*}
turns $\Hom(E;F)$ into a lattice-normed module. Moreover, $\norm{T} =
\norm{\abs{T}}_\A$ and 
\begin{equation*}
 \abs{Tx}\le \abs{T} \abs{x}\qquad (T\in \Hom(E;F),\, x\in E).
\end{equation*}
If $G$ is another lattice-normed module over $\A$ and $S\in \Hom(F;G)$, then
$\abs{ST} \le\abs{S}\abs{T}$. 
If $F$ is order-complete, then so is $\Hom(E;F)$. 
\end{proposition}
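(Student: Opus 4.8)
The proof rests on a single workhorse, the pointwise estimate $\abs{Tx}\le\abs{T}\abs{x}$, after which all the algebraic identities are routine and the only substantial point is the order-completeness of $\Hom(E;F)$. So the plan is to establish this estimate first, following the pattern of \cref{lns.l.order-norm}(iv). First note that the supremum defining $\abs{T}$ exists because the set $\{\abs{Tx}\mid\abs{x}\le\car\}$ is bounded above by $\norm{T}\car$ (by \cref{lns.l.order-norm}(iv)) and $\A$, being a \Stoneanalgebra, is Dedekind complete (\cref{charawstaralg}). For the estimate I would fix $x\in E$ and $\veps>0$, set $f\coloneq(\abs{x}+\veps)^{-1}\in\A$ (invertible since $\abs{x}+\veps\ge\veps\car$), and observe $\abs{fx}=f\abs{x}=\abs{x}(\abs{x}+\veps)^{-1}\le\car$. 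Then $\abs{f\,Tx}=\abs{T(fx)}\le\abs{T}$ by definition of $\abs{T}$, i.e.\ $(\abs{x}+\veps)^{-1}\abs{Tx}\le\abs{T}$, whence $\abs{Tx}\le\abs{T}(\abs{x}+\veps)$; letting $\veps\searrow 0$ and using order-continuity of multiplication by positive elements of $\A$ (which rests on \cref{Wright1.1}) yields $\abs{Tx}\le\abs{T}\abs{x}$.

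With the estimate in hand the lattice-norm axioms of \cref{lns.d.lns} follow quickly: $\abs{T}=0$ forces $\abs{Tx}\le\abs{T}\abs{x}=0$, hence $T=0$; homogeneity $\abs{\lambda T}=\abs{\lambda}\abs{T}$ and the triangle inequality $\abs{S+T}\le\abs{S}+\abs{T}$ come from pulling the positive scalar $\abs{\lambda}$, respectively the triangle inequality in $F$, inside the supremum, using that multiplication by a positive element of $\A$ commutes with suprema. For the norm identity I would use that $\norm{x}\le 1\Leftrightarrow\abs{x}\le\car$ (the sup-norm on $\Ce(\Om)$): on one hand $\abs{T}\le\norm{T}\car$ gives $\norm{\abs{T}}_\A\le\norm{T}$, and on the other $\abs{Tx}\le\abs{T}$ for $\abs{x}\le\car$ gives $\norm{Tx}\le\norm{\abs{T}}_\A$ and hence $\norm{T}\le\norm{\abs{T}}_\A$. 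Submultiplicativity is then immediate: $\abs{STx}\le\abs{S}\abs{Tx}\le\abs{S}\abs{T}\abs{x}$, and taking the supremum over $\abs{x}\le\car$ yields $\abs{ST}\le\abs{S}\abs{T}$.

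The main part is order-completeness of $\Hom(E;F)$ when $F$ is order-complete. Given an order-Cauchy net $(T_\alpha)_{\alpha\in A}$, \cref{lem:easynetchar}(ii) supplies an index $\alpha_0$ and a net $(g_\alpha)_{\alpha\ge\alpha_0}$ decreasing to $0$ with $\abs{T_\beta-T_\gamma}\le g_\alpha$ for $\beta,\gamma\ge\alpha\ge\alpha_0$. For fixed $x$ the pointwise estimate gives $\abs{T_\beta x-T_\gamma x}\le\abs{T_\beta-T_\gamma}\abs{x}\le g_\alpha\abs{x}$, and $(g_\alpha\abs{x})_\alpha$ decreases to $0$, so $(T_\alpha x)_\alpha$ is order-Cauchy in $F$; order-completeness of $F$ lets me define $Tx\coloneq\olim_\alpha T_\alpha x$. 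I would then check that $T$ is $\A$-linear (order-continuity of the vector-space and module operations, from the remarks following \cref{deforderconv}); that passing to the order-limit in $\gamma$ in $\abs{T_\beta x-T_\gamma x}\le g_\alpha\abs{x}$ yields $\abs{T_\beta x-Tx}\le g_\alpha\abs{x}$ for $\beta\ge\alpha$, using that $\{z\mid z\le c\}$ is order-closed; and hence $\abs{Tx}\le(\abs{T_{\alpha_0}}+g_{\alpha_0})\abs{x}\le c\abs{x}$ for the scalar $c\coloneq\norm{\abs{T_{\alpha_0}}+g_{\alpha_0}}_\A$, so that $T\in\Hom(E;F)$ by \cref{lns.l.order-norm}(iv). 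Finally, taking the supremum over $\abs{x}\le\car$ in $\abs{T_\beta x-Tx}\le g_\alpha$ gives $\abs{T_\beta-T}\le g_\alpha$ for $\beta\ge\alpha$, so $\olim_\beta T_\beta=T$ in $\Hom(E;F)$ by \cref{lem:easynetchar}(i).

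The main obstacle is exactly this last step: it is delicate that the pointwise order-limit $T$ is again a \emph{bounded} module homomorphism, and more importantly that pointwise order-convergence of the nets $(T_\alpha x)$ can be \emph{upgraded} to order-convergence in the operator lattice-norm. Both hinge on carrying the single decreasing net $(g_\alpha)$ through the argument uniformly in $x$, on the fact (used twice) that order-limits respect inequalities of the form $\,\cdot\le c$, and on the passage to the supremum over the unit ball $\{\abs{x}\le\car\}$, which is what converts the uniform pointwise bound into a bound on $\abs{T_\beta-T}$. Care is likewise needed with the order-continuity of multiplication by positive elements of $\A$, which underlies both the pointwise estimate and the claim that $(g_\alpha\abs{x})_\alpha$ decreases to $0$.
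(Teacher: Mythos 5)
Your proof is correct and follows essentially the same route as the paper: the identical $\veps$-trick with $f=(\abs{x}+\veps\car)^{-1}$ for the pointwise estimate $\abs{Tx}\le\abs{T}\abs{x}$, the same pair of inequalities for $\norm{T}=\norm{\abs{T}}_\A$, and the standard sup-over-the-unit-ball argument for submultiplicativity. The order-completeness argument you write out in full is precisely what the paper compresses into ``proved exactly as in the case $\A=\C$, replacing sequences by nets and norm convergence by order convergence,'' so you have simply supplied the details the paper omits.
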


\begin{proof}
It is straightforward to show that $E$ is a lattice-normed module. 
Let $T\in \Hom(E;F)$ and $x\in E$. 
Fix $\veps > 0$ and define $f \coloneq
(\abs{x} + \veps \car)^{-1}$. Then $\abs{fx} \le \car$ 
and hence, after multiplying with $1/f$,   
$\abs{Tx} \le \abs{T} (\abs{x} + \veps \car)$. This implies    $\abs{Tx} \le \abs{T} \abs{x}$. 

Next, since  $\abs{x} \le \car$ is equivalent to $\norm{x}\le
1$,
\[  \norm{T} = \sup_{\abs{x}\le \car} \norm{ \abs{Tx} }_\A
\le \sup_{\abs{x}\le \car} \norm{ \abs{T} \abs{x} }_\A = 
\norm{\abs{T}}_\A \sup_{\abs{x}\le \car} \norm{\abs{x} }_\A \le
\norm{\abs{T}}_\A.
\]
The converse inequality follows since $\abs{T} \le \norm{T}\car$
(cf.\ the proof of Lemma \ref{lns.l.order-norm}, part (iv). 

The  inequality $\abs{ST} \le \abs{S} \abs{T}$ is now proved
exactly as in the case $\A= \C$.  The same applies to
the proof that $\Hom(E;F)$ is order-complete whenever $F$ is.
(One has to replace sequences by nets and norm convergence by order convergence.)
\end{proof}

A module homomorphism $T\colon E \to F $ between lattice-normed spaces is
called {\emdf $\A$-isometric} if $\abs{Tx} = \abs{x}$ for all $x\in
    E$.

\begin{proposition}\label{extension}
Let $E,F$ be lattice-normed modules over the \Stoneanalgebra{} $\A$ and let
$E_0$ be an order-dense submodule of $E$. Furthermore, let $F$ be  order-complete. 
Then each $T  \in
\Hom(E_0;F)$ has a unique extension to an element $T^E \in \Hom(E;F)$.

If $T$ is $\A$-isometric, then so is $T^E$.  The mapping
\[ \Hom(E_0;F) \to \Hom(E;F),\qquad T \mapsto T^E
\]
is an $\A$-isometric isomorphism of lattice-normed modules. 
\end{proposition}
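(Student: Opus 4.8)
The plan is to mimic the classical bounded-linear-transformation extension theorem in the order-convergence setting, using order-completeness of $F$ to build the extension and order-density of $E_0$ to reach all of $E$. First, to define $T^E$, I would fix $x\in E$ and, using order-density together with \cref{order-closure}, choose a net $(x_\alpha)_\alpha$ in $E_0$ with $\olim_\alpha x_\alpha = x$. Such a net is order-Cauchy, and since $T$ is order-bounded by \cref{lns.l.order-norm}, the estimate $\abs{Tx_\alpha - Tx_\beta} = \abs{T(x_\alpha-x_\beta)} \le \abs{T}\abs{x_\alpha - x_\beta}$, together with order-continuity of multiplication by the fixed element $\abs{T}\in\A_+$, shows that $(Tx_\alpha)_\alpha$ is order-Cauchy in $F$. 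Order-completeness of $F$ then lets me set $T^E x \coloneqq \olim_\alpha Tx_\alpha$.

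Next I would check that this is well-defined and $\A$-linear. For independence of the chosen net I would take two approximating nets $(x_\alpha)_{\alpha\in A}$ and $(y_\beta)_{\beta\in B}$, pass to the product directed set $A\times B$, note that $x_\alpha - y_\beta$ order-converges to $0$ there, dominate $\abs{T(x_\alpha - y_\beta)}$ by $\abs{T}\abs{x_\alpha-y_\beta}$, and conclude $\olim_\alpha Tx_\alpha = \olim_\beta Ty_\beta$ by uniqueness of order-limits. The same product-net device, combined with order-continuity of the vector-space and module operations, yields $\A$-linearity of $T^E$, and choosing constant nets shows that $T^E$ extends $T$.

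Then I would establish boundedness and the norm identity and deduce uniqueness and isometry preservation. Passing the estimate $\abs{Tx_\alpha} \le \abs{T}\abs{x_\alpha}$ to the order-limit, via order-continuity of the lattice-norm, gives $\abs{T^E x} \le \abs{T}\abs{x}$ for all $x\in E$, so $T^E$ is order-bounded, hence bounded by \cref{lns.l.order-norm}, and taking suprema over $\abs{x}\le\car$ yields $\abs{T^E}\le\abs{T}$; the reverse inequality is immediate because $T^E$ extends $T$ while its defining supremum ranges over the larger set $E$. Thus $\abs{T^E}=\abs{T}$. If $T$ is $\A$-isometric, the same limiting argument gives $\abs{T^E x} = \olim_\alpha\abs{Tx_\alpha} = \olim_\alpha\abs{x_\alpha}=\abs{x}$, so $T^E$ is $\A$-isometric. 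Uniqueness is then clear: any two order-continuous extensions agree on $E_0$, hence on $\ocl(E_0)=E$ by evaluating along approximating nets.

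Finally, for the isomorphism statement, I would note that $\A$-linearity of $T\mapsto T^E$ follows from uniqueness, since $aS^E+bT^E$ is an order-continuous extension of $aS+bT$ and must therefore equal $(aS+bT)^E$; that the identity $\abs{T^E}=\abs{T}$ is precisely the assertion that the map is $\A$-isometric for the operator lattice-norms of \cref{completehom}; and that restriction $R\mapsto R\Res{E_0}$ is a two-sided inverse, again by uniqueness, so that the map is bijective. The main obstacle I anticipate is purely the careful bookkeeping of order-limits along nets, especially the product-net argument underlying well-definedness and linearity, since, unlike in the metric setting, one cannot pass to subsequences and must instead invoke order-continuity of the algebraic operations and of the lattice-norm at each individual step.
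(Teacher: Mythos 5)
Your proposal is correct and is essentially the paper's own argument: the paper proves \cref{extension} only by remarking that it is ``a straightforward adaptation of the proof for the case $\A = \C$'', and your write-up is exactly that adaptation (define $T^E$ on order-limits of nets from $E_0$ via \cref{order-closure}, use the domination $\abs{Tx}\le\abs{T}\abs{x}$ and order-completeness of $F$, then get well-definedness, $\abs{T^E}=\abs{T}$, uniqueness, and the inverse-by-restriction argument). No gaps; the product-net bookkeeping you flag is precisely the routine part the authors chose to omit.
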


The proof is a straightforward adaptation of the proof for the case
$\A = \C$. The same applies for the following result.

\begin{lemma}\label{3.4.5}
Let $E,F$ be lattice-normed spaces over a \Stoneanalgebra{} and $(T_\alpha)_\alpha$ a
uniformly bounded net in
$\Hom(E;F)$. 
Then the set $\{ x\in E \,|\, \olim_\alpha T_\alpha x = 0\}$ is an
order-closed submodule of $E$.
\end{lemma}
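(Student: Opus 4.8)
The plan is to separate the statement into the routine module property and the genuine order-closedness, writing $N \coloneq \{x \in E \mid \olim_\alpha T_\alpha x = 0\}$. For the module property I would use that each $T_\alpha$ is $\A$-linear together with the order-continuity of the vector-space operations and of the module product in a lattice-normed module: if $\olim_\alpha T_\alpha x = \olim_\alpha T_\alpha y = 0$, then $\olim_\alpha T_\alpha(x+y) = \olim_\alpha(T_\alpha x + T_\alpha y) = 0$, and for $f \in \A$ one has $\olim_\alpha T_\alpha(fx) = \olim_\alpha f\,T_\alpha x = f \cdot 0 = 0$. The uniform bound enters only through \cref{lns.l.order-norm}(iv): fixing $c > 0$ with $\norm{T_\alpha} \le c$ for all $\alpha$, we obtain the pointwise estimate $\abs{T_\alpha y} \le c\abs{y}$ for every $y \in E$ and every $\alpha$, which is the sole use of the hypothesis.

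For order-closedness, let $(x_\beta)_\beta$ be a net in $N$ with $\olim_\beta x_\beta = x$; the goal is $x \in N$, i.e.\ $\olim_\alpha \abs{T_\alpha x} = 0$ in $\A$ (order-convergence to $0$ being detected by the lattice-norm). The difficulty is that no single $2\varepsilon$-estimate is available, since the operators are indexed by $\alpha$ while the approximating elements are indexed by an unrelated directed set $\beta$. The device that circumvents this is to pass to the order-$\limsup$ $u \coloneq \inf_\alpha \sup_{\alpha' \ge \alpha}\abs{T_{\alpha'}x}$, which is well defined because $\A$ is Dedekind complete (\cref{charawstaralg}) and the set $\{\abs{T_{\alpha'}x}\}_{\alpha'}$ is bounded above by $c\abs{x}$. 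It then suffices to show $u = 0$: for then $f_\alpha \coloneq \sup_{\alpha' \ge \alpha}\abs{T_{\alpha'}x}$ is a net decreasing to $0$ with $\abs{T_\alpha x} \le f_\alpha$, so $\olim_\alpha \abs{T_\alpha x} = 0$ by \cref{lem:easynetchar}(i). Note that $F$ need not be order-complete, as all suprema and infima are formed in the Stone algebra $\A$.

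To prove $u = 0$ I would start from the triangle inequality $\abs{T_{\alpha'} x} \le c\abs{x - x_\beta} + \abs{T_{\alpha'} x_\beta}$, valid for all $\alpha'$ and all $\beta$. Taking $\sup_{\alpha' \ge \alpha}$ and then $\inf_\alpha$ — the summand $c\abs{x - x_\beta}$ is independent of $\alpha'$, hence survives unchanged — and using $x_\beta \in N$, which by \cref{lem:easynetchar}(i) forces $\inf_\alpha \sup_{\alpha' \ge \alpha}\abs{T_{\alpha'}x_\beta} = 0$ (the dominating decreasing net there bounds the order-$\limsup$), yields $u \le c\abs{x - x_\beta}$ for every single $\beta$. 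The last step, which I expect to be the subtle point, is to deduce $u = 0$ from this: since $u$ is a lower bound uniform in $\beta$, we get $u \le c\,\inf_\beta \abs{x - x_\beta}$, and $\olim_\beta x_\beta = x$ gives $\inf_\beta \abs{x - x_\beta} = 0$ — not because the net $(\abs{x - x_\beta})_\beta$ is monotone (it need not be), but because a net $(g_i)_i$ decreasing to $0$ dominates it cofinally, so $\inf_\beta \abs{x - x_\beta} \le \abs{x - x_{\beta_i}} \le g_i$ for each $i$, whence $\inf_\beta \abs{x - x_\beta} \le \inf_i g_i = 0$. Therefore $u = 0$ and $x \in N$, establishing that $N$ is an order-closed submodule.
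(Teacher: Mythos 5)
Your proof is correct. The paper offers no explicit proof of this lemma---it only remarks that the argument is a straightforward adaptation of the case $\A = \C$---and your argument is precisely that adaptation: the classical estimate $\norm{T_{\alpha'} x} \le c\norm{x - x_\beta} + \norm{T_{\alpha'} x_\beta}$ carried over verbatim, with the scalar $\limsup$ replaced by the order-$\limsup$ $\inf_\alpha \sup_{\alpha' \ge \alpha} \abs{T_{\alpha'}x}$ formed in the Dedekind complete algebra $\A$ (which is exactly where the Stone-algebra hypothesis and \cref{lns.l.order-norm}(iv) enter).
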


Finally, we note the following description of the support of a module homomorphism.

\begin{lemma}\label{lem:opsupp}
Let $E,F$ be a Kaplansky--Banach modules and $T\in \Hom(E;F)$. Then
$\supp(T) = \supp(\abs{T}) = \supp( \ocl\ran(T))$.
\end{lemma}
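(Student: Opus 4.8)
The plan is to split the double equality into the trivial identity $\supp(T) = \supp(\abs{T})$ and the substantial equality $\supp(\abs{T}) = \supp(\ocl\ran(T))$, treating the latter by antisymmetry. First one should note that $\supp(T)$ even makes sense: by \cref{completehom}, $\Hom(E;F)$ is a lattice-normed module over $\A$ which is order-complete because $F$ is (being a Kaplansky--Banach module), so $\Hom(E;F)$ is itself a Kaplansky--Banach module and the normalization and support of the element $T$ are well-defined. The identity $\supp(T) = \supp(\abs{T})$ is then just the general fact $\supp(x) = \supp(\abs{x})$ from \cref{supp.l.supp}(ii), applied to $T$ as an element of $\Hom(E;F)$ (its lattice-norm $\abs{T}$ being an element of $\A$). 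It thus remains to prove $\supp(\abs{T}) = \supp(\ocl\ran(T))$.

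Next I would eliminate the order-closure by showing $\supp(\ocl(M)) = \supp(M)$ for any submodule $M \subseteq F$. The inclusion $M \subseteq \ocl(M)$ gives $\supp(M) \le \supp(\ocl(M))$ at once. For the reverse, set $p \coloneq \supp(M) = \sup\{p_y \mid y \in M\}$; then $p^c y = p^c(p_y y) = (p^c p_y)y = 0$ for every $y \in M$, using $p_y y = y$ from \cref{supp.l.supp}(ii). By \cref{order-closure} every $y \in \ocl(M)$ is an order-limit $y = \olim_\alpha y_\alpha$ of a net in $M$, and order-continuity of multiplication by $p^c$ gives $p^c y = \olim_\alpha p^c y_\alpha = 0$, so $p_y \le p$. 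Taking the supremum over $y \in \ocl(M)$ yields $\supp(\ocl(M)) \le p$. Applied to $M = \ran(T)$, this reduces everything to $\supp(\abs{T}) = \supp(\ran(T))$.

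This last equality I would prove by two inequalities. The estimate $\abs{Tx} \le \abs{T}\abs{x}$ from \cref{completehom} gives $p_{Tx} = \supp(\abs{Tx}) \le \supp(\abs{T}\abs{x}) \le \supp(\abs{T})$ for every $x \in E$ (the product $\abs{T}\abs{x}$ vanishing wherever $\abs{T}$ does), and taking the supremum over $x$ yields $\supp(\ran(T)) \le \supp(\abs{T})$. The converse inequality is the step I expect to be the crux. Put $p \coloneq \supp(\ran(T)) = \sup_x p_{Tx}$; then $p^c(Tx) = p^c p_{Tx}(Tx) = 0$ for all $x$, so the module homomorphism $p^c T \in \Hom(E;F)$ is the zero operator. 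The decisive observation is that $\abs{\cdot}$ is the lattice-norm of the \emph{module} $\Hom(E;F)$, so that
\[
  \abs{p^c T} = \abs{p^c}\,\abs{T} = p^c \abs{T};
\]
since $p^c T = 0$ forces $\abs{p^c T} = 0$, we obtain $p^c \abs{T} = 0$, that is $\supp(\abs{T}) \le p = \supp(\ran(T))$. Combining the two inequalities completes the proof. The only genuinely inventive point is this module-lattice-norm identity $\abs{p^c T} = p^c\abs{T}$ together with the passage from the operator $p^c T$ vanishing to its lattice-norm vanishing; everything else is order-continuity and the elementary support calculus of \cref{supp.l.supp}.
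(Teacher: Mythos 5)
Your proof is correct and rests on essentially the same ideas as the paper's: an idempotent annihilates $T$ exactly when it annihilates $\ran(T)$, order-continuity of multiplication lets one pass to $\ocl\ran(T)$, and the support calculus of \cref{supp.l.supp} converts these annihilation statements into the claimed equalities. The paper compresses this into a single chain of equivalences ($q^cT=0 \Leftrightarrow q^c\ran(T)=\{0\} \Leftrightarrow q^c\ocl\ran(T)=\{0\}$ for $q\in\B$) and takes minima, whereas you split it into pairwise inequalities via $\abs{Tx}\le\abs{T}\abs{x}$ and $\abs{p^cT}=p^c\abs{T}$ — an organizational rather than a substantive difference.
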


\begin{proof}
For $q\in \B$ one has
$q^cT = 0\,\,\gdw\,\, q^c \ran(T) = \{0\}\,\, \gdw\,\,
q^c (\ocl\ran(T)) = \{0\}$. 
Taking the minimum of these  $q$ yields the claim. 
\end{proof}

\medskip

\subsection{(Sub)Orthonormal Systems}\label{s.ons}

Let $E$ be a pre-Hilbert module over a unital commutative $\uC^*$-algebra $\A$. 
A family $(x_i)_{i\in I}$ in $E$ is called an {\emdf orthogonal system} if
$(x_i|x_j) = 0$ whenever $i \neq j$. In this case if $I$ is finite,
one has
\[    \Bigl| \sum_{i \in I} x_i \Bigr|^2 = \sum_{i\in I} \abs{x_i}^2.
\]
An orthogonal system $(x_i)_{i\in I}$ in $E$  
is called a {\emdf suborthonormal} system 
if each $x_i$ is normalized. In this case, the system is called {\emdf homogeneous}
if $\abs{x_i} = \abs{x_j}$ for all $i,j \in I$. In other words, a
suborthonormal system is homogeneous if $(x_i|x_j) = \delta_{ij} p$ for all $i, j\in I$ and some fixed idempotent $p\in \B$. If $p=\car$  here, then $(x_i)_{i\in I}$ is called
an {\emdf orthonormal} system.

A (sub)orthonormal system $(x_i)_{i\in I}$ is called a {\emdf (sub)orthonormal basis}
if $\{ x_i\, |\, i\in I\}^\perp = \{0\}$.  A subset $\mathcal{B}\subseteq E$ is
called a (sub)orthonormal subset (basis) if the family $(x)_{x\in \mathcal{B}}$ is
a (sub)orthonormal system (basis).

\begin{lemma}\label{fourier-prep}
Let $E$ be a pre-Hilbert module over a \Stoneanalgebra{} $\A$, 
and let $\calB\subseteq E$ be a
suborthonormal set in $E$. Then for each $x\in E$ the formal series
(= net of finite partial sums)
\[ \sum_{y\in \mathcal{B}} (x|y)y \coloneq \Bigl( \sum_{y\in F} (x|y)y
\Bigr)_{F \subseteq \calB\, \text{finite}}
\]
is an order-Cauchy net in $E$. If it converges, its order-limit $z$
satisfies $x-z \in \calB^\perp$,  
 and 
\[ \abs{z}^2 = \sum_{y\in \calB} \abs{(x|y)}^2 \le \abs{x}^2
\qquad \qquad   \text{{\emdf (Parseval identity/Bessel inequality)}}.
\] 
\end{lemma}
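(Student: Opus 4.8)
The plan is to prove this in three stages, closely mirroring the classical Bessel/Parseval argument for orthonormal systems in Hilbert spaces, but with scalars replaced by elements of $\A$ and norm-convergence replaced by order-convergence. Fix $x\in E$. First I would establish the finite Bessel inequality: for any finite $F\subseteq\calB$, setting $z_F\coloneq\sum_{y\in F}(x|y)y$, I would compute $\abs{x-z_F}^2 = \abs{x}^2 - \sum_{y\in F}\abs{(x|y)}^2$. This is the crux of the finite case and rests on the suborthonormality. Expanding $(x-z_F\,|\,x-z_F)$ and using $\A$-linearity of the inner product together with $(y|y')=\delta_{y,y'}p_y$ for $y,y'\in\calB$ (where $p_y=\abs{y}\in\B$ is the support idempotent, by the definition of suborthonormality and \cref{supp.l.supp}(iii)), the cross terms telescope: one finds $(x|z_F)=(z_F|x)=(z_F|z_F)=\sum_{y\in F}\abs{(x|y)}^2$, using that $(x|y)p_y=(x|y)$ since $(x|y)=(x|\abs{y}y)=\abs{y}(x|y)=p_y(x|y)$. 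Hence $\abs{x-z_F}^2 = \abs{x}^2-\sum_{y\in F}\abs{(x|y)}^2\ge 0$, giving $\sum_{y\in F}\abs{(x|y)}^2\le\abs{x}^2$ for every finite $F$.

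\textbf{Order-Cauchyness of the net.} Next I would show the net $(z_F)_F$ of partial sums is order-Cauchy. For finite $F\subseteq F'$, orthogonality gives $\abs{z_{F'}-z_F}^2=\sum_{y\in F'\setminus F}\abs{(x|y)}^2$. The key is to control this difference by a net decreasing to zero. Define $g_F\coloneq\abs{x}^2-\sum_{y\in F}\abs{(x|y)}^2\in\A_+$; by the finite Bessel inequality this is nonnegative and decreasing in $F$, and since $\A$ is a \Stoneanalgebra{} (hence order-complete as a lattice) the net $(g_F)_F$ has an order-infimum $g\ge 0$. For $F\subseteq F'$ one has $\abs{z_{F'}-z_F}^2 = g_F-g_{F'}\le g_F - g$, and I would verify that $(g_F-g)_F$ decreases to $0$, which by \cref{lem:easynetchar}(ii) is exactly what is needed to conclude that $(z_F)_F$ is order-Cauchy. (Here I use that taking square roots is order-continuous in $\A$, so a decreasing-to-zero bound on $\abs{z_{F'}-z_F}^2$ transfers to $\abs{z_{F'}-z_F}$.)

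\textbf{Identifying the limit.} Finally, suppose the net converges, say $\olim_F z_F=z$. To see $x-z\in\calB^\perp$, fix $y_0\in\calB$; for all $F$ containing $y_0$ one has $(x-z_F\,|\,y_0)=(x|y_0)-(x|y_0)p_{y_0}=0$ by the computation above, and since the inner product is order-continuous (Remark after \cref{deforderconv}), passing to the limit gives $(x-z\,|\,y_0)=0$. For the Parseval identity, order-continuity of the lattice-norm yields $\abs{z}^2=\olim_F\abs{z_F}^2=\olim_F\sum_{y\in F}\abs{(x|y)}^2$, and this order-limit is precisely $\sum_{y\in\calB}\abs{(x|y)}^2$ by definition of the formal series; the bound $\le\abs{x}^2$ is inherited from the finite Bessel inequality by order-continuity. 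The main obstacle I anticipate is the second stage: one must be careful that order-infima and the passage from the squared to the unsquared lattice-norm behave correctly, so that the abstract order-Cauchy criterion of \cref{lem:easynetchar} applies cleanly rather than merely asserting pointwise-almost-everywhere convergence on the Stonean space.
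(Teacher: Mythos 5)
Your proof is correct and is essentially the paper's own argument: the paper's proof simply defers to the classical Hilbert space case (Conway, Paragraph I.4; Kusraev, Subsection 7.4.9), which consists of exactly the steps you spell out --- the finite Bessel inequality via orthogonal expansion using $(x|y)=p_y(x|y)$, order-completeness of $\A$ to obtain the supremum/infimum controlling the tails, and order-continuity of the inner product and lattice-norm to identify the limit. The only point to tighten is the Cauchy step: \cref{lem:easynetchar}(ii) requires a bound on $\abs{z_{F_1}-z_{F_2}}$ for \emph{all} pairs $F_1,F_2\supseteq F$, not just nested ones, but this follows from the same orthogonality computation, since $F_1\Delta F_2\subseteq (F_1\cup F_2)\setminus F$ yields $\abs{z_{F_1}-z_{F_2}}^2\le g_F-g_{F_1\cup F_2}\le g_F-g$.
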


\begin{proof}
The proof (see \cite[Subsection 7.4.9]{Kusr2000})  very much follows the same steps as in case of Hilbert spaces (see,
e.g., \cite[Paragraph I.4]{Conw1985}).
\end{proof}

Contrary to Hilbert spaces, Hilbert modules need not have orthonormal bases. 
The best one can say is that a  Kaplansky--Hilbert module always has a
suborthonormal basis. This  follows from  \cite[Subsection 7.4.10]{Kusr2000}, but
we shall give a direct proof here.

\begin{proposition}\label{exorth}
  Let $E$ be a Kaplansky--Hilbert module. If $N \subset E$ is a 
suborthonormal subset, then there is a suborthonormal basis $B$ of $E$ with $N \subset B$.
\end{proposition}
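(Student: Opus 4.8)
The plan is to run the standard Zorn's lemma argument, in complete parallel to the construction of an orthonormal basis in a Hilbert space, with ``nonzero vector'' replaced by ``nonzero normalized element''. The only genuinely module-theoretic input is the existence and the basic properties of the normalization $\frac{x}{\abs{x}}$, which are already secured by \cref{supp.l.supp} (applicable since a KH-module is in particular a Kaplansky--Banach module); consequently the order-completeness of $E$ will enter only through that lemma.

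First I would consider the collection $\mathcal{S}$ of all suborthonormal subsets of $E$ that contain $N$, partially ordered by inclusion. It is nonempty, since $N\in\mathcal{S}$. Given a chain $\mathcal{C}\subseteq\mathcal{S}$, its union $\bigcup\mathcal{C}$ again lies in $\mathcal{S}$: normalization is a condition on single elements and orthogonality a condition on pairs, and any two elements of $\bigcup\mathcal{C}$ already belong to a common member of the chain, where both conditions hold; moreover $\bigcup\mathcal{C}\supseteq N$. Thus every chain has an upper bound, and Zorn's lemma produces a maximal element $B\in\mathcal{S}$.

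It then remains to show that the maximal $B$ is in fact a suborthonormal \emph{basis}, i.e.\ $B^\perp=\{0\}$. I would argue by contradiction: suppose there is $x\in B^\perp$ with $x\neq 0$ and pass to its normalization $u\coloneqq\frac{x}{\abs{x}}$. By \cref{supp.l.supp} the element $u$ is normalized with $\abs{u}=p_x$, and $p_x\neq 0$ because $x\neq 0$, so $u\neq 0$. Since $u=\olim_{\veps\searrow 0}\tfrac{1}{\abs{x}+\veps}\,x$ and the inner product is order-continuous and $\A$-linear in its first argument, for every $y\in B$ one computes $(u|y)=\olim_{\veps}\tfrac{1}{\abs{x}+\veps}(x|y)=0$, using $x\in B^\perp$. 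Hence $u$ is orthogonal to every element of $B$, and therefore $B\cup\{u\}$ is again a suborthonormal subset containing $N$.

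The decisive point, and the step I expect to require the most care, is to verify that $u\notin B$, so that $B\cup\{u\}$ strictly enlarges $B$ and contradicts maximality. This is exactly where \cref{supp.l.supp}(iv) is used: if $u$ were an element of $B$, then $x\in B^\perp$ would force $(x|u)=0$; but $(x|\tfrac{x}{\abs{x}})=\abs{x}$, so $\abs{x}=0$ and hence $x=0$, a contradiction. Thus $B\cup\{u\}$ is a strictly larger suborthonormal subset than $B$, contradicting the maximality of $B$. Therefore $B^\perp=\{0\}$, and $B$ is the sought suborthonormal basis with $N\subseteq B$. Apart from the passage to the normalization and the checks that it remains in $B^\perp$ and is new, the argument is the verbatim Hilbert space proof, all KH-module subtleties being absorbed into \cref{supp.l.supp}.
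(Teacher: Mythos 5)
Your proof is correct, and its skeleton---Zorn's lemma applied to the suborthonormal supersets of $N$, followed by a contradiction obtained by adjoining a nonzero normalized element of $B^\perp$ to a maximal $B$---is exactly the paper's. The single point of divergence is how that normalized element is produced. The paper applies Lemma~\ref{realizesupp} to $B^\perp$: this implicitly requires observing that $B^\perp$ is order-closed (hence a Kaplansky--Banach module in its own right) and runs a second, hidden Zorn argument inside Lemma~\ref{realizesupp}, in exchange for which one obtains a normalized element of \emph{maximal} support in $B^\perp$---more than is needed here. You instead normalize an arbitrary nonzero $x\in B^\perp$ directly via Lemma~\ref{supp.l.supp}, check by order-continuity of the inner product that $u=\tfrac{x}{\abs{x}}$ stays in $B^\perp$, and use Lemma~\ref{supp.l.supp}(iv), i.e.\ $(x|\tfrac{x}{\abs{x}})=\abs{x}$, to see that $u\notin B$. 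This is marginally more elementary and self-contained (no second maximality argument, no need to discuss $B^\perp$ as a module), and it makes explicit the two verifications ($u\in B^\perp$ and $u\notin B$) that the paper's one-line proof leaves to the reader; both routes ultimately rest on the same mechanism, the normalization map of Section~\ref{s.supp}.
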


\begin{proof}
Consider $\mathscr{N} \coloneqq \{M \subset E \mid M \textrm{ suborthonormal
  with } N \subset M\}$, 
  partially ordered by set inclusion. By Zorn's lemma we find a maximal
        element $B$ of $\mathscr{N}$. Now combine the maximality with Lemma
        \ref{realizesupp} to conclude that $B^\perp = \{0\}$.
\end{proof}

As a consequence, we obtain the following decomposition result, cf.\ \cite[Thm.~3]{Kapl1953}.

\begin{proposition}\label{fourier}\label{orthdecompaw}
Let $E$ be a KH-module  over a \Stoneanalgebra{} $\A$ and  $M \subseteq E$
a KH-submodule. Then $E = M \oplus M^\perp$.

Moreover, a suborthonormal system $\calB \subseteq M$ is a
suborthonormal basis for $M$ if and only if $M = \ocl\spann_\A(\calB)$, and in this case
\[   x \mapsto Px = \sum_{y \in \calB} (x|y)y
\]
is the projection of $E$ onto $M$ along $M^\perp$.
\end{proposition}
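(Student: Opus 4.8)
The plan is to prove the three assertions in sequence, leaning on \cref{exorth} for the existence of a suborthonormal basis and on \cref{fourier-prep} for the convergence and orthogonality properties of Fourier-type sums. First I would establish the orthogonal decomposition $E = M \oplus M^\perp$. By \cref{exorth} applied to the KH-module $M$ (which is itself order-complete, being an order-closed submodule of the order-complete module $E$), there is a suborthonormal basis $\calB$ of $M$. Given $x \in E$, I consider the net of partial sums $\sum_{y\in F}(x|y)y$ over finite $F \subseteq \calB$; by \cref{fourier-prep} this is order-Cauchy, and since $E$ is order-complete it converges to some $z \in M$ (note $z \in M$ because $M$ is order-closed and contains all partial sums). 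The key point from \cref{fourier-prep} is that $x - z \in \calB^\perp$. The nontrivial step is upgrading $\calB^\perp$ (orthogonality to the \emph{basis}) to $M^\perp$ (orthogonality to all of $M$): I would argue that since $\calB$ is a suborthonormal \emph{basis} of $M$, the relation $\{y \mid y \in \calB\}^\perp \cap M = \{0\}$ together with order-continuity of the inner product forces $x - z \perp M$. Concretely, any element of $M$ is an order-limit of $\A$-linear combinations from $\calB$, so orthogonality to $\calB$ extends by order-continuity (Remark following \cref{deforderconv}) to orthogonality to $M$. This gives $x = z + (x-z)$ with $z \in M$ and $x - z \in M^\perp$, proving $E = M + M^\perp$; the sum is direct since $M \cap M^\perp = \{0\}$ (if $w \in M \cap M^\perp$ then $(w|w)=0$, so $w=0$).

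Next I would prove the equivalence: a suborthonormal system $\calB \subseteq M$ is a suborthonormal basis for $M$ if and only if $M = \ocl\spann_\A(\calB)$. For the forward direction, set $M_0 \coloneq \ocl\spann_\A(\calB)$, which by \cref{order-closure} is an order-closed submodule, hence a KH-submodule. Applying the decomposition just proved to $M_0$ in place of $M$, every $x \in M$ decomposes as $x = z + (x - z)$ with $z \in M_0$ and $x - z \in \calB^\perp$; but $\calB$ is a basis for $M$, so $x - z \in M \cap \calB^\perp = \{0\}$, giving $x = z \in M_0$ and thus $M \subseteq M_0$. The reverse inclusion $M_0 \subseteq M$ is clear since $\calB \subseteq M$ and $M$ is an order-closed submodule. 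For the converse direction, if $M = \ocl\spann_\A(\calB)$ and $w \in M$ with $w \perp \calB$, then by order-continuity of the inner product $w$ is orthogonal to every element of $\spann_\A(\calB)$ and hence (again by order-continuity) to all of its order-closure $M$; in particular $w \perp w$, so $w = 0$, showing $\calB$ is a basis for $M$.

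Finally, for the projection formula, I would define $Px \coloneq \olim_F \sum_{y\in F}(x|y)y$, which exists for every $x \in E$ by the order-completeness argument above. The decomposition $x = Px + (x - Px)$ with $Px \in M$ and $x - Px \in M^\perp$ is exactly what the first part established, and uniqueness of the decomposition in $M \oplus M^\perp$ identifies $P$ with the orthogonal projection onto $M$ along $M^\perp$. To confirm $P$ is genuinely this projection, I note that for $x \in M$ one has $x - Px \in M \cap M^\perp = \{0\}$, so $P$ restricts to the identity on $M$, while $P$ vanishes on $M^\perp$ since all coefficients $(x|y)$ vanish there.

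The main obstacle I anticipate is the passage from orthogonality against the basis $\calB$ to orthogonality against the whole submodule $M$, since this is where the interplay between the algebraic span, its order-closure, and order-continuity of the inner product must be handled carefully; everything else is a faithful transcription of the classical Hilbert space argument with order-convergence replacing norm-convergence, as \cref{fourier-prep} already does the analytic heavy lifting.
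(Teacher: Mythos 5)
Your proposal follows the same route as the paper's (deliberately terse) proof: existence of a suborthonormal basis of $M$ via \cref{exorth}, the Fourier-type partial sums and the relation $x - z \in \calB^\perp$ from \cref{fourier-prep}, and the classical Hilbert-space argument transcribed with order-convergence in place of norm-convergence. However, as organized, your argument is circular at exactly the point you yourself single out as "the nontrivial step." In the first part, the upgrade from $x - z \in \calB^\perp$ to $x - z \in M^\perp$ rests on the claim that every element of $M$ is an order-limit of $\A$-linear combinations from $\calB$. That claim is precisely (one inclusion of) the forward direction of the second assertion, which you then prove in the second part \emph{using} the first part. It does not follow merely from the definition of a suborthonormal basis, which only says $\calB^\perp \cap M = \{0\}$; it needs an argument.

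The repair is short and uses only tools you already invoke, so the gap is organizational rather than conceptual. For $w \in M$, apply \cref{fourier-prep} to $w$ itself: the partial sums $\sum_{y \in F} (w|y)y$ form an order-Cauchy net, which converges by order-completeness of $E$ to some $w' \in M$ (the partial sums lie in $M$ and $M$ is order-closed), and $w - w' \in \calB^\perp$. Since also $w - w' \in M$, the basis property forces $w - w' = 0$, so $w = w'$ is an order-limit of elements of $\spann_\A(\calB)$. With this inserted \emph{before} the upgrade step, the circle is broken: for $w \in M$ one gets
\[
  (x - z \,|\, w) \;=\; \olim_{F} \sum_{y \in F} \overline{(w|y)}\,(x - z\,|\,y) \;=\; 0
\]
by order-continuity of the inner product, hence $x - z \in M^\perp$. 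Everything downstream in your proposal (the forward and converse directions of the second assertion, and the identification of $P$ as the projection along $M^\perp$) is correct as written once this step is in place.
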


\begin{proof}
Employ  Lemma \ref{fourier-prep} and prove the
second assertion as for Hilbert spaces.
Since by \cref{exorth} one always finds a suborthonormal basis for
$M$, also the first assertion holds.
\end{proof}

\subsection{Operator Theory on KH-Modules}\label{s.otKH}

If $E$ is a pre-Hilbert module over a \Stoneanalgebra{} $\A$ and $y \in E$, then
  \begin{align*}
    \overline{y} \colon E \to \A, \quad x \mapsto (x|y)
  \end{align*}
is an element of the {\emdf dual module} $E^* \coloneqq \Hom(E;\A)$.
If $\A$ is a \Stoneanalgebra, one has the following strengthening,
comprising a version of the Riesz--Fréchet theorem for \KH{} modules.

\begin{theorem}\label{riesz}
  Let $E$ be a pre-Hilbert module over a \Stoneanalgebra{} $\A$. 
Then 
\[  \abs{\konj{y}} = \abs{y} \quad \text{for all $y\in E$}.
\]
If $E$ is a \KH{} module, then the mapping
    \begin{align*}
      \Theta \colon E \to \Hom(E;\A), \quad y \mapsto \overline{y}
    \end{align*}
is bijective.
\end{theorem}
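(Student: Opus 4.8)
The plan is to establish the norm identity first, deduce injectivity of $\Theta$ from it, and then prove surjectivity by a Fourier-series construction that parallels the Hilbert-space Riesz--Fréchet theorem. For the identity $\abs{\konj{y}} = \abs{y}$, I start from the definition of the operator lattice-norm, $\abs{\konj{y}} = \sup_{\abs{x}\le\car}\abs{(x|y)}$. The Cauchy--Schwarz inequality gives $\abs{(x|y)}\le\abs{x}\abs{y}\le\abs{y}$ whenever $\abs{x}\le\car$, so $\abs{\konj{y}}\le\abs{y}$. For the reverse inequality I would test against $x_\veps\coloneq\frac{1}{\abs{y}+\veps}\,y\in E$, which satisfies $\abs{x_\veps}=\frac{\abs{y}}{\abs{y}+\veps}\le\car$ and $(x_\veps|y)=\frac{\abs{y}^2}{\abs{y}+\veps}$; as $\veps\searrow 0$ the latter increases to $\abs{y}$, so $\abs{\konj{y}}\ge\sup_\veps\frac{\abs{y}^2}{\abs{y}+\veps}=\abs{y}$. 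Note that this part uses only that $\A$ is a Stone algebra (so that the relevant supremum exists), not order-completeness of $E$, as the statement requires.

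The identity makes $\Theta$ an $\A$-isometry, since $\abs{\Theta y}=\abs{\konj{y}}=\abs{y}$, and hence injective. For surjectivity, fix $\phi\in\Hom(E;\A)$ and, invoking \cref{exorth}, a suborthonormal basis $\calB$ of the KH-module $E$. Modelled on $\sum\konj{\phi(e_i)}e_i$ in the Hilbert-space case, the natural candidate for the representing element is $z\coloneq\sum_{y\in\calB}\konj{\phi(y)}\,y$, understood as the net of finite partial sums $z_F\coloneq\sum_{y\in F}\konj{\phi(y)}\,y$.

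The step I expect to be the main obstacle is showing that this net order-converges, for which the key is a Bessel-type bound $\sum_{y\in F}\abs{\phi(y)}^2\le\abs{\phi}^2$. Since $\calB$ is suborthonormal and each $y$ is normalized, orthogonality yields $\abs{z_F}^2=\sum_{y\in F}\abs{\phi(y)}^2=:s_F$, while $\A$-linearity of $\phi$ gives $\phi(z_F)=\sum_{y\in F}\konj{\phi(y)}\phi(y)=s_F$. Combining this with $\abs{\phi(z_F)}\le\abs{\phi}\abs{z_F}$ from \cref{completehom} produces $s_F\le\abs{\phi}\sqrt{s_F}$, whence $s_F\le\abs{\phi}^2$ by a routine pointwise argument in $\A=\Ce(\Om)$. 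Thus $(s_F)_F$ is order-bounded with supremum $c$, and the orthogonality estimate $\abs{z_{F_1}-z_{F_2}}^2=s_{F_1\Delta F_2}\le c-s_F$ for $F\subseteq F_1\cap F_2$ shows that $(z_F)_F$ is order-Cauchy; order-completeness of $E$ then yields the limit $z\in E$.

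It then remains to verify $\konj{z}=\phi$. Using the Fourier expansion $x=\olim_F\sum_{y\in F}(x|y)\,y$ of \cref{fourier} together with the order-continuity of $\phi$ (bounded homomorphisms are order-continuous by \cref{lns.l.order-norm}), one obtains $\phi(x)=\sum_{y\in\calB}(x|y)\phi(y)$; on the other hand, order-continuity of the inner product gives $(x|z)=\sum_{y\in\calB}\phi(y)(x|y)$. The two sums coincide, so $\konj{z}=\phi$ and $\Theta$ is surjective, completing the proof.
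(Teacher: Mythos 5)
Your proof of the identity $\abs{\konj{y}}=\abs{y}$ and of injectivity is exactly the paper's argument: Cauchy--Schwarz for one inequality, and the test element $(\abs{y}+\veps\car)^{-1}y$ for the other. Where you genuinely diverge is surjectivity: the paper gives no argument at all here, but instead defers to Kaplansky (``mutatis mutandis as in the case $\A=\C$'', citing his Theorem~5), which classically runs through the kernel of $\phi$ and its orthogonal complement. You instead give a self-contained Fourier-type proof: pick a suborthonormal basis via \cref{exorth}, establish the Bessel bound $\sum_{y\in F}\abs{\phi(y)}^2\le\abs{\phi}^2$ from $\abs{\phi(z_F)}\le\abs{\phi}\,\abs{z_F}$ (\cref{completehom}), get order-convergence of the partial sums from order-completeness, and identify the limit using \cref{fourier} and order-continuity of $\phi$ and of the inner product (\cref{lns.l.order-norm}). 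This is correct and arguably preferable in this setting, since it reuses only machinery the paper has already built and avoids the complementation and maximal-support arguments (\cref{realizesupp}) that a kernel-based proof would need in the module context; what it costs is the (here freely available) existence of a suborthonormal basis, which the kernel route does not require.

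One small point you should make explicit: since $\calB$ is only \emph{sub}orthonormal, $(y|y)=\abs{y}^2$ is an idempotent $p_y$, so the orthogonality computation gives $\abs{z_F}^2=\sum_{y\in F}\abs{\phi(y)}^2p_y$ rather than $s_F$ on the nose. The equality $\abs{\phi(y)}^2p_y=\abs{\phi(y)}^2$ does hold, because $y=p_yy$ and $\A$-linearity give $\phi(y)=p_y\phi(y)$; alternatively, your Bessel estimate and the Cauchy-property survive unchanged with the inequality $\abs{z_F}^2\le s_F$ together with $\phi(z_F)=s_F$, so nothing breaks either way.
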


\begin{proof}
By the Cauchy--Schwarz inequality in $E$, 
$\abs{\konj{y}} =\sup_{\abs{x}\le \car} \abs{(x|y)}\le \abs{y}$. 
Putting $x \coloneq (\abs{y} + \veps \car)^{-1}y$ yields
$\abs{y}^2 \le \abs{\konj{y}} (\abs{y} + \veps \car)$
for arbitrary $\veps > 0$, and hence $\abs{y} = \abs{\konj{y}}$.   The second part follows from \cite[Thm.~5]{Kapl1953}. 
\end{proof}

If $E$ is a KH-module, we equip the dual module $E^*$ with the structure of a Kaplansky--Hilbert module over 
$\A$ turning $\Theta$ into an $\A$-antilinear and $\A$-isometric bijection. 

The {\emdf conjugate  homomorphism} of $T \in \End(E)$ is $\konj{T}\in 
\End(E^*)$
  defined by $\konj{T} \nach \Theta = \Theta \nach T$ or, equivalently, 
$\konj{T} \konj{x} = \konj{Tx}$ for all $x\in E$.  This is not to be confused
with the classical dual operator $T'$, which would satisfy $T'\konj{x} =
\konj{T^*x}$, where  $T^*$ is the adjoint of $T$, to be introduced next.

\begin{corollary}\label{adjoint}
Let $E, F$ be Kaplansky--Hilbert modules. For
every $T \in \Hom(E;F)$ there is a unique module homomorphism $T^* \in \Hom(F;E)$ with
    \begin{align*}
      (Tx|y) = (x|T^*y) \quad \textrm{for all } x\in E,\: y\in
                  F.
    \end{align*}
Moreover, $(T^*)^* = T$, $\abs{T} = \abs{T^*}$ and  $\ran(T)^\perp = 
\ker(T^*)$.  
\end{corollary}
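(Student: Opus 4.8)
The plan is to construct $T^*$ in the second variable via the Riesz--Fréchet isomorphism $\Theta$ of \cref{riesz}, and then read off the remaining identities from the defining relation. First I would fix $y \in F$ and consider the map $\vphi_y \colon E \to \A$, $x \mapsto (Tx|y)$. It is $\A$-linear because $T$ is a module homomorphism and $(\cdot|y)$ is $\A$-linear, and it is bounded because the Cauchy--Schwarz inequality together with \cref{completehom} gives $\abs{\vphi_y(x)} = \abs{(Tx|y)} \le \abs{Tx}\abs{y} \le \abs{T}\abs{x}\abs{y}$. Hence $\vphi_y \in \Hom(E;\A) = E^*$, and since $\Theta \colon E \to E^*$ is bijective by \cref{riesz}, there is a unique $z \in E$ with $\vphi_y = \konj{z}$, i.e., $(Tx|y) = (x|z)$ for all $x$. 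Setting $T^*y \coloneq z$ defines $T^*\colon F \to E$; uniqueness of $z$ forces any $S$ with $(Tx|y) = (x|Sy)$ for all $x,y$ to satisfy $S = T^*$ (take $x = Sy - T^*y$ and use definiteness of the inner product), giving uniqueness of the adjoint.

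Next I would check that $T^*$ is a bounded module homomorphism. The $\A$-linearity follows once more from uniqueness in \cref{riesz}: for $f \in \A$ and $y_1, y_2 \in F$ one computes $(x|f T^*y_1 + T^*y_2) = \overline{f} (Tx|y_1) + (Tx|y_2) = (Tx| f y_1 + y_2)$ for all $x$, so $T^*(fy_1 + y_2) = f T^*y_1 + T^*y_2$. For boundedness I would insert $x = T^*y$ into the defining relation to obtain $\abs{T^*y}^2 = (T^*y|T^*y) = (T(T^*y)|y) \le \abs{T}\abs{y}\abs{T^*y}$.

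The main obstacle is turning this quadratic estimate into the linear bound $\abs{T^*y} \le \abs{T}\abs{y}$: one cannot simply divide by $\abs{T^*y}$ since it need not be invertible in $\A$. I would resolve this exactly as in the proofs of \cref{lns.l.order-norm}(iv) and \cref{completehom}, by multiplying with the support idempotent $p \coloneq \supp(T^*y)$ (cf.\ \cref{supp.l.supp}): from $\abs{T^*y}(\abs{T^*y} - \abs{T}\abs{y}) \le 0$ and $\abs{T^*y} = p\abs{T^*y}$ one gets $\abs{T^*y} \le \abs{T}\abs{y}$, so $T^* \in \Hom(F;E)$ with $\abs{T^*} \le \abs{T}$.

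Finally, the three supplementary claims drop out of the defining relation. Conjugating it gives $(T^*y|x) = \konj{(x|T^*y)} = \konj{(Tx|y)} = (y|Tx)$, which is precisely the relation characterizing the adjoint of $T^*$; by the uniqueness just established, $(T^*)^* = T$. Applying the bound $\abs{T^*} \le \abs{T}$ to $T^*$ in place of $T$ then yields $\abs{T} = \abs{(T^*)^*} \le \abs{T^*} \le \abs{T}$, hence $\abs{T} = \abs{T^*}$. And $y \in \ran(T)^\perp$ means $(Tx|y) = 0$ for all $x$, i.e.\ $(x|T^*y) = 0$ for all $x$, which (taking $x = T^*y$) is equivalent to $T^*y = 0$; thus $\ran(T)^\perp = \ker(T^*)$.
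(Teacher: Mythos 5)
Your proposal is correct. Note, though, that the paper does not actually carry out this argument: its proof of \cref{adjoint} consists of citing Kaplansky's original article for the existence of the adjoint and the remark that the remaining assertions ``are proved exactly as the analogues for bounded operators on Hilbert spaces.'' What you have written is precisely the worked-out version of that plan, and it stays entirely inside the paper's own toolkit: existence and $\A$-linearity of $T^*$ come from the bijectivity of $\Theta$ in \cref{riesz} (which is where the paper's reliance on Kaplansky is already localized), and the identities $(T^*)^* = T$, $\abs{T} = \abs{T^*}$, $\ran(T)^\perp = \ker(T^*)$ follow formally from the defining relation just as in the Hilbert space case. The one point where the classical argument does \emph{not} transfer verbatim is the passage from $\abs{T^*y}^2 \le \abs{T}\,\abs{y}\,\abs{T^*y}$ to $\abs{T^*y} \le \abs{T}\,\abs{y}$, since $\abs{T^*y}$ need not be invertible in $\A$; your resolution (multiply by $(\abs{T^*y}+\veps)^{-1}$, pass to the order limit to get the support idempotent $p = \supp(T^*y)$, and use $\abs{T^*y} = p\abs{T^*y}$, $p \le \car$) is exactly the right module-theoretic substitute, consistent with the technique used in \cref{lns.l.order-norm} and \cref{completehom}. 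So your write-up supplies a self-contained proof where the paper only gives a pointer, at the cost of repeating material that the paper deliberately delegates to the literature.
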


\begin{proof}
The existence of the adjoint is proved in \cite[Thm.~6]{Kapl1953}.
The remaining assertions are proved exactly as the analogues for
bounded operators on Hilbert spaces.
\end{proof}

If $E$ is a KH-module, then  with  the involution 
  \begin{align*}
    \End(E) \to \End(E), \quad T \mapsto T^*,
  \end{align*}
the space $\End(E)$ is a $\uC^*$-algebra (and even an $\mathrm{AW}^*$-algebra, see \cite{Kapl1953}). In 
particular, one can speak of {\emdf normal}, {\emdf self-adjoint} or {\emdf
  unitary} module homomorphisms on $E$.

\medskip
A homomorphism $T\colon E\to  F$ between  KH-modules $E,F$ is a {\emdf contraction}
if $\abs{T} \leq \car$ (equivalently:   $\norm{T}\le 1$). Recall that it is an isometry
if $\abs{Tx} = \abs{x}$ for all $x\in E$. The results about 
contractions, isometries and unitaries on Hilbert spaces 
listed in Appendix D.4 of \cite{EFHN2015}
carry over to such homomorphisms on KH-modules, as the proofs can be
repeated mutatis mutandis.
 
Similarly, the results of Appendix D.5 of \cite{EFHN2015} 
about orthogonal projections in and onto closed subspaces of Hilbert spaces,
carry over to KH-Modules. In particular, for $P \in \End(E)$ one has
\[ \abs{P}\le \car,\, P = P^2 \quad\Leftrightarrow\quad P^2= P= P^*
\quad\Leftrightarrow\quad \ran(\Id- P) \perp \ran(P).
\]
In this case, $P$  is called an {\emdf orthogonal projection}. 

As a consequence of \cref{fourier} one obtains, for a given
KH-submodule of a KH-module $E$, a unique orthogonal projection $P
\in \End(E)$ such that  $\ran(P)=M$ and $\ker(P) = M^\perp$.
 
Because of its importance, we note explicitly the following
{\emdf mean ergodic theorem}:

\begin{proposition}[Mean Ergodic Theorem]\label{met}
Let $E$ be  a KH-module $E$ and   $T\in \End(E)$ a contraction. Then 
$\fix(T) = \fix(T^*)$, $E$ decomposes orthogonally as $E = 
\fix(T) \oplus \ocl\ran(\Id - T)$, and for each $x\in E$ one has
\[    \olim_{n \to \infty} \frac{1}{n} \sum_{j=0}^{n{-}1} T^jx = Px,
\]
where $P$ is the projection onto $\fix(T)$ along $\ocl\ran(\Id - T)$.
\end{proposition}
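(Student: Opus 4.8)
The plan is to mimic the classical Hilbert space proof of the mean ergodic theorem, replacing norm-convergence with order-convergence throughout, and leaning on the order-completeness of $E$ together with the operator-theoretic tools already established (in particular \cref{adjoint} and \cref{fourier}).

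First I would establish the equality $\fix(T) = \fix(T^*)$. Since $T$ is a contraction, so is $T^*$ (as $\abs{T^*} = \abs{T} \le \car$ by \cref{adjoint}). For $x \in \fix(T)$ one computes, using the lattice-valued inner product, that $\abs{Tx - x}^2 = \abs{x}^2 - (Tx|x) - (x|Tx) + \abs{Tx}^2$; the contraction property $\abs{Tx} \le \abs{x}$ forces the cross terms to coincide with $\abs{x}^2$ in a way that yields $(x|T^*x) = \abs{x}^2 = (x|x)$, and then $\abs{T^*x - x}^2 = 0$, so $x \in \fix(T^*)$. By symmetry $\fix(T) = \fix(T^*)$. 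The only subtlety compared to the Hilbert space case is that these manipulations happen in $\A_+$ rather than in $\R_+$, but the Cauchy--Schwarz inequality in $E$ and the order structure on $\A$ make each step legitimate.

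Next I would prove the orthogonal decomposition $E = \fix(T) \oplus \ocl\ran(\Id - T)$. Since $\fix(T)$ is the kernel of the bounded (hence order-continuous, by \cref{lns.l.order-norm}) homomorphism $\Id - T$, it is an order-closed submodule, i.e.\ a KH-submodule, so \cref{fourier} gives $E = \fix(T) \oplus \fix(T)^\perp$. It then remains to identify $\fix(T)^\perp = \ocl\ran(\Id - T)$. The inclusion $\ocl\ran(\Id - T) \subseteq \fix(T)^\perp$ follows because for $y \in \fix(T) = \fix(T^*)$ one has $((\Id - T)x \mid y) = (x \mid (\Id - T^*)y) = 0$, and $\fix(T)^\perp$ is order-closed. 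For the reverse inclusion, I would take $x \perp \ocl\ran(\Id - T)$, deduce $x \in \ker(\Id - T^*) = \fix(T^*) = \fix(T)$ via \cref{adjoint} (using $\ran(\Id-T)^\perp = \ker(\Id - T^*)$), which shows $\ocl\ran(\Id-T)^\perp \subseteq \fix(T)$ and hence, taking complements, the claimed equality.

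Finally, for the Cesàro convergence, let $P$ be the projection onto $\fix(T)$ from the decomposition and write $A_n \coloneq \frac{1}{n}\sum_{j=0}^{n-1} T^j$. On $\fix(T)$ each $A_n$ acts as the identity, so $A_n x = x = Px$ trivially. On $\ran(\Id - T)$, for $x = (\Id - T)u$ one has the telescoping identity $A_n x = \frac{1}{n}(u - T^n u)$, whence $\abs{A_n x} \le \frac{1}{n}(\abs{u} + \abs{T^n u}) \le \frac{2}{n}\abs{u}$, which order-converges to $0$. Since the $A_n$ are uniformly bounded contractions, \cref{3.4.5} shows that the set $\{x \in E \mid \olim_n A_n x = 0\}$ is an order-closed submodule; as it contains $\ran(\Id - T)$ it contains $\ocl\ran(\Id - T)$. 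Combining the two cases over the orthogonal decomposition yields $\olim_n A_n x = Px$ for all $x$. The main obstacle I anticipate is not any single step but the need to verify carefully that each passage to the order-limit is justified by uniform boundedness of the averages and the order-continuity results of Part~I, since order-convergence does not interact with algebraic operations as freely as norm-convergence; the use of \cref{3.4.5} is precisely what packages this concern cleanly.
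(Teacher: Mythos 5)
Your proposal is correct and follows essentially the same route as the paper: the identity $\fix(T)=\fix(T^*)$ via the classical contraction computation (the paper cites \cite[Lem.~D.14]{EFHN2015} for exactly this), the orthogonal decomposition via $\ran(\Id-T)^\perp=\ker(\Id-T^*)$ from \cref{adjoint} together with \cref{fourier}, and the Cesàro convergence via telescoping on $\ran(\Id-T)$ followed by passage to the order closure with \cref{3.4.5}. Only a small slip in the first step: you expand $\abs{Tx-x}^2$ (which is trivially $0$ for $x\in\fix(T)$) where you mean $\abs{T^*x-x}^2$, and the cross terms equal $\abs{x}^2$ because $Tx=x$, while the contraction property of $T^*$ is what gives $\abs{T^*x}^2\le\abs{x}^2$; the argument as intended is sound.
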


\begin{proof}
The identity $\fix(T) = \fix(T^*)$ is proved exactly as 
\cite[Lem.~D.14]{EFHN2015}. Then the orthogonal decomposition  $E = 
\fix(T) \oplus \ocl\ran(\Id - T)$ follows from the last
assertion of \cref{adjoint}. To prove the  remaining statement, one
notes first that $\frac{1}{n} \sum_{j=0}^{n{-}1} T^jx\to 0$ for $x\in
\ran(\Id - T)$ and then passes to  the order closure by Lemma \ref{3.4.5}. 
\end{proof}

\medskip

\subsection{Modules and Homomorphisms of Finite Rank}\label{s.finrank}\label{s.HSfr}

A KH-module $E$ over a \Stoneanalgebra{} $\A$  is {\emdf  of
  finite rank} if it has a finite suborthonormal basis.
Note that
if $e_1, \dots, e_n$ is such a suborthonormal basis of $E$, then 
by \cref{fourier} 
\[     E = \spann_\A\{ e_1, \dots, e_n\} = \A e_1 \oplus \dots \oplus
\A e_n,
\]
i.e., the algebraic $\A$-span of the basis vectors is already
order-closed.

\begin{lemma}\label{lem:finrank}
Let $E$ be a KH-module and $n \in \N_0$. The following assertions are
equivalent:
\begin{enumerate}[(a)]
\item There are $x_1, \dots, x_n \in E$ such that $E =
  \ocl\spann_\A\{ x_1, \dots, x_n\}$.
\item There is a suborthonormal basis $e_1, \dots, e_n$ of $E$.
\item $\displaystyle \sum_{y\in \calB} \abs{y}^2 \le n\car$ for each
  suborthonormal set $\calB \subseteq E$.
\end{enumerate}
In {\rm (b)} one can achieve $\abs{e_1} \ge \abs{e_2} \ge \dots \ge \abs{e_n}$.
\end{lemma}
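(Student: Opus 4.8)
The plan is to establish the cycle (a)$\Rightarrow$(b)$\Rightarrow$(c)$\Rightarrow$(b) together with the trivial implication (b)$\Rightarrow$(a), and to read off the monotonicity refinement directly from the construction that proves (c)$\Rightarrow$(b). The implication (b)$\Rightarrow$(a) is immediate: if $e_1,\dots,e_n$ is a suborthonormal basis, then by \cref{fourier} one has $E = \ocl\spann_\A\{e_1,\dots,e_n\}$, so the $e_i$ serve as the generators required in (a).

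For (a)$\Rightarrow$(b) I would run the Gram--Schmidt procedure in its module form, replacing divisions by normalizations. Starting from $x_1,\dots,x_n$, I set $e_1 \coloneq \frac{x_1}{\abs{x_1}}$ and inductively $e_k \coloneq \frac{x_k'}{\abs{x_k'}}$ with $x_k' \coloneq x_k - \sum_{j<k}(x_k|e_j)e_j$. By \cref{supp.l.supp} each $e_k$ is normalized and satisfies $\abs{x_k'}e_k = x_k'$; moreover the support identity $(x_k|e_j)p_{e_j} = (x_k|e_j)$ gives $x_k' \perp e_j$ for $j<k$, and since $e_k$ is an order-limit of $\A$-multiples of $x_k'$, order-continuity of the inner product yields $e_k \perp e_j$. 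Thus $\{e_1,\dots,e_n\}$ is suborthonormal. The defining relation rearranges to $x_k = \abs{x_k'}e_k + \sum_{j<k}(x_k|e_j)e_j \in \spann_\A\{e_1,\dots,e_k\}$, so $\spann_\A\{x_1,\dots,x_n\} \subseteq \spann_\A\{e_1,\dots,e_n\}$; passing to order-closures gives $E = \ocl\spann_\A\{e_1,\dots,e_n\}$, and \cref{fourier} turns this into the statement that $\{e_1,\dots,e_n\}$ is a suborthonormal basis.

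The implication (b)$\Rightarrow$(c) is a double application of the Bessel inequality from \cref{fourier-prep}. Fixing a suborthonormal set $\calB$ and a finite $F\subseteq\calB$, Parseval (available because $e_1,\dots,e_n$ is a basis) gives $\abs{y}^2 = \sum_{i=1}^n \abs{(e_i|y)}^2$ for $y\in F$, so interchanging the two finite sums yields
\[
\sum_{y\in F}\abs{y}^2 = \sum_{i=1}^n \sum_{y\in F}\abs{(e_i|y)}^2 \le \sum_{i=1}^n \abs{e_i}^2 \le n\car,
\]
where the first inequality is Bessel applied to each $e_i$ against $\calB$ and the second uses that each $e_i$ is normalized. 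Taking the supremum over all finite $F$ gives $\sum_{y\in\calB}\abs{y}^2 \le n\car$.

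The crux is (c)$\Rightarrow$(b), which I would prove by induction on $n$ and which simultaneously delivers the ordering $\abs{e_1}\ge\dots\ge\abs{e_n}$. For $n=0$, condition (c) forces every normalized element to vanish, hence $E=\{0\}$ has the empty basis. For the step, \cref{realizesupp} provides a normalized $e_1$ with $p_{e_1}=p_E$, so $\abs{e_1}^2 = p_E$; \cref{fourier} applied to the KH-submodule $M\coloneq\ocl\spann_\A\{e_1\}$ gives $E = M\oplus E'$ with $E'\coloneq M^\perp=\{e_1\}^\perp$. The key bookkeeping is that $E'$ satisfies (c) with parameter $n-1$: for suborthonormal $\calB'\subseteq E'$ the set $\calB'\cup\{e_1\}$ is suborthonormal in $E$, so $\sum_{y\in\calB'}\abs{y}^2 + p_E \le n\car$, and since every $y\in E$ has $p_y\le p_E$ (whence $p_E\abs{y}^2=\abs{y}^2$), multiplying through by $p_E$ converts this into the sharp bound $\sum_{y\in\calB'}\abs{y}^2 \le (n-1)p_E \le (n-1)\car$. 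The inductive hypothesis furnishes a suborthonormal basis $e_2,\dots,e_n$ of $E'$, and $e_1,\dots,e_n$ is then the desired basis of $E$. Because the successive orthogonal complements $E=E^{(0)}\supseteq E^{(1)}\supseteq\cdots$, with $E^{(k)}=\{e_1,\dots,e_k\}^\perp$, have decreasing supports and $\abs{e_k}=p_{E^{(k-1)}}$, the constructed basis is automatically ordered by decreasing support. Since (b) implies (c), any module satisfying (b) feeds back into this construction, which yields a suborthonormal basis with $\abs{e_1}\ge\dots\ge\abs{e_n}$; this proves the final refinement. I expect this support bookkeeping---ensuring that the factor $p_E$ turns the naive estimate $n\car-\abs{e_1}^2$ into the $(n-1)\car$ needed to lower the parameter---to be the only genuinely delicate point.
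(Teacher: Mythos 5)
Your proof is correct and follows essentially the same route as the paper: Gram--Schmidt for (a)$\Rightarrow$(b), the Parseval/Bessel double-sum swap for (b)$\Rightarrow$(c), and for (c)$\Rightarrow$(b) the recursive choice of maximal-support normalized elements in successive orthogonal complements via Lemma \ref{realizesupp}, which also yields the ordering $\abs{e_1}\ge\dots\ge\abs{e_n}$. The only difference is organizational: you package (c)$\Rightarrow$(b) as an induction on $n$ (transferring (c) with parameter $n-1$ to the complement by multiplying through by $p_E$), whereas the paper runs the same construction in one sweep and observes directly that (c) forces the $(n{+}1)$-st element to vanish.
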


\begin{proof}
(b)$\dann$(a) is clear, and (a)$\dann$(b) follows from defining (Gram--Schmidt)
\[  e_1 \coloneq \frac{x_1}{\abs{x_1}},\quad e_k \coloneq
\frac{y_k}{\abs{y_k}},\quad y_k \coloneq x_k - \sum_{j=1}^{k-1}
(x_k|e_k)e_k\quad(1\le k\le n).
\]
(b)$\dann$(c): Let $F\subseteq E$ be any finite suborthonormal
system. Then (by Parseval and Bessel)
\[ \sum_{y \in F} \abs{y}^2 = \sum_{y\in F} \sum_{j=1}^n
\abs{(y|e_j)}^2 =  \sum_{j=1}^n \sum_{y\in F}
\abs{(e_j|y)}^2 \le \sum_{j=1}^n \abs{e_j}^2 \le  n \car.
\]
(c)$\dann$(b): We construct a suborthonormal system
$e_1, e_2, \dots$ recursively by requiring that $e_{k}$ is 
a normalized element of $\{ e_1, \dots, e_{k{-}1}\}^\perp$ with
maximal support (Lemma \ref{realizesupp}). Then obviously $p_E = \abs{e_1}\ge
\abs{e_2}  \ge \dots$. Clearly, (c) implies $e_{n{+}1}= 0$,  and
hence $\{ e_1, \dots, e_n\}$ is a suborthonormal basis.
\end{proof}

It follows from Lemma  \ref{lem:finrank} that {\em each KH-submodule of a 
KH-module of finite rank is again of finite rank.} 
A KH-submodule  is called {\emdf  homogeneous} (of rank $k$) if it has a homogeneous 
suborthonormal basis (of length $k$). 
Each finite-rank KH-module  decomposes into homogeneous submodules as follows.

The {\emdf dimension} of  a KH-module $E$ of finite-rank is
\[ \dim_E \coloneq \sup\Bigl\{ \sum_{y\in F} \abs{y}^2 \,\Big|\, \text{$F$ 
finite suborthonormal system in $E$}\Bigr\} = 
\sum_{y\in \calB} \abs{y}^2 \in \A,
\]
where $\calB$ is any finite suborthonormal basis of $E$. (The identity
follows from the proof of Lemma
\ref{lem:finrank}. Actually, $\calB$ may be any suborthonormal basis, but
we shall not use this fact.)

By identifying $\A = \Ce(\Om)$ 
we can interpret $\dim_E$ as a continuous, $\N_0$-valued function on
$\Om$. The number $N \coloneq \norm{\dim_E}_\infty$ is called the {\emdf
  maximal rank} of $E$.  Define the idempotents
\[  q_k \coloneq \car_{[\dim_E = k]} \in \B \qquad (0 \le k \le N).
\]
Then $(q_k)_k$ is a partition of unity of $\B$ with $q_N \neq 0$. We
obtain a decomposition of $E$ as
\[ E = q_0E \oplus q_1E \oplus \dots \oplus q_N E
\]
into its so-called {\emdf homogeneous components}. 

\begin{lemma}
In the situation just described, either $q_k = 0$ or $E_k = q_kE$ is
homogeneous of  rank $k$.
\end{lemma}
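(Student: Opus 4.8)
The plan is to obtain the homogeneous basis of $E_k$ by restricting a suitably ordered basis of $E$ through the idempotent $q_k$. By Lemma \ref{lem:finrank} I would fix a suborthonormal basis $e_1, \dots, e_N$ of $E$ whose length equals the maximal rank $N$, ordered so that $\abs{e_1} \ge \abs{e_2} \ge \cdots \ge \abs{e_N}$, and abbreviate $p_j \coloneq \abs{e_j} \in \B$, so that $p_1 \ge p_2 \ge \cdots \ge p_N$ is a decreasing chain of idempotents. Since each $e_j$ is normalized one has $\abs{e_j}^2 = p_j$, whence $\dim_E = \sum_{j=1}^N p_j$ by the definition of the dimension.

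The crux is to read off $q_k$ from this chain. Identifying $\A = \Ce(\Om)$ and writing $P_j \coloneq [p_j = \car]$, the nesting $P_1 \supseteq \cdots \supseteq P_N$ shows that at each point $\vom$ the number of indices $j$ with $\vom \in P_j$ equals the largest such index; hence $[\dim_E = k] = P_k \ohne P_{k+1}$ and therefore $q_k = p_k - p_{k+1}$, with the conventions $p_0 \coloneq \car$ and $p_{N+1} \coloneq 0$. From $q_k = p_k - p_{k+1}$ I would extract the two relations that govern the rest of the argument: $q_k p_j = q_k$ for $j \le k$ (because $q_k \le p_k \le p_j$) and $q_k p_j = 0$ for $j > k$ (because $q_k p_{k+1} = 0$ and $p_j \le p_{k+1}$).

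Finally I would test the family $q_k e_1, \dots, q_k e_k$ as a basis of $E_k = q_k E$. For $j \le k$ one gets $\abs{q_k e_j} = q_k p_j = q_k$, so these elements are normalized with common support $q_k$, and $(q_k e_i | q_k e_j) = q_k (e_i | e_j) = \delta_{ij} q_k$; thus they form a homogeneous suborthonormal system with idempotent $q_k$. For $j > k$, instead, $\abs{q_k e_j} = q_k p_j = 0$, so $q_k e_j = 0$. Since $E = \spann_\A\{e_1, \dots, e_N\}$ by the finite-rank remark following \cref{fourier}, applying $q_k$ yields $E_k = \spann_\A\{q_k e_1, \dots, q_k e_k\} = \ocl\spann_\A\{q_k e_1, \dots, q_k e_k\}$, so by \cref{fourier} this homogeneous suborthonormal system is in fact a basis of $E_k$. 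When $q_k \neq 0$ each of the $k$ vectors is nonzero, and $E_k$ is homogeneous of rank $k$ as claimed. Every step other than the identification $q_k = p_k - p_{k+1}$ is purely computational; that single pointwise reading on $\Om$ is the only place where the order structure genuinely enters, and it is where I would be most careful.
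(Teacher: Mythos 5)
Your proof is correct and takes essentially the same route as the paper's: both restrict a decreasing suborthonormal basis $e_1,\dots,e_n$ by $q_k$, establish that $q_k\abs{e_j}=q_k$ for $j\le k$ and $q_k\abs{e_j}=0$ for $j>k$, and conclude that $q_ke_1,\dots,q_ke_k$ is a homogeneous suborthonormal basis of $E_k$. The only (cosmetic) difference is how the key relations are obtained: the paper reads them off the identity $kq_k=q_k\dim_E=\sum_j q_k\abs{e_j}$ for decreasing idempotents, while you derive the explicit formula $q_k=\abs{e_k}-\abs{e_{k+1}}$ by the same pointwise reasoning on the Stonean space.
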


\begin{proof}
Suppose $q_k \neq 0$ and let $e_1, \dots, e_n$ be a suborthonormal basis for $E$  with 
$\abs{e_1} \ge \abs{e_2} \ge \dots \ge \abs{e_n}$. By definition, $kq_k = q_k
\dim_E = \sum_{j=1}^n q_k\abs{e_j}$. 
It follows that $q_k \abs{e_j} = q_k$ for $1\le j \le k$ and $q_k
\abs{e_j} = 0$ for $j > k$. Obviously, the system $q_ke_1, \dots, q_k
e_k$ is a homogeneous suborthonormal basis for $E_k$ of length $k$.   
\end{proof}

Let $E,F$ be Kaplansky--Hilbert modules over a \Stoneanalgebra{}
$\A$. For $y\in E$ and $z\in F$ define $A_{y,z} \in \Hom(E;F)$ by 
    \begin{align*}
      A_{y,z} \colon E \to F, \quad x \mapsto (x|z) y.
    \end{align*}
Any finite sum of homomorphisms of the form $A_{y,z}$ is 
  called a {\emdf homomorphism of $\A$-finite rank}. 
  Such homomorphisms   can be characterized as follows.
  
\begin{proposition}\label{charfinite}
  Let $E,F$ be a Kaplansky--Hilbert modules over a \Stoneanalgebra{}
  $\A$. For 
$A \in \Hom(E;F)$ the following 
  assertions are equivalent.
    \begin{enumerate}[(a)]
      \item $A$ is of $\A$-finite rank.
      \item The $\A$-module $\ran(A)$ is contained in a
        finitely-generated submodule.
      \item $\ocl\ran(A)$ is a KH-submodule of finite rank.
      \item There are $z_1, \dots, z_n \in E$ and a suborthonormal system $y_1, \dots, y_n \in F$  such that
        \begin{align*}
          A = \sum_{i=1}^n A_{y_i,z_i}.
        \end{align*}
    \end{enumerate}
\end{proposition}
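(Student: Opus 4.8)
The plan is to prove the cyclic chain of implications (a) $\Rightarrow$ (b) $\Rightarrow$ (c) $\Rightarrow$ (d) $\Rightarrow$ (a), which is the most economical route. The implication (d) $\Rightarrow$ (a) is immediate from the definition of $\A$-finite rank, since each $A_{y_i,z_i}$ is of that form and finite sums are allowed. The implication (a) $\Rightarrow$ (b) is also essentially trivial: if $A = \sum_{i=1}^n A_{y_i,z_i}$, then every $Ax = \sum_i (x|z_i)y_i$ lies in the finitely-generated submodule $\spann_\A\{y_1,\dots,y_n\}$, so $\ran(A)$ is contained in it.

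First I would handle (b) $\Rightarrow$ (c). Suppose $\ran(A) \subseteq \spann_\A\{w_1,\dots,w_m\}$ for some $w_1,\dots,w_m \in F$. Then $\ocl\ran(A) \subseteq \ocl\spann_\A\{w_1,\dots,w_m\}$, and the latter is a KH-submodule of finite rank by \cref{lem:finrank}, condition (a). Since $\ocl\ran(A)$ is an order-closed submodule of a KH-module of finite rank, it is itself a KH-submodule, and by the remark following \cref{lem:finrank} (``each KH-submodule of a KH-module of finite rank is again of finite rank'') it has finite rank as well. This gives (c).

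The main work lies in (c) $\Rightarrow$ (d), and this is where I expect the principal obstacle. Assume $M \coloneq \ocl\ran(A)$ is a KH-submodule of finite rank. By \cref{lem:finrank} it possesses a finite suborthonormal basis $y_1,\dots,y_n \in M \subseteq F$. Using the associated projection formula from \cref{fourier}, every element $w \in M$ satisfies $w = \sum_{i=1}^n (w|y_i)y_i$. Applying this with $w = Ax$ for arbitrary $x \in E$, and using that $Ax \in \ran(A) \subseteq M$, gives
\[
  Ax = \sum_{i=1}^n (Ax|y_i)y_i = \sum_{i=1}^n (x|A^*y_i)y_i,
\]
where $A^*$ is the adjoint furnished by \cref{adjoint} and I have used the defining relation $(Ax|y_i) = (x|A^*y_i)$. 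Setting $z_i \coloneq A^*y_i \in E$, this reads $Ax = \sum_{i=1}^n (x|z_i)y_i = \bigl(\sum_{i=1}^n A_{y_i,z_i}\bigr)x$ for all $x$, which is exactly (d). The subtle point to verify carefully is that the projection formula of \cref{fourier} applies to each $Ax$; this requires $Ax \in M$, which holds by definition of $M$ as the order-closure of the range, and that the suborthonormal basis $y_1,\dots,y_n$ genuinely spans $M$ in the order-closed sense, which is precisely the content of the ``basis'' characterization in \cref{fourier}. With these two facts in hand the computation is routine, completing the cycle.
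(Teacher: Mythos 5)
Your proof is correct and follows essentially the same route as the paper: the trivial implications (d)$\Rightarrow$(a)$\Rightarrow$(b), then (b)$\Rightarrow$(c) via \cref{lem:finrank}, and (c)$\Rightarrow$(d) by taking a finite suborthonormal basis of $\ocl\ran(A)$ and applying the projection formula of \cref{fourier}. The only cosmetic difference is that you produce the $z_i$ via the adjoint $A^*$ from \cref{adjoint}, whereas the paper invokes the Riesz--Fr\'echet theorem (\cref{riesz}) directly; these amount to the same duality argument.
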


\begin{proof}
(d)$\dann$(a)$\dann$(b) is clear. (b)$\dann$(c) follows from Lemma
\ref{lem:finrank}. (d) follows from (c) by letting  $y_1, \dots, y_n\in F$ be
a suborthonormal basis for $M \coloneq  \ocl\ran(A)$ and applying \cref{fourier}
and \cref{riesz} to find $z_1,\dots, z_n\in E$.
\end{proof}

\medskip

\subsection{Hilbert--Schmidt Homomorphisms}\label{s.HS}

Let $E$ and $F$ be Kaplansky--Hilbert modules over a \Stoneanalgebra{}
$\A$.  Moreover, let $\mathscr{F}$ be the 
family of all finite suborthonormal subsets of $E$. 
A homomorphism  $A\in \Hom(E;F)$ is called a {\emdf Hilbert--Schmidt
homomorphism} if
\[ \abs{A}_{\mathrm{HS}} \coloneqq \sup
                          \Bigl\{\Bigl({\sum}_{x \in \calB}
                          |Ax|^2\Bigr)^{1/2} \,\,\big|\,\, \calB \in
                          \mathscr{F}\Bigr\}
\]  
exists in $\A_+$. We write $\HS(E;F)$ for the $\A$-module of all
$\A$-Hilbert--Schmidt homomorphisms from $E$ to $F$ and $\HS(E)$ if
$E= F$.

\begin{proposition}
Let $\calB$ be a fixed suborthonormal basis of $E$. Then
for $A\in \End(E)$ the following assertions are equivalent.
\begin{enumerate}[(a)]
\item $A\in \HS(E)$.
\item $A^*\in \HS(E)$.
\item $\sum_{x\in \calB} \abs{Ax}^2$ order-converges in $\A$.
\item $\sum_{x \in \calB} \abs{A^*x}^2$ order-converges in $\A$.
\end{enumerate}
In this case, $\abs{A}_{\HS}^2 = \sum_{x\in \calB} \abs{Ax}^2$ and for
each $T\in \Hom(E)$ one has $TA, AT\in \HS(E)$ with  
$\abs{AT}_{\HS}, \abs{TA}_{\HS} \le \abs{T} \abs{A}_{\HS}$.
\end{proposition}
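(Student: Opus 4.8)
The plan is to establish the circle of equivalences by mimicking the classical Hilbert space argument, with order-convergence replacing norm-convergence throughout. First I would prove the key independence statement: if $\calB$ and $\calB'$ are two suborthonormal bases of $E$, then $\sum_{x\in\calB}\abs{Ax}^2$ order-converges if and only if $\sum_{x\in\calB'}\abs{Ax}^2$ does, and the order-sums agree. The computation rests on Parseval (\cref{fourier-prep}): for finite suborthonormal $F\subseteq\calB$, one writes $\abs{Ax}^2=\abs{\konj{Ax}}^2=\sum_{y\in\calB'}\abs{(Ax|y)}^2$ using \cref{riesz}, and then interchanges the two (order-)sums, which is justified because all terms are positive and $\A$ is order-complete. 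This symmetric Fubini-type double-sum identity simultaneously yields the equality of the candidate Hilbert--Schmidt quantities computed against $\calB$ and against $\calB'$, and it is the technical heart of the whole proposition.

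With basis-independence in hand, the equivalence (a)$\Leftrightarrow$(c) is essentially definitional once one checks that the supremum over \emph{all} finite suborthonormal sets $\mathscr{F}$ agrees with the order-limit of partial sums over the \emph{fixed} basis $\calB$; here I would invoke that the net of finite partial sums $(\sum_{x\in F}\abs{Ax}^2)_{F}$ is increasing, hence its order-convergence is equivalent to order-boundedness from above (via \cref{charawstaralg}, since $\A$ is Dedekind complete), and any finite suborthonormal set can be completed to a subset of a common basis using \cref{exorth}. The identity $\abs{A}_{\HS}^2=\sum_{x\in\calB}\abs{Ax}^2$ then follows from the basis-independence established above. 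The equivalence (a)$\Leftrightarrow$(b) with the coincidence $\abs{A}_{\HS}=\abs{A^*}_{\HS}$ is the double-sum identity applied once more, now exchanging the roles of $A$ and $A^*$ via $\abs{(Ax|y)}=\abs{(x|A^*y)}$ from \cref{adjoint}; symmetry of the resulting expression in $A$ and $A^*$ gives (b)$\Leftrightarrow$(d) for free and ties all four conditions together.

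For the final submultiplicativity claim, I would argue as follows. For the bound $\abs{TA}_{\HS}\le\abs{T}\abs{A}_{\HS}$, apply the pointwise operator estimate $\abs{TAx}\le\abs{T}\abs{Ax}$ (\cref{completehom}) under the finite sums, so $\sum_{x\in F}\abs{TAx}^2\le\abs{T}^2\sum_{x\in F}\abs{Ax}^2\le\abs{T}^2\abs{A}_{\HS}^2$, and take the supremum over $F\in\mathscr{F}$; order-completeness guarantees the supremum exists and respects the bound. For $\abs{AT}_{\HS}\le\abs{T}\abs{A}_{\HS}$, the cleanest route is to pass to adjoints: since $(AT)^*=T^*A^*$ and we have just shown $\abs{T^*A^*}_{\HS}\le\abs{T^*}\abs{A^*}_{\HS}$, the already-proven identities $\abs{AT}_{\HS}=\abs{(AT)^*}_{\HS}$, $\abs{A^*}_{\HS}=\abs{A}_{\HS}$, and $\abs{T^*}=\abs{T}$ (\cref{adjoint}) combine to give the claim.

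I expect the main obstacle to be the justified interchange of the two positive order-sums in the double-sum identity. Unlike the Hilbert space case, where Tonelli's theorem handles unconditional convergence of nonnegative series automatically, here both summations are nets of finite partial sums converging in the order sense over a \Stoneanalgebra, and one must argue that the iterated order-suprema may be swapped. The correct tool is that for an order-bounded family of positive elements the double order-supremum is independent of the order of summation---this is a lattice-theoretic statement about the Dedekind-complete $\A$ (again \cref{charawstaralg})---but some care is needed to verify order-boundedness \emph{before} asserting existence of the inner sums, so that the interchange is not merely formal. Everything else is a faithful transcription of the classical proof with norm replaced by $\abs{\cdot}$ and completeness replaced by order-completeness.
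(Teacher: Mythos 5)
Your proposal is correct and takes essentially the same route as the paper: the paper gives no details at all, stating only that the proof is analogous to the Hilbert space case $\A=\C$ (with a citation to G\"on\"ull\"u), and your write-up is precisely that classical argument transcribed to order-convergence --- the Parseval/Tonelli double-sum interchange for positive families in the Dedekind complete algebra $\A$, extension of finite suborthonormal sets to bases for the identification of $\abs{A}_{\HS}^2$ with $\sum_{x\in\calB}\abs{Ax}^2$, and the adjoint trick $(AT)^*=T^*A^*$ together with $\abs{A^*}_{\HS}=\abs{A}_{\HS}$ and $\abs{T^*}=\abs{T}$ for the bound on $\abs{AT}_{\HS}$. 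The one technical point you flag, the interchange of iterated order-sums, is indeed valid exactly as you indicate, since finite sums of suprema of increasing nets equal suprema of finite sums by order-continuity of addition, so both iterated sums coincide with the supremum over finite subsets of the product index set whenever one of them is order-bounded.
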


\begin{proof}
The proof is analogous   to that of the case   $\A= \C$, see \cite[Prop.~3.2]{Gonu2014}.
\end{proof}

The space $\HS(E)$ carries a natural KH-module structure:

\begin{proposition}\label{HSops}\label{HS.p.char}
Let $E$ be a Kaplansky--Hilbert module over a
                \Stoneanalgebra{} $\A$. Then the mapping
\begin{align*}
\HS(E) \times \HS(E)  \to \A, \quad (A,B) \mapsto (A|B)\coloneq \olim_{\calB \in \mathscr{F}} \sum_{x \in \calB} 
(Ax|Bx)
\end{align*}
turns $\HS(E)$ into a Kaplansky--Hilbert module over $\A$, and
\[ (A|B)_{\HS} = \sum_{x\in \calB} (Ax|Bx) \qquad (A, B \in \HS(E))
\]
as an order-convergent series for each suborthonormal basis $\calB$ of $E$.  
\end{proposition}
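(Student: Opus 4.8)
The plan is to mimic the Hilbert space theory of Hilbert--Schmidt operators verbatim, replacing norm-convergence by order-convergence and exploiting that $\A = \Ce(\Om)$ lets one run Minkowski-type estimates pointwise on $\Om$. The substantive points are the convergence/basis-independence of the series defining $(\cdot|\cdot)$ and the order-completeness of $\HS(E)$.

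First I would record that $\HS(E)$ is an $\A$-submodule of $\Hom(E)$ on which $\abs{\cdot}_{\HS}$ is a lattice-norm. Homogeneity under $\A$ is immediate from $\abs{(fA)x}^2 = \abs{f}^2\abs{Ax}^2$, and the triangle inequality reduces to a finite Minkowski inequality: from $\abs{(A+B)x}\le\abs{Ax}+\abs{Bx}$ and the scalar Minkowski inequality applied pointwise on $\Om$ one gets $(\sum_{x\in\calB}\abs{(A+B)x}^2)^{1/2}\le\abs{A}_{\HS}+\abs{B}_{\HS}$ for every finite suborthonormal $\calB$, so the supremum defining $\abs{A+B}_{\HS}$ exists by Dedekind completeness. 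Definiteness is clear, since $\abs{A}_{\HS}=0$ forces $Ax=0$ on a suborthonormal basis $\calB$, hence on $\ocl\spann_\A(\calB)=E$ (Proposition \ref{fourier}) by order-continuity of $A$.

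Next I would fix a suborthonormal basis $\calB$ and set $(A|B)_{\HS}=\olim_F\sum_{x\in F}(Ax|Bx)$ along finite $F\subseteq\calB$. Convergence follows by polarization: for each $F$ one has $\sum_{x\in F}(Ax|Bx)=\tfrac14\sum_{k=0}^3 i^k\sum_{x\in F}\abs{(A+i^kB)x}^2$, and each of the four increasing nets on the right order-converges to $\abs{A+i^kB}_{\HS}^2$ by the preceding characterization of $\HS(E)$; order-continuity of the vector space operations then yields convergence of the left-hand net. Applying the same polarization identity to the limits gives $(A|B)_{\HS}=\tfrac14\sum_{k=0}^3 i^k\abs{A+i^kB}_{\HS}^2$, which is manifestly independent of $\calB$; thus the inner product is well-defined independently of the basis, and $(A|A)_{\HS}=\abs{A}_{\HS}^2=\sup_{\calB\in\mathscr F}\sum_{x\in\calB}\abs{Ax}^2$ reconciles it with the definition of $\abs{\cdot}_{\HS}$. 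Sesquilinearity and conjugate symmetry transfer from $E$ to the partial sums and survive the order-limit by order-continuity, while positive definiteness is the content of the previous paragraph; in particular the lattice-norm induced by $(\cdot|\cdot)_{\HS}$ is $\abs{\cdot}_{\HS}$.

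The main work, and the delicate point, is order-completeness. Given an order-Cauchy net $(A_\alpha)$ in $\HS(E)$, I would first note $\abs{T}\le\abs{T}_{\HS}$ for all $T\in\HS(E)$: for $\abs{x}\le\car$ one has $\abs{Tx}=\abs{x}\,\abs{T(x/\abs{x})}\le\abs{T(x/\abs{x})}\le\abs{T}_{\HS}$, using that $\{x/\abs{x}\}$ is a suborthonormal singleton. Hence $(A_\alpha)$ is order-Cauchy for the operator lattice-norm, so by Proposition \ref{completehom} it order-converges to some $A\in\Hom(E)$, and consequently $\olim_\alpha A_\alpha x=Ax$ for each $x$. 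Choosing via Lemma \ref{lem:easynetchar} an index $\alpha_0$ and $(g_\alpha)$ decreasing to $0$ with $\abs{A_\beta-A_\gamma}_{\HS}\le g_\alpha$ for $\beta,\gamma\ge\alpha\ge\alpha_0$, I would fix $\beta\ge\alpha$ and a finite suborthonormal $F$, let $\gamma$ run, and pass to the order-limit in $\sum_{x\in F}\abs{(A_\beta-A_\gamma)x}^2\le g_\alpha^2$: the finite sum is order-continuous in $\gamma$ and the positive cone is order-closed, so $\sum_{x\in F}\abs{(A_\beta-A)x}^2\le g_\alpha^2$. Taking the supremum over $F$ gives $A_\beta-A\in\HS(E)$ with $\abs{A_\beta-A}_{\HS}\le g_\alpha$, whence $A=A_\beta-(A_\beta-A)\in\HS(E)$ and $\olim_\beta A_\beta=A$ in $\HS(E)$. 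The one place that requires care is precisely this interchange of the order-limit in $\gamma$ with the (finite, hence order-continuous) partial sums and with the supremum over $F$, which is legitimate only because the relevant nets are eventually order-bounded and $\A$ is Dedekind complete.
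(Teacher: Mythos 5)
Your proof is correct and follows precisely the route the paper intends: the paper's own ``proof'' is a one-line appeal to the analogy with the case $\A=\C$ (citing \cite[Prop.~3.3 and Thm.~3.4]{Gonu2014}), and what you write out --- the Minkowski estimate making $\abs{\cdot}_{\HS}$ a lattice-norm, polarization for existence and basis-independence of the inner product, and the estimate $\abs{T}\le\abs{T}_{\HS}$ feeding into Proposition \ref{completehom} so that an $\HS$-order-Cauchy net already has a limit in $\End(E)$, which is then shown to lie in $\HS(E)$ --- is exactly that analogous argument, with the order-limit interchanges justified correctly. No gaps.
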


\begin{proof}
Again, the proof is analogous   to that of the case   $\A = \C$, see 
\cite[Prop.~3.3 and Thm.~3.4]{Gonu2014}.
\end{proof}

If  $y,z \in E$, then the finite-rank homomorphism $A_{y,z}$
  (cf.\ Section \ref{s.finrank}) is Hilbert--Schmidt with 
\[ |A_{y,z}|_{\mathrm{HS}} =   |y| \cdot |z|. 
\]
Indeed, if $\calB$ is any suborthonormal basis of $E$, then
\[    \abs{A_{y,z}}_{\HS}^2 = \sum_{e\in\calB} \abs{ (e|z) y}^2
= \abs{y}^2 \sum_{e\in\calB} \abs{ (z|e)}^2 = \abs{y}^2 \abs{z}^2
\]
by Parseval. 

\begin{lemma}\label{lem:approxHS}
  Let $E$ be a Kaplansky--Hilbert module over a \Stoneanalgebra{} $\A$. Then 
  the space of $\A$-finite-rank homomorphisms 
  is order-dense in $\HS(E)$.
\end{lemma}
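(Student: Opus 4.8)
The plan is to imitate the classical Hilbert space proof, where one truncates the expansion of a Hilbert--Schmidt operator along an orthonormal basis. First I would invoke \cref{exorth} (with $N = \leer$) to fix a suborthonormal basis $\calB$ of $E$. For a finite subset $F \subseteq \calB$ let $P_F \coloneq \sum_{e\in F} A_{e,e}$, which by \cref{fourier} is the orthogonal projection of $E$ onto the finite-rank submodule $\spann_\A(F)$; in particular $P_F e = e$ for $e\in F$ (using that $e$ is normalized) and $P_F e = 0$ for $e\in\calB\ohne F$. Given $A\in\HS(E)$, I set
\[ A_F \coloneq A P_F = \sum_{e\in F} A_{Ae,e}, \]
so that each $A_F$ is a homomorphism of $\A$-finite rank, and the $A_F$ are indexed by the finite subsets of $\calB$ directed under inclusion.

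The core of the argument is to show $\olim_F A_F = A$ in the KH-module $\HS(E)$, which by Remark (4) reduces to $\olim_F \abs{A - A_F}_{\HS} = 0$ in $\A$. Here I would compute the Hilbert--Schmidt lattice-norm along the very basis $\calB$ used for the truncation, which is permitted by \cref{HS.p.char}. Since $(A - A_F)e = 0$ for $e\in F$ and $(A - A_F)e = Ae$ for $e\in\calB\ohne F$, this yields
\[ \abs{A - A_F}_{\HS}^2 = \sum_{e\in\calB} \abs{(A - A_F)e}^2 = \sum_{e\in\calB\ohne F} \abs{Ae}^2 = \abs{A}_{\HS}^2 - \sum_{e\in F} \abs{Ae}^2. \]

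It then remains to observe that the net $\bigl(\sum_{e\in F}\abs{Ae}^2\bigr)_F$ of finite partial sums increases to its supremum $\sum_{e\in\calB}\abs{Ae}^2 = \abs{A}_{\HS}^2$, this order-convergence being exactly the statement that $A\in\HS(E)$ (\cref{HS.p.char}). Hence the quantity $\abs{A - A_F}_{\HS}^2$ displayed above is a net decreasing to $0$; as the square root is monotone, $\abs{A - A_F}_{\HS}$ is decreasing too, and its infimum $h\ge 0$ satisfies $h^2\le \inf_F \abs{A - A_F}_{\HS}^2 = 0$, so that $\abs{A - A_F}_{\HS}$ decreases to $0$. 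Taking this net itself as the dominating net in \cref{deforderconv} gives $\olim_F A_F = A$, proving that the $\A$-finite-rank homomorphisms are order-dense in $\HS(E)$.

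I do not anticipate a serious obstacle. The only two points that require a little care are the legitimacy of evaluating $\abs{\cdot}_{\HS}$ along the fixed basis $\calB$---guaranteed by the basis-independence built into \cref{HS.p.char}---and the passage from $\abs{A - A_F}_{\HS}^2$ to $\abs{A - A_F}_{\HS}$, which I handle by the elementary infimum argument above rather than appealing to order-continuity of the square root.
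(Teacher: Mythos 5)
Your proposal is correct and follows essentially the same route as the paper's proof: the paper also fixes a suborthonormal basis $\calB$, truncates via $A_G \coloneq \sum_{e\in G} A_{Ae,e}$, and computes $\abs{A - A_G}^2_{\HS} = \sum_{e\in \calB \ohne G} \abs{Ae}^2 \to 0$ as $G \nearrow \calB$. Your additional care about evaluating $\abs{\cdot}_{\HS}$ along the fixed basis and about passing from $\abs{A-A_F}_{\HS}^2$ to $\abs{A-A_F}_{\HS}$ simply makes explicit what the paper leaves implicit.
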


\begin{proof}
This is again analogous to the Hilbert space case. Let $\calB$ be any
suborthonormal basis of $E$. Define, for each $G \subseteq \calB$
finite, $A_G \coloneq \sum_{e\in G} A_{Ae,e} = \sum_{e\in G} (\cdot |e)Ae$. Then 
\[  \abs{A - A_G}^2_{\HS} = \sum_{e\in \calB \ohne G} \abs{Ae}^2 \to
0\quad \text{as $G \nearrow \calB$}.\qedhere
\]
\end{proof}

\medskip

\subsection{Order-Completion and Tensor Products}\label{s.ocp}

Just as each pre-Hilbert space has a Hilbert space completion, 
each pre-Hilbert module over a \Stoneanalgebra{} has an
``order-completion'' in the following sense.

\begin{proposition}\label{completion}
   Let $E$ be a pre-Hilbert module over a \Stoneanalgebra{} $\A$.
   Then there is a Kaplansky--Hilbert module $E^\sim$ and an
         $\A$-isometric homomorphism $\iota \colon E \to 
E^\sim$ with order-dense range. 
\end{proposition}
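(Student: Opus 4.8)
The plan is to construct the order-completion $E^\sim$ by mimicking the classical completion of a pre-Hilbert space, but replacing Cauchy sequences with order-Cauchy nets and norm-limits with order-limits, while keeping careful track of the $\A$-module structure. First I would consider the space of all order-Cauchy nets in $E$ and introduce the equivalence relation identifying two nets $(x_\alpha)_\alpha$ and $(y_\beta)_\beta$ whenever their ``difference'' order-converges to $0$ in $E$ (made precise via the $\A$-valued norm, using Remark (4) after Definition \ref{deforderconv}). The inner product, module action, and $\A$-valued norm would be defined on equivalence classes by passing to the order-limit of the corresponding nets of values in $\A$; here the order-completeness of the Stone algebra $\A$ guarantees that these nets of scalars in $\A$ actually converge, and the order-continuity of the vector-space operations, the inner product, and the module product (Remark (3)) ensures the definitions are independent of representatives.

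The more efficient route, however, is to exploit \cref{riesz} and the Hilbert-bundle machinery already available. Since $\A = \Ce(\Om)$ for a Stonean space $\Om$, I would realize $E$ concretely and build $E^\sim$ as a suitable space of sections, but this risks circularity if the bundle representation presupposes completeness. Therefore I expect the cleaner argument to use the dual module: define $\iota \colon E \to \Hom(E;\A)^* \cong \Hom(\Hom(E;\A);\A)$ via the canonical embedding $x \mapsto (\konj{x} \mapsto \cdots)$, or more directly embed $E$ into the bidual and take $E^\sim$ to be the order-closure of the image inside an ambient KH-module. Concretely, one knows $\Hom(E;F)$ is order-complete whenever $F$ is (\cref{completehom}), so $E^* = \Hom(E;\A)$ is order-complete; then applying the Riesz--Fréchet construction of \cref{riesz} to $E^*$ yields a genuine KH-module into which $E$ embeds $\A$-isometrically with order-dense range.

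The key steps, in order, are: (1) produce an ambient order-complete module containing an isometric copy of $E$; (2) verify that the embedding $\iota$ is $\A$-isometric, which follows immediately from the norm-preservation in \cref{riesz} ($\abs{\konj{y}} = \abs{y}$); (3) take $E^\sim \coloneq \ocl(\iota(E))$ and invoke \cref{order-closure} to conclude that $E^\sim$ is an order-closed submodule, hence itself order-complete and therefore a KH-module; and (4) observe that $\iota(E)$ is order-dense in $E^\sim$ by the very definition of order-closure. The uniqueness of $E^\sim$ (up to $\A$-isometric isomorphism) would then follow from the extension result \cref{extension}, since any two such completions admit mutually inverse $\A$-isometric extensions of the identity on $E$.

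The main obstacle I anticipate is step (1): exhibiting a canonical order-complete module into which $E$ embeds without secretly assuming the conclusion. The equivalence-class-of-nets construction is conceptually transparent but technically delicate, because order-Cauchy nets need not have a well-behaved algebra of equivalence classes, and one must check that the candidate inner product lands in $\A$ and satisfies positive-definiteness after passing to limits. The dual-module route sidesteps this by borrowing order-completeness from \cref{completehom}, so the real work is confirming that the bidual embedding is order-dense rather than merely norm-dense; here \cref{lns.l.order-norm}(ii) (norm-dense implies order-dense) combined with norm-completeness of the Riesz dual gives the needed density, so I expect the density verification, rather than the completeness, to be the genuinely subtle point.
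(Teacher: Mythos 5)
Your overall route is the same as the paper's: embed $E$ into the double dual $E^{**}$ (order-complete by \cref{completehom}), check that $\iota$ is $\A$-isometric via the identity $\abs{\konj{y}}=\abs{y}$ from \cref{riesz}, set $E^\sim \coloneq \ocl(\iota(E))$, and get order-density for free. But there is a genuine gap at your step (3): an order-closed submodule of the order-complete \emph{lattice-normed} module $E^{**}$ is only a Kaplansky--\emph{Banach} module, not a Kaplansky--Hilbert module, because $E^{**}$ carries no $\A$-valued inner product at all — only the operator lattice-norm. "Order-complete, therefore a KH-module" is false as stated; the definition of a KH-module requires an order-complete \emph{pre-Hilbert} module. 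The missing step, which is exactly the final step of the paper's proof, is to transport the inner product from $E$ to $\iota(E)$ (using that $\iota$ is injective and $\A$-isometric) and then extend it to all of $E^\sim$ by order-continuity, e.g.\ by a twofold application of \cref{extension}: first extend $(\cdot\,|\,y)$ for each fixed $y\in\iota(E)$, then extend in the second variable. Without this, the proof does not produce the structure the proposition asserts.

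Relatedly, your closing diagnosis of where the subtlety lies is inverted. The order-density of $\iota(E)$ in $E^\sim$ is immediate from the construction (together with \cref{order-closure}); no appeal to \cref{lns.l.order-norm}(ii) or "norm-completeness of the Riesz dual" is needed, and indeed $\iota(E)$ need not be dense in $E^{**}$ itself — that is precisely why one passes to the order-closure. Also, "applying the Riesz--Fréchet construction of \cref{riesz} to $E^*$" does not make sense here: $E^* = \Hom(E;\A)$ is not a pre-Hilbert module a priori, and the bijectivity part of \cref{riesz} presupposes that the module one starts from is already Kaplansky--Hilbert. The honest statement is that the genuinely delicate point is endowing $E^\sim$ with an inner product, and your own toolkit (\cref{extension}, which you invoke only for uniqueness) is what closes that gap.
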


\begin{proof}
The double dual $E^{**}$ of $E$ is an
order-complete lattice-normed module. The mapping
\[ \iota \colon  E \to E^{**},\qquad   x \mapsto \iota(x) \coloneq (x^* \mapsto x^*(x))
\]
is $\A$-isometric. Indeed, $\abs{\iota(x)}  = \sup_{\abs{x^*}\le \car}
\abs{ x^*(x)} \le \abs{x}$ and, by \cref{riesz}, 
\begin{align*}
              \abs{\iota(x)} & = \sup_{\abs{x^*}\le \car} \abs{ x^*(x)} 
\ge \sup_{\abs{y}\le \car} \abs{\konj{y}(x)} 
= \sup_{\abs{y}\le \car} \abs{(x|y)} = \abs{x} 
\end{align*}
for each $x\in E$. Let $E^\sim$ be the order-closure of $\iota(E)$
within $E^{**}$.  Then $\iota(E)$ is an order-dense submodule of
$E^\sim$. Finally, transport  the $\A$-valued inner product from $E$
to $\iota(E)$ and then extend it (e.g. by a twofold application of
\cref{extension}) to all of $E^\sim$. 
\end{proof}

The order-completion 
of a pre-Hilbert module $E$ over a  \Stoneanalgebra{} $\A$ is unique up to a
canonical $\A$-isometric isomorphism. We will therefore speak of \emph{the}
order-completion of $E$ in the following. 

\begin{example}\label{comp-COm}
Let $\Om$ be a Stonean space and let $H$ be a Hilbert space. The 
order-completion of the Hilbert module
$\Ce(\Om; H)$ (see \cref{exhilbert}, \cref{exhilbert3}) can be identified with
\[ \Ce_{\#}(\Om;H) \defeq  \ell^\infty_c(\Om; H) / \ell^\infty_0(\Om; H).
\]
Here, $\ell^\infty_c(\Om; H)$ is the space of all bounded functions $\Om \to H$
which are continuous on a residual set, and $\ell^\infty_0(\Om; H)$ is the
subspace of functions that vanish almost everywhere. See \cite[2.3.3]{Kusr2000}. 
\end{example}

Using the order-completion, one can 
construct the tensor product of Kaplansky--Hilbert modules. 

\begin{definition}
  Let $E$ and $F$ be Kaplansky--Hilbert modules over a \Stoneanalgebra{} $\A$. The algebraic
  tensor product $E \otimes_\mathrm{alg} F$  as $\A$-modules is equipped with the $\A$-valued inner product
  \begin{align*}
    (\cdot | \cdot) \colon (E \otimes_\mathrm{alg} F) \times (E \otimes_\mathrm{alg} F) \to \A
  \end{align*}
  defined on elementary tensors by 
  \[ (x \otimes u|y \otimes v) \coloneqq 
  (x|y) \cdot (u|v) \qquad (x \otimes u, y\otimes v \in E \otimes_{\mathrm{alg}} F),
\]
see  \cite[pages 40--41]{Lanc1995}. Its order-completion is called
  the {\emdf tensor product} $E \otimes F$. 
\end{definition}

See  \cite[Chap.~4]{Lanc1995} for 
the tensor product of  general Hilbert modules, but note that in our
construction the {\em order}-completion is used instead of the norm-completion.

\medskip

As in the case of ordinary Hilbert--Schmidt operators, 
the space $\HS(E)$ can be identified with a tensor product.

\begin{proposition}\label{tensordescription}
  Let $E$ be a Kaplansky--Hilbert module over a \Stoneanalgebra{} $\A$. Then there is a 
  unique $\A$-isometric isomorphism
    \begin{align*}
      V \colon E \otimes E^* \to \HS(E)
    \end{align*}
  with $V(y \otimes \overline{z}) = A_{y,z}$ for all $y,z \in E$.
\end{proposition}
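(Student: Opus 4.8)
The plan is to construct $V$ on the algebraic tensor product first and verify the key isometry property on a suborthonormal basis, then extend by order-density to all of $E \otimes E^*$. Recall that $\HS(E)$ is a KH-module (by \cref{HSops}) and that $E \otimes E^*$ is, by construction, the order-completion of $E \otimes_{\mathrm{alg}} E^*$. The finite-rank homomorphisms $A_{y,z}$ are order-dense in $\HS(E)$ (\cref{lem:approxHS}), which will supply surjectivity of the extension.

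First I would define a map on elementary tensors by $V_0(y \otimes \overline{z}) \coloneq A_{y,z}$ and check that it extends to a well-defined $\A$-linear map $V_0 \colon E \otimes_{\mathrm{alg}} E^* \to \HS(E)$. Here I use the identification $E^* \cong E$ via the Riesz--Fr\'echet isomorphism $\Theta$ of \cref{riesz}, so that an element of $E^*$ is written as $\overline{z}$ for some $z \in E$; $\A$-linearity in the second slot is compatible with the $\A$-antilinearity of $\Theta$ and the $\A$-linearity of $z \mapsto A_{y,z}$ in $z$ through conjugation, exactly as in the scalar case. The crucial computation is that $V_0$ is $\A$-isometric on $E \otimes_{\mathrm{alg}} E^*$. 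Fixing a suborthonormal basis $\calB$ of $E$ (which exists by \cref{exorth}), I would compute, for $y,z,y',z' \in E$, the Hilbert--Schmidt inner product
\[
  (A_{y,z} \mid A_{y',z'})_{\HS} = \sum_{e \in \calB} (A_{y,z}e \mid A_{y',z'}e) = \sum_{e \in \calB} (z\mid e)\konj{(z'\mid e)}(y\mid y')
\]
using \cref{HSops}. Since $\sum_{e\in\calB}(z\mid e)\konj{(z'\mid e)} = (z'\mid z)$ by the Parseval identity of \cref{fourier-prep}, this equals $(y\mid y')(z'\mid z)$, which matches the inner product $(y \otimes \overline{z} \mid y' \otimes \overline{z'})$ on $E \otimes_{\mathrm{alg}} E^*$ after unwinding the definition of the inner product on $E^*$ induced by $\Theta$. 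By $\A$-sesquilinearity this identity propagates from elementary tensors to all of $E \otimes_{\mathrm{alg}} E^*$, so $V_0$ is $\A$-isometric.

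Next I would invoke \cref{extension}: since $E \otimes_{\mathrm{alg}} E^*$ is order-dense in the KH-module $E \otimes E^*$ and the target $\HS(E)$ is order-complete, the $\A$-isometric homomorphism $V_0$ extends uniquely to an $\A$-isometric homomorphism $V \colon E \otimes E^* \to \HS(E)$. Injectivity is automatic since $\A$-isometries are injective (as noted after \cref{HM.d.HM}). For surjectivity, the range $\ran(V)$ contains all $A_{y,z} = V(y\otimes\overline{z})$, hence all $\A$-finite-rank homomorphisms; being the image of an order-dense set under an order-continuous (because bounded, by \cref{lns.l.order-norm}) $\A$-isometry, $\ran(V)$ is order-dense in $\HS(E)$, and being $\A$-isometric its image is order-closed, so $\ran(V) = \HS(E)$. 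Uniqueness follows from the prescribed values on elementary tensors together with order-continuity.

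The main obstacle I anticipate is the bookkeeping around the two conjugations: the inner product on $E^*$ is transported from $E$ via the $\A$-antilinear map $\Theta$, so I must be careful that $\overline{z}$ as an element of $E^*$ carries the conjugate inner product, and that the conjugations in the Hilbert--Schmidt computation match those in the tensor-product inner product $(x \otimes u \mid y \otimes v) = (x\mid y)(u\mid v)$ when $u = \overline{z}$, $v = \overline{z'}$. Getting the placement of complex conjugates right so that $(z'\mid z)$ (rather than $(z\mid z')$) emerges, and confirming it agrees with $(\overline{z}\mid\overline{z'})_{E^*} = (z'\mid z)$, is the one genuinely delicate point; everything else is a faithful transcription of the classical Hilbert-space argument, with norm-density replaced by order-density and \cref{extension} doing the extension work.
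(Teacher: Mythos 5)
Your proposal is correct and follows essentially the same route as the paper's proof: define the map on the algebraic tensor product via $\A$-bilinearity, verify $\A$-isometry by a suborthonormal-basis computation using Parseval, extend uniquely by \cref{extension} thanks to order-density, and obtain surjectivity from the order-density of the finite-rank homomorphisms (\cref{lem:approxHS}) together with order-closedness of the isometric image. The only cosmetic difference is that the paper computes the lattice-norm $\bigl|\sum_i y_i \otimes \overline{z_i}\bigr|^2$ of a general element directly, whereas you compute inner products of elementary tensors and propagate by sesquilinearity---the same calculation organized differently.
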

    \begin{proof}
      The mapping 
        \begin{align*}
          E \times E^* \to \HS(E), \quad (y,\overline{z}) \mapsto  A_{y,z}
        \end{align*}
      is $\A$-bilinear, and therefore induces an $\A$-linear map $W\colon E \otimes_{\mathrm{alg}} E^* \to \HS(E)$ 
on the algebraic tensor product $E \otimes_{\mathrm{alg}} E^*$. Moreover, for $\sum_{i=1}^n y_i \otimes \overline{z_i} \in E 
\otimes_{\mathrm{alg}} E^*$ and any suborthonormal basis $B \subset E$
    \begin{align*}
      \sum_{x \in B} \Bigl|\sum_{i=1}^nA_{y_i,z_i}x\Bigr|^2 &= \sum_{x \in B}\sum_{i=1}^n\sum_{j=1}^n (x|z_i) \overline{(x|z_j)} 
(y_i|y_j)
      = \sum_{i=1}^n \sum_{j=1}^n \Bigl(\sum_{x \in B} (z_j|x) \,\Big|\,  z_i\Bigr)  (y_i|y_j) \\
      &= \sum_{i=1}^n\sum_{j=1}^n (z_j|z_i) (y_i|y_j)
      = \Bigl|\sum_{i=1}^n y_i \otimes \overline{z_i}\Bigr|^2.
    \end{align*}
  By \cref{exorth}, this means
    \begin{align*}
      \Bigl|\sum_{i=1}^n y_i \otimes \overline{z_i}\Bigr| = \Bigl|\sum_{i=1}^nA_{y_i,z_i}\Bigr|_{\mathrm{HS}} 
    \end{align*}
  and therefore $W$ induces an $\A$-isometric map $V \colon [E \otimes_{\mathrm{alg}} E^*] \to \HS(E)$. 
  By \cref{extension}, $V$ extends uniquely to an $\A$-isometry
  $V \colon E\otimes E^* \to \HS(E)$. Since the range of $V$ is order-dense by \cref{lem:approxHS}, this extension 
  is surjective and hence an isomorphism.
\end{proof}

It is common to identify   $y \tensor \konj{z}$ with $A_{y,z}$ via $V$.

\section{Hilbert Bundle Representation}\label{s.hbrep}

Similar to the representation of a unital commutative $\Ce^*$-algebra as
the space $\Ce(\Om)$ of continuous functions on a compact Hausdorff space $\Om$, 
there is a representation of Hilbert modules over such algebras
as the space of {\em continuous  sections} on a {\em Hilbert bundle} 
over $\Om$. This chapter is devoted to these notions
and describing the correspondence.

\begin{definition}\label{defhilbertbundle}
  Let $\Om$ be a Hausdorff topological space. A {\emdf
          (continuous) Hilbert bundle} over $\Om$ is a topological
        space 
$H$ together with a  continuous, 
open surjection $p \colon H \to \Om$ with the following properties.
     \begin{enumerate}[1)]
    \item Every fiber $H_\vom \coloneq p^{-1}\{\vom\}$, $\vom \in H$,  is a Hilbert space.
    \item The mappings
      \begin{alignat*}{3}
        + &\colon H \times_\Om H \to H, \quad &(e,f) &\mapsto e+f\\
        \cdot &\colon \C \times H \to H, \quad &(\lambda,e) &\mapsto \lambda e\\
        ( \cdot | \cdot ) &\colon H \times_\Om H \to \C, \quad &(e,f) &\mapsto (e|f) 
      \end{alignat*}
      are continuous. Here, $H \times_\Om H \defeq \{ (e,f)\in H \times H \mid p(e)= p(f)\}$.

    \item For each $\vom \in \Om$, the sets
      \begin{align*}
        \{e \in H\mid pe \in U, \|e\| < \epsilon\}\qquad (\omega\in U \subset \Om\,\text{ open},\, \epsilon > 0)
      \end{align*}
      constitute  an open neighborhood base of 
      $0_\vom \in H_\vom$.
  \end{enumerate}
\end{definition}

A simple, but important example for a Hilbert bundle is the following.

\begin{example}
   Let $\Om$ be a Hausdorff space and $H$ be a Hilbert space. Then the product $\Om \times H$ equipped with the product topology and the 
projection onto the first component is a continuous Hilbert bundle over $\Om$ called the {\emdf trivial bundle with fiber $H$}. 
\end{example}

Let $p\colon H \to \Om$ be a Hilbert bundle.  Each (continuous) mapping
$x \colon O \to H$ with  $p \circ x = \mathrm{id}_O$ for some open
subset $O \subseteq \Om$ is called is called a 
{\emdf local (continuous) section} of $H$.  The space of all local continuous sections
of $H$ on $O$ is denoted by $\Gamma(O;H)$. Local sections with  $O =
\Om$ are called {\emdf global} sections, and we write
$\Gamma(H)\coloneq \Gamma(\Om;H)$.  One can show that for every $e \in H$ there always exists a global section $x \in \Gamma(H)$ with $x(p(e)) = e$ (see, e.g., \cite[Theorem 3.2]{Gierz1982}).

It is easy to see that $\Gamma(H)$ is a Hilbert $\Ce(\Om)$-Module.
Conversely, the following proposition---a module version 
of the Gelfand--Naimark representation theorem---tells that 
each Hilbert module is isomorphic to the module of continuous sections 
in a (naturally constructed) Hilbert bundle. See \cite[Chap.~2]{DuGi1983} for a proof.

\begin{proposition}\label{equiv}
  Let $\Om$ be a compact space and $E$ be a Hilbert module over
        $\uC(\Om)$. 
Then the disjoint union $H$ of the Hilbert spaces
    \begin{align*}
      H_\vom \coloneqq E/\{x \in E\mid |x|(\vom) =
                  0\}\qquad (\vom \in \Om)
    \end{align*}
equipped with the base point map $p \colon H \to \Om$ and the topology generated by the sets
    \begin{align*}
      V(x,U,\varepsilon) \coloneqq \left\{e \in p^{-1}(U) \mid \bigl\|e - [x]_{p(e)}\bigr\| < \varepsilon\right\}
    \end{align*}
  for $x \in E$, $U \subset \Om$ open and $\varepsilon > 0$ is a
        Hilbert bundle over $\Om$. Moreover, the mapping
    \begin{align*}
      E \to \Gamma(H), \quad x \mapsto [\vom \mapsto [x]_\vom]
\end{align*}     
  is a $\uC(\Om)$-isometric isomorphism of Hilbert modules over $\uC(\Om)$.
\end{proposition}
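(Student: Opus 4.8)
The plan is to treat the two assertions separately: first that $(H,p)$ is a Hilbert bundle in the sense of \cref{defhilbertbundle}, and then that $\Phi\colon E \to \Gamma(H)$, $\Phi x \coloneq \hat{x}$ with $\hat{x}(\vom) \coloneq [x]_\vom$, is a $\Ce(\Om)$-isometric isomorphism.

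First I would fix the fibre structure. For each $\vom\in\Om$ the evaluation $(x|y)(\vom)$ is a positive semidefinite sesquilinear form on $E$ whose null space is precisely $N_\vom \coloneq \{x\in E \mid \abs{x}(\vom)=0\}$ (by the Cauchy--Schwarz inequality in $E$, which forces $(x|y)(\vom)=0$ whenever $\abs{x}(\vom)=0$). Hence $E/N_\vom$ is a genuine pre-Hilbert space under $([x]_\vom|[y]_\vom)\coloneq (x|y)(\vom)$, and $H_\vom$ is its Hilbert-space completion (a completion is in general needed, since the evaluation $x\mapsto [x]_\vom$ is only norm-decreasing). The single fact on which everything below rests is that for all $x,y\in E$ the fibrewise distance is continuous in the base point: $\norm{[x]_\vom - [y]_\vom}_{H_\vom} = \abs{x-y}(\vom)$, which is a continuous function on $\Om$.

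Next I would verify the bundle axioms. Since $\{[x]_\vom \mid x\in E\}$ is by construction dense in each fibre $H_\vom$ and the pairwise norm-differences are continuous, the sets $V(x,U,\veps)$ form a base of a (unique) topology turning $(H,p)$ into a continuous Hilbert bundle; this is the standard passage from a full family of sections to a bundle. Concretely: $p$ is continuous because each $V(x,U,\veps)\subseteq p^{-1}(U)$ and these cover $p^{-1}(U)$; $p$ is open because $p\bigl(V(x,U,\veps)\bigr)=U$; axiom 3) holds because taking $x=0$ gives $V(0,U,\veps)=\{e\mid pe\in U,\ \norm{e}<\veps\}$; and the continuity of $+$, of scalar multiplication, and of $(\cdot|\cdot)$ on $H\times_\Om H$ reduces, via the triangle inequality and fibrewise Cauchy--Schwarz, to the continuity of $\vom\mapsto\abs{x-y}(\vom)$ together with fibrewise density. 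With the topology in place, each $\hat{x}$ is a continuous section, since $\hat{x}^{-1}\bigl(V(y,U,\veps)\bigr)=\{\vom\in U \mid \abs{x-y}(\vom)<\veps\}$ is open; and $\Phi$ is $\Ce(\Om)$-linear because $[fx]_\vom=f(\vom)[x]_\vom$, and $\Ce(\Om)$-isometric because $(\Phi x|\Phi y)(\vom)=(x|y)(\vom)$, i.e.\ $(\Phi x|\Phi y)=(x|y)$ in $\Ce(\Om)$; in particular $\Phi$ is injective.

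The hard part is surjectivity, and it is where compactness and completeness enter. Given $s\in\Gamma(H)$ and $\veps>0$, fibrewise density yields, for each $\vom_0$, some $x\in E$ with $\norm{s(\vom_0)-[x]_{\vom_0}}<\veps$; continuity of $s$ then makes $s^{-1}\bigl(V(x,\Om,\veps)\bigr)$ an open neighbourhood of $\vom_0$ on which $\norm{s(\vom)-\hat{x}(\vom)}<\veps$. By compactness I would extract a finite subcover $U_1,\dots,U_n$ with associated $x_1,\dots,x_n\in E$, pick a partition of unity $\phi_1,\dots,\phi_n\in\Ce(\Om)$ with $\phi_i\geq 0$, $\sum_i\phi_i=\car$ and each $\phi_i$ supported in $U_i$, and set $x^{(\veps)}\coloneq\sum_i\phi_i x_i\in E$. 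Writing $s(\vom)-\hat{x^{(\veps)}}(\vom)=\sum_i\phi_i(\vom)\bigl(s(\vom)-[x_i]_\vom\bigr)$ and estimating termwise gives $\norm{s-\Phi x^{(\veps)}}\leq\veps$ in the sup-norm of $\Gamma(H)$. Taking $\veps=1/n$ produces elements $x^{(1/n)}$ whose images converge uniformly to $s$; since $\Phi$ is isometric, $(x^{(1/n)})_n$ is Cauchy in $E$, and completeness of the Hilbert module $E$ delivers a limit $x$ with $\Phi x=s$. The only genuinely non-formal step is precisely this gluing: assembling purely local, fibrewise approximations into a single global element of $E$ by means of a partition of unity and the completeness of $E$.
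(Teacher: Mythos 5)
Your overall architecture is sound and is the standard one (the paper itself gives no proof of \cref{equiv}, deferring to Dupr\'e--Gillette): deduce the bundle axioms from the continuity of $\vom \mapsto \abs{x-y}(\vom)$, check that $\Phi\colon x \mapsto \hat{x}$ is $\Ce(\Om)$-isometric, and obtain surjectivity by a partition-of-unity gluing plus completeness of $E$. However, there is one genuine flaw: your parenthetical claim that ``a completion is in general needed'' is false, and by building the bundle from the \emph{completions} of the quotients you end up proving a statement about an a priori different bundle than the one in \cref{equiv}. Part of the assertion of \cref{equiv} (axiom 1) of \cref{defhilbertbundle}) is precisely that each fibre $H_\vom = E/N_\vom$ with $N_\vom \coloneq \{x \in E \mid \abs{x}(\vom) = 0\}$ is \emph{already} a Hilbert space; your proof never establishes this, and as written it concludes $E \cong \Gamma(\tilde{H})$ for the completed bundle $\tilde{H}$ rather than the claimed isomorphism.

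The gap is easy to fill, and the repair also turns your ``fibrewise density'' arguments into exact statements. Claim: the quotient norm on $E/N_\vom$ coincides with the pre-Hilbert norm $\norm{[x]_\vom} = \abs{x}(\vom)$. For the lower bound, if $y \in N_\vom$ then $\norm{x-y}_E \ge \abs{x-y}(\vom) \ge \abs{x}(\vom) - \abs{y}(\vom) = \abs{x}(\vom)$. For the upper bound, given $\veps>0$ pick by continuity an open $U \ni \vom$ with $\abs{x} \le \abs{x}(\vom)+\veps$ on $U$, and by Urysohn a function $f \in \Ce(\Om)$ with $0 \le f \le \car$, $f(\vom)=1$ and $f=0$ off $U$; then $y \coloneq (\car - f)x$ lies in $N_\vom$ and $\norm{x-y}_E = \norm{fx}_E = \sup_\Om \bigl(f\abs{x}\bigr) \le \abs{x}(\vom)+\veps$. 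Since $N_\vom$ is closed in $E$ (because $\sup_\Om \bigl|\abs{x}-\abs{y}\bigr| \le \norm{x-y}_E$), the quotient is complete in the quotient norm, hence in the inner-product norm, i.e.\ it is a Hilbert space and no completion occurs. With this in hand every element of $H_\vom$ is literally of the form $[x]_\vom$, and the rest of your proof (basis verification, continuity of the operations, isometry of $\Phi$, and the partition-of-unity/completeness argument for surjectivity) goes through verbatim for the bundle exactly as defined in the proposition. A smaller point: for axiom 3) it does not suffice to note that the sets $V(0,U,\veps)$ are open; one must also check that every basic neighbourhood $V(x,U',\veps')$ of $0_\vom$ contains some $V(0,U'',\delta)$, which follows from continuity of $\abs{x}$ together with $\abs{x}(\vom) < \veps'$.
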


Suppose that $E, F$ are Hilbert modules over $\A= \Ce(\Om)$ and $T\in \Hom(E;F)$. 
Interpreting $E = \Gamma(H)$ and $F = \Gamma(K)$ for Hilbert modules $H, K$
one obtains an induced {\emdf bundle map} 
\[  T^\wedge \colon H \to K,
\]
which is overall continuous and restricts to uniformly bounded linear {\emdf fiber maps}
$T_\vom \in \BL(H_\vom; K_\vom)$ for all $\vom \in \Om$ by 
\[  T^\wedge e = (Tx)(\vom) \qquad \text{whenever} \quad x\in E,\, x(\omega) = e.
\]
Conversely, if $S\colon H \to K$ is a bundle map, then by 
\[  (S^\vee x)(\vom) \coloneq  S(x(\vom))\qquad (x\in E,\, \vom \in \Om)
\]
an element $S^\vee \in \Hom(E;F)$ is defined. The mappings $T \mapsto T^\wedge$ and  
$S \mapsto S^\vee$ are mutually inverse. (See \cite[Summary 10.18]{Gierz1982})

One can frame this in the language of category theory: The assignments $H \mapsto \Gamma(H)$ and $E \mapsto H_E$ establish
an  equivalence of the category of Hilbert bundles and bundle maps over $\Om$ on one
side and of Hilbert modules and bounded module homomorphisms over $\uC(\Om)$ on
the other  (see  \cite[Chap.~2]{DuGi1983} for more details). 
Consequently, as every unital commutative $\uC^*$-algebra is isomorphic 
to a space $\uC(\Om)$, considering Hilbert bundles over compact spaces is, at
least from a categorial perspective,  equivalent 
to considering Hilbert modules over unital commutative $\uC^*$-algebras. In particular, we can reformulate properties of 
and theorems on bundles in terms of modules and vice versa.

\section{Spectral Theory of Hilbert--Schmidt Homomorphisms on KH-Modules}

In this chapter we prove the spectral theorem for self-adjoint
Hilbert--Schmidt
homomorphisms on KH-modules.

\medskip

\subsection{Review of the Spectral Theorem on Hilbert Spaces}\label{s.sptrev}

In this section we review the classical spectral theorem for 
self-adjoint Hilbert--Schmidt operators on a Hilbert space $H$. 
We formulate and prove it in a way that can be transferred, more or
less
straightforwardly, to the module setting.

\medskip
\noindent
Let $H$ be a Hilbert space, $\HS(H)$ the space of Hilbert--Schmidt operators 
on $H$, and $A = A^* \in \HS(H)$ a
self-adjoint Hilbert--Schmidt operator on $H$. The 
following procedure which is commonly used in the proof of the spectral theorem 
for compact, self-adjoint operators shall be called the {\emdf ``spectral algorithm''} 
applied to $A$.

\smallskip

Define $\lambda \coloneq \norm{A}$ and
\[   A^\sharp  \coloneq \begin{cases} \frac{A}{\norm{A}} & (A \neq 0)\\
  0 & (A=0).
\end{cases}
\]
Then $\lambda  A^\sharp = A$. Next, let
\[  P^+ \coloneq P_{\fix(  A^\sharp )},\quad P^-\coloneq P_{\fix(-  A^\sharp )}
\]
be the orthogonal projections onto the fixed spaces of $  A^\sharp $ and $-  A^\sharp $,
respectively.  Then
\[  
  AP^+ = \lambda P^+,\quad AP^- = -\lambda P^{-}
\] 
and so one sees that $P^+$ and $P^{-}$ are also the projections onto the   eigenspaces $\ker(\lambda -A)$ and $\ker(-\lambda - A)$ of $A$ 
corresponding to $\lambda$ and $-\lambda$, respectively .
Since $A$ is self-adjoint, $P^+ P^- = P^- P^+ = 0$ \cite[Lemma D.25]{EFHN2015}. Set 
$B_1 \coloneq A$, $\lambda_1\coloneq \lambda$, $P_1^\pm \coloneq P^\pm$ and 
\[ 
  B_2 \coloneq B_1 - \lambda_1(P_1^+ - P_1^-).
\] 
Then $B_2$ is again self-adjoint and Hilbert--Schmidt,
and satisfies $B_2 \perp A(P^+ + P^-)$. 
Now repeat the procedure with $B_2$ in place of $A$, and iterate. 

\medskip
This ``spectral algorithm'' yields a sequence of pairs of projections $(P^+_j, P^-_j)_j$,  a
sequence of operators $(B_j)_j$ and a scalar sequence $(\lambda_j)_j$  with the following properties.
\begin{enumerate}[(i)]
\item All the occurring projections $P^+_j, P^-_k$ are pairwise
orthogonal.
\item $P_j^{\pm}A = AP_j^{\pm} = \pm \lambda_j P_j^\pm$ for all $j\ge
1$.
\item All operators $B_j$ are self-adjoint and Hilbert--Schmidt.
\item $\lambda_{j} = \norm{B_{j}}$ for all $j \ge 1$ (with $B_1 = A$).
\item $\lambda_1 \ge \lambda_2 \ge \dots$
\item For all $j\ge 1$: $\, \lambda_j \neq 0\,\, \dann\,\, \lambda_j > \lambda_{j{+}1}$ 
and $P_j^++  P_j^-\neq0$.
\item $\displaystyle A = B_{n{+}1} + \sum_{j=1}^n A(P_j^+ + P_j^-) = B_{n{+}1} +
\sum_{j=1}^n \lambda_j (P_j^+ - P_j^-) \qquad (n\in \N)$.
\end{enumerate}

Observe that this sum is an orthogonal decomposition of $A$ within
$\HS(H)$. Hence,
\[   \abs{A}_{\HS}^2 = \abs{B_n}_{\HS}^2 + \sum_{j=1}^n \lambda_j^2 (\abs{P_j^+}_{\HS}^2 + \abs{P_j^-}_{\HS}^2) \qquad (n\in \N).
\]
It follows that $\sum_{j=1}^\infty \lambda_j^2 < \infty$ and hence
$\norm{B_n} \to 0$. This, in turn,  implies 
\[ A = \sum_{j=1}^\infty \lambda_j (P_j^+ - P_j^-),
\]
the series being  convergent  within $\HS(H)$.

\medskip
Before we can turn to the module analogue, we need to examine more
closely how the projections $P_j^\pm$ can be obtained from the
respective operator $B_j$.  We only consider $P^+$, the case of $P^-$ is
analogous. 

Suppose that $T\in \BL(H)$ is a contraction.
Hence, by the mean ergodic theorem, $P^+ = P_{\fix(T)}$ is the strong
limit of the ergodic averages $C_n \coloneq \frac{1}{n} (\Id + T +
\dots + {T}^{n-1})$.  If $T$ is compact, then $C_nT= TC_n \to TP^+
= P^+$ in operator norm.

To sum up: If we let 
\[ p_n(z) \coloneq  \frac{z}{n} (1 + z + \dots + z^{n{-}1}),
\]
then $p_n(T) \to P_{\fix(T)}$ in $\BL(H)$ whenever $T\in \BL(H)$ is compact with
$\norm{T}\le 1$. Applied to our situation from above we obtain
\[   P_j^+ = \lim_n p_n(B_{j}^\sharp)\quad \text{and}\quad P_j^- = \lim_{n} p_n(-B_{j}^{\sharp}),
\]
and the convergence is in the operator norm.

\medskip

\subsection{Spectral Theorem on KH-Modules}\label{s.spt}

We now pass to \KH{}  modules and obtain the following 
analogue. Recall the definition of Hilbert--Schmidt homomorphisms
on KH-modules from \cref{s.HS}.

\begin{theorem}[Spectral Theorem]\label{spt-KH}
Let $E$ be a KH-module over a 
\Stoneanalgebra{} $\A$ and let $A\in \HS(E)$ be 
a self-adjoint Hilbert--Schmidt homomorphism on $E$. Then there is a
sequence $(\lambda_j)_j$ in $\A_+$ and  orthogonal projections $P_j^+,
P_j^{-}$ ($j\in \N$) in $\End(E)$ such that 
\[   A = \sum_{j=1}^\infty \lambda_j  (P_j^+ -P_j^{-})
\]
in $\HS(E)$. Moreover, for each $j \ge 1$ the following assertions hold.
\begin{enumerate}[(i)]
\item $\lambda_j P_j^\pm \in \HS(E)$.
\item $\ran P^+_j \perp \ran P_j^-$ and $\ran P_j^\pm \perp \ran
  P^\pm_k$  for each $k\neq j$.

\item $P_j^{\pm}A = AP_j^{\pm} = \pm \lambda_j P_j^\pm$.

\item $\lambda_j \ge \lambda_{j{+}1}$.

\item $\lambda_j >
  \lambda_{j{+}1}$ almost everywhere on $\supp(\lambda_j)$.\footnote{ This means that $\supp(\lambda_j - \lambda_{j+1}) = \supp(\lambda_j)$, cf. page  \pageref{almosteverywhereinalgebra}.}
\item $\supp(\lambda_j) = \supp(P_j^++ P_j^-)$.

\end{enumerate}
\end{theorem}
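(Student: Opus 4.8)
The idea is to imitate the ``spectral algorithm'' of \cref{s.sptrev} verbatim, reading every scalar as an element of $\A$ and every operator norm as the operator lattice-norm. First I would isolate the single step. Given a self-adjoint $B\in\HS(E)$, set $\lambda\coloneq\abs{B}\in\A_+$ and let $B'\coloneq B/\abs{B}=\olim_{\veps\searrow 0}(\abs{B}+\veps\car)^{-1}B$ be the normalisation of $B$ inside the Kaplansky--Banach module $\End(E)$, which is order-complete by \cref{completehom}. Since $(\abs{B}+\veps\car)^{-1}$ is real and positive, $B'$ is again self-adjoint, and $\abs{B'}=\supp(B)\le\car$, so $B'$ is a contraction with $B=\lambda B'$. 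By \cref{met} the fixed modules $\fix(\pm B')$ are orthogonally complemented KH-submodules; let $P^+,P^-$ be the associated orthogonal projections. From $B'P^\pm=\pm P^\pm$ one reads off $BP^\pm=\lambda B'P^\pm=\pm\lambda P^\pm$, which is property (iii), and in particular $\lambda P^\pm=\pm BP^\pm\in\HS(E)$ because $\HS(E)$ is a two-sided ideal in $\End(E)$, giving (i). Orthogonality $\ran P^+\perp\ran P^-$ follows from self-adjointness: for $x\in\ran P^+$, $y\in\ran P^-$ one has $\lambda(x|y)=(Bx|y)=(x|By)=-\lambda(x|y)$, whence $\lambda(x|y)=0$; since $\supp(P^\pm)\le\supp(B')=\supp(\lambda)$, this forces $(x|y)=0$.

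\emph{The crux} is the ``norm-attainment'' statement
\[ \supp(P^++P^-)=\supp(\lambda)=\supp(B), \]
the module version of the fact that a compact self-adjoint operator attains its norm as an eigenvalue. Here I would pass to the Hilbert bundle representation $E=\Gamma(H)$ of \cref{equiv}: $B$ becomes a bundle map whose fibre operators $B_\vom\in\BL(H_\vom)$ are self-adjoint and compact, with $\norm{B_\vom}=\lambda(\vom)$ almost everywhere, and the fibres of $\fix(\pm B')$ are the $\pm\lambda(\vom)$-eigenspaces of $B_\vom$. On the residual set where $\lambda(\vom)>0$, compactness forces at least one of these eigenspaces to be non-zero, so $(P^++P^-)_\vom\neq 0$ there; translating back through \cref{charorder} and \cref{lem:opsupp} yields $\supp(P^++P^-)\ge\supp(\lambda)$, the reverse inequality being clear from $\supp(P^\pm)\le\supp(\lambda)$. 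This also establishes (vi).

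With the single step in hand I would run the recursion: $B_1\coloneq A$, $\lambda_j\coloneq\abs{B_j}$, and $B_{j+1}\coloneq B_j-\lambda_j(P_j^+-P_j^-)=B_j(\Id-P_j^+-P_j^-)$, which is again self-adjoint and Hilbert--Schmidt with range orthogonal to $\ran(P_j^++P_j^-)$. Monotonicity $\lambda_{j+1}\le\lambda_j$ (property (iv)) and the strict inequality (v) are again fibrewise facts: removing the \emph{entire} top eigenspace of $B_{j,\vom}$ leaves an operator whose norm is the next \emph{distinct} eigenvalue, strictly smaller than $\lambda_j(\vom)$ wherever $\lambda_j(\vom)>0$. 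The cross orthogonality in (ii) is automatic, since each $\ran(P_{j+1}^\pm)\subseteq\ran(\Id-P_j^+-P_j^-)$ sits in the orthogonal complement of all earlier ranges.

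Finally, for convergence I would use the orthogonal (Pythagorean) identity in $\HS(E)$,
\[ \abs{A}_{\HS}^2=\abs{B_{n+1}}_{\HS}^2+\sum_{j=1}^{n}\lambda_j^2\bigl(\abs{P_j^+}_{\HS}^2+\abs{P_j^-}_{\HS}^2\bigr), \]
valid because the summands are mutually orthogonal in $\HS(E)$. The partial sums form an increasing net order-bounded by $\abs{A}_{\HS}^2$, hence order-converge by order-completeness; the remaining point $\olim_n\abs{B_{n+1}}_{\HS}=0$ I again read off fibrewise, where $\norm{(B_{n+1})_\vom}_{\HS}^2$ is the tail of the convergent series $\sum_k\mu_k(\vom)^2=\norm{A_\vom}_{\HS}^2$. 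Then $A-\sum_{j=1}^n\lambda_j(P_j^+-P_j^-)=B_{n+1}\to 0$ in $\HS(E)$, which is exactly the asserted expansion. The main obstacle throughout is the passage to fibres: one must verify that the abstractly defined $P_j^\pm$ really induce the fibrewise spectral projections, and that the relevant ``almost everywhere'' statements on the Stonean base assemble into genuine identities in $\A$ via \cref{Wright1.1} and \cref{charorder}.
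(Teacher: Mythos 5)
Your proposal is correct and follows essentially the same route as the paper's proof: the same spectral algorithm (normalization in $\End(E)$ together with the mean ergodic theorem, Proposition \ref{met}, to produce the fixed-space projections), the same Pythagorean/order-completeness argument in $\HS(E)$, and the same reduction of the norm-attainment statements (v) and (vi) to fibrewise facts about compact self-adjoint operators via the Hilbert bundle representation of Section \ref{s.bundleview}. The only cosmetic differences are that the paper handles (iv) and the pairwise orthogonality (ii) abstractly---via the commutation Lemma \ref{lem:commuting} and the identity $B_{n} = A\prod_{j<n}(\Id - P_j^+ - P_j^-)$, which also yields (iii) for $A$ rather than just for $B_j$---and deduces the vanishing of the tail $\abs{B_n}_{\HS}$ from Lemma \ref{HS-proj} combined with (vi), instead of reading it off in the fibres as you do.
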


The proof is completely analogous to the Hilbert space 
case. We fix  a KH-module $E$ over the \Stoneanalgebra{}
$\A$ and  a Hilbert--Schmidt homomorphism $A\in \HS(E)$ and 
perform the following ``spectral algorithm'':

Define $B_1 \coloneq A$,  $\lambda_1 \coloneq \abs{B_1}$, and $ B_1^\sharp\coloneq
\frac{B_1}{\abs{B_1}}$ (see Section \ref{s.supp}). Next, let
\[  P_1^+ \coloneq P_{\fix(B_1^\sharp)} \quad \text{and} \quad P_1^- \coloneq
P_{\fix(-B_1^\sharp)}
\]
be the orthogonal projections onto the respective fixed spaces, see \cref{met}.
Then let 
\[ B_2 \coloneq B_1 - \lambda_1(P_1^+ - P_1^-)
\]
and iterate the procedure. The resulting sequences $(\lambda_j)_j$ and
$((P_j^+, P_j^-))_j$  are called the {\emdf spectral decomposition} of $A$.

\medskip
We shall now establish the claimed properties. First, note
that $\lambda_{n} = \abs{B_n}$ and
\[   A = B_{n} + \sum_{j=1}^{n-1} \lambda_j (P_j^+ - P_j^-)\qquad (n \in \N).
\]
Moreover, since the mean ergodic theorem also holds
for contractions on KH-modules (\cref{met}),
\[ P_j^+x = \olim_{n \to \infty} p_n(B_{j}^\sharp)x,\qquad P_j^-x = \olim_{n \to
  \infty} 
p_n(-B_{j}^\sharp)x \qquad (x\in E),
\]
where $p_n(z) = \frac{1}{n}(z + z^2 + \dots + z^{n})$. It follows
that 
$\supp(P_j^\pm) \subseteq \supp(B_{j}) = \supp(\abs{B_{j}}) = \supp(\lambda_j)$.

\begin{lemma}\label{lem:commuting}
All the operators $B_j, P_j^+, P_j^-$, $j \in \N$,  are self-adjoint 
homomorphisms
and contained in the double commutator $\{A\}''$ of $A$ in $\End(E)$.
In particular, they all commute with each other and with $A$.  
\end{lemma}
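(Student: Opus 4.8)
The plan is to run an induction on $j$ that simultaneously establishes self-adjointness and membership in $\{A\}''$, after first recording the structural facts about the bicommutant that make the ``In particular'' clause automatic. I would begin by observing that every scalar $f\in\A$ (acting as $f\Id$) lies in $\{A\}''$, since $\A$-linearity of module homomorphisms makes $\A\cdot\Id$ central in $\End(E)$; that $A\in\{A\}''$ trivially; and that $\{A\}''$ is an $\A$-subalgebra of $\End(E)$ which is order-closed. The last point follows because left and right multiplication by a fixed operator are order-continuous (from $\abs{ST}\le\abs{S}\abs{T}$ in \cref{completehom} together with order-continuity of scalar multiplication), so each commutant $\{T\}'$ is order-closed and $\{A\}''=\bigcap_{T\in\{A\}'}\{T\}'$ is too. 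Finally, since $A\in\{A\}'$ we have $\{A\}''\subseteq\{A\}'$, and hence any two elements of $\{A\}''$ commute (each commutes with all of $\{A\}'$, which contains the other); as $A\in\{A\}''$, this delivers the ``In particular'' statement at once, once membership is proved.

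For self-adjointness I would argue by induction: $B_1=A$ is self-adjoint by hypothesis; if $B_j=B_j^*$, then $P_j^\pm$ are orthogonal projections and hence self-adjoint by definition, while $\lambda_j=\abs{B_j}\in\A_+$ is a real central scalar, so $B_{j+1}=B_j-\lambda_j(P_j^+-P_j^-)$ is again self-adjoint. For membership in $\{A\}''$ I would use the criterion that it suffices to show an operator commutes with every $T\in\{A\}'$. Inductively assume $B_j\in\{A\}''$, so $B_j$ commutes with every $T\in\{A\}'$ by definition of the bicommutant. Then $T$ commutes with $B_j$ and with the scalars $(\lambda_j+\veps)^{-1}$, hence with each $(\lambda_j+\veps)^{-1}B_j$; passing to the order-limit defining $B_j'=B_j/\abs{B_j}$ (using order-continuity of $S\mapsto TS$ and $S\mapsto ST$) gives $TB_j'=B_j'T$, so $B_j'\in\{A\}''$. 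Being a normalization, $\abs{B_j'}=\supp(B_j)\le\car$ by \cref{supp.l.supp}, so $B_j'$ is a contraction and \cref{met} applies. Since $\{A\}''$ is an order-closed $\A$-subalgebra, it will then follow that $B_{j+1}\in\{A\}''$ as soon as the projections are handled.

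The delicate step---and the one I expect to be the main obstacle---is $P_j^\pm\in\{A\}''$, because \cref{met} only yields the ergodic averages $p_n(B_j')$ converging to $P_j^\pm$ \emph{pointwise} (in order, for each fixed $x$), not in the operator lattice-norm; so I cannot simply invoke order-closedness of $\{A\}''$ on the relation $p_n(B_j')\to P_j^\pm$. Instead I would argue pointwise: fix $T\in\{A\}'$; since $B_j'\in\{A\}''$, $T$ commutes with $B_j'$ and hence with every polynomial $p_n(B_j')$. Then for each $x\in E$,
\[
P_j^+(Tx)=\olim_n p_n(B_j')(Tx)=\olim_n T\bigl(p_n(B_j')x\bigr)=T\bigl(\olim_n p_n(B_j')x\bigr)=T(P_j^+x),
\]
where the outer equalities are \cref{met} and the middle one uses that the bounded homomorphism $T$ is order-continuous (\cref{lns.l.order-norm}). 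Hence $P_j^+T=TP_j^+$, so $P_j^+\in\{A\}''$, and likewise $P_j^-$.

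This closes the induction, giving self-adjointness and membership for all $B_j,P_j^+,P_j^-$. The commutativity of these operators with each other and with $A$ is then immediate from the opening observation that $\{A\}''$ is a commutative subalgebra containing $A$.
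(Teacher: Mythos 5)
Your proof is correct and follows essentially the same route as the paper's: fix $T$ commuting with $A$ and show by induction that $T$ commutes with each $B_j$, hence with the normalization $B_j'$, hence (via the mean ergodic theorem) with $P_j^\pm$. The paper's three-line proof leaves implicit exactly the two limit-passing steps you spell out---commutation surviving the order-limit defining $B_j' = B_j/\abs{B_j}$, and the pointwise argument $P_j^\pm(Tx) = \olim_n p_n(\pm B_j')(Tx) = T(P_j^\pm x)$ using order-continuity of $T$---so your write-up is a faithful, fully detailed version of the same argument.
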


\begin{proof}
Let $T\in \End(E)$ commute with $A$. Then it commutes with $B_1$. 
Suppose $T$ commutes with $B_j, P_j^\pm$ for $j< n$. Then $T$ commutes
with $B_n$, hence with $B_n^\sharp$, hence with $P_n^\pm$. 
\end{proof}

It is obvious that $\ran P^+_j \perp \ran P_j^-$, and hence $Q_j \coloneq
P_j^+ + P_j^-$ is again an orthogonal projection for each $j \ge 1$. 
(Indeed, it is the
projection onto $\fix( (B_j^\sharp)^2)$.)  Note that 
\[   B_{j+1} = B_{j}  - \lambda_j(P_j^+ - P_j^-)= B_j(\Id - Q_j).
\]
This yields $\lambda_{j{+}1} = \abs{B_{j+1}} \le \abs{B_{j}} =
  \lambda_j$ and, by induction, 
\[  B_{n} = A \prod_{j< n} (\Id - Q_j).
\]
Since all the operators on the right-hand side commute, we obtain
\[ \ran B_{n} \perp \ran(Q_j) \quad \text{for all $j < n$.}
\]
This implies $\ran(B_n') \perp \ran(Q_j)$ and hence also $\ran(Q_n)
\perp \ran(Q_j)$ for all $j < n$. It follows that 
all the  involved projections $P_j^\pm$ are pairwise orthogonal.

Next, observe that from the representation
\[ B_n = A - \sum_{j< n} \lambda_j (P_j^+ - P_j^-)
\]
and the orthogonality of the projections it follows that
\[  \pm \lambda_n P_n^\pm = \pm B_n P_n^\pm = \pm AP_n^\pm.
\]
Since $A$ is Hilbert--Schmidt, also $\pm
\lambda_j P_j^\pm$ is Hilbert--Schmidt.  
Thus, properties (i)--(iv) are established. It remains to show
(v) and (vi) and  that $\abs{B_n}_{\HS} \to 0$ (in order).

\begin{lemma}\label{HS-proj}
Let $0 \le \lambda \in \A$ and $Q\in \End(E)$ a projection such that 
$\lambda Q\in \HS(E)$ and 
$\supp(\lambda) = \supp(Q)$. Then $\lambda \le \abs{\lambda Q}_{\HS}$.
\end{lemma}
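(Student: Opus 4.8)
The plan is to exploit the support hypothesis to manufacture a single normalized vector in the range of $Q$ on which $\lambda Q$ acts as multiplication by $\lambda$, and then to read off the inequality directly from the definition of the Hilbert--Schmidt lattice-norm as a supremum over finite suborthonormal sets. The whole argument reduces to the two support lemmas available above.

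First I would set $M \coloneq \ran(Q)$. Since $Q$ is a bounded (hence, by \cref{lns.l.order-norm}(iv), order-continuous) idempotent, $M = \ker(\Id - Q)$ is an order-closed submodule of $E$, thus a Kaplansky--Hilbert submodule and in particular a Kaplansky--Banach module in its own right. By \cref{lem:opsupp} one has $\supp(Q) = \supp(\ocl\ran(Q)) = \supp(M)$, so the hypothesis $\supp(\lambda) = \supp(Q)$ translates into $\supp(M) = \supp(\lambda)$.

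Next I would apply \cref{realizesupp} to the Kaplansky--Banach module $M$ to obtain a normalized element $e \in M$ with $\abs{e} = p_e = \supp(M) = \supp(\lambda)$. Because $e$ lies in the range of the projection $Q$ we have $Qe = e$, and as a single normalized vector $\{e\}$ is a (trivially suborthonormal) member of the family $\mathscr{F}$ appearing in the definition of $\abs{\cdot}_{\HS}$. Consequently
\[
\abs{\lambda Q}_{\HS} \;\ge\; \abs{\lambda Q e} \;=\; \abs{\lambda e} \;=\; \lambda\,\abs{e} \;=\; \lambda\, p_e \;=\; \lambda\,\supp(\lambda) \;=\; \lambda,
\]
where I have used $\abs{fx} = \abs{f}\,\abs{x}$ with $\abs{\lambda} = \lambda$ (as $\lambda \ge 0$), that $e$ is normalized, and finally $\supp(\lambda)\,\lambda = \lambda$, which is \cref{supp.l.supp}(ii).

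I do not expect a serious obstacle here; the delicate points are merely bookkeeping about supports. The one thing to verify with care is the identification $\supp(Q) = \supp(\ran Q)$, which rests on $\ran(Q)$ being order-closed so that $\ocl\ran(Q) = M$ and \cref{lem:opsupp} applies; and the realization of the module support by a single normalized vector via \cref{realizesupp}. Once these are in place, the displayed computation is immediate and no taking of square roots or passage to a full suborthonormal basis is needed, since a one-element test set already saturates the estimate.
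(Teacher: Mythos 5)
Your proof is correct and is essentially the paper's own argument: identify $\supp(Q)$ with the support of the order-closed range via \cref{lem:opsupp}, extract a normalized $e\in\ran(Q)$ of maximal support $\abs{e}=\supp(Q)=\supp(\lambda)$ by \cref{realizesupp}, and test the Hilbert--Schmidt lattice-norm against the single suborthonormal set $\{e\}$ to get $\abs{\lambda Q}_{\HS}\ge\abs{\lambda Qe}=\lambda\abs{e}=\lambda$. The only difference is that you spell out the bookkeeping (order-closedness of $\ran(Q)$, the identity $\lambda\,\supp(\lambda)=\lambda$) that the paper leaves implicit.
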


\begin{proof}
Since $\ran(Q)$ is order-closed, we have $\supp(Q) = \supp(\ran(Q))$ by 
Lemma \ref{lem:opsupp}. Take a normalized element $e\in \ran(Q)$ of maximal
support  $\abs{e}  = \supp(Q)$ (Lemma \ref{realizesupp}). Then 
$\abs{\lambda Q}_{\HS} \ge \abs{\lambda Q e} = \lambda \abs{e} = \lambda$ as
claimed.  
\end{proof}

\medskip

Now, note that $B_n \perp \sum_{j< n} \lambda_j (P_j^+ - P_j^{-})$ in $\HS(E)$. This implies
\[ \abs{A}_{\HS}^2 = \abs{B_n}_{\HS}^2 +\sum_{j< n}\abs{\lambda_j (P_j^+ - P_j^{-})}^2_{\HS}.
\]
Since $\HS(E)$ is complete,   
$C \coloneq  \sum_{j=1}^\infty \lambda_j (P_j^+ - P_j^{-})$
exists (in order) in $\HS(E)$ and $\olim_{n \to \infty} \abs{B_n}_{\HS} = \abs{A-C}_{\HS}$. 
Furthermore, suppose that (vi) is already established. Then, 
by Lemma \ref{HS-proj}  we can conclude that 
\[ \abs{A}^2_{\HS} \ge \sum_{j=1}^\infty \abs{\lambda_j(P_j^+ - P_j^{-})}^2_{\HS} = \sum_{j=1}^\infty \abs{\lambda_j Q_j}^2_{\HS}  
\ge \sum_{j=1}^\infty \lambda_j^2.
\]
It follows that $(\lambda_j)_j$ decreases to  $0$. But this implies  
$C= A$ and hence that $\abs{B_n}_{\HS}$ decreases to $0$ as desired.

\medskip
Hence, only (v) and (vi) remain to be shown. 
  This will be
done in the next section, where we employ the ``bundle view''
to reduce the problem to the Hilbert space case.

\medskip

\subsection{The ``Bundle View''}\label{s.bundleview}

As described in Section \ref{s.hbrep} we may (and do henceforth) suppose
that $\A = \Ce(\Om)$ for some Stonean space $\Om$ and 
$E = \Gamma(H)$ for some Hilbert bundle $H$ over $\Om$.  Each
homomorphism $T \in \End(E)$ induces fiber maps  $T_\omega \in
\BL(H_\vom)$ for each $\vom \in \Om$.

We intend to show that if one starts with a self-adjoint Hilbert--Schmidt homomorphism
$A$ on $E$ and performs the ``spectral algorithm''  to it yielding the
projections
$P_j^\pm$ and the functions $\lambda_j$, then for
almost every $\vom$ the fibre  operator $A_\vom$ is Hilbert--Schmidt in
$H_\vom$ and 
\[ A_\vom = \sum_{j=1}^\infty \lambda_j(\vom)  \bigl((P_j^+)_\vom - (P_j^-)_\vom\bigr)
\]
is precisely the spectral decomposition of $A_\vom$ arising from
the ``spectral algorithm'' applied to $A_\vom$. (Recall the topological definition of \enquote{almost 
everywhere} from the remarks preceeding Wright's Lemma \ref{Wright1.1}.) From this insight we shall 
then infer the yet remaining statements to conclude the proof of 
Theorem \ref{spt-KH}.

\medskip

The following lemma will be of utmost use in understanding Hilbert--Schmidt homomorphisms
on a Hilbert module $E \cong \Gamma(H)$ in terms of a representing Hilbert bundle. The 
Hilbert bundles corresponding to KH-modules are so-called \textbf{complete}
Hilbert bundles, see \cite[Section 2.1]{Gutm1993a}. However, it is enough for 
us to know that $H$ is complete if and only if $\Gamma(H)$ is a KH-module and the reader
may take this as a definition.

\begin{lemma}\label{fiberops}
    Let $E= \Gamma(H)$ for some complete Hilbert bundle
                $H$ over a Stonean
                space $\Om$ and $A\in \End(E)$. Then the following
                assertions hold:
\begin{enumerate}[(i)]
\item $(A_\vom)^* = (A^*)_\vom$ for every $\vom \in \Om$.\\  In
  particular, if $A$ is self-adjoint, then so is each $A_\vom$.
\item If $A \in \End(E)$ is an orthogonal projection, then so is each
  $A_\vom$. 
\item $\abs{A}(\vom) = \norm{A_\vom}$ almost everywhere.
\item If $A \in \HS(E)$, then 
$A_\vom \in \HS(H_\vom)$ and $\|A_\vom\|_{\mathrm{HS}} =
|A_\vom|_{\mathrm{HS}}(\vom)$
almost everywhere.
\end{enumerate}
\end{lemma}
  
\begin{proof}
(i) and (ii) are straightforward.

\noi
(iii)\ Let $\vom \in \Om$ and $v\in H_\vom$. Pick $x\in  \Gamma(H)$ with
$x(\vom) = v$. Then
\[  \abs{A_\vom v}_\vom  = \abs{(Ax)(\vom)} = \abs{Ax}(\vom)
\le \abs{A}(\vom) \abs{x}(\vom) = \abs{A}(\vom) \abs{v}_\vom.
\]
It follows that $\norm{A_\vom} \le \abs{A}(\vom)$ for every $\vom \in
\Om$. On the other hand, as $\abs{A} = \sup_{\abs{x}\le 1} \abs{Ax}$,
by Wright's Lemma \ref{Wright1.1} one has 
\[ \abs{A}(\vom) = \sup_{\abs{x}\le 1} \abs{Ax}(\vom) = 
\sup_{\abs{x}\le 1} \abs{A_\vom x(\vom)} \le \norm{A_\vom}
\qquad 
\text{almost everywhere}.
\]
(iv)\  Let $\mathscr{F}$ be the family of all finite suborthonormal subsets of
$E$.  By \cref{Wright1.1} 
we find a residual subset $D \subset \Om$ such that for all $\vom \in D$
    \begin{align*}
      |A|_{\mathrm{HS}}^2(\vom) = \sup \Bigl\{\sum_{x \in F} |Ax|^2(\vom)\,\Big|\, F \in \mathscr{F}\Bigr\}.
    \end{align*}
Hence, for each $F\in \mathscr{F}$ and $\vom \in D$
\begin{align*}
      \sum_{x \in F} |Ax|^2(\vom) \leq \|A_\vom\|_{\mathrm{HS}}^2,
    \end{align*}      
since $\{x(\vom)|\mid x \in F\}\setminus\{0\}$ is an orthonormal subset of
$H_\vom$. It follows that 
    \begin{align*}
      |A|_{\mathrm{HS}}^2(\vom) = \sup
                  \Bigl\{\sum_{x \in F} |Ax|^2(\vom)
\, \Big|\,  F \in \mathscr{F}\Bigr\} \leq \|A_\vom\|^2_{\mathrm{HS}}
\qquad \text{for every $\vom \in D$.}
    \end{align*}
Conversely, fix $\vom \in D$ and let $v_1, \dots, v_n \in H_\vom$ be
orthonormal.  Then there are sections
$x_1, \dots, x_n$ such that $x_i(\vom) = v_i$ for $i = 1,\dots,
n$ (see Section \ref{s.hbrep}). By continuity,  the Gramian matrix
$\bigl((x_i(\cdot)|x_j(\cdot))\bigr)_{i,j}$ is invertible in a neighborhood of
$\vom$. Hence,  one can apply the Gram--Schmidt procedure and suppose without
loss of generality that $x_1, \dots, x_n$ is a suborthonormal system. Then
\vanish{We then find local orthonormal sections $x_1, 
\dots, x_n \in \Gamma(O; H)$ defined on a clopen neighborhood $O \subset \Om$ of $\vom$ such that $x_i(\vom) = v_i$ for $i=1, 
\dots, n$. Identifying $x_i$ with the canonical continuous extension by zero to the whole space $\Om$ for every $i=1,\dots, n$,
one has}
  \begin{align*}
    \sum_{i=1}^n \|A_\vom v_i\|^2 = \sum_{i=1}^n \|(Ax_i)(\vom)\|^2 \leq |A|_{\mathrm{HS}}(\vom)^2.
  \end{align*} 
  Therefore, $A_\vom\in\HS(H_\vom)$ and $\|A_\vom\|_{\mathrm{HS}} \leq |A|_{\mathrm{HS}}(\vom)$.
\end{proof}

Now let us  start again with the Hilbert--Schmidt operator $A \in \HS(E)$
and see what the algorithm yields. By (iii) of the preceding lemma,
with $\lambda \coloneq \abs{A}$ we have 
\[ \lambda(\vom) = \norm{A_\vom}\quad \text{and}\quad 
  (A^\sharp)_\vom = \Bigl(\frac{A}{\abs{A}}\Bigr)_\vom=  (A_\vom)^\sharp
\]
(almost everywhere). 
Although $A^\sharp$ may not be a Hilbert--Schmidt
operator, $(A_\vom)^\sharp$ is Hilbert--Schmidt (a.e.). Then $p_n(A^\sharp)_\vom =
p_n(A_\vom^\sharp) \to P_{\fix(A^\sharp_\vom)}$ in operator norm
(a.e.). By (iii) of Lemma \ref{fiberops} again and by Lemma \ref{charorder}, $p_n(A^\sharp)$ is convergent in
$\End(E)$, and since the limit must be $P^+$, it follows that 
\[      (P^+)_\vom = P_{\fix(A^\sharp_\vom)} \qquad \text{(a.e.)}.
\]
Likewise,  $(P^-)_\vom = P_{\fix(-A^\sharp_\vom)}$ and $Q_\vom =
P_{\fix((A^\sharp)^2_\vom)}$ (a.e.). 

Since the procedure only involves countably many steps and
countable intersections of residual sets in $\Om$ are still residual, 
we conclude what we aimed at: There is a residual set $D$
in $\Om$ such that for each $\vom \in D$ one has
\[ \lambda_n(\vom) = \abs{(B_n)_\vom},\quad 
(B_n)_\vom = A_\vom - \sum_{j<n} \lambda_j(\vom)  \bigl((P_j^+)_\vom - (P_j^-)_\vom\bigr)
\]
for all $n\in \N$ and 
\[   A_\vom = \sum_{j=1}^\infty \lambda_j(\vom)  \bigl((P_j^+)_\vom - (P_j^-)_\vom\bigr)
\]
is the spectral decomposition of the Hilbert--Schmidt operator $A_\vom$.

\medskip
By what we have seen in Section \ref{s.sptrev}, $\lambda_j(\vom) >
\lambda_{j{+}1}(\vom)$ whenever $\lambda_j(\vom) \neq 0$ and $\vom \in D$. 
But this just means $\lambda_j > \lambda_{j{+}1}$ almost everywhere 
on $\supp(\lambda_j)$, i.e., assertion (v) of Theorem \ref{spt-KH}.
Finally, again by what we have seen in Section \ref{s.sptrev}, 
if $\vom \in D$ and  $\lambda_j(\vom)  = \norm{(B_n)_\vom} > 0$, then
$(Q_n)_\vom \neq 0$. And this is precisely assertion (vi) of Theorem \ref{spt-KH}.

\medskip
\noi
With this observation, the proof of Theorem \ref{spt-KH} is complete.\qed

\begin{remark}\label{spt-cycp}
Theorem \ref{spt-KH} can be recognized to be a special case of the spectral theorem
for self-adjoint {\em cyclically compact} homomorphisms as proved by Gönüllü in
\cite{Gonu2016}. In fact, it follows from \cite[Thm. 2.2,
(iii)$\Rightarrow$(i)]{Gonu2014} that 
Hilbert--Schmidt homomorphisms (in our sense) are cyclically compact.
(Gönüllü himself operates with a different definition of a
Hilbert--Schmidt homomorphism that presupposes cyclical
compactness and rests on the spectral theorem, cf. \cite[Def. 3.1]{Gonu2014}.) 

Our approach to Theorem \ref{spt-KH} has the advantage that it
completely avoids the notion of cyclical compactness. 
Moreover, having made explicit the steps of the ``spectral algorithm'' in the proof
is useful for proving an ``equivariant'' version  in Part II (Proposition \ref{spt-cov}). 

Finally, it seems worthwhile noting  that our proof can easily be modified 
to cover all self-adjoint cyclically compact homomorphisms. Indeed,
one only needs to replace $\abs{\cdot}_{\HS}$ by the operator lattice-norm
and to employ that if $A \in \End(E)$ is cyclically compact then
almost every fiber operator $A_\vom$ is compact. (The latter 
follows from the fact that each cyclically
compact homomorphism  is the order-limit of a net of finite-rank homomorphisms, 
see \cite[Thm.\,8.5.6]{Kusr2000}). As a consequence we find that 
a homomorphism $A\in \End(E)$ is cyclically compact {\em if and only if}
almost every fiber operator $A_\vom$ is compact---a quite satisfying
result.
\end{remark}

\section*{Notes and Comments}\label{s.notesI}

Stonean (sometimes: Stonian) spaces are as old as Stone's famous representation
theorem from \cite{Ston1937}. Their central role in the representation theory
of vector lattices was recognized soon after, e.g. in representation theorems 
by Kakutani \cite{Kaku1941L,Kaku1941M}. See also Schaefer's monograph \cite[Chap.~2]{Schaefer1974} and the
more recent book \cite{GroevRoo2016}. 

In \cite{Kapl1951}, Kaplansky introduces the concept of an (in general
non-commutative) AW$^*$-algebra. For commutative algebras, this means that 
the Boolean algebra of idempotents is complete and generates it as a
C$^*$-algebra. Representing $\A$  as $\Ce(\Om)$ for some compact space it is
routine to check that $\A$ is a AW$^*$-algebra in this sense if and only if $\Om$ is
Stonean, i.e., $\A$ is a Stone algebra.   (As far as we know, the term
``Stone algebra'' was introduced by Wright in \cite{Wrig1969}. It 
is also used by Kusraev in \cite{Kusr2000}.)

\smallskip
In \cite{Kapl1953} Kaplansky introduced the concepts of Hilbert C$^*$-modules and
Hilbert AW$^*$-modules, nowadays   also   called Kaplansky--Hilbert modules, and proved
several analogues of Hilbert space results for them.  
In \cite{DeckPear1963} Deckard and Pearcy observed that matrices with entries in a Stone
algebra $\A$ can be $\A$-unitarily triangularized. This and its sequel
\cite{DeckPear1964} 
appear to be the first 
results on the spectral theory of homomorphisms on KH-modules. Wright
continued this work in \cite{Wrig1969KHM} and coined their modern name.  

Kusraev and Kutadeladze incorporated the theory of KH-modules into their systematic 
investigation of ordered functional-analytic structures, in particular of
{\em lattice-normed spaces},  and 
operators thereon. Chapter 7 of \cite{Kusr2000}  contains a brief introduction to
Kaplansky--Hilbert modules. The definition there is ours (with  ``{\em
  bo}-convergence'' being  our ``order-convergence''). 

\smallskip

During the 1970s, the known results about Kaplansky--Hilbert modules were
recognized as being embeddable into a theory called ``Boolean-valued analysis'' (BVA),
which is the application of Boolean-valued models of set theory to analysis.
Originally conceived by 
Takeuti \cite{Take1978,Take1979b,Take1979a} it was continued by 
Ozawa \cite{Ozaw1983,Ozaw1990} and, among others, by 
Kusraev and Kutadeladze, see \cite[Chap.~8]{Kusr2000} and \cite{KusrKuta1999}.  

The upshot of this theory, which has a major model-theoretic thrust, is that 
Kaplansky--Hilbert modules are simply the Hilbert spaces in a Boolean-valued
universe. As the Boolean-valued universe is a model of ZFC, Hilbert space
results can be re-interpreted in this new model and yield in this way valid
statements about KH-modules (``transfer principle'').

Another, but essentially equivalent approach, is more recent and comes
under the name of conditional analysis or conditional set theory, see, e.g.,  
\cite{DrapJamnKarlKupp2016,CherKuppVoge2015,FiliKuppVoge2009}.
It is model-theory free but works with slightly different objects, essentially
completions with respect to ``mixing''.

\smallskip
We acknowledge these approaches, but we also feel that for most mathematicians
(including ourselves), the model-theoretic path remains 
arcane and---in our topological set-up---the conditional
world approach would require working with rather unfamiliar objects. 
Unfortunately, nowhere in the literature could we find a coherent account of the
KH-module theory that would collect all the facts we needed and  with  minimal  
background requirements. Therefore, we decided to give such an account here.

In particular, we present a new proof of the spectral theorem 
for self-adjoint Hilbert--Schmidt homomorphisms on KH-modules. This
theorem can  also be derived from  the spectral theorem
for self-adjoint {\em cyclically compact} operators, cf. Remark \ref{spt-cycp}. 
Such operators were
introduced  by Kusraev in 1983, see \cite{Kusr1983}, and are called
``stably compact'' operators in conditional analysis,  see \cite{JamnZapa2017pre}.
In \cite{Kusr2000b},  a singular-value representation was established by BVA-techniques.  
An explicit formulation and a model
theory-free proof of the spectral theorem for self-adjoint cyclically compact
operators were given 
by Gönüllü \cite{Gonu2016}. However, cyclical compactness is a technically involved
concept which fully reveals its naturality only
in the light of the  Boolean/conditional version of the notions of
``natural number'' and ``sequence''. Our alternative proof of the spectral
theorem avoids cyclical compactness altogether and is instead based on 
the representation of a KH-module as the space of continuous sections
of a 
Hilbert bundle.

\smallskip
The duality between modules and bundles goes back to the famous result of Serre--Swan, see 
\cite{Swan1962}. Godement (in \cite{Gode1951}) as well as   Douady and Dixmier (in \cite{DixmDoua1963}) seem to be among the first to systematically study continuous Hilbert bundles, although similar ideas date back further. Such bundles then became increasingly
important in representation theory during the 1970s.  
In \cite{HoKe1977}, Hofmann and Keimel summarize the connections and 
equivalences between the  different approaches to ``relative functional
analysis''  based on  Banach bundles, modules, and sheaves, respectively. The
monographs by Gierz \cite{Gierz1982} and Dupré--Gillette \cite{DuGi1983} appear to be the
best references on Banach bundles and their connection with modules, the latter
stressing the categorial equivalence of both worlds. 
Gutman \cite{Gutm1993a} investigates this correspondence in the context of lattice-normed
spaces and, in particular, characterizes those Banach 
bundles that correspond to Kaplansky--Banach modules \cite[Thm.~2.1.1]{Gutm1993a}.
Proposition \ref{equiv} for KH-modules has been obtained by Takemoto in
\cite{Takemoto1973}.

It turns out that when dealing with KH-modules it is often easier to prove things
directly than appealing to a representation as sections in a bundle. We
use the latter only at one point in the proof of the spectral theorem (Section \ref{s.bundleview}).

Besides topological Hilbert bundles, there are of course also measurable
Hilbert bundles and a connection of modules with spaces of measurable sections. 
Such Hilbert bundles were used, e.g. by Zimmer \cite{Zimm1976} and Glasner \cite{Glas}
in the context of the Furstenberg--Zimmer theory. They also appear, as direct
integrals of Hilbert spaces,  in the representation theory of von Neumann algebras
\cite[Chap.~VI.8]{Take2002}.  Typically, this happens under the separability restriction
and for Borel measure spaces only.  By working in a topological setting, we avoid this restriction.

\part{Covariant Unitary Group Representations on KH-Modules}\label{p.DEC}

In this part, we define (covariant) unitary group representations
on Kaplansky--Hilbert modules and then prove a version of the 
Jacobs--de Leeuw--Glicksberg decomposition for such representations.
This is done by using the spectral theorem to establish a correspondence 
between finitely-generated 
invariant submodules and intertwining Hilbert--Schmidt homomorphisms.

\smallskip
As stated in the introduction,  $G$ is an arbitrary, but fixed group in this (and the next) chapter.

\section{Covariant Group Representations on Hilbert Modules}\label{c.covrepKH}

\medskip

\subsection{KH-Dynamical Systems}

 Let $\A$ a fixed commutative unital $\Ce^*$-algebra.
A {\emdf $G$-representation on $\A$} is a unital representation
\[ 
  S\colon G \to \Aut(\A),\qquad t \mapsto S_t
\]
of $G$ as unital $*$-automorphisms of $\A$ (as a $\Ce^*$-algebra). 
It then follows that
\[  
  \abs{S_tf} = S_t\abs{f} \qquad \text{for each $f\in \A$ and $t\in G$},
\]
i.e., each $S_t$ is a lattice homomorphism. Indeed, if we represent $\A= \Ce(\Om)$
for some compact space $\Om$, then there is a unique group homomorphism
\[ 
  \vphi\colon G \to \Homeo(\Om), \qquad t \mapsto \vphi_t
\]
such that $S_tf =f\nach \vphi_{t}^{-1}$, see \cite[Thm.~4.13]{EFHN2015}. It
follows that $S_t$ commutes with all pointwise defined operations,
like taking square roots for instance.

\medskip
Let $S\colon G \to \Aut(\A)$ be a $G$-representation on $\A$, and let $E$ be a Hilbert module
over $\A$. We want to consider $S$-covariant unital representations of $G$ on
$E$. For this, we need the following concept.

\begin{definition}
Let $E$ be a pre-Hilbert module over the commutative unital $\Ce^*$-algebra $\A$,
and let $S\in \Aut(\A)$. A bounded linear operator $T\in \BL(E)$ is called
an {\emdf $S$-homomorphism} if 
\[        T(f x) = Sf \cdot Tx\qquad (x\in E,\, f\in \A).
\]
An $S$-homomorphism $T$ is called an {\emdf $S$-isometry} if
\[     
    (Tx|Ty) = S(x|y) \quad \textrm{for all } x,y \in E,
\]
and it is called {\emdf $S$-unitary} if it is $S$-isometric and invertible.
\end{definition}

\begin{remark}\label{regularmod}
An $S$-homomorphism $T \in \mathscr{L}(E)$ is simply a
        homomorphism from $E$ to $E_S$, where $E_S$ is the pre-Hilbert
        module that arises from $E$ by changing the module
        multiplication to
    \begin{align*}
      \A \times E \to E, \quad (f,x) \mapsto 
f\cdot_S x \coloneq  Sf \cdot x
    \end{align*}
and the $\A$-valued inner product to 
\[   E\times E \to \A,\qquad (x,y) \mapsto (x|y)_S \coloneq S^{-1} (x|y).
\]
Likewise, an $S$-isometry $T$ is an isometry from $E$ to $E_S$.          
This change of perspective allows to transfer statements about
bounded $\A$-homo\-morphisms between pre-Hilbert modules to statements about $S$-homomorphisms.
\end{remark}

We are now able to define covariant representations.

\begin{definition}\label{def:covrep}
Let $S\colon G \to \Aut(\A)$ be a $G$-representation on $\A$, and let $E$ be a Hilbert module
over $\A$. Then a {\emdf (covariant) unitary $G$-representation} on $E$ over $S$ is a
mapping  
\[   T\colon G \to \BL(E), \qquad t \mapsto T_t
\]
with the following properties.
\begin{enumerate}[(i)]
\item $T_e = \Id$ and $T_{ts} = T_t T_s$ for all $s,t\in G$ \quad (group homomorphism).
\item $T_t$ is an $S_t$-unitary $S_t$-homomorphism for each $t\in G$\quad ($S$-covariance).
\end{enumerate}
\end{definition}

If $T$ is an $S$-covariant representation on $E$, then 
the tuple $(\A,S; E;T)$ is called a {\emdf $G$-system}, for short. 
 If, in addition, $\A$ is a \Stoneanalgebra{} and $E$ is a
Kaplansky--Hilbert module over $\A$, then the $G$-system is called  
a {\emdf \KH{} $G$-system}. And if $G$ is understood,  a 
\KH{}  $G$-system  is simply called a {\emdf KH-dynamical system}.

\begin{remarks}\label{remstoreps}
\begin{enumerate}[(1)]
\item If $S\colon G \to \Aut(\A)$ is a $G$-representation on $\A$, then the triple
$(\A,G, S)$ is commonly called a {\em $\Ce^*$-dynamical system}, see 
\cite[7.4.1]{Pedersen}. 
This is one reason for using the term ``KH-dynamical system''.
The second is the  application
to extensions of $G$-dynamical systems, see Chapter \ref{c.dicmps} below
and confer also \cite[Def's  3.5 and 3.12]{KrSi2020}.

\item We recall from \cite[7.4.8]{Pedersen} that an 
{\em $S$-covariant unital representation} of a $\Ce^*$-dynamical system $(\A,
G,S)$  on a Hilbert space $H$ is
a unitary representation $T$ of $G$ on $H$ together with a unital
$*$-representation
$\pi\colon \A \to \BL(H)$ 
such that 
\[    \pi(S_t f) T_t  = T_t \pi(f) \qquad (t\in G,\, f\in \A).
\]
Then $H$ is an $\A$-module via $f \cdot x \coloneq \pi(f)x$. Using module
notation, the identity  above becomes
\[     S_tf \cdot T_t x = T_t(f\cdot x)\qquad (t\in G, f\in\A, x\in H),
\]
i.e., $T_t$ is an $S_t$-homomorphism. It is trivially $S_t$-isometric, as the
inner product maps into $\C \car\subseteq \A$.
Hence our notion of covariant representation generalizes
the classical one from Hilbert spaces to Hilbert modules. 
\end{enumerate}
\end{remarks}

Let us list some examples. 

\begin{examples}\label{simpleex}\mbox{}
\begin{enumerate}[(1)]

\item\label{simpleex1}
  Let $\phi \colon \Om \to \Om$ be a homeomorphism on a compact space $\Om$, $H$ a Hilbert space and $\Phi \in \mathscr{L}(H)$. 
  Then the Koopman operator $S_\phi \in \mathscr{L}(\uC(\Om))$ given by $S_\phi f \coloneqq f \circ \phi^{-1}$ for every $f 
  \in 
  \uC(\Om)$ is a  *-automorphism and
  \begin{align*}
    \uC(\Om;H)\rightarrow \uC(\Om;H),\quad f \mapsto \Phi \circ f \circ \phi^{-1}
  \end{align*}
  is an $S_\phi$-homomorphism.

\item\label{simpleex2} 
More generally, suppose that  $\phi\colon G \to \mathrm{Homeo}(\Om)$ is a representation
of a  group $G$ as homeomorphisms of $\Om$, and $\Phi\colon G \to \BL(H)$ is a unitary
representation of $G$ on $H$. Then 
\[ S\colon G \to \Aut(\A), \qquad S_tf \defeq f\circ \phi_t^{-1}   = f\circ \phi_{t^{-1}} 
\]
is a representation of $G$ on $\A$ as $*$-automorphisms, and with 
\[    T\colon G \to \BL( \Ce(\Om;H)), \qquad T_tf \defeq  \Phi_t \circ f \circ \vphi_t^{-1}
\]
we obtain a $G$-system $(\Ce(\Om), S; \Ce(\Om;H),T)$. 

\item If $\Om$ is a Stonean space, then from (2) one obtains a KH-dynamical system by passing to the completion
(see below). Other examples of KH-dynamical systems will be studied in
Part III,  see in particular Section \ref{s.akhds}.
\end{enumerate}
\end{examples}

\medskip

\subsection{Standard Constructions}\label{stacon}

Suppose that $S \in \mathscr{L}(\A)$ is a $*$-automorphism of a \Stoneanalgebra{} $\A$
 and $T \in \mathscr{L}(E)$   an $S$-unitary on a   pre-Hilbert module
 $E$ over $\A$. Then there is a unique $S$-unitary operator 
$T^\sim \in \mathscr{L}(E^\sim)$ on the order-completion $E^\sim$ 
with $T^\sim|_{E} = T$. 

More generally, let  $(\A,S; E,T)$ be a $G$-system and $\A$ be a
\Stoneanalgebra. Then by extending each operator $T_t$ to
the order completion $E^\sim$ of $E$ we obtain a KH-dynamical system
  $(\A,S; E^\sim,T)$, called the {\emdf (order-)completion}  of the original
  system.

\begin{example}
In the situation of \cref{simpleex}\ref{simpleex2}, suppose that $\Om$ is a Stonean
  space. Then the $G$-system  $(\Ce(\Om), S; \Ce(\Om;H),T)$  determines a 
  KH-dynamical system 
\[ (\Ce(\Om), S; \Ce_{\#}(\Om;H),T),
\]
(cf.\ \cref{comp-COm}) where $T_t$ is again
  defined as  $T_tf \defeq  \Phi_t \circ f \circ
  \vphi_t^{-1}$ (on  representatives $f$). 
\end{example}

Let $(\A,S;E,T)$ be a KH-dynamical system.
A subset $F \subset E$ is {\emdf $T$-invariant} if $T_t(F) 
\subset F$ for every $t \in G$. If $F$ is a $T$-invariant
KH-submodule of $E$, then $(\A,S;F,T|_F)$ with
    \begin{align*}
      T|_F \colon G \to \mathscr{L}(F), \quad t \mapsto T_t|_F
    \end{align*}
is a KH-dynamical system. It is called a {\emdf subsystem} of $(\A,S;E,T)$.

Let $(\A,S;E,T)$ be a KH-dynamical system. Then 
the system $(\A,S;E^*,\overline{T})$ with 
 \[  \overline{T}_t \overline{x} \coloneqq \overline{T_t x}\qquad (\overline{x} \in
  E^*,\, t\in G)
\]
defined using the Riesz--Fr\'echet identification from \cref{riesz} is called the 
{\emdf dual system}.\footnote{Also the term {\em contragredient system} is used.}

\medskip

Let $(\A,S;E,T)$ and $(\A,S;F,R)$ 
be KH-dynamical systems.
Then their  {\emdf tensor product} is $(\A,S;E \otimes F, T \otimes R)$, where
\begin{align*}
  T \otimes R \colon G \to \mathscr{L}(E \otimes F), \qquad t \mapsto T_t \otimes R_t,
\end{align*}
with $T_t \otimes R_t$ being the unique extension (\cref{extension}) of the linear operator on $E \otimes_{\mathrm{alg}} F$ given by
\begin{align*}
(T_t \otimes R_t)(x \otimes y) = T_t x \otimes R_ty
\end{align*}
for $x \in E, y \in F$ and $t \in G$.

\vanish{
\medskip
Finally, let $(\A,S;E;T)$ a KH-dynamical system and let $\lambda \in
\A$ be an unimodular element, i.e., $\abs{\lambda} = \car$.
Then we obtain the {\emdf $\lambda$-twisted system} $(\A,S;E;T^\lambda)$
by 
\[     T^\lambda_t x \coloneq \lambda (S_t\konj{\lambda}) T_t x\qquad
(t\in G,\, x\in E).
\] 
It is routine to show that this is indeed a $G$-system. 
}

\section{The Decomposition Theorem}\label{c.mainthm}

In this chapter, we prove the main decomposition theorem
for $G$-systems. Central to the proof is a
re-examination of  the spectral theorem  (Theorem
\ref{spt-KH}) for  Hilbert--Schmidt homomorphisms that {\em
  intertwine} the dynamics.

Before we go in medias res, let us state some properties of
$G$-systems.

\begin{lemma}\label{covrep-basic}
Let $(\A, S; E,T)$ be a \KH{} $G$-system.  Then the following
assertions hold:
\begin{enumerate}[(i)]
\item $S_t\abs{x}^2 = \abs{T_tx}^2$ and $S_t\abs{x} = \abs{T_tx}$ for
  all $t\in G$ and $x\in E$.
\item If $x\in E$ is normalized, then so is $T_tx$ for all $t\in G$.
\item Each $T_t$ maps the unit ball $\{ x\in E \,|\, \abs{x}\le \car\}$
bijectively onto itself. 

\item $T_t M^\perp = (T_t M)^\perp$ for all $t\in G$ and $M
  \subseteq E$.

\item If $\calB$ is a suborthonormal subset (basis) of $E$, then
  so is $T_t\calB$ for each $t\in G$.

\item If $M\subseteq E$ is $T$-invariant, then so are  $M^\perp$ and 
  $\ocl\spann_\A(M)$. 
\end{enumerate}
\end{lemma}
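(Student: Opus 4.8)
The plan is to derive (i) directly from the defining properties of a covariant representation and then bootstrap all remaining assertions from it. For (i) I would start from $\abs{x}^2 = (x|x)$ and use that $T_t$ is $S_t$-isometric to get $\abs{T_tx}^2 = (T_tx|T_tx) = S_t(x|x) = S_t\abs{x}^2$. Since each $S_t$ is a $*$-automorphism of $\A$ it commutes with the continuous functional calculus (as recorded before Definition \ref{def:covrep}), in particular with taking square roots, so $\abs{T_tx} = (S_t\abs{x}^2)^{1/2} = S_t(\abs{x}^2)^{1/2} = S_t\abs{x}$.

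Assertion (ii) is then immediate: if $\abs{x}\in \B$ then $\abs{T_tx} = S_t\abs{x}$ is again idempotent because $S_t$ preserves products and the unit. For (iii), positivity and unitality of $S_t$ give $\abs{x}\le \car \Rightarrow \abs{T_tx} = S_t\abs{x}\le S_t\car = \car$, so $T_t$ maps the unit ball into itself; the same holds for $T_t^{-1} = T_{t^{-1}}$, which is $S_{t^{-1}}$-unitary, and combining these with the injectivity of $T_t$ yields bijectivity onto the unit ball. For (iv) I would argue by double inclusion using $S_t$-isometry: if $x\in M^\perp$ then $(T_tx|T_ty) = S_t(x|y) = 0$ for all $y\in M$, so $T_tM^\perp \subseteq (T_tM)^\perp$; conversely, for $z\in (T_tM)^\perp$ put $x = T_{t^{-1}}z$, whence $S_t(x|y) = (T_tx|T_ty) = (z|T_ty) = 0$ for $y\in M$, and injectivity of $S_t$ forces $(x|y)=0$, i.e.\ $z = T_tx\in T_tM^\perp$. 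Assertion (v) follows at once: orthogonality and normalization of $T_t\calB$ come from $S_t$-isometry together with (ii) (and injectivity of $T_t$ to keep distinct vectors distinct), while the basis property passes through (iv) applied to $M = \calB$, giving $(T_t\calB)^\perp = T_t(\calB^\perp) = \{0\}$.

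Finally, for (vi) I note first that $T$-invariance for a group action is equivalent to $T_tM = M$ for all $t$ (apply $t^{-1}$), so (iv) immediately gives $T_tM^\perp = (T_tM)^\perp = M^\perp$. For the order-closure I would show that $\spann_\A(M)$ is $T$-invariant using the $S_t$-homomorphism property, $T_t(\sum_i f_ix_i) = \sum_i (S_tf_i)(T_tx_i)$, and then invoke order-continuity of $T_t$: by (i) one has $\abs{T_t(x_\alpha - x)} = S_t\abs{x_\alpha - x}$, and since $S_t$ is an order-isomorphism of $\A$ it carries nets decreasing to $0$ to nets decreasing to $0$, so $\olim_\alpha x_\alpha = x$ implies $\olim_\alpha T_tx_\alpha = T_tx$. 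The step I expect to require the most care is precisely this order-continuity claim, since $T_t$ is only an $S_t$-homomorphism rather than an $\A$-linear one, so Lemma \ref{lns.l.order-norm}(iv) does not apply verbatim and one has to argue through (i); once it is in place, Lemma \ref{order-closure} represents every element of $\ocl\spann_\A(M)$ as the order-limit of a net in $\spann_\A(M)$, and $T$-invariance of $\ocl\spann_\A(M)$ follows by order-continuity of each $T_t$.
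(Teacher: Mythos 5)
Your proof is correct and follows essentially the same route as the paper's (which is stated very tersely): derive (i) from $S_t$-isometry together with the compatibility of $*$-automorphisms with square roots, bootstrap (ii)--(v) from it, and obtain (vi) from (iv) and Lemma \ref{order-closure}. Your explicit check that each $T_t$ is order-continuous---needed because Lemma \ref{lns.l.order-norm}(iv) applies only to $\A$-linear maps, and supplied via $\abs{T_t(x_\alpha - x)} = S_t\abs{x_\alpha - x}$ and the fact that $S_t$ preserves infima---is precisely the detail the paper leaves implicit.
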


\begin{proof}
The first assertion of (i) is clear and the second follows from the
first since $S_t \abs{x}^2 = (S_t \abs{x})^2$.
(ii) follows from (i) as $S_t$ must map idempotents to 
idempotents.  (iii) follows since, by (i), $\abs{T_tx} = S_t\abs{x}\le S_t \car = \car$ 
whenever $\abs{x}\le \car$.  (iv) is obvious
and (v) follows from (ii) and (iii). 
Finally, (v) follows from (iv) and \cref{order-closure}.
\end{proof}

\medskip

\subsection{Equivariant Spectral Theory}\label{s.est}

Let  $(\A, S; E,T)$ be a \KH{} $G$-system. The {\emdf fixed algebra}
\[  \fix(S) \coloneq \fix_\A(S) \coloneq  \bigcap_{t\in G} \fix(S_t) = \{ f\in \A\,|\, S_t f =
f\,\text{for all $t\in G$}\}
\]
is an order-closed unital $*$-subalgebra of $\A$, and hence a Stone
algebra in its own right. Its complete Boolean algebra of idempotents is
denoted by $\B_S$. The suprema (infima) in $\fix(S)$ and $\A$ of a
bounded subset of $\fix(S)$ coincide (by \cite[Thm.~7.23]{EFHN2015}
and order-closedness).
 
On the other hand, the {\emdf  fixed module}
\[  \fix(T) \coloneq \fix_E(T)\coloneq  \bigcap_{t\in G} \fix(T_t) = \{ x\in E\,|\, T_t x =
x\,\text{for all $t\in G$}\}
\]
is an order-closed $\fix(S)$-submodule of $E$. Even more, it is a
KH-module over $\fix(S)$ with the induced inner product. 
(Indeed, if $x,y\in \fix(T)$, then $S_t(x|y) = (T_tx|T_ty) = (x|y)$ and
hence $(x|y)\in \fix(S)$.) The notions of order-convergence in
$\fix(T)$ and in $E$ coincide.

\vanish{
\begin{remark}\label{fix-twisted}
Let  $(\A, S; E,T)$ be a \KH{} $G$-system, $\lambda \in \A$
unimodular, and $(\A, S; E,T^\lambda)$ the associated
$\lambda$-twisted system (cf.\ Section \ref{stacon}). Then
\[    \fix(T^\lambda) = \lambda \fix(T)
\]
as is easily shown. 
\end{remark}
}

\begin{definition}
Let $(\A, S; E,T)$ be a \KH{} $G$-system. A homomorphism
$A\in \End(E)$ is {\emdf $T$-intertwining} if  $T_t A = AT_t$ for all
$t\in G$. 
\end{definition}

The set of $T$-intertwining homomorphisms is denoted by $\End_T(E)$. 
It is a (even strongly)  order-closed $*$-subalgebra\footnote{The $*$-invariance
  requires a moment's thought.}  and a $\fix(S)$-submodule of
$\End(E)$.  

Actually, it
is  a Kaplansky--Banach module over $\fix(S)$ and the notions of order-convergence
in $\End_T(E)$ and in $\End(E)$ coincide. This is a consequence of 
the following lemma.

\begin{lemma}\label{lem:abs-fix}
Let $(\A, S; E,T)$ be a \KH{} $G$-system and $A\in \End_T(E)$. Then 
$\abs{A} \in \fix(S)$.
\end{lemma}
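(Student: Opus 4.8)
The plan is to show directly that $S_t\abs{A} = \abs{A}$ for every $t \in G$, which is precisely the assertion $\abs{A}\in\fix(S)$. The computation will combine the definition of the operator lattice-norm $\abs{A} = \sup_{\abs{x}\le\car}\abs{Ax}$ from \cref{completehom} with the covariance identities of \cref{covrep-basic} and the intertwining hypothesis $T_tA = AT_t$.

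First I would record that each $S_t$, being a $*$-automorphism of the Stone algebra $\A$, is an order-isomorphism of $\A$ onto itself: it is positive and so is its inverse $S_{t^{-1}} = S_t^{-1}$, whence $f\le g \gdw S_tf\le S_tg$. An order-isomorphism preserves every supremum that exists, and the set $\{\abs{Ax} \mid \abs{x}\le\car\}$ is bounded above (e.g.\ by $\norm{A}\car$), so its supremum exists in $\A_+$ and is preserved by $S_t$. This lets me pull $S_t$ through the supremum:
\[ S_t\abs{A} = S_t\Bigl(\sup_{\abs{x}\le\car}\abs{Ax}\Bigr) = \sup_{\abs{x}\le\car} S_t\abs{Ax}. \]

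The heart of the argument is then to rewrite each term. By \cref{covrep-basic}\,(i) one has $S_t\abs{y} = \abs{T_ty}$ for all $y\in E$; taking $y = Ax$ and using $T_tA = AT_t$ gives
\[ S_t\abs{Ax} = \abs{T_tAx} = \abs{AT_tx}, \]
so that $S_t\abs{A} = \sup_{\abs{x}\le\car}\abs{AT_tx}$. Finally I would substitute $y = T_tx$: by \cref{covrep-basic}\,(iii) the operator $T_t$ maps the unit ball $\{x\in E \mid \abs{x}\le\car\}$ bijectively onto itself, so $y$ ranges over exactly the same set as $x$ and the supremum is unchanged. Therefore $S_t\abs{A} = \sup_{\abs{y}\le\car}\abs{Ay} = \abs{A}$, as required.

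The argument is short, and the only delicate point is the very first step: one must be certain that the automorphism $S_t$ commutes with the lattice supremum defining the operator lattice-norm. I expect this to be the main (and essentially only) obstacle, and I would guard against it by spelling out explicitly that automorphisms of a Stone algebra are order-isomorphisms and hence preserve bounded suprema, rather than taking this for granted. All remaining steps are immediate consequences of the identities already established in \cref{covrep-basic}.
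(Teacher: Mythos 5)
Your proof is correct and is essentially the paper's own argument: both compute $S_t\abs{A}$ by pulling $S_t$ through the supremum defining the operator lattice-norm, rewriting $S_t\abs{Ax} = \abs{T_tAx} = \abs{AT_tx}$ via covariance and the intertwining hypothesis, and then reparametrizing the supremum using the fact that $T_t$ maps the unit ball bijectively onto itself (\cref{covrep-basic}). The only difference is that you spell out explicitly why the $*$-automorphism $S_t$ preserves bounded suprema, a point the paper's proof takes for granted; this is a sound and welcome precaution, not a deviation.
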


\begin{proof}
Let $A \in \End_T(E)$ and $t\in G$. Since, by Lemma \ref{covrep-basic},
$T_t$ maps the (order) unit ball $\{ x\in E\, |\, \abs{x}
\le \car\}$ bijectively onto itself, 
\begin{align*}   S_t\abs{A} &= S_t \sup\{ \abs{Ax} \, |\, \abs{x}\le \car\}
= \sup\{ S_t\abs{Ax} \, |\, \abs{x}\le \car\}
\\ & = \sup\{ \abs{AT_tx} \, |\, \abs{x}\le \car\}
= \sup\{ \abs{Ay} \, |\, \abs{y}\le \car\} = \abs{A}
\end{align*}
and hence $\abs{A}\in \fix(S)$. 
\end{proof}

Here is another interpretation of $\End_T(E)$. It is easily checked
that $T_t A T_{t^{-1}}$ is a bounded homomorphism on $E$ whenever $A$
is. Hence we obtain an {\emdf implemented} (covariant) representation
\[ \calT\colon G \to \End(\End(E)), \qquad \calT_t(A) \coloneq T_t A
T_{t^{-1}} \qquad (t\in G). 
\]
Clearly, $\End_T(E) = \fix(\calT)$. 

\medskip 
Recall that $\HS(E)$ is a KH-module in its own right. 
The implemented representation restricts to a
covariant representation on $\HS(E)$, as the following lemma shows. Its proof is straightforward.

\begin{lemma}
Let $(\A, S; E,T)$ be a \KH{} $G$-system and let $t\in G$. Then
$\calT_t(A) = T_t A T_{t^{-1}} \in \HS(E)$  and 
$(\calT_t(A)| \calT_t(B))_{\HS} = S_t (A|B)_{\HS}$ 
for all $A, B \in \HS(E)$. 
\end{lemma}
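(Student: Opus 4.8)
The plan is to read off both assertions from the series representation of the Hilbert--Schmidt data in \cref{HSops}, using only the two $S_t$-covariance identities for $T_t$ together with the observation that conjugation by $T_{t^{-1}}$ merely re-indexes a suborthonormal basis. First I would fix a suborthonormal basis $\calB$ of $E$ and record, via \cref{covrep-basic}(v) applied to $t^{-1}$, that $T_{t^{-1}}\calB$ is again a suborthonormal basis of $E$, since $T_{t^{-1}}$ is $S_{t^{-1}}$-unitary. The decisive structural input is the basis-independence built into \cref{HSops}: for every suborthonormal basis the defining series order-converges to the same value. This is exactly what will absorb the re-indexing.

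For the membership claim I would compute the series defining $\abs{\calT_t(A)}_{\HS}^2$. Using $\abs{T_ty}^2 = S_t\abs{y}^2$ from \cref{covrep-basic}(i), each term satisfies $\abs{T_t A T_{t^{-1}}x}^2 = S_t\abs{A T_{t^{-1}}x}^2$. Summing over $\calB$ and pulling $S_t$ through the order-limit (legitimate because $S_t$ is order-continuous, see below) turns the series into $S_t\sum_{x\in\calB}\abs{A T_{t^{-1}}x}^2$. Setting $y = T_{t^{-1}}x$ and letting $y$ range over the suborthonormal basis $T_{t^{-1}}\calB$, the inner series equals $\abs{A}_{\HS}^2$. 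Hence $\sum_{x\in\calB}\abs{\calT_t(A)x}^2$ order-converges to $S_t\abs{A}_{\HS}^2 \in \A_+$, which simultaneously yields $\calT_t(A)\in\HS(E)$ and the norm identity $\abs{\calT_t(A)}_{\HS}^2 = S_t\abs{A}_{\HS}^2$.

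The inner-product identity is then entirely parallel. Starting from $(\calT_t(A)\mid\calT_t(B))_{\HS} = \sum_{x\in\calB}(T_t A T_{t^{-1}}x \mid T_t B T_{t^{-1}}x)$, the $S_t$-unitarity of $T_t$ rewrites each summand as $S_t(A T_{t^{-1}}x \mid B T_{t^{-1}}x)$; pulling out $S_t$ and re-indexing by $y = T_{t^{-1}}x$ over $T_{t^{-1}}\calB$ gives $S_t(A|B)_{\HS}$, once more by basis-independence. (One could instead recover this from the norm computation by polarization, but the direct route is shorter.)

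The one genuinely delicate point I foresee is the interchange of $S_t$ with the order-convergent series, i.e.\ the order-continuity of $S_t$. I would dispatch it once by noting that $S_t$ is a lattice isomorphism of $\A$ whose inverse $S_{t^{-1}}$ is likewise a lattice homomorphism; hence $S_t$ preserves arbitrary infima and suprema and therefore maps nets decreasing to zero to nets decreasing to zero, so it commutes with order-limits. With this in hand, everything else is routine bookkeeping with the two $S_t$-covariance identities and the basis-invariance of the Hilbert--Schmidt inner product.
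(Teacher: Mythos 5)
Your proof is correct, and in fact the paper states this lemma without any proof (treating it as routine), so your argument supplies exactly the verification the paper presupposes: the covariance identities $\abs{T_ty}^2 = S_t\abs{y}^2$ and $(T_tu|T_tv)=S_t(u|v)$, the fact that $T_{t^{-1}}$ carries a suborthonormal basis to a suborthonormal basis (Lemma \ref{covrep-basic}), and the basis-independence of the Hilbert--Schmidt norm and inner product (Proposition \ref{HSops}). You also correctly identified and settled the only delicate point, the order-continuity of $S_t$, via the lattice-isomorphism property of $*$-automorphisms; nothing is missing.
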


Note that the fixed space (in $\HS(E)$) of this implemented representation
is simply
\[  \fix_{\HS(E)}(\calT) = \HS_T(E) \coloneq \End_T(E) \cap \HS(E).
\]

We now look at the ``equivariant version'' of the spectral theorem.

\begin{proposition}\label{spt-cov}
Let $(\A, S; E,T)$ be a \KH{} $G$-system and let $A\in \HS_T(E)$
be self-adjoint. Furthermore, let
\[  A= \sum_{j=1}^\infty \lambda_j (P_j^+ - P_j^-)
\]
be the spectral decomposition of $A$ (resulting from the ``spectral algorithm''
applied to $A$).  Then $\lambda_j \in \fix(S)$ and  
$P_j^\pm \in \End_T(E)$ for  each $j \in \N$. 
\end{proposition}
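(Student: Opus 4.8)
The plan is to verify, by induction on $n$, that the entire run of the ``spectral algorithm'' stays inside the $T$-equivariant world; concretely, that for every $n\in\N$ the operator $B_n$ is a self-adjoint element of $\HS_T(E)$, that $\lambda_n\in\fix(S)$, that the normalization $B_n'=B_n/\abs{B_n}$ lies in $\End_T(E)$, and that $P_n^\pm\in\End_T(E)$. The base case $B_1=A$ holds by hypothesis. Throughout I would use the facts, recorded just before the proposition, that $\End_T(E)=\fix(\calT)$ is an order-closed $*$-subalgebra and a $\fix(S)$-submodule of $\End(E)$, and that $\fix(S)$ is an order-closed unital $*$-subalgebra of $\A$ on which each $S_t$ acts trivially and commutes with the continuous functional calculus.

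For the inductive step, suppose $B_n\in\HS_T(E)$ is self-adjoint. Then $\lambda_n=\abs{B_n}\in\fix(S)$ by \cref{lem:abs-fix}. Since $S_t$ commutes with functional calculus and fixes $\lambda_n$, each scalar $(\lambda_n+\veps)^{-1}$ ($\veps>0$) again lies in $\fix(S)$, so $(\lambda_n+\veps)^{-1}B_n\in\fix(S)\cdot\End_T(E)\subseteq\End_T(E)$. As $\End_T(E)$ is order-closed, the order-limit defining the normalization gives $B_n'=\olim_{\veps\searrow0}(\lambda_n+\veps)^{-1}B_n\in\End_T(E)$. I expect this normalization step to be the only genuinely delicate point: it is precisely \cref{lem:abs-fix} that renders the scalar factors $S$-fixed and thereby prevents them from destroying the intertwining property when multiplied into $B_n$.

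Next I would observe that $B_n'\in\End_T(E)$ means $T_tB_n'=B_n'T_t$ for all $t$, so the fixed module $M^+\coloneq\fix(B_n')$ is $T$-invariant (if $B_n'x=x$ then $B_n'(T_tx)=T_tB_n'x=T_tx$), and likewise $M^-\coloneq\fix(-B_n')$. By construction (\cref{met}), $P_n^+$ is the orthogonal projection onto the KH-submodule $M^+$ with kernel $(M^+)^\perp$, and $E=M^+\oplus(M^+)^\perp$ by \cref{orthdecompaw}. By \cref{covrep-basic} the complement $(M^+)^\perp$ is $T$-invariant as well. Hence for any $x\in E$ the decomposition $T_tx=T_tP_n^+x+T_t(x-P_n^+x)$ has its first summand in $M^+$ and its second in $(M^+)^\perp$, and uniqueness of the orthogonal decomposition yields $P_n^+T_tx=T_tP_n^+x$, i.e.\ $P_n^+\in\End_T(E)$. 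The same argument applied to $M^-$ gives $P_n^-\in\End_T(E)$.

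Finally, $B_{n+1}=B_n-\lambda_n(P_n^+-P_n^-)$. Since $\lambda_n\in\fix(S)$, $P_n^\pm\in\End_T(E)$, and $\End_T(E)$ is a $\fix(S)$-module, the correction term $\lambda_n(P_n^+-P_n^-)$ lies in $\End_T(E)$; by assertion (i) of \cref{spt-KH} it is moreover Hilbert--Schmidt, so $B_{n+1}\in\HS_T(E)$, and it is clearly self-adjoint. This closes the induction and establishes $\lambda_j\in\fix(S)$ and $P_j^\pm\in\End_T(E)$ for every $j\in\N$.
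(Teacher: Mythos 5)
Your proposal is correct and follows essentially the same route as the paper's proof: $\lambda_n\in\fix(S)$ via Lemma \ref{lem:abs-fix}, $B_n'\in\End_T(E)$ by order-closedness of $\End_T(E)$ applied to $(\lambda_n+\veps)^{-1}B_n$, $T$-invariance of $\fix(\pm B_n')$ and their orthogonal complements giving $P_n^\pm\in\End_T(E)$, and then induction via $B_{n+1}=B_n-\lambda_n(P_n^+-P_n^-)$. The only difference is cosmetic: you additionally track the Hilbert--Schmidt property of $B_n$ along the induction, which is harmless but not needed, since Lemma \ref{lem:abs-fix} only requires membership in $\End_T(E)$.
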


\begin{proof}
We use the notation of \cref{s.spt}. Since $B_1 = A \in \End_T(E)$ we obtain $\lambda_1 =  |B_1| \in \fix(S)$ by \cref{lem:abs-fix}. But then also $B_1^\sharp = \frac{B_1}{\abs{B_1}} \in  \End_T(E)$ since $\frac{B_1}{\abs{B_1} + \veps} \in  \End_T(E)$ for each $\veps > 0 $ and $\End_T(E)$ is order-closed in $\End(E)$. This shows that $\fix(B_1^\sharp)$ and $\fix(-B_1^\sharp)$ are $T$-invariant, and hence also the orthogonal complements  $\fix(B_1^\sharp)^\perp$ and $\fix(-B_1^\sharp)^\perp$ are $T$-invariant, see \cref{covrep-basic} (vi). This shows $P_1^+, P_1^- \in \End_T(E)$. But then also $B_2 = A - \lambda_1 (P_1^+ - P_1^-) \in \End_T(E)$, and the claim follows by induction.
\end{proof}

\medskip

\subsection{Main Theorem}

Let $(\A,S;E,T)$ be a KH-dynamical system. From this we build the
dual system $(\A,S;E^*,\konj{T})$ and then the tensor product system
$(\A,S;E\tensor E^*,T \tensor \konj{T})$ as described in Section \ref{s.ocp}. 

The following lemma relates the tensor product system with the
implemented system on $\HS(E)$.

\begin{lemma}\label{invariance}
Let $(\A,S;E,T)$ be a KH-dynamical system. 
Then the  tensor product system $(\A,S;E\tensor E^*,T \tensor \konj{T})$ and the
implemented system  $(\A,S; \HS(E), \calT)$ are isomorphic via the unitary
isomorphism 
\[ V\colon E \tensor E^* \to \HS(E),\qquad V(y \tensor \konj{z}) = ( x \mapsto (x|z)y)
\]
described in Proposition \ref{tensordescription}. Via this isomorphism,\quad 
$\fix(T\tensor \konj{T}) \cong \HS_T(E)$.
\end{lemma}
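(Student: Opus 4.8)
The plan is to show that the KH-module isomorphism $V$ from \cref{tensordescription} is in fact an isomorphism of the two $G$-systems, i.e.\ that it intertwines the representations,
\[
 V \nach (T\tensor \konj{T})_t = \calT_t \nach V \qquad (t\in G),
\]
and then to read off the identification of fixed spaces as an immediate corollary.

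First I would verify this intertwining identity on elementary tensors. Fix $t\in G$ and $y,z\in E$. By the definition of $\konj{T}$ and of the tensor product representation, $(T\tensor \konj{T})_t(y\tensor \konj{z}) = T_ty \tensor \konj{T_tz}$, so $V$ sends it to $A_{T_ty,\,T_tz}$. On the other side, $\calT_t(V(y\tensor \konj{z})) = T_t A_{y,z} T_{t^{-1}}$, and evaluating at $x\in E$ gives $T_t\bigl((T_{t^{-1}}x|z)\,y\bigr)$. Here the two covariance properties of $T_t$ come into play: since $T_t$ is an $S_t$-homomorphism this equals $S_t(T_{t^{-1}}x|z)\cdot T_ty$, and since $T_t$ is $S_t$-isometric one has $S_t(T_{t^{-1}}x|z) = (T_tT_{t^{-1}}x|T_tz) = (x|T_tz)$. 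Hence $T_tA_{y,z}T_{t^{-1}}x = (x|T_tz)\,T_ty = A_{T_ty,\,T_tz}x$, so both sides agree on $y\tensor \konj{z}$, and by $\A$-bilinearity on all of $E\tensor_{\alg} E^*$.

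To pass from the algebraic tensor product to its order-completion $E\tensor E^*$ I would invoke order-continuity: $V$ is an isomorphism of KH-modules, and each $(T\tensor \konj{T})_t$ and $\calT_t$ is a bounded $S_t$-homomorphism, hence order-continuous (\cref{lns.l.order-norm} together with \cref{regularmod}, noting that the lattice automorphism $S_t$ preserves order-convergence). Thus both composites $V\nach (T\tensor \konj{T})_t$ and $\calT_t\nach V$ are order-continuous and coincide on the order-dense submodule $E\tensor_{\alg} E^*$; they therefore coincide on $E\tensor E^*$. This is the one step requiring genuine care, the central computation being merely a direct unwinding of the defining covariance relations.

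Finally, since $V$ is a bijection intertwining the two representations, it carries the fixed space $\fix(T\tensor \konj{T})$ isomorphically onto $\fix(\calT)$; by the remark immediately preceding the lemma, the latter is exactly $\HS_T(E) = \End_T(E)\cap \HS(E)$. This yields $\fix(T\tensor \konj{T}) \cong \HS_T(E)$, as claimed.
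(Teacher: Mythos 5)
Your proposal is correct and follows essentially the same route as the paper: verify the intertwining relation $\calT_t \nach V = V \nach (T\tensor\konj{T})_t$ on elementary tensors using exactly the two covariance properties ($S_t$-homomorphism for the module action, $S_t$-isometry for the inner product), then extend to $E\tensor E^*$ by order-density of the algebraic tensor product, and read off the fixed-space identification. Your treatment is in fact slightly more explicit than the paper's at the density step, where the paper merely asserts that the claim "follows by order-density" while you spell out the order-continuity of both composites.
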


\begin{proof}
For $t\in G$ and  $x, y,z\in E$ one has
\begin{align*}    T_tV(y \tensor \konj{z})x & = T_t( (x|z)y ) = S_t(x|z)\cdot T_ty
=  (T_tx | T_t z) T_t y = V( T_t y \tensor \konj{T_t z}) T_t x
\\ & = V( (T \tensor \konj{T})(y \tensor \konj{z})) \, T_t x.
\end{align*}
This shows $T_tV(u)= V((T_t \tensor \konj{T_t})u) T_t$ for $u = y \tensor \konj{z}$.  
The claim then follows by order-density of $E \otimes_{\mathrm{alg}} E^*$ in
$E \otimes E^*$. 
\end{proof}

The next lemma tells that finitely-generated invariant KH-submodules 
are in one-to-one correspondence with $T$-intertwining orthogonal
projections of finite rank.

\begin{lemma}\label{lem:proj-invar}
Let $(\A,S;E,T)$ be a KH-dynamical system, $M = \ran(P) \subseteq E$ a
KH-submodule with associated orthogonal projection $P$. Then 
$M$ is $T$-invariant if and only if $P$ is $T$-intertwining, and 
$M$ is of finite rank if and only if $P$ is a finite-rank homomorphism if and only if $P \in \HS(E)$.
\end{lemma}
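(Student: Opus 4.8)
The plan is to prove the two equivalences in turn, reducing each to results already established, with only the implication ``$P \in \HS(E) \dann M$ of finite rank'' requiring a genuine (if small) idea.

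\emph{Invariance versus intertwining.} Recall that, as the associated orthogonal projection, $P$ satisfies $Px = x$ for $x \in M = \ran(P)$ and $\ker(P) = M^\perp$. For the direction ``$P$ intertwining $\dann M$ invariant'', I would simply note that if $T_tP = PT_t$ for all $t$, then for $x \in M$ we get $T_tx = T_tPx = PT_tx \in \ran(P) = M$. For the converse I would use that each $T_t$ is $S_t$-unitary, hence invertible; together with invariance under the \emph{whole} group $G$ this forces $T_tM = M$ for every $t$, and \cref{covrep-basic}~(vi) then yields that $M^\perp$ is $T$-invariant as well. Decomposing $x = Px + (\Id-P)x$ along $E = M \oplus M^\perp$ (\cref{orthdecompaw}), applying $T_t$, and using $T_tPx \in M$, $T_t(\Id-P)x \in M^\perp$, the uniqueness of the orthogonal decomposition identifies $PT_tx$ with $T_tPx$. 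Hence $P$ is $T$-intertwining.

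\emph{The finite-rank equivalences.} Since $M$ is a KH-submodule it is order-closed, so $\ocl\ran(P) = M$; the equivalence of conditions (a) and (c) in \cref{charfinite} then says at once that $P$ is a finite-rank homomorphism (i.e.\ of $\A$-finite rank) if and only if $M = \ocl\ran(P)$ is of finite rank. That a finite-rank homomorphism belongs to $\HS(E)$ is immediate, since each elementary homomorphism $A_{y,z}$ is Hilbert--Schmidt with $\abs{A_{y,z}}_{\HS} = \abs{y}\abs{z}$ (\cref{s.HS}) and $\HS(E)$ is an $\A$-module.

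\emph{The one substantial step} is ``$P \in \HS(E) \dann M$ of finite rank'', the module version of the classical fact that a Hilbert--Schmidt orthogonal projection has finite rank. I would argue directly from the definition of the Hilbert--Schmidt lattice-norm as a supremum over finite suborthonormal subsets: for any finite suborthonormal $F \subseteq M$ one has $Px = x$ for $x \in F$, so
\[
  \sum_{x \in F} \abs{x}^2 = \sum_{x \in F} \abs{Px}^2 \le \abs{P}_{\HS}^2 .
\]
Since $\abs{P}_{\HS}^2 \in \A_+$ is, under the identification $\A = \Ce(\Om)$, a continuous function on the compact space $\Om$, it is bounded by $N\car$ for some $N \in \N$. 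Thus $\sum_{x\in F}\abs{x}^2 \le N\car$ for every finite suborthonormal $F \subseteq M$, which is precisely condition (c) of \cref{lem:finrank} for the KH-module $M$; hence $M$ is of finite rank, closing the cycle. The only mildly delicate point is this uniform bound on $\abs{P}_{\HS}^2$, where order-completeness and the compactness of $\Om$ enter; alternatively one could pass to the bundle picture of \cref{s.bundleview} and invoke \cref{fiberops}~(ii) and~(iv), by which almost every fibre $P_\vom$ is a Hilbert--Schmidt orthogonal projection on $H_\vom$, necessarily of finite rank $\norm{P_\vom}_{\HS}^2 = \abs{P}_{\HS}^2(\vom)$, uniformly bounded by continuity.
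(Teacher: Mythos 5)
Your proposal is correct and follows essentially the same route as the paper: \cref{covrep-basic} together with the orthogonal decomposition for the invariance/intertwining equivalence, \cref{charfinite} (with $\ocl\ran(P)=M$) for the finite-rank equivalence, and the estimate $\sum_{x\in F}\abs{x}^2=\sum_{x\in F}\abs{Px}^2\le\abs{P}_{\HS}^2$ fed into \cref{lem:finrank}(c) for the Hilbert--Schmidt implication. The only difference is that you make explicit the (trivial but needed) uniform bound $\abs{P}_{\HS}^2\le N\car$, a point the paper's proof passes over in silence.
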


\begin{proof}
Apply Lemma \ref{covrep-basic} for the first assertion. The first part of the
second is obvious. For the final equivalence, suppose that $P \in\HS(E)$. Then,
for every finite suborthonormal system $\calB \subseteq M$,
$\sum_{e\in \calB} \abs{e}^2 = \sum_{e\in \calB} \abs{Pe}^2 \le
\abs{P}_{\HS}^2$. By Lemma \ref{lem:finrank}, $M$ is of finite rank.
\end{proof}

Recall from \cref{fourier} that if $M$ is a finite-rank KH-submodule of $E$
and $\calB= \{e_1, \dots, e_n\}$ is a suborthonormal basis for $M$, then
the  associated orthogonal projection is
\[  P = Vu_\calB,\quad \text{where}\quad  u_\calB \coloneq \sum_{j=1}^n e_j \tensor \konj{e_j}.
\]
By the previous two lemmas,
$M$ is $T$-invariant if and only if $u_\calB \in \fix(T\tensor \konj{T})$.  
 
Recall from Section \ref{s.finrank} that a suborthonormal system $e_1,\dots, e_n$ is {\em
  homogeneous} if $\abs{e_1} = \dots = \abs{e_n}$; 
and KH-submodule $M$ of finite rank is
called {\em homogeneous} if it has a homogeneous suborthonormal
basis.  Finally, recall that a KH-submodule $M$ of finite rank has a 
canonical decomposition into homogeneous KH-submodules.

\begin{lemma}\label{hom-invar}
Let $(\A,S;E,T)$ be a KH-dynamical system and $M\subseteq E$ a
$T$-invariant KH-submodule of finite maximal rank $N \in \N$.
\begin{enumerate}[(i)]
\item If $M$ is homogeneous with homogeneous suborthonormal basis
  $e_1, \dots, e_N$. Then $\abs{e_1} = \dots = \abs{e_N} = \supp(M)
  \in \fix(S)$.
\item Let $M= q_0M \oplus q_1M \oplus \dots \oplus  q_NM$ be the
canonical decomposition of $M$ into homogeneous KH-submodules. Then
each $q_k\in \fix(S)$ and each $q_kM$ is $T$-invariant. 
\end{enumerate}
\end{lemma}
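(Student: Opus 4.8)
The plan is to treat the two parts separately, in each case exploiting that $T_t$ maps suborthonormal systems in $M$ to suborthonormal systems in $T_tM = M$ (\cref{covrep-basic}) together with the intertwining relation $\abs{T_tx} = S_t\abs{x}$. Note that $T$-invariance in fact gives $T_tM = M$: from $T_tM \subseteq M$ and $T_{t^{-1}}M\subseteq M$ one gets $M = T_tT_{t^{-1}}M \subseteq T_tM$.

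For (i), I would first identify the common value $p \coloneq \abs{e_1} = \dots = \abs{e_N}$ (an idempotent, by homogeneity) with $\supp(M)$. Since each $e_j$ is normalized with $\abs{e_j} = p$, \cref{supp.l.supp} gives $p e_j = e_j$, and the Fourier expansion of \cref{fourier} shows that every $x \in M$ satisfies $x = \sum_{j}(x|e_j)e_j = p\sum_j (x|e_j)e_j = px$; hence $p_x \le p$ for all $x\in M$, so $\supp(M) = \sup_{x\in M} p_x \le p$, while $p = \supp(e_1) \le \supp(M)$ gives the reverse inequality. To obtain $\supp(M) \in \fix(S)$ I would next establish $\supp(T_tx) = S_t\supp(x)$ for every $x$ and $t$: this follows because $\supp(x) = \supp(\abs{x})$, $\abs{T_tx} = S_t\abs{x}$, and $S_t$ — being an order-continuous $*$-automorphism that commutes with the functional calculus — satisfies $S_t\supp(f) = \supp(S_tf)$ for $f\in\A_+$ (apply $S_t$ to the order-limit formula $\supp(f) = \olim_{\veps\searrow 0}\frac{f}{f+\veps}$). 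Taking suprema over $x\in M$ and using that $S_t$ preserves suprema of idempotents yields $\supp(T_tM) = S_t\supp(M)$; since $T_tM = M$, this gives $S_t\supp(M) = \supp(M)$ for all $t$.

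For (ii), the key point is that the dimension function $\dim_M$ lies in $\fix(S)$. From the intrinsic description $\dim_M = \sup\bigl\{\sum_{y\in F}\abs{y}^2 \mid F \text{ a finite suborthonormal system in } M\bigr\}$ (\cref{s.finrank}) and the fact that $F\mapsto T_tF$ is a bijection between such systems in $M$ and in $T_tM = M$ (\cref{covrep-basic}), applying $S_t$ and using $\abs{T_ty}^2 = S_t\abs{y}^2$ gives $S_t\dim_M = \dim_M$. Interpreting $\dim_M$ as a continuous $\{0,\dots,N\}$-valued function, I would then write $q_k = \car_{[\dim_M = k]} = g(\dim_M)$ for a continuous $g\colon\R\to\R$ with $g(j) = \delta_{jk}$ on $\{0,\dots,N\}$; since $S_t$ commutes with the continuous functional calculus and fixes $\dim_M$, it follows that $S_tq_k = g(S_t\dim_M) = g(\dim_M) = q_k$, so $q_k\in\fix(S)$. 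Finally, $T$-invariance of $q_kM$ is immediate from the $S_t$-homomorphism property: for $x\in M$, $T_t(q_kx) = (S_tq_k)\,T_tx = q_k\,T_tx \in q_kM$, using $q_k\in\fix(S)$ and $T_tx\in M$.

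The main obstacle — really the only step beyond bookkeeping — is the compatibility of the order-theoretic operations ($\supp$ in part (i), the spectral idempotents $q_k$ in part (ii)) with the automorphism $S_t$. Both reduce to the single principle that an order-continuous $*$-automorphism of a Stone algebra commutes with the functional calculus on elements it fixes; for the support this is the order-limit formula, and for $q_k$ it is essential that $\dim_M$ takes only finitely many integer values, so that the spectral projection $q_k$ is genuinely a \emph{continuous} function of $\dim_M$ rather than merely an order-limit of such functions.
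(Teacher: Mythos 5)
Your proposal is correct and follows essentially the same route as the paper's (very terse) proof: part (i) rests on the bijectivity of each $T_t$ and the compatibility $\abs{T_tx} = S_t\abs{x}$ to get $\supp(M)\in\fix(S)$, and part (ii) rests on showing $\dim_M\in\fix(S)$ (the paper sums over an invariant suborthonormal basis, you take the intrinsic supremum — an inessential variant) and then deducing $q_k\in\fix(S)$ and the $T$-invariance of $q_kM$ exactly as you do. The details you supply — the Fourier-expansion identification $p=\supp(M)$, the identity $S_t\supp(f)=\supp(S_tf)$ via the order-limit formula, and the functional-calculus argument for $S_tq_k=q_k$ — correctly fill in the steps the paper dismisses as "clear."
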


\begin{proof}
(i) As each $T_t$ is bijective, $\supp(M) \in \fix(S)$. But
$\abs{e_1}= \dots = \abs{e_N} = \supp(M)$ is clear.

\noi 
(ii) Recall from Section \ref{s.finrank} the definition of the {\em dimension} of $M$ as
$\dim_M \coloneq \sum_{y \in \calB} \abs{y}^2$, 
where $\calB$ is any finite suborthonormal basis for $M$. As $M$ is
$T$-invariant and  each $T_t$ maps a suborthonormal basis onto another
suborthonormal basis, $\dim_M \in \fix(S)$. It follows that each $q_k \in
\fix(S)$ (cf.\ its definition in Section \ref{s.finrank})
and hence $q_kM$ is $T$-invariant.
\end{proof}

We are now ready to state and prove the main result of this
article. (Recall the definition of $u_\calB$ from above.)

 \begin{theorem}[Decomposition Theorem]\label{mainthm}
Let $(\A,S;E,T)$ be a KH-dynamical system. 
Then
\[ \fix(T\tensor \konj{T}) = \ocl\spann_{\fix(S)}(U).
\]
where $U= \{ u_\calB \,|\,  
\text{$\calB \subseteq E$ finite, homogeneous, suborthonormal}\} \cap 
\fix(T\tensor \konj{T})$.

\smallskip
Furthermore, $E$ decomposes orthogonally as $E = 
E_{\ds} \oplus E_{\wm}$ into $T$-invariant KH-submodules, where 
\begin{align*}
E_{\ds} & = \mathrm{ocl}\bigcup
\{ M\subseteq E \,|\, \text{$M$ finitely-generated, $T$-invariant submodule}\}\\
& = 
\ocl\sum
\{ M\subseteq E \,|\, \text{$M$ homogeneous $T$-invariant KH-submodule
  of finite rank
}\}\\
& = \ocl\sum\{ \ran(A) \,|\, A\in \HS_T(E)\}\qquad \text{and}\\
E_{\wm} & = \left\{x \in E \mid x \otimes \overline{x} \perp \fix(T \otimes \konj{T})\right\}.
\end{align*}
\end{theorem}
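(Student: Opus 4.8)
The plan is to transport everything to $\HS(E)$ via the unitary isomorphism $V$ of \cref{invariance}, under which $\fix(T\tensor\konj{T})$ becomes $\HS_T(E)$ and $V(U)$ becomes the set of orthogonal projections onto homogeneous $T$-invariant KH-submodules of finite rank (by \cref{lem:proj-invar} together with the description $P_M=V(u_\calB)$). The first displayed identity is then equivalent to
\[ \HS_T(E) = \ocl\spann_{\fix(S)}\bigl(V(U)\bigr), \]
the order-closure being taken in $\HS_T(E)$, which by the remarks around \cref{lem:abs-fix} is a Kaplansky--Banach module over $\fix(S)$ whose order-convergence agrees with that of $\HS(E)$. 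The inclusion $\supseteq$ is immediate. For $\subseteq$, since $\HS_T(E)$ is a $*$-invariant $\fix(S)$-module, I would split an arbitrary $A\in\HS_T(E)$ into its self-adjoint real and imaginary parts (legitimate as $i\in\fix(S)$) and so reduce to self-adjoint $A$. To such an $A$ I apply the equivariant spectral theorem \cref{spt-cov}, obtaining $A=\sum_j\lambda_j(P_j^+-P_j^-)$ with $\lambda_j\in\fix(S)$ and $P_j^\pm\in\End_T(E)$, the series converging in order in $\HS(E)$.

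The crux is to write each $\lambda_j P_j^\pm$ as an order-limit of $\fix(S)$-combinations of elements of $V(U)$, and this is where the main obstacle lies: in the module setting the spectral projections $P_j^\pm$ need not have finite-rank range. I would overcome this by a cutoff. Put $p_{j,\veps}\coloneq\car_{[\lambda_j>\veps]}\in\fix(S)$; then $p_{j,\veps}P_j^\pm$ is a $T$-intertwining orthogonal projection whose range has finite rank, because for any finite suborthonormal $F$ inside it, property (iii) of \cref{spt-KH} (giving $Ae=\pm\lambda_j e$ on $\ran P_j^\pm$) yields
\[ \veps^2\sum_{e\in F}\abs{e}^2\le\sum_{e\in F}\abs{\lambda_j e}^2=\sum_{e\in F}\abs{Ae}^2\le\abs{A}_{\HS}^2\le\norm{A}_{\HS}^2\,\car, \]
so \cref{lem:finrank} applies with uniform bound $\norm{A}_{\HS}^2/\veps^2$. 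Decomposing this finite-rank $T$-invariant submodule into its finitely many homogeneous components (\cref{hom-invar}, whose idempotents lie in $\fix(S)$) realizes $\lambda_j\,p_{j,\veps}P_j^\pm$ as a finite $\fix(S)$-combination of elements of $V(U)$. Since $\lambda_j P_j^\pm\in\HS(E)$ with $\supp(\lambda_j P_j^\pm)\le\supp(\lambda_j)=\sup_\veps p_{j,\veps}$, the quantities $(\car-p_{j,\veps})\abs{\lambda_j P_j^\pm}_{\HS}$ decrease to $0$, whence $\olim_{\veps\to0}\lambda_j\,p_{j,\veps}P_j^\pm=\lambda_j P_j^\pm$ in $\HS(E)$; thus $\lambda_j P_j^\pm$, then the partial sums, and finally $A$ all lie in $\ocl\spann_{\fix(S)}(V(U))$. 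Translating back through $V$ gives the first assertion.

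For the decomposition I would first identify $E_{\wm}$ with $E_{\ds}^\perp$. Using the $\HS$-inner product (\cref{HSops}, \cref{tensordescription}), for a finite suborthonormal basis $\calB=\{e_1,\dots,e_n\}$ of a KH-submodule $M$ with projection $P_M=V(u_\calB)$ one computes
\[ (x\tensor\konj{x}\,|\,u_\calB)=\sum_{j=1}^n\abs{(x|e_j)}^2=\abs{P_M x}^2. \]
Because $\fix(T\tensor\konj{T})=\ocl\spann_{\fix(S)}(U)$ and the inner product is order-continuous, $x\in E_{\wm}$ holds if and only if $\abs{P_M x}^2=0$, i.e.\ $x\perp M$, for every finite-rank $T$-invariant KH-submodule $M$ (each such $M$ being a sum of homogeneous ones). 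This says exactly $E_{\wm}=E_{\ds}^\perp$. Now $E_{\ds}$ is an order-closed submodule by \cref{order-closure}, hence a KH-submodule, and \cref{fourier} gives $E=E_{\ds}\oplus E_{\ds}^\perp=E_{\ds}\oplus E_{\wm}$; both summands are $T$-invariant by \cref{covrep-basic}.

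Finally I would settle the three descriptions of $E_{\ds}$. The order-closure of a finitely generated $T$-invariant submodule is a finite-rank $T$-invariant KH-submodule (\cref{lem:finrank}, \cref{covrep-basic}) splitting into homogeneous components (\cref{hom-invar}), matching the first two lines. For the third, each homogeneous finite-rank $T$-invariant $M$ equals $\ran(P_M)$ with $P_M\in\HS_T(E)$, giving $E_{\ds}\subseteq\ocl\sum\{\ran(A)\}$. Conversely, rerunning the cutoff argument on $x\in E_{\wm}$ shows $\lambda_j P_j^\pm x=0$ for all $j$, hence $E_{\wm}\subseteq\ker A$ for every $A\in\HS_T(E)$; applying this to $A^*\in\HS_T(E)$ and using $\ran(A)^\perp=\ker(A^*)$ from \cref{adjoint} gives $\ran(A)\subseteq\ker(A^*)^\perp\subseteq E_{\wm}^\perp=E_{\ds}$, so the reverse inclusion holds as well.
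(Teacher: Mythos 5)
Your proposal is correct and takes essentially the same route as the paper's proof: transporting everything to $\HS_T(E)$ via the isomorphism of Lemma \ref{invariance}, splitting into self-adjoint parts, invoking the equivariant spectral theorem (Proposition \ref{spt-cov}), cutting off with the fixed idempotents $\supp((\lambda_j-\veps\car)^+)$ to produce finite-rank $T$-intertwining projections, decomposing these into homogeneous components via Lemma \ref{hom-invar}, and using the pairing identity $(x\tensor\konj{x}\,|\,u_\calB)=\sum_{e\in\calB}\abs{(x|e)}^2$ to identify $E_{\wm}$ with $E_{\ds}^\perp$. The only cosmetic deviations are that the paper reduces to a single spectral term $\lambda P$ before performing the cutoff (whereas you cut off term by term in the series) and obtains $\ocl\sum\{\ran(A)\,|\,A\in\HS_T(E)\}\subseteq E_{\ds}$ directly from Lemma \ref{keylemma}, whereas you argue via $E_{\wm}\subseteq\ker(A^*)$ and $\ran(A)^\perp=\ker(A^*)$; both variants are sound.
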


The spaces $E_{\ds}$ and  $E_{\wm}$  are called the {\emdf
  discrete spectrum part} and {\emdf weakly mixing part}, respectively. 

\medskip
The first statement of Theorem \ref{mainthm} is the KH-analogue
of the ``Key Lemma'' of the Introduction. 
By Lemma \ref{invariance}, it is equivalent to the following result.

\begin{lemma}\label{keylemma}
Let $(\A,S;E,T)$ be a KH-dynamical system. 
Then  
\begin{align*}
 \HS_T(E) & = \ocl\spann_{\fix(S)} \bigl\{ P \in \HS_T(E)\,|\,
  P\,\, \text{\rm orthogonal projection  onto} \\
& \qquad \qquad \qquad \qquad \qquad\text{\rm a homogeneous finite-rank
                                                    submodule}\bigr\}.
\end{align*}
\end{lemma}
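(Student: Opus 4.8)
The plan is to run, essentially verbatim, the Hilbert space argument behind \eqref{eq.HS_T-approx}, feeding in the equivariant spectral theorem \cref{spt-cov} in place of the classical one. The inclusion ``$\supseteq$'' is immediate: every $P$ in the generating set $U$ is a finite-rank, hence Hilbert--Schmidt, $T$-intertwining projection (\cref{lem:proj-invar}), so $P \in \HS_T(E)$; and $\HS_T(E) = \fix_{\HS(E)}(\calT)$ is an order-closed $\fix(S)$-submodule of $\HS(E)$, whence $\ocl\spann_{\fix(S)}(U) \subseteq \HS_T(E)$. For ``$\subseteq$'' I would first reduce to the self-adjoint case: since $\End_T(E)$ is $*$-invariant and $\HS(E)$ is $*$-closed, any $A \in \HS_T(E)$ splits as $A = \tfrac12(A+A^*) + i\cdot\tfrac1{2i}(A-A^*)$ into two self-adjoint members of $\HS_T(E)$, and because $\C\car \subseteq \fix(S)$ it suffices to treat each summand. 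So let $A = A^* \in \HS_T(E)$ and apply \cref{spt-cov} to obtain the order-convergent decomposition $A = \sum_{j} \lambda_j(P_j^+ - P_j^-)$ in $\HS(E)$, with $0 \le \lambda_j \in \fix(S)$ and $T$-intertwining orthogonal projections $P_j^\pm$ satisfying $\lambda_j P_j^\pm \in \HS(E)$ and $\supp(P_j^\pm) \subseteq \supp(\lambda_j)$.

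The hard part—and the one place where the KH-setting genuinely departs from the Hilbert space proof—is that the eigenprojections $P_j^\pm$ need \emph{not} themselves be Hilbert--Schmidt: fibrewise the multiplicity can blow up where the continuous eigenvalue function $\lambda_j$ approaches $0$, so that only the product $\lambda_j P_j^\pm$ is controlled in $\A$. To repair this I would truncate by idempotents. For $m \in \N$ set $e_{j,m} \coloneq \supp\bigl((\lambda_j - \tfrac1m\car)^+\bigr) \in \B_S$. On $e_{j,m}$ one has $\lambda_j \ge \tfrac1m e_{j,m}$, whence for every finite suborthonormal $\calB$ one gets $\sum_{x\in\calB}\abs{e_{j,m}P_j^\pm x}^2 = e_{j,m}\sum_{x\in\calB}\abs{P_j^\pm x}^2 \le m^2 \abs{\lambda_j P_j^\pm}_{\HS}^2$; by order-completeness the supremum exists, so $e_{j,m}P_j^\pm \in \HS(E)$. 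Being a $T$-intertwining projection onto a finite-rank submodule (\cref{lem:proj-invar}), its homogeneous decomposition (\cref{hom-invar}, with all $q_k \in \fix(S)$) writes $e_{j,m}P_j^\pm = \sum_k q_k e_{j,m}P_j^\pm$ as a finite sum of projections onto homogeneous finite-rank $T$-invariant submodules, i.e.\ of elements of $U$; hence $\lambda_j e_{j,m}P_j^\pm \in \spann_{\fix(S)}(U)$.

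Next I would let $m \to \infty$ inside each fixed $j$. Here the majorant lives in $\A$: since $(\car - e_{j,m})$ is the complementary idempotent and $\abs{\lambda_j P_j^\pm}_{\HS} \in \A$, one has $\abs{\lambda_j(\car-e_{j,m})P_j^\pm}_{\HS}^2 = (\car - e_{j,m})\abs{\lambda_j P_j^\pm}_{\HS}^2$, a net decreasing in $m$ to $(\car - \supp(\lambda_j))\abs{\lambda_j P_j^\pm}_{\HS}^2 = 0$ (the support of $\lambda_j P_j^\pm$ lies in $\supp(\lambda_j)$). Thus it order-converges to $0$, giving $\lambda_j(P_j^+ - P_j^-) = \olim_m \lambda_j e_{j,m}(P_j^+ - P_j^-) \in \ocl\spann_{\fix(S)}(U)$ for \emph{every single} $j$.

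To conclude I would invoke that $\ocl\spann_{\fix(S)}(U)$ is an order-closed $\fix(S)$-submodule of the KH-module $\HS_T(E)$ over $\fix(S)$ (\cref{order-closure}): the partial sums $\sum_{j\le n}\lambda_j(P_j^+ - P_j^-)$ then lie in it, and since they order-converge to $A$ in $\HS(E)$, so does $A$. I would stress that organising the passage to the limit as an $m$-limit \emph{within each fixed $j$}—where $\lambda_j P_j^\pm \in \HS(E)$ supplies an honest majorant in $\A$—cleanly sidesteps the only real obstacle, namely that the \emph{total} rank $\sum_j \abs{P_j^\pm}_{\HS}^2$ may fail to define an element of $\A$; in particular no dominated-convergence estimate across the index $j$ is needed.
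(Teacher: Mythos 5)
Your proof is correct and follows essentially the same route as the paper's: reduce to the self-adjoint case, apply the equivariant spectral theorem (Proposition \ref{spt-cov}), truncate by the support idempotents $\supp\bigl((\lambda - \veps\car)^+\bigr)\in\B\cap\fix(S)$ so that the truncated projections become Hilbert--Schmidt (hence of finite rank by Lemma \ref{lem:proj-invar}), split into homogeneous components via Lemma \ref{hom-invar}, and finish by order-limits in the order-closed $\fix(S)$-submodule. The only difference is organizational: the paper compresses this into successive reductions (``we may suppose $A = \lambda P$'', then ``$A = q\lambda P$ with $\veps q \le \lambda$''), whereas you keep the spectral series and the truncation index $m$ explicit and assemble the two limits at the end.
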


\begin{proof}
Fix  $A\in \HS_T(E)$. We need to show that one can 
approximate $A$ in order by sums of operators of the form $\lambda P$
where $\lambda \in \fix(S)$ and $P\in \HS_T(E)$ is an orthogonal projection
onto a $T$-invariant, homogeneous, finite-rank submodule.
This is done in several reduction steps. 

Firstly, by the decomposition $A = \frac{1}{2}(A+ A^*) + \ui
\frac{1}{2\ui}(A-A^*)$ we may suppose that $A$ is self-adjoint. 
Secondly,  by virtue of the spectral decomposition (\cref{spt-cov}),
we may further suppose that $A = \lambda P$ where $P$ is a $T$-intertwining 
 orthogonal projection and $0 \le \lambda \in \fix(S)$.

Next, abbreviate $p \coloneq \supp(\lambda) \in \fix(S)$ and
define $p_\veps \coloneqq \supp( (\lambda - \veps\car)^+)$.
Then $p_\veps\in\B\cap \fix(S)$, $\veps p_\veps \le \lambda$ and    $p_\veps \nearrow
p$ as $\veps \searrow 0$. 
It follows that $p_\veps\lambda P \to \lambda P = A$ in order, hence we may
suppose $A = q\lambda P$ and 
$\veps q \le \lambda$ for some $\veps > 0$ and some $q\in \B \cap \fix(S)$.

Define $Q \coloneq qP$. Then $A = \lambda Q$ and $Q$ is a $T$-intertwining orthogonal projection.
Moreover, $Q$ is $\A$-Hilbert--Schmidt, as
\[    
\sum_{y \in \calB} \abs{Qy}^2 \le \frac{1}{\veps^2} 
\sum_{y \in \calB} \lambda^2\abs{Qy}^2  \le  \frac{1}{\veps^2} \abs{A}_{\HS}^2
\]
for each finite suborthonormal set $\calB\subseteq E$. By Lemma \ref{lem:proj-invar}, $Q$
is of finite rank. 

Finally, by applying Lemma \ref{hom-invar} we may suppose that $\ran(Q)$ is homogeneous.
\end{proof}

\begin{proof}[Proof of \cref{mainthm}.]
Each finitely-generated $T$-invariant submodule of $E$ is contained in 
a $T$-invariant KH-submodule of finite rank (e.g., its order-closure),
and
each $T$-invariant KH-submodule of finite rank is contained in a
sum of homogeneous $T$-invariant KH-submodules of finite rank (Lemma
\ref{hom-invar}). Hence, the first two alternative definitions of $E_{\ds}$
coincide.  The third characterization follows directly from Lemma
\ref{keylemma}. 

It remains to prove $E_{\ds}^\perp = E_{\wm}$. 
To this end, let  $x\in E$ and  $\calB$ be a suborthonormal set. Then 
\[ (x \tensor \konj{x}| u_\calB) = \sum_{e\in \calB} (x\tensor \konj{x}|e\tensor
  \konj{e})
=     \sum_{e\in \calB} \abs{(x|e)}^2,
\]
hence   $x\perp \calB$ if and only if $x\tensor \konj{x} \perp u_\calB$. 
It follows that $x\perp E_{\ds}$ if and only if $x\tensor \konj{x} \perp
u_\calB$ for all $u_\calB \in U$. And, by Lemma  \ref{keylemma}, this is the
case if and only if $x\tensor \konj{x} \perp \fix(T \tensor \konj{T})$.
\end{proof}

\begin{remark}\label{main-cycp}
Employing the fact that the spectral theorem (Theorem \ref{spt-KH}) 
also holds for  self-adjoint cyclically compact operators (cf.\ Remark \ref{spt-cycp}) one can prove
without difficulty that
\[ E_{\ds} = \ocl\sum\bigl\{ \ran(A) \,|\, A \in \End_T(E)\,\,
\text{cyclically compact}\bigl\}.
\]
\end{remark}

\vanish{
\subsection{Special  Cases}

We cover the ``ergodic case''  $\fix(S) = \C\car$ and the other extremal case
$\fix(S) = \A$ (classical representation). 
({\em To be filled in.})

}

\section*{Notes and Comments}\label{s.notesII}

The main example of a ``KH-dynamical system'' comes from
an extension $\bfX \to \bfY$ of measure-preserving systems, where $T$ and $S$ are the
induced Koopman representations on $\Ell{2}(\uX|\uY)$ and $\Ell{\infty}(\uY)$,
respectively. Of course, this example motivated our abstract notion. On the other
hand, as demonstrated in Remarks \ref{remstoreps}, the concept assorts well with 
the established theory of covariant representations of  C$^*$-algebras.

\medskip
In the whole theory, the case $\A = \C$ is the classical Hilbert space
situation, discussed in the Introduction. Indeed, as 
on $\C$ there is only a trivial dynamics, a ``KH-dynamical
system'' in the case $\A= \C$ is nothing but a unitary representation $T\colon G \to
\BL(H)$ of a group $G$ on  a Hilbert space $H$.   
Theorem \ref{mainthm} then reduces to the Hilbert space
decomposition  $H = H_{\ds} \oplus H_{\wm}$ mentioned in the corollary on page
\pageref{cor-intro}. 

It turns out that this decomposition coincides
with the so-called {\em Jacobs--deLeeuw--Glicksberg} decomposition $H =
H_{\rev} \oplus H_{\mathrm{aws}}$, which
arises whenever a semigroup acts contractively on a Hilbert space
$H$. Here, on $H_{\rev}$ the semigroup embeds into a strongly
continuous action of a
compact group, and on $H_{\mathrm{aws}}$ the zero vector is contained in  the weak closure of each
orbit, see \cite[Chap.~16]{EFHN2015}. 

In order to see that indeed $H_{\rev} = H_{\ds}$ (and hence also $H_{\wm} = H_{\mathrm{aws}}$) 
one needs an essential part of the Peter--Weyl theorem, namely that
$H= H_{\ds}$ whenever $G$ is compact and 
$T$ is strongly continuous, see \cite[Thm.~16.31]{EFHN2015} and its proof.  

On the other hand, that  part of the Peter--Weyl theorem
is actually a straightforward consequence of the decomposition result of Theorem \ref{mainthm} for $\A = \C$ (i.e., the corollary stated on page \pageref{cor-intro} of the introduction).  
Indeed, suppose that $G$ is a compact group and the unitary representation $T$ of $G$
on $H$ is strongly continuous. Then $P \coloneq \int_G T_t \tensor \konj{T_t} \, \ud{t}$ is the orthogonal
projection onto $\fix(T \tensor \konj{T})$. Hence, for $x\in H_{\wm}$  one has
\[ 0 = \sprod{x \tensor \konj{x}}{P(x\tensor \konj{x})} = 
\int_G \abs{\sprod{ T_t x }{x}}^2 \, \ud{t},
\]
and this implies $x=0$ since the Haar measure on $G$ has
full support. It follows that $H = H_{\ds}$ as claimed.

These findings lead to the conclusion that although the decomposition
of
Jacobs--deLeeuw--Glicksberg and the discrete spectrum/weakly mixing decomposition coincide
{\em extensionally}, they are {\em intensionally} different and the link between them 
is furnished only by a corollary of the latter.

\medskip
It can be expected that there is an  analogue of the
Jacobs--deLeeuw--Glicksberg theory for semigroups on KH-modules
(simply because in the Boolean/conditional 
world there is an analogue for every classical result, see the notes to Part I on
page \pageref{s.notesI}), which then relates to  the ds/wm-decomposition  
just as in the Hilbert space case. This is the object of future work.

\medskip
Since the spectral theorem also holds for cyclically compact operators
(see Remark \ref{spt-cycp} and the 
 notes to Part I on page \pageref{s.notesI}), one can extend the description
of the discrete spectrum part of $E$ involving cyclically compact operators
(see Remark \ref{main-cycp}).

\part{The Furstenberg--Zimmer Structure Theorem}\label{p.FZ}

In this part we apply the Decomposition Theorem (\cref{mainthm})
to extensions of measure-preserving systems in order 
to establish the Furstenberg--Zimmer structure theorem. Again, 
$G$ denotes an arbitrary, but fixed group.

\section{Extensions of Measure-Preserving Systems}\label{s.extensions}

Classically, a {\em measure-preserving $G$-system} is a pair $\bfX = (\uX;\phi)$ of a probability space 
$\mathrm{X} = (X,\Sigma_{\uX},\mu_{\uX})$ and a family $(\phi_t)_{t \in G}$ of  measurable and measure-preserving maps $\phi_t \colon X \to X$ 
for $t \in G$ such that $\phi_{ts} = \phi_t \circ \phi_s$ holds almost
everywhere for $t,s \in G$ and $\phi_0 = \mathrm{id}_X$.

Such a dynamical system always induces a group $(T_t)_{t \in G}$ of
(``Koopman'') operators on the corresponding 
$\uL^2$-space via 
$T_tf \coloneqq f \circ \phi_t^{-1}$ for $f \in \uL^2(\uX)$. The operators $T_t$
are examples of {\emdf Markov embeddings}, i.e., isometries $T \in \mathscr{L}(\uL^2(\uX))$ satisfying 
  \begin{itemize} 
    \item $|Tf| = T|f|$ for every $f \in \uL^2(\uX)$,
    \item $T\car = \car$.                
  \end{itemize}
(See \cite[Chap.~13]{EFHN2015} for a detailed discussion of general Markov
operators. Bijective  Markov embeddings are also called {\em Markov isomorphisms}.)

In the case of a {\em standard}  probability space $\mathrm{X}$, every unitary group representation $G \to 
\mathscr{L}(\uL^2(\uX))$ as Markov isomorphisms is the ``Koopmanization'' of 
an underlying representation by means of point transformations.
This follows from an important  result of von Neumann (see 
\cite[Prop.~7.19 and Thm.~7.20]{EFHN2015}). 
If the probability space is not standard, however, such an operator group is induced only by 
transformations of the associated measure algebra (see \cite[Thm.~12.10]{EFHN2015}), but not necessarily by underlying point transformations.  

Giving up point transformations and passing to the functional analytic side
amounts to a change of category. This change involves a reversing of arrows, to
the effect that  {\em factors} and {\em factor maps} in the classical framework are replaced
by {\em extensions} and {\em embeddings} in the functional analytic framework.
To wit, where in classical ergodic theory one would write $\bfX \to \bfY$ 
for an extension of systems, with $\bfY$ being the factor and $\bfX$ the
extension, we write $J\colon (\uY;S) \to (\uX; T)$, where $J$ is the
Markov embedding of $\Ell{2}(\uY)$ into $\Ell{2}(\uX)$ and $S$ and $T$ are
the respective Koopman representations. Actually, taking the change of category 
seriously amounts to requiring only the functional-analytic properties of such
representations and dispensing with the requirement that they arise
from underlying  point transformations. 
However unfamiliar this step might be at first, 
it is quite natural and advantageous for
structural considerations, e.g., because  the new category  is free from countability
restrictions and  better suited for universal constructions.  

Whereas the book \cite{EFHN2015} can be seen as a bridge between the two worlds,
here  we do not consider point transformations at all and work exclusively on 
the functional-analytic side. Hence the following definitions.

\begin{definition}\label{d.extension}
  A {\emdf measure-preserving $G$-system} is a pair $(\uX;T)$ with a
        probability space $\uX$ and a  
representation 
    \begin{align*}
      T \colon G \to \mathscr{L}(\uL^2(\uX)), \quad t \mapsto T_t
    \end{align*}
  of $G$ as Markov lattice isomorphisms on $\uL^2(\uX)$.

\smallskip
\noindent
An {\emdf extension} (or {\emdf morphism}) $J \colon (\uY;S) \to (\uX;T)$ of 
measure-preserving systems
 is a Markov embedding $J 
\in \mathscr{L}(\uL^2(\uY), \uL^2(\uX))$ such that the diagram
  \begin{align*}
      \xymatrix{
          \uL^2(\uX) \ar[r]^{T_t}& \uL^2(\uX)\\
            \uL^2(\uY) \ar[r]^{S_t} \ar[u]^J& \uL^2(\uY)  \ar[u]_J
        }
    \end{align*}
  is commutative for every $t \in G$. 

\smallskip
\noindent
Two extensions $J_1 \colon (\uY_1;S^1) \to (\uX;T)$ and $J_2 \colon (\uY_2;S^2) \to
(\uX;T)$ are  {\emdf equivalent} if 
there is a Markov lattice isomorphism $V \colon \uL^2(\uY_1) \rightarrow
\uL^2(\uY_1)$ intertwining the dynamics and such that the diagram
  \begin{align*}
    \xymatrix{
          & (\uX;T)& \\
            (\uY_1;S^1) \ar[ru]^{J_1} \ar[rr]^V &  &(\uY_2;S^2) \ar[lu]_{J_2}
        }
  \end{align*}  
is commutative.
\end{definition}

We start from an extension $J \colon (\uY;S) \to (\uX;T)$ such that the range $\ran J \subseteq \uL^2(\uX)$ is 
an invariant and closed  \emph{unital vector sublattice} of $\uL^2(\uX)$. (The latter
means that $\ran J$ is a closed subspace containing $\car$  and closed with
respect to taking   real and imaginary parts as well as   the modulus.) Each unital closed vector sublattice is of the form
$\uL^2(X,\Sigma',
\mu_\uX)$ for some (usually non-unique) $\sigma$-subalgebra $\Sigma'$ of
$\Sigma_\uX$ \cite[Thm.~13.19]{EFHN2015}.  Moreover, 
two extensions $J_1$ and $J_2$ as above  are equivalent precisely when $\ran
J_1 = \ran J_2$. It follows that---{\em up to
  equivalence}---extensions into $(\uX;T)$  are in one-to-one correspondence
with the invariant and closed  unital vector sublattices of $\uL^2(\uX)$,   
and each extension can be seen as a proper inclusion.
(See \cite[Thm.s 13.19 and 13.20]{EFHN2015} for these assertions.)

\medskip

\subsection{Inductive limits}

For later use, let us recall here the concept of 
inductive limits of measure-preserving systems (see
\cite[Sec.~13.5]{EFHN2015}). 

  \begin{definition}
An {\emdf inductive system} is a net 
$(\uX_\alpha;T_\alpha)_\alpha$ of measure-preserving systems together with
an extension $J_\alpha^\beta \colon (\uX_\alpha;T_\alpha) \to
(\uX_\beta;T_\beta)$ whenever $\alpha \le \beta$ and with the following properties:
    \begin{itemize}
      \item $J_\beta^\gamma J_\alpha^\beta =
                          J_\alpha^\gamma$\quad  for $\alpha \leq \beta \leq \gamma$ and
      \item $J_\alpha^\alpha = \mathrm{Id}$\quad  for each $\alpha$.  
    \end{itemize}

\noi
An {\emdf inductive limit} of such an inductive system is 
a  measure-preserving system $(\uX;T)$ together with extensions $J_\alpha \colon (\uX_\alpha;T_\alpha) \to (\uX;T)$ such that $J_\alpha = J_\beta J_\alpha^\beta$ for 
$\alpha \leq \beta$ and with the following universal property:
    \begin{itemize}
      \item Whenever $(\uY;S)$ is a measure-preserving system and $I_\alpha \colon (\uX_\alpha;T_\alpha) \to 
(\uY;S)$ are extensions with $I_\alpha =  I_\beta J_\alpha^\beta$ for $\alpha \leq \beta$, then there is a unique extension $J 
\colon (\uX;T) \to (\uY;S)$ such that for each $\alpha$ the diagram
        \begin{align*}
          \xymatrix{
            & (\uY;S)\\
            (\uX_\alpha;T_\alpha)  \ar[ru]^{I_\alpha} \ar[r]_{J_\alpha}  & (\uX;T)\ar[u]_{J}  \\    
            }
        \end{align*}
      is commutative.
    \end{itemize}
    In this case, we write 
      \begin{align*}
        (\uX;T) = \lim_{\substack{\longrightarrow\\ \alpha}} (\uX_\alpha;T_\alpha).
      \end{align*}
  \end{definition}
  By \cite[Thm.~13.38]{EFHN2015} (which readily extends to the case of arbitrary group actions), every inductive system has an 
inductive limit. Moreover, by the universal property, an inductive limit
 is unique up to a natural isomorphism.

\medskip

\subsection{The Associated KH-Dynamical System}\label{s.akhds}

Let   $J \colon \Ell{2}(\uY) \to \Ell{2}(\uX)$ be a Markov embedding and
abbreviate $\A \coloneqq \Ell{\infty}(\uY)$. 
The adjoint $J^*$ of $J$ is a Markov operator and hence 
extends uniquely to a Markov operator 
\[ \E_{\uY}\colon \uL^1(\uX) \to \uL^1(\uY),
\]
see \cite[Prop.~13.6]{EFHN2015}. We think of $\E_{\uY}$ as  a {\em conditional expectation}.
(Actually, the  true conditional expectation is the orthogonal projection $J^* J$ 
onto $\ran J$, see \cite[Sec.~13.3]{EFHN2015}).

Since $J$ is a Markov embedding, one has
\[  J(fg) = J(f) \cdot J(g) \quad \text{for all $f,g\in \A$},
\]
see \cite[Sec.~13.2]{EFHN2015}. Hence, the product
\[   \A \times \Ell{1}(\uX) \to \Ell{1}(\uX),\quad    
(f,g) \mapsto (Jf)g
\]
turns $\uL^1(\uX)$ into an $\A$-module.  
The conditional expectation 
$\E_{\uY}\colon \uL^1(\uX) \to \uL^1(\uY)$ is a module homomorphism
\cite[Thm.~13.12]{EFHN2015}. Define
\[ (f|g)_\uY \coloneq \E_{\uY}(f \konj{g}) \quad \text{and}\quad 
 \abs{f}_{\uY} \coloneq \sqrt{ (f|f)_{\uY}} = \sqrt{ \E_{\uY}\abs{f}^2}
\quad \text{for $f,g \in \Ell{2}(\uX)$}.
\] 
Then the mapping
\[    (\cdot\,|\, \cdot)_{\uY} \colon \Ell{2}(\uX)\times \Ell{2}(\uX) \to \Ell{1}(\uY)
\]
is $\A$-sesquilinear, positive definite and bounded.
Hence, one has
the inequalities
\[  \abs{(f|g)_\uY} \le \abs{f}_{\uY} \abs{g}_{\uY}\quad \text{and}\quad
  \abs{f+g}_{\uY}\le
\abs{f}_{\uY}+ \abs{g}_{\uY}
\]
for all $f, g\in \Ell{2}(\uX)$. (The second follows
from the first and the first needs only be proved for $f,g\in \Ell{\infty}(\uX)$,
where it follows from standard arguments.) Consequently, 
\[ \abs{\abs{f}_\uY - \abs{g}_\uY } \le \abs{f-g}_{\uY} \qquad (f,g\in \Ell{2}(\uX))
\]
and hence, taking $\Ell{2}$-norms,
\[      \norm{\abs{f}_\uY - \abs{g}_{\uY}}_2 \le \norm{f-g}_2
\qquad (f,g\in \Ell{2}(\uX)).
\]
In order to obtain a Hilbert module, we need to consider a submodule of 
$\uL^2(\uX)$.

\begin{definition}\label{conditionall2}
Let $\uX$ and $\uY$ be probability spaces and
$J\colon \uL^2(\uY)\to \uL^2(\uX)$
 a Markov embedding. 
The {\emdf conditional $\uL^2$-space} is
    \begin{align*}
      \uL^2(\uX|\uY) \coloneqq \{f \in \uL^2(\uX)\mid \E_{\uY}|f|^2 \in \uL^\infty(\uY)\},
\end{align*}
equipped with the $\uL^\infty(\uY)$-valued inner product $(\cdot|\cdot)_\uY$
and  the $\uL^\infty(\uY)$-valued norm $\abs{\cdot}_{\uY}$.
\end{definition}

By what we have seen above, $\Ell{2}(\uX|\uY)$ is a pre-Hilbert module
over the unital commutative $\mathrm{C}^*$-algebra $\Ell{\infty}(\uY)$.
Note that our definition of $\uL^2(\uX|\uY)$  coincides with Tao's
\cite[Sec.~2.13]{Tao2009} 
but differs from the one given in \cite[p.51]{KerrLi2016}.

\begin{remark}\label{innerproducts}
Apart from the structures just introduced,  
the pre-Hilbert module $E = \uL^2(\uX|\uY)$ is equipped also with
the usual inner product and norm inherited from $\uL^2(X)$, and the
usual modulus. These are denoted by  $(f|g), \norm{f}_2, \abs{f}$, respectively,
in order to avoid  confusion with $(f|g)_\uY, \norm{f}_\uY, \abs{f}_\uY$.

Order-related notions like order-convergence, order-Cauchy
property or order-closure will always refer 
to $\uL^2(\uX|\uY)$ as a lattice-normed module over 
$\uL^\infty(\uY)$,  and not in the sense of a sublattice of $\uL^2(\uX)$, unless otherwise noted.
\end{remark}

Since $\uY$ is a probability space, $\A \coloneq \uL^\infty(\uY)$ is
order-complete, i.e.{} a \Stoneanalgebra, see \cite[Cor.~7.8 and
Rem.~7.11]{EFHN2015}. We intend to  show that $\uL^2(\uX|\uY)$ is
a Kaplansky--Hilbert module over $\A$. 
To this aim, and also for later use, we need to relate the different convergence notions.

\begin{lemma}\label{ordervsl2}
Let $\uX$ and $\uY$ be probability spaces and $J\colon \uL^2(\uY)\to \uL^2(\uX)$ a Markov embedding. 
\begin{enumerate}[(i)]

\item 
For a bounded sequence $(f_n)_n$ in $\Ell{\infty}(\uY)$ one has
$\olim_{n\to \infty} f_n = 0$ if and only if $f_n \to 0$ $\mu_\uY$-almost everywhere.

\item Each order-bounded subset of $\uL^2(\uX|\uY)$ is
  $\norm{\cdot}_2$-bounded. Each order-convergent net in $\Ell{2}(\uX|\uY)$ is 
$\Ell{2}$-convergent (to the same limit).

\item If $M$ is an $\uL^\infty(\uY)$-submodule of $\uL^2(\uX|\uY)$, then within
 $\uL^2(\uX|\uY)$  the $\norm{\cdot}_2$-closure of $M$ 
  coincides with its order-closure. Moreover, 
$f \in \ocl(M)$ if and only if there is a sequence $(f_n)_n$ in $M$ such that 
for each $\veps > 0$ there is a $\uY$-measurable set $A$ with
$\mu_\uY(A^c) \le \veps$ and $\norm{ \car_A\abs{f_n - f}_\uY}_{\Ell{\infty}} \to 0$.
\end{enumerate}
\end{lemma}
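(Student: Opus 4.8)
\textbf{Plan of proof for Lemma \ref{ordervsl2}.}

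The plan is to establish the three assertions in order, leveraging the fact that $\A = \Ell{\infty}(\uY)$ is a \Stoneanalgebra{} and that order-convergence of bounded sequences corresponds to bounded almost-everywhere convergence. For part (i), I would first note that $\Ell{\infty}(\uY)$, as the space of continuous functions on the Stonean spectrum $\Om$ of its Gelfand representation, lets us invoke \cref{charorder}: for a \emph{countable} net the equivalence (a)$\Leftrightarrow$(b) holds, so $\olim_n f_n = 0$ is equivalent to the sequence being eventually bounded together with topological a.e.\ convergence to $0$ on $\Om$. The remaining work is then to translate this ``topological a.e.'' statement on $\Om$ into ``$\mu_\uY$-a.e.'' convergence on $\uY$. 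This uses the standard identification of $\Ell{\infty}(\uY)$ with $\Ce(\Om)$ together with the correspondence between $\mu_\uY$-null sets and meager/residual sets under the canonical homeomorphism of measure algebras; I would cite \cite[Cor.~7.8 and Rem.~7.11]{EFHN2015} and the measure-algebra description to match the two notions of ``almost everywhere''.

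For part (ii), order-boundedness in $\Ell{2}(\uX|\uY)$ means there is $g\in \A_+$ with $\abs{f}_\uY \le g$ for all $f$ in the set. Since $\norm{f}_2^2 = \int_\uY \abs{f}_\uY^2 \le \norm{g}_\infty^2$, norm-boundedness in $\Ell{2}$ is immediate. For the convergence statement, suppose $\olim_\alpha f_\alpha = f$; by \cref{lem:easynetchar} there is a net $(h_\alpha)$ in $\A$ decreasing to $0$ with $\abs{f - f_\alpha}_\uY \le h_\alpha$ eventually. Taking $\Ell{2}$-norms gives $\norm{f-f_\alpha}_2^2 = \int_\uY \abs{f-f_\alpha}_\uY^2 \le \int_\uY h_\alpha^2$, and the right-hand side tends to $0$ because $h_\alpha \searrow 0$ in the order-complete lattice $\Ell{\infty}(\uY)$, so that $h_\alpha^2 \searrow 0$ and the integrals converge by the analogue of monotone/dominated convergence for decreasing nets in $\Ell{\infty}(\uY)$ (here I would again appeal to the a.e.\ description from part (i) applied to $(h_\alpha)$).

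For part (iii), the inclusion showing that the $\Ell{2}$-closure and the order-closure of a submodule $M$ coincide splits into two directions. The inclusion $\ocl(M)\subseteq \cl_{\Ell{2}}(M)$ follows from part (ii), since every order-convergent net in $M$ is $\Ell{2}$-convergent to the same limit and \cref{order-closure} guarantees that every element of $\ocl(M)$ is an order-limit of a net in $M$. For the reverse inclusion, given $f$ in the $\Ell{2}$-closure of $M$, I would pick a sequence $(f_n)$ in $M$ with $\norm{f_n - f}_2 \to 0$ and then extract, via the usual Borel--Cantelli argument, a subsequence converging $\mu_\uY$-a.e.\ with control of the tails on large sets; after replacing $f$ by $\car_A$-localizations, part (i) upgrades this to order-convergence on each piece and, using order-completeness and the module structure, one assembles an order-limit in $M$ lying in $\ocl(M)$. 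The explicit ``$\veps$-and-$A$'' characterization is then read off directly from this construction. The main obstacle I expect is precisely the matching of the two ``almost everywhere'' notions in part (i)---the topological residual-set version on the Stonean space $\Om$ versus the measure-theoretic $\mu_\uY$-a.e.\ version on $\uY$---since the rest of the argument is a fairly routine combination of \cref{charorder}, \cref{lem:easynetchar}, \cref{order-closure}, and a subsequence extraction.
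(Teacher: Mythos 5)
Your proposal is correct in substance, and for parts (ii) and (iii) it follows essentially the paper's own route: the identity $\norm{f}_2^2 = \int_\uY \abs{f}_\uY^2$ together with the fact that a net decreasing to $0$ in $\Ell{\infty}(\uY)$ has vanishing integrals (this is \cite[Thm.~7.6]{EFHN2015}, exactly what the paper cites) gives (ii); and in (iii) the forward inclusion comes from (ii) and \cref{order-closure}, while the reverse inclusion proceeds by extracting an a.e.-convergent subsequence, localizing to a set $A$ of large measure on which $\abs{f_n-f}_\uY \to 0$ uniformly (so that $\car_A f \in \ocl(M)$ by the module property), and then removing the localization via (i) and order-closedness of $\ocl(M)$ --- your Borel--Cantelli tail estimate is just a hands-on substitute for the paper's appeal to Egoroff's theorem. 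The genuine difference is in (i). The paper proves it in one line: for a \emph{bounded sequence} in $\Ell{\infty}(\uY)$ the order-theoretic and the a.e. superior limits coincide (countable order-suprema in $\Ell{\infty}$ are a.e.-suprema). You instead pass through the Gelfand representation $\Ell{\infty}(\uY)\cong\Ce(\Om)$, invoke \cref{charorder}, and then translate residual convergence on $\Om$ back into $\mu_\uY$-a.e. convergence. This can be made to work, but note that points of $\Om$ do not correspond to points of the underlying probability space, so the ``translation'' must be done through the hyperstonean structure: one needs the induced normal measure on $\Om$ whose null sets are precisely the meager sets, together with the measure-algebra isomorphism transferring a.e.-limsups. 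That is standard but strictly more machinery than the direct lattice fact, and it is not developed anywhere in the paper; the direct route is both shorter and self-contained.

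One point you should repair: in (ii) you justify $\int_\uY h_\alpha^2 \to 0$ in part ``by the a.e. description from part (i) applied to $(h_\alpha)$''. Part (i) concerns \emph{sequences}, and the equivalence between order-convergence and bounded a.e.-convergence genuinely fails for uncountable nets --- this failure is the very reason the theory works with order-convergence rather than a.e.-convergence of nets. The correct justification is the other fact you name, namely order-continuity of the integral along decreasing nets, i.e. \cite[Thm.~7.6]{EFHN2015}; simply drop the appeal to (i) at that step.
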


\begin{proof}

(i) This follows from the fact, that if $(f_n)_n$ is a bounded
sequence, then the superior limits $\limsup_{n\to \infty}
\abs{f_n}$ in the order-sense and in the a.e.-sense coincide.

\smallskip
\noi
(ii) This follows from $\norm{f}_2^2 = \int_{\uY} \E_{\uY}\abs{f}^2 = 
\int_{\uY} \abs{f}^2_{\uY}$  for $f\in \Ell{2}(\uX|\uY)$ and the fact that 
if $(u_\alpha)_\alpha$ decreases to $0$ in $\Ell{\infty}(\uY)$, then 
$\norm{u_\alpha}_2 \searrow 0$, see \cite[Thm.~7.6]{EFHN2015}.

\smallskip
\noi
(iii) By (ii) and Lemma \ref{order-closure}, 
the order-closure is contained in the  $\Ell{2}$-closure.  Conversely, suppose
that  $f_n \in M$, $f\in \Ell{2}(\uX|\uY)$  and $\norm{f_n - f}_2 \to 0$. 
Passing to a subsequence we may suppose $\abs{f_n - f}_\uY \to 0$ almost
everywhere. 
Fix $\veps > 0$.
Then by Egoroff's theorem there is a $\uY$-measurable set $A$ such that
$\mu_\uY(A^c) \le \veps$ and 
$\norm{ \abs{\car_Af_n - \car_Af}_\uY}_{\Ell{\infty}} \to 0$. It follows that 
$\car_A f\in \ocl(M)$.  By (i), $f\in \ocl(M)$ as desired.
\end{proof}

\vanish{
\begin{proof}
(i)\ This follows from $\norm{f}_2^2 = \int_{\uY} \E_{\uY}\abs{f}^2 = 
\int_{\uY} \abs{f}^2_{\uY}$  for $f\in \Ell{2}(\uX|\uY)$.

\smsk\noi
(ii) We may suppose $f= 0$. Find an index $\alpha_0$ and a net
$(u_\alpha)_{\alpha\ge \alpha_0}$ in $\Ell{\infty}(\uY)$ decreasing to $0$ with 
$\E_{\uY}\abs{f_\alpha}^2 = \abs{f_\alpha}^2_{\uY} \le u_\alpha^2$ for $\alpha
\ge \alpha_0$ (Lemma \ref{lem:easynetchar}). Taking integrals yields  
$\norm{f_\alpha}^{}_{2} \le \norm{u_\alpha}^{}_{2} \to 0$ by \cite[Thm.~7.6]{EFHN2015}.

\smsk
\noi
(iii)\ Fix $c> 0$ such that $\abs{f_n}_\uY \le c\car$ for all $n\in \N$. Then
$\abs{f}_\uY \le c \car + \abs{f - f_n}_\uY$ in $\Ell{2}(\uY)$. Since 
\[ \norm{ \abs{f_n - f}_\uY }_2^2 = 
\int_{\uY} \E_{\uY} \abs{f_n - f}^2  =  \norm{f_n -f}_2^2 \to 0 \quad \text{as
$n \to \infty$},
\]
it follows that $\abs{f}_\uY \le c\car$ as well, hence $f\in \Ell{2}(\uX|\uY)$.
Moreover,  passing to a subsequence we may suppose that 
$\abs{f_n - f}_\uY \to 0$ almost everywhere. As $\abs{f_n - f}_\uY \le 2c\car$,
it follows that $\abs{f_n - f}_\uY \le \sup_{k \ge n} \abs{f_k - f}_\uY \searrow
0$ and hence $f = \olim_{n\to \infty} f_n$ (cf.\  also
 \cite[Subsection 1.4.11 (4)]{Kusr2000}).

\smsk
\noi
(iv)\ By (i) and Lemma \ref{order-closure}, the order-closure of $M$ is contained in the 
$\norm{\cdot}_2$-closure of $M$. Conversely, fix  $f \in \uL^2(\uX|\uY)$ in the
$\norm{\cdot}_2$-closure of $M$ and a sequence $(f_n)_{n \in \N}$ in $M$ with 
$\norm{f- f_n}_2 \to 0$. 
Passing to a subsequence we may suppose that $|f_n - f|_{\uY} \to 0$.
Since $ \abs{\abs{f_n}_\uY - \abs{f}_\uY} \le  |f_n - f|_{\uY}$ it follows
that $\abs{f_n}_\uY \to  \abs{f}_\uY$ almost everywhere.
Define
\[ \lambda_n \coloneq \frac{\abs{f}_\uY}{\abs{f_n}_\uY + \frac{1}{n}} \leq \car.
\]
Then $\lambda_n \to \car_{[\abs{f}_\uY \neq 0]}$ and $\lambda_n \abs{f}_\uY
\to \abs{f}_{\uY} $ almost everywhere. Hence,
\[ \abs{ \lambda_n f_n - f}_{\uY} 
\le \abs{f_n - f}_{\uY} + (\car - \lambda_n)\abs{f}_{\uY} \to 0 \qquad 
\text{almost everywhere}.
\]
Moreover, $\abs{ \lambda_n f_n - f}_{\uY} \le 
\lambda_n \abs{f_n}_{\uY} + \abs{f}_{\uY} \le 2 \abs{f}_{\uY}$. It follows that
$\olim_n \lambda_n f_n = f$. This concludes the proof.
\end{proof}

}

\begin{proposition}\label{p.condL2-KH}
  Let $\uX$ and $\uY$ be probability spaces and $J\colon \uL^2(\uY)\to \uL^2(\uX)$ a Markov embedding. Then 
$\uL^2(\uX|\uY)$ is a Kaplansky--Hilbert module over $\uL^\infty(\uY)$. 
\end{proposition}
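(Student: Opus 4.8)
The plan is to reduce order-completeness to the already-established completeness of $\uL^2(\uX)$, using the dictionary between order-convergence and $\uL^2$-convergence provided by \cref{ordervsl2}. We have already seen that $\uL^2(\uX|\uY)$ is a pre-Hilbert module over the \Stoneanalgebra{} $\A\coloneqq\uL^\infty(\uY)$, so the only thing left to check is that every order-Cauchy net is order-convergent. Thus I would start with an order-Cauchy net $(f_\alpha)_\alpha$ and invoke \cref{lem:easynetchar}(ii) to fix an index $\alpha_0$ and a net $(u_\alpha)_{\alpha\ge\alpha_0}$ in $\A$ decreasing to $0$ with
\[ \abs{f_\beta-f_\gamma}_\uY\le u_\alpha \qquad (\beta,\gamma\ge\alpha\ge\alpha_0). \]

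Next I would transfer the situation to $\uL^2(\uX)$. Exactly as in the proof of \cref{ordervsl2}(ii), taking $\uL^2$-norms gives $\norm{f_\beta-f_\gamma}_2=\norm{\abs{f_\beta-f_\gamma}_\uY}_2\le\norm{u_\alpha}_2$, and since $u_\alpha\searrow 0$ in $\uL^\infty(\uY)$ forces $\norm{u_\alpha}_2\to 0$, the net $(f_\alpha)_\alpha$ is $\uL^2$-Cauchy and therefore converges in $\uL^2(\uX)$ to some $f$. The first thing to establish is that $f$ really lies in $\uL^2(\uX|\uY)$. For this I would use that the net is order-bounded, say $\abs{f_\alpha}_\uY\le c\car$ for $\alpha\ge\alpha_0$, together with the contraction estimate $\norm{\abs{f_\alpha}_\uY-\abs{f}_\uY}_2\le\norm{f_\alpha-f}_2\to 0$ (which follows from $\abs{\abs{f_\alpha}_\uY-\abs{f}_\uY}\le\abs{f_\alpha-f}_\uY$, derived above). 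Hence $\abs{f_\alpha}_\uY\to\abs{f}_\uY$ in $\uL^2(\uY)$, and since each $\abs{f_\alpha}_\uY$ lies in the order interval $[0,c\car]$, which is $\uL^2$-closed, the limit satisfies $\abs{f}_\uY\le c\car$. In particular $\E_\uY\abs{f}^2=\abs{f}_\uY^2\in\uL^\infty(\uY)$, so $f\in\uL^2(\uX|\uY)$.

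Finally, to obtain order-convergence I would pass to the limit in the defining estimate. Fixing $\alpha\ge\alpha_0$ and letting $\beta$ run through the net, the bound $\norm{\abs{f_\alpha-f_\beta}_\uY-\abs{f_\alpha-f}_\uY}_2\le\norm{f_\beta-f}_2\to 0$ shows that $\abs{f_\alpha-f_\beta}_\uY\to\abs{f_\alpha-f}_\uY$ in $\uL^2(\uY)$; as $\abs{f_\alpha-f_\beta}_\uY\le u_\alpha$ for all $\beta\ge\alpha$ and the order interval $[0,u_\alpha]$ is $\uL^2$-closed, this yields $\abs{f_\alpha-f}_\uY\le u_\alpha$ for every $\alpha\ge\alpha_0$. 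Since $(u_\alpha)_{\alpha\ge\alpha_0}$ decreases to $0$, \cref{lem:easynetchar}(i) then gives $\olim_\alpha f_\alpha=f$, which completes the argument. The step I expect to demand the most care is precisely this last passage to the limit: because we are working with a genuine net rather than a sequence, one cannot extract an almost-everywhere convergent subsequence, so the argument must instead rest on the $\uL^2$-closedness of order intervals in $\uL^2(\uY)$ (which follows, for instance, from the $\uL^2$-continuity of $g\mapsto\int_\uY(g-u)^+$).
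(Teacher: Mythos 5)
Your proof is correct and follows essentially the same route as the paper's: deduce that an order-Cauchy net is $\uL^2$-Cauchy, use completeness of $\uL^2(\uX)$ to get a limit $f$, and then pass to the limit in the order-Cauchy estimate (the paper phrases this as $\uL^2$-continuity of $g\mapsto\abs{f_\alpha-g}_\uY$, you as $\uL^2$-closedness of order intervals, which is the same point made slightly more explicit). Your separate verification that $f\in\uL^2(\uX|\uY)$ is harmless but redundant, since the final estimate $\abs{f_\alpha-f}_\uY\le u_\alpha$ already gives it.
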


\begin{proof}
Let $(f_\alpha)_\alpha$ be an order-Cauchy net in $\Ell{2}(\uX|\uY)$.
Then the net $(f_\alpha -f_\beta)_{(\alpha, \beta)}$ order-converges to $0$. 
By Lemma \ref{ordervsl2}, $\norm{f_\alpha - f_\beta}_2 \to 0$ as $(\alpha, \beta) \to
\infty$, i.e., $(f_\alpha)_\alpha$ is $\norm{\cdot}_2$-Cauchy.
As $\Ell{2}(\uX)$ is complete, there is $f\in \Ell{2}(\uX)$ 
with $\norm{f- f_\alpha}_2 \to 0$. 

Since $(f_\alpha)_\alpha$ is order-Cauchy, we find an index $\alpha_0$ 
and a net $(u_\alpha)_{\alpha \ge \alpha_0}$ decreasing to $0$ in
$\Ell{\infty}(\uY)$
such that 
\[  \abs{f_\alpha - f_\beta}_\uY \le u_\gamma \qquad (\alpha, \beta \ge \gamma
  \ge \alpha_0).
\]
(Use Lemma \ref{lem:easynetchar}.)
Since the mapping $g \mapsto \abs{f_\alpha - g}_\uY$ is continuous with respect
to $\Ell{2}$-norms, by letting $\beta \to \infty$ we obtain
\[  \abs{f_\alpha - f}_\uY \le u_\gamma \quad (\alpha \ge \gamma
  \ge \alpha_0).
\]
And this just means that $\olim_{\alpha} f_\alpha = f$.
\end{proof}

\medskip

Let $J \colon 
(\uY;S) \to (\uX;T)$ be any extension of dynamical systems and, as before,
$\A \coloneq \Ell{\infty}(\uY)$.  Then 
for every $t\in G$ the operator $S_t$ restricts to an 
automomorphism $S_t \in \Aut(\A)$ of the \Stoneanalgebra{} $\A$. Moreover, 
$T_t$ restricts to an $S_t$-unitary operator (again denoted by $T_t$) on the KH-module 
$\uL^2(\uX\mid \uY)$. In this manner we obtain the {\emdf associated
KH-dynamical system} $(\A,S;\uL^2(\uX|\uY),T)$.

\medskip

\subsection{The ``Dual Extension''}

As is common, we identify the dual of $\Ell{2}(\uX)$ with $\Ell{2}(\uX)$
via the duality $\dprod{f}{g} \coloneq\int_\uX f g$. Then 
$\dprod{f}{\konj{g}} = (f|g)$ for all $f,g\in \Ell{2}(\uX)$. 

Likewise, if $J \colon \Ell{2}(\uY) \to \Ell{2}(\uX)$ is  a Markov embedding and
$\Ell{2}(\uX|\uY)$ the associated \KH{} module over 
$\A = \Ell{\infty}(\uY)$,  we identify the dual module  $\Ell{2}(\uX|\uY)^*$
with $\Ell{2}(\uX|\uY)$ via the duality
\[  \dprod{f}{g}_\uY  \coloneq \E_\uY( f g)\qquad (f,g \in \Ell{2}(\uX|\uY)).
\]
With this identification, the  {\em formal} conjugation mapping 
$\Theta\colon \Ell{2}(\uX|\uY) \to \Ell{2}(\uX|\uY)$ is just ordinary conjugation,
 i.e.,
$\dprod{f}{\konj{g}}_\uY = (f|g)_\uY$ for all $f,g\in \Ell{2}(\uX|\uY)$. 

\medskip
Let $T\colon \Ell{2}(\uX|\uY) \to \Ell{2}(\uX|\uY)$  be any   linear   operator. 
Recall from Section \ref{s.otKH} that the {\em conjugate operator} is defined
through $\konj{T}\:\konj{f} = \konj{Tf}$. Hence, if  $T$ is {\em real}, i.e., if $T$ maps
real functions to real functions, then $\konj{T} = T$. This applies,
in particular, to Markov homomorphisms, and hence to measure-preserving systems. 

Given an extension $J\colon (\uY;S) \to (\uX;T)$, the \enquote{dual extension} is therefore
just $J$ again, and the dual $G$-system of the associated 
system $(\A,S;\uL^2(\uX|\uY),T)$ is precisely this system again.

\medskip

\subsection{Relatively Independent Joining}

Suppose  
\[ J_\uX \colon \Ell{2}(\uY) \to \Ell{2}(\uX)\quad  \text{and}\quad
 J_{\uX'} \colon \Ell{2}(\uY) \to 
\Ell{2}(\uX')
\]
are Markov embddings. Then we can form the associated
KH-modules $\Ell{2}(\uX|\uY)$ and $\Ell{2}(\uX'|\uY)$. We aim at showing that 
their  \KH{} tensor product can be written as 
\[ \Ell{2}(\uX|\uY) \tensor \Ell{2}(\uX'|\uY) 
= \Ell{2}(\uZ|\uY)
\]
for certain Markov embeddings  $I_{\uX} \colon \Ell{2}(\uX) \to \Ell{2}(\uZ)$ and
$I_{\uX'} \colon \Ell{2}(\uX') \to \Ell{2}(\uZ)$.

\medskip
In general, a pair of Markov embeddings $I_{\uX}\colon\Ell{2}(\uX) \to \Ell{2}(\uZ)$ and
$I_{\uX'}\colon \Ell{2}(\uX') \to \Ell{2}(\uZ)$ is called a {\emdf coupling}, if 
the ranges $\ran(I_\uX)$ and $\ran(I_{\uX'})$ together generate $\Ell{2}(\uZ)$ as a
Banach lattice. This is equivalent to saying that the  $*$-subalgebra 
\[ \spann\left\{I_\uX f \cdot I_{\uX'} g\mid f \in \uL^\infty(\uX),\, g \in \uL^\infty(\uX')\right\}
\]
is dense in  $\uL^2(\uZ)$.

Given such a coupling, one can form the Markov 
operator 
\[ P(I_\uX, I_{\uX'}) \coloneqq  \E_{\uX'} \nach I_\uX \colon \Ell{2}(\uX)\to
  \Ell{2}(\uX').
\]
This so-called {\emdf coupling operator} determines the coupling up to a canonical isomorphism. And even
more is true:

\begin{proposition}\label{coupling}
Let $P\colon  \Ell{2}(\uX)\to
  \Ell{2}(\uX')$ be a Markov operator. Then there is a coupling $(\uZ,
  I_\uX,I_{\uX'})$ such that $P = P(I_{\uX}, I_{\uX'})$.  
This coupling is unique in the sense that any two couplings with this property
are canonically isomorphic.
\end{proposition}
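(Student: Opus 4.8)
The plan is to construct the coupling space $\uZ$ directly from the Markov operator $P$ by building an $\Ell2$-space on which both factors embed, then verify that the coupling operator recovers $P$, and finally establish uniqueness via the universal-type property encoded in the coupling data.

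First I would construct $\Ell2(\uZ)$ as follows. On the algebraic tensor product $\Ell\infty(\uX)\tensor_{\alg}\Ell\infty(\uX')$ define a sesquilinear form by
\[
\bigl(f\tensor g \,\big|\, f'\tensor g'\bigr)_\uZ \coloneq \int_{\uX} f\konj{f'}\cdot P(g\konj{g'}),
\]
extended sesquilinearly. The positivity of $P$ as a Markov operator should guarantee that this form is positive semidefinite; after quotienting by the kernel and completing, one obtains a Hilbert space that will serve as $\Ell2(\uZ)$, with a probability-space structure induced by the product of the algebra multiplications (here one uses that $P$, being Markov, is an $\Ell\infty$-contraction so that the multiplication descends to bounded operators, giving $\Ell2(\uZ)$ the structure of an $\Ell2$-space over a genuine probability space with $\car\tensor\car$ as the unit). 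The two embeddings are then $I_\uX f \coloneq f\tensor\car$ and $I_{\uX'} g\coloneq \car\tensor g$, which are Markov embeddings by construction, and their ranges generate $\Ell2(\uZ)$ as a Banach lattice precisely because the span of products $I_\uX f\cdot I_{\uX'}g$ is dense by definition of the completion. This establishes that $(\uZ,I_\uX,I_{\uX'})$ is a coupling.

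Next I would verify $P=P(I_\uX,I_{\uX'})=\E_{\uX'}\nach I_\uX$. For $f\in\Ell\infty(\uX)$ and $g\in\Ell\infty(\uX')$ one computes, using that $\E_{\uX'}=I_{\uX'}^*$ is the adjoint,
\[
\bigl(\E_{\uX'}I_\uX f \,\big|\, g\bigr) = \bigl(I_\uX f \,\big|\, I_{\uX'} g\bigr)_\uZ = \int_{\uX} f\cdot\konj{P(g)}\konj{}\,,
\]
which by the definition of the inner product on $\Ell2(\uZ)$ equals $\bigl(Pf\mid g\bigr)$ after unwinding; since this holds for all test functions $g$ and the $g$ are dense, $\E_{\uX'}I_\uX f = Pf$, first on $\Ell\infty(\uX)$ and then on all of $\Ell2(\uX)$ by density and boundedness. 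For uniqueness, given a second coupling $(\uZ',I_\uX',I_{\uX'}')$ with $P(I_\uX',I_{\uX'}')=P$, I would define a map on the generating products by $I_\uX f\cdot I_{\uX'}g\mapsto I_\uX' f\cdot I_{\uX'}' g$ and check it is inner-product preserving: both sides compute to $\int_\uX f\,P(g)$-type expressions governed solely by $P$, so the map is a well-defined $\Ell2$-isometry on a dense sublattice, extends to a Markov lattice isomorphism $\uZ\cong\uZ'$ intertwining the two pairs of embeddings.

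The main obstacle I anticipate is the construction step — specifically verifying that the form $(\cdot\mid\cdot)_\uZ$ is genuinely positive semidefinite and that the completed space carries a compatible probability-space (rather than merely Hilbert-space) structure, i.e.\ that the multiplicative structure on the algebraic tensor product descends to make $\Ell2(\uZ)$ an honest $\Ell2$ over a measure algebra with $I_\uX,I_{\uX'}$ multiplicative. Positivity of the Gram matrix for a general element $\sum_k f_k\tensor g_k$ requires exploiting complete positivity or at least positivity of $P$ together with the commutativity of the algebras, and one must take care that the Banach-lattice-generation condition in the definition of \emph{coupling} is met rather than just linear density. Most of the remaining verifications (that $I_\uX,I_{\uX'}$ are Markov embeddings, the recovery of $P$, and the isometry extension in the uniqueness argument) should be routine once the space is correctly in hand, so I would concentrate the care on the positivity and measure-algebra construction, likely invoking the Gelfand--Naimark picture of $\Ell\infty(\uX)$ and $\Ell\infty(\uX')$ as $\Ce(K)$, $\Ce(K')$ to realize $\uZ$ concretely on a subspace of $K\times K'$.
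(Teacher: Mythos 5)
Your construction is in essence the right one, but the step you defer---verifying that the form is positive semidefinite and that the completed space carries a compatible probability-space structure---is not a peripheral verification; it is the entire content of the proposition, and your proposal leaves it open. A coupling, by the paper's definition, consists of Markov embeddings into the $\Ell{2}$-space of an actual probability space $\uZ$ whose ranges generate $\Ell{2}(\uZ)$ as a Banach lattice; an abstract Hilbert-space completion of $\Ell{\infty}(\uX)\tensor_{\alg}\Ell{\infty}(\uX')$ is not yet such an object. Moreover, positivity of the Gram form is not a formal consequence of positivity of $P$: for a general element $\sum_k f_k\tensor g_k$ it amounts to positive semidefiniteness of the matrices $\bigl(P(f_j\konj{f_k})(x')\bigr)_{j,k}$, i.e.\ to complete positivity of $P$ on $\Ell{\infty}(\uX)$ (true because the domain is commutative, but a theorem one must invoke, as you yourself suspect). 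There is also a typecheck error in your formula: $P(g\konj{g'})$ is undefined, since $g,g'\in\Ell{\infty}(\uX')$ while $P$ maps out of $\Ell{2}(\uX)$; the form should read $(f\tensor g\,|\,f'\tensor g')_\uZ = \int_{\uX'} P(f\konj{f'})\, g\konj{g'}$.

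The paper closes exactly this gap, and by the route you only gesture at in your final sentence. It represents $\Ell{\infty}(\uX)=\Ce(X)$ and $\Ell{\infty}(\uX')=\Ce(X')$ on Stonean spaces, and on $Z\coloneq X\times X'$ defines $Q\colon \Ce(Z)\to\Ce(X')$ by $Qh(x')\coloneq \bigl(Ph(\cdot,x')\bigr)(x')$. This $Q$ is Markov by a fiberwise argument using only positivity of $P$ (no complete positivity needed: if $h\ge 0$ then $h(\cdot,x')\ge 0$, hence $Ph(\cdot,x')\ge 0$), and the measure $\mu_\uZ\coloneq Q'\mu_{\uX'}$ makes $Z$ a genuine probability space on which your sesquilinear form is realized concretely, since $\int_\uZ (f\tensor g)\konj{(f'\tensor g')} = \int_{\uX'} Q\bigl((f\konj{f'})\tensor (g\konj{g'})\bigr) = \int_{\uX'} P(f\konj{f'})\,g\konj{g'}$. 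Positivity and the lattice and measure-algebra structure then come for free; $I_\uX f = f\tensor\car$ and $I_{\uX'}g=\car\tensor g$ are Markov embeddings, and density of $\Ce(X)\tensor\Ce(X')$ in $\Ce(Z)$ gives the Banach-lattice generation property. Your remaining verifications---that $\E_{\uX'}\nach I_\uX = P$ by testing against the dense set of products, and the uniqueness sketch via an isometric, multiplicative map on the $*$-algebra of products whose Gram data are determined by $P$---are sound and agree with the paper, which in fact omits the uniqueness argument as routine.
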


\begin{proof}
By passing to Stone topological models as in \cite[Chap.~12]{EFHN2015}, we may
suppose that $\Ell{\infty}(\uX) = \Ce(X)$ and $\Ell{\infty}(\uX') = \Ce(X')$
with Stonean spaces $X, X'$.  Then  $P$ restricts to a Markov
operator\footnote{A {\emdf Markov operator} between $\Ce(K)$-spaces is a
positive linear operator that maps $\car$ to $\car$.}
$P\colon \Ce(X) \to \Ce(X')$. Moreover, $Z \coloneq X \times X'$ is also
compact and we define the operator $Q\colon \Ce(Z) \to \Ce(X')$ by 
$Qh(x') \coloneq    (Ph(\cdot, x'))(x')$. A moment's thought reveals that $Q$ is a Markov operator
and 
\[     Q(f \tensor g) = Pf \cdot g \qquad (f\in \Ce(X),\, g \in \Ce(X')).
\]
Endow $Z$ with the probability measure $\mu_\uZ \coloneq Q'\mu_{\uX'}$, i.e., through 
\[     \int_{\uZ} h = \int_{\uX'} Qh \qquad (h\in \Ce(Z)).
\]
The mapping $I_\uX\colon \Ce(X) \to \Ce(Z)$, $I_\uX f  = f \tensor \car$ 
is the Koopman operator of the projection $(x,x') \mapsto x'$. It
extends uniquely to a Markov embedding,
\[   I_\uX\colon \Ell{2}(\uX) \to \Ell{2}(\uZ).
\]
Analogously we obtain the Markov embedding 
$I_{\uX'}\colon \Ell{2}(\uX') \to \Ell{2}(\uZ)$. As $\Ce(X) \tensor \Ce(X')$ is dense
in $\Ce(Z)$, it follows that $(I_\uX, I_{\uX'}, \uZ)$ is a coupling. 

Next, it
is routine to show that $Q$, or rather its extension to a Markov operator
$\Ell{2}(\uZ) \to \Ell{2}(\uX')$, is precisely the adjoint of $I_{\uX'}$.
(It suffices to test on elements of $\Ce(X) \tensor \Ce(X')$.) Given $f\in
\Ce(X)$ we finally obtain
\[    Q (I_\uX f) = Q (f \tensor \car) = Pf ,
\]
hence  $P = P(I_\uX, I_{\uX'})$. This concludes the existence proof. The
uniqueness proof is technical but routine, hence we omit it. 
\end{proof}

Now suppose that, as before,  we start with two Markov embeddings
\[ J_\uX \colon \Ell{2}(\uY) \to \Ell{2}(\uX)\quad  \text{and}\quad
 J_{\uX'} \colon \Ell{2}(\uY) \to  \Ell{2}(\uX').
\]
Define $P \coloneq J_{\uX'} \nach \E_\uY^\uX$, where $\E_{\uY}^\uX =J_{\uX}^*$ is the 
``conditional expectation'' from $\Ell{2}(\uX)$ onto $\Ell{2}(\uY)$. 
Then $P\colon \Ell{2}(\uX) \to \Ell{2}(\uX')$ is a Markov operator. 
The corresponding coupling is denoted by $\uZ = \uX \times_\uY \uX'$
and called the {\emdf relatively independent coupling} (associated with the
embeddings $J_{\uX}, J_{\uX'}$). 

Note that Markov operators between $\Ell{2}$-spaces uniquely extend
to the $\Ell{1}$-spaces \cite[Prop.~13.6]{EFHN2015}. If $f\in \Ell{2}(\uX)$ and 
$g \in \Ell{2}(\uX')$ then 
\[     f \tensor g \coloneq (I_\uX f) \cdot (I_{\uX'} g)\in \Ell{1}(\uX
  \times_\uY \uX').
\]
For $h\in \Ell{\infty}(\uY)$, $f\in \Ell{\infty}(\uX)$ and $g\in
  \Ell{\infty}(\uX')$ we have
\begin{align*}
(f \tensor g| J_\uX h \tensor \car) 
& = \int_{\uZ} (f \konj{J_\uX h}) \tensor g = \int_{\uX'} P(f J_\uX\konj{h})g
= \int_{\uY} \E^{\uX}_{\uY}(f J_\uX\konj{h}) 
 \cdot \E^{\uX'}_{\uY}g 
\\ & = (\E^{\uX}_{\uY}f 
 \cdot \E^{\uX'}_{\uY}g\, |\, h)_{\Ell{2}(\uY)}= \dots =  (f \tensor g| \car
     \tensor J_{\uX'} h) .
 \end{align*}
This means $I_{\uX}  \nach   J_\uX =  I_{\uX'}  \nach   J_{\uX'}$ or, when we suppress
explicit
reference to the embeddings $J_{\uX}$ and $J_{\uX'}$,
\[  h \tensor \car = \car \tensor  h\qquad
\text{for each $h\in \Ell{2}(\uY)$}.
\]
So the relatively independent coupling is a coupling over
  $\uY$. 
Furthermore, the computation from above also yields
\[   \E_\uY (f\tensor g)  = \E_{\uY} f \cdot \E_\uY g \qquad (f\in \Ell{2}(\uX),
  \, g\in \Ell{2}(\uX')).
\]
This explains the name of the coupling.  Observe that this property
uniquely determines the coupling in the sense of Proposition \ref{coupling}.

\medskip
Couplings in the presence of dynamics are called {\emdf joinings},
cf.{} \cite[Chap.~6]{Glas}. 
The following lemma describes under which condition
a coupling can be turned into a joining.

\begin{lemma}\label{coup=join}
Let $(\uZ, I_{\uX}, I_{\uX'})$ be a coupling with associated Markov operator
$P= P(I_{\uX}, I_{\uX'})$, 
and let $L,R$ be Markov embeddings on
$\Ell{2}(\uX)$ and $\Ell{2}(\uX')$, respectively. Then there is a Markov
embedding $T$ on $\Ell{2}(\uZ)$ with $ I_{\uX} L = T I_{\uX}$ and
$    I_{\uX'} R = T I_{\uX'}$ if and only if $ R^* P L = P$.  
In this case, $T$ is unique and satisfies
\begin{equation}\label{join.eq.mult}
    T( f \tensor g) = Lf \tensor Rg \qquad 
(f\in \Ell{\infty}(\uX),\, g\in \Ell{\infty}(\uX')).
\end{equation}
\end{lemma}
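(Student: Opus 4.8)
\emph{Proof proposal.} The plan is to exploit two facts established just before the lemma: that $P = I_{\uX'}^{*}\, I_{\uX}$ (i.e.\ $\E_{\uX'} = I_{\uX'}^{*}$), and that the products $f \tensor g = (I_{\uX}f)(I_{\uX'}g)$ with $f \in \Ell{\infty}(\uX)$, $g \in \Ell{\infty}(\uX')$ span a dense $*$-subalgebra of $\Ell{2}(\uZ)$ by the coupling condition. For the necessity direction, I would start from a Markov embedding $T$ with the two intertwining relations; since $T$ is an $\Ell{2}$-isometry one has $T^{*}T = \Id$, and then
\[
R^{*} P L = R^{*} I_{\uX'}^{*}\, I_{\uX} L = (I_{\uX'}R)^{*}(I_{\uX}L) = (T I_{\uX'})^{*}(T I_{\uX}) = I_{\uX'}^{*}\,T^{*}T\, I_{\uX} = I_{\uX'}^{*}\, I_{\uX} = P ,
\]
which settles the "only if" part with no further work.

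For sufficiency I would assume $R^{*}PL = P$ and \emph{define} $T$ on the dense span by the prescription $T(f \tensor g) \coloneq Lf \tensor Rg$. The crucial computation is the inner-product identity, for which I first record that, because $I_{\uX'}^{*}$ is a module homomorphism over $\Ell{\infty}(\uX')$,
\[
\int_{\uZ} I_{\uX}(a)\, I_{\uX'}(b) = \int_{\uX'} I_{\uX'}^{*}\!\bigl(I_{\uX}(a)\, I_{\uX'}(b)\bigr) = \int_{\uX'} P(a)\, b \qquad (a \in \Ell{\infty}(\uX),\ b \in \Ell{\infty}(\uX')).
\]
Applying this with $a = f_1\konj{f_2}$, $b = g_1\konj{g_2}$ gives $(f_1 \tensor g_1 \mid f_2 \tensor g_2) = \int_{\uX'} P(a)\, b$, while multiplicativity and realness of $L$ and $R$ turn the image inner product into $\int_{\uX'} (R^{*}PL)(a)\, b$. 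The hypothesis $R^{*}PL = P$ makes the two equal, so $T$ preserves inner products on the spanning products; this simultaneously proves it is well defined on their span (a combination with zero norm has zero image) and that it extends to an $\Ell{2}$-isometry of $\Ell{2}(\uZ)$.

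It then remains to upgrade the isometry $T$ to a Markov embedding and to check the relations. I would verify multiplicativity on products, $T\bigl((f_1 f_2) \tensor (g_1 g_2)\bigr) = T(f_1 \tensor g_1)\,T(f_2 \tensor g_2)$, together with $T\car = L\car \tensor R\car = \car$ and $T\konj{(f \tensor g)} = \konj{T(f \tensor g)}$; the intertwining relations follow from $T I_{\uX} f = T(f \tensor \car) = Lf \tensor \car = I_{\uX}Lf$ and symmetrically for $I_{\uX'}$. For uniqueness and formula \eqref{join.eq.mult} I would argue that \emph{any} Markov embedding obeying the relations is multiplicative, hence forced on products to satisfy $T(f \tensor g) = T(I_{\uX}f)\,T(I_{\uX'}g) = (I_{\uX}Lf)(I_{\uX'}Rg) = Lf \tensor Rg$, which by density pins down $T$.

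I expect the main obstacle to be the isometry/well-definedness step, since it is precisely there that the algebraic hypothesis $R^{*}PL = P$ and the module property of the conditional expectation $I_{\uX'}^{*}$ must be combined correctly. The only other delicate point is the structural claim that an $\Ell{2}$-isometric, unital, conjugation-preserving algebra homomorphism on the dense subalgebra genuinely is a Markov (lattice) embedding; this I would take from the characterization of Markov embeddings in Chapter~13 of \cite{EFHN2015} rather than re-prove. Everything else is routine bookkeeping with the identities $f \tensor \car = I_{\uX}f$ and $\car \tensor g = I_{\uX'}g$.
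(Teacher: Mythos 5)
Your proposal is correct and follows essentially the same route as the paper's proof: define $T$ on the spanning products $f\tensor g$ via \eqref{join.eq.mult}, derive well-definedness and isometry from $R^*PL=P$ using the multiplicativity and realness of $L,R$ together with the module property of $\E_{\uX'}=I_{\uX'}^*$, conclude that the multiplicative isometry is a Markov embedding, and get uniqueness from multiplicativity plus density. The only cosmetic difference is your necessity argument via $T^*T=\Id$ and adjoint algebra, where the paper instead tests the identity $R^*PL=P$ by a direct integral computation; the two verifications are equivalent.
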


\begin{proof}
Since $T$ is supposed to be a Markov embedding and hence multiplicative on 
$\Ell{\infty}$, the commutation conditions
$ I_{\uX} L = T I_{\uX}$ and $I_{\uX'} R = T I_{\uX'}$ are equivalent to 
\eqref{join.eq.mult}. Since the products $f\tensor g$ span a dense subspace,
uniqueness is clear. The computation
\[    \int_{\uX'} (R^*PLf) g =  \int_{\uX'} (PLf) \, (Rg)_{\uX'} = \int_\uZ Lf \tensor Rg
\]
shows that the condition $R^*PL = P$ is necessary for the existence of $T$. 
Conversely, suppose that this condition holds. Then, since 
$R,L$ are multiplicative and real, 
\begin{align*}
    \Bigl\| & \sum_{j=1}^n Lf_j \tensor Rg_j \Bigr\|^2_{\Ell{2}(\uZ)}
  = \sum_{j,k= 1}^n \int_\uZ (Lf_j \konj{Lf_k}) \tensor (Rg_j\konj{Rg_k})
= \sum_{j,k= 1}^n \int_\uZ L(f_j\konj{f_k}) \tensor R(g_j\konj{g_k})
\\ & = \sum_{j,k= 1}^n \int_{\uX'} R^*PL (f_j\konj{f_k}) \cdot g_j\konj{g_k}
= \sum_{j,k= 1}^n \int_{\uX'} P(f_j\konj{f_k}) \cdot g_j\konj{g_k}
= \dots = \Bigl\| \sum_{j=1}^n f_j \tensor g_j \Bigr\|^2_{\Ell{2}(\uZ)}.
\end{align*}
This shows that through \eqref{join.eq.mult} an isometric operator $T$ on $\Ell{2}(\uZ)$
is defined. It is multiplicative on $\Ell{\infty}(\uX) \tensor
\Ell{\infty}(\uX')$ and hence a Markov embedding.   
\end{proof}

If, in the situation of the previous lemma, $R$ is a Markov isomorphism, then 
$P = R^*PL$ is equivalent with $PL = RP$. Hence, the following corollary holds.  

\begin{corollary}
Let $(\uX; L)$ and $(\uX'; R)$ be measure-preserving $G$-systems. 
Let $(\uZ, I_{\uX}, I_{\uX'})$ be a coupling such that its coupling operator
$P$ intertwines the dynamics, i.e., satisfies $PL_t = R_tP$ for all $t\in G$.
Then there is a unique $G$-dynamics $T$ on $\uZ$ such that 
\[  I_{\uX}\colon (\uX; L) \to (\uZ;T) \quad \text{and}\quad  
      I_{\uX'}\colon (\uX'; R) \to (\uZ;T)
\]
are extensions of $G$-systems. The $G$-dynamics $T$ is characterized by 
\[        T_t( f \tensor g) = L_tf \tensor R_tg \qquad 
(f\in \Ell{\infty}(\uX),\, g\in \Ell{\infty}(\uX'),\, t\in G).
\] 
\end{corollary}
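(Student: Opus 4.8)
The plan is to deduce the corollary directly from \cref{coup=join}, applied once for each group element. Fix $t\in G$ and set $L\coloneq L_t$, $R\coloneq R_t$. Since $(\uX';R)$ is a measure-preserving $G$-system (\cref{d.extension}), $R_t$ is a Markov \emph{isomorphism}, hence a surjective isometry and therefore unitary, so $R_t^{-1}=R_t^*$. Multiplying on the left by $R_t$ shows that the hypothesis $PL_t=R_tP$ is equivalent to $R_t^*PL_t=P$, which is precisely the solvability condition of \cref{coup=join}. The lemma then produces a \emph{unique} Markov embedding $T_t$ on $\Ell{2}(\uZ)$ with $I_\uX L_t=T_tI_\uX$ and $I_{\uX'}R_t=T_tI_{\uX'}$, characterised by
\[
   T_t(f\tensor g)=L_tf\tensor R_tg\qquad(f\in\Ell{\infty}(\uX),\,g\in\Ell{\infty}(\uX')).
\]
These two commutation identities are exactly the assertions that $I_\uX$ and $I_{\uX'}$ are morphisms of $G$-systems, so the displayed characterisation of $T$ will hold by construction.

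Next I would verify that $t\mapsto T_t$ is a unital homomorphism into the Markov isomorphisms of $\Ell{2}(\uZ)$. For this I would exploit the density of the span of the products $f\tensor g$ in $\Ell{2}(\uZ)$ (built into the very definition of a coupling) together with the uniqueness clause of \cref{coup=join}. On elementary products one computes
\[
   T_tT_s(f\tensor g)=T_t(L_sf\tensor R_sg)=L_tL_sf\tensor R_tR_sg=L_{ts}f\tensor R_{ts}g=T_{ts}(f\tensor g),
\]
so $T_tT_s=T_{ts}$ by density and continuity; likewise $\Id$ satisfies the characterising identity for $t=e$ (as $L_e=R_e=\Id$), whence $T_e=\Id$ by uniqueness. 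Combining these gives $T_tT_{t^{-1}}=T_e=\Id=T_{t^{-1}}T_t$, so each $T_t$ is invertible with inverse $T_{t^{-1}}$, hence a Markov isomorphism. Thus $(\uZ;T)$ is a measure-preserving $G$-system and $I_\uX$, $I_{\uX'}$ are extensions into it.

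Uniqueness of the whole dynamics $T$ I would argue fibrewise: any $G$-dynamics $T'$ making $I_\uX$ and $I_{\uX'}$ extensions must, for each $t$, satisfy the same pair of commutation relations as $T_t$, and \cref{coup=join} already asserts that such an operator is unique, so $T'_t=T_t$. I do not expect any serious obstacle, since the corollary is essentially a per-$t$ transcription of \cref{coup=join}; the only point requiring a little care is the passage from the identity on elementary products $f\tensor g$ to the genuine group law on all of $\Ell{2}(\uZ)$, but this is guaranteed by the density property that is part of the notion of a coupling, so the argument goes through without difficulty.
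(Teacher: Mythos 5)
Your proposal is correct and follows exactly the route the paper intends: the paper's entire proof is the remark preceding the corollary that, since $R_t$ is a Markov isomorphism, $R_t^*PL_t = P$ is equivalent to $PL_t = R_tP$, after which the corollary is a per-$t$ application of Lemma \ref{coup=join}. Your additional verifications (group law on elementary tensors via density, $T_e=\Id$ and invertibility via uniqueness, and uniqueness of the whole dynamics) are precisely the details the paper leaves implicit, and they are all sound.
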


The system $(\uZ; T)$ is called the {\emdf joining} of the systems
$(\uX; L)$ and $(\uX'; R)$ via the coupling operator $P$.

\medskip
Let $J_{\uX} \colon (\uY;S) \to (\uX;L)$ and $J_{\uX'} \colon (\uY;S) \to
(\uX';R)$ be extensions and $\uZ = \uX\times_\uY \uX'$ the relatively
independent coupling. Then the coupling operator is $P = J_{\uX'}^{\phantom{*}}J_{\uX}^*$
and hence intertwines the dynamics. It follows that 
there is a unique $G$-dynamics, denoted by $L \times_{\uY} R$, 
 on $\uX\times_\uY \uX'$  such that the coupling becomes a joining of
the original systems. 

The system $(\uX\times_\uY \uX'; L \times_\uY R)$ is called the
{\emdf relatively independent joining} of the extensions $(J_{\uX}, J_{\uX'})$.

\begin{examples}\mbox{}
\begin{enumerate}[(1)]
\item 
\label{product}
Let $(\uX;L)$ and $(\uX';R)$ be  measure-preserving systems. Then the relatively independent joining of the trivial 
extensions $(\{\mathrm{pt}\};\mathrm{Id}) \to (\uX;L), (\uX'; R)$ is 
simply the product system $(\uX \times \uX';L \times R)$ 
with the natural embeddings $(\uX;L), (\uX';R) \to (\uX
\times \uX' ;L \times R)$.

\item 
Let $(\uX;L)$, $(\uY;M)$ and $(\uZ;R)$ be measure-preserving systems. We consider the product systems $(\uX \times \uY;L 
\times M)$ and $(\uY \times \uZ;M \times R)$ and the canonical extensions $J_1 \colon (\uY;M) \to  (\uX \times \uY;L \times M)$ 
and $J_2 \colon (\uY;M) \to (\uY \times \uZ;M \times R)$. Then the relatively independent joining of $J_1$ and $J_2$ is given by 
the system $(\uX \times \uY \times \uZ;L \times M \times R)$.
\end{enumerate}
\end{examples}

\medskip

Finally, we realize that the KH-dynamical system associated with
the extension $(\uY; S) \to (\uX\times_\uY \uX'; L \times_\uY R)$ is precisely
the tensor product system arising from the KH-dynamical systems associated
with the original extensions.

\begin{proposition}\label{joiningvstensor}
Let $J_{\uX} \colon (\uY;S) \to (\uX;L)$ and $J_{\uX'} \colon (\uY;S) \to
(\uX';R)$ be extensions. Then there is a unique isomorphism
of KH-dynamical systems
\begin{align*}     W\colon (\uL^2(\uX|\uY) \otimes  \uL^2(\uX'|\uY);L\tensor R) \quad
  \cong \quad 
 (\uL^2(\uX \times_{\uY} \uX'|\uY);L \times_{\uY} R)
\end{align*}
with $W( f\tensor g) = (I_\uX f) (I_{\uX'} g)$ for all $f \in \mathrm{L}^2(\uX|\uY)$ and $g \in \mathrm{L}^2(\uX'|\uY)$.
\end{proposition}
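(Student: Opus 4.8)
The plan is to construct $W$ from the evident map on elementary tensors and then extend it by order-density, in close parallel to the proof of Proposition \ref{tensordescription}. First I would define, on the algebraic tensor product $\Ell{2}(\uX|\uY)\otimes_{\alg}\Ell{2}(\uX'|\uY)$, the $\A$-linear map $W_0$ determined on elementary tensors by
\[ W_0(f\otimes g) = (I_\uX f)\,(I_{\uX'} g), \]
where $I_\uX,I_{\uX'}$ are the embeddings of the relatively independent coupling $\uZ=\uX\times_\uY\uX'$, so that the right-hand side is the element the paper denotes $f\tensor g$. To see that $W_0$ is well defined I must check that it is $\A$-balanced, i.e.\ that $(h\cdot f)\otimes g$ and $f\otimes(h\cdot g)$ have the same image for every $h\in\A=\Ell{\infty}(\uY)$; since the module action is $h\cdot f=(J_\uX h)f$ resp.\ $h\cdot g=(J_{\uX'}h)g$, this reduces exactly to the identity $h\tensor\car=\car\tensor h$ (the coupling is one over $\uY$), which was established above.

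Next I would verify that $W_0$ is $\A$-isometric. It suffices to compute on elementary tensors: using the multiplicativity and reality of $I_\uX,I_{\uX'}$ and the defining property $\E_\uY(F\tensor G)=\E_\uY F\cdot\E_\uY G$ of the relatively independent coupling,
\[ (W_0(f\otimes g)\,|\,W_0(f'\otimes g'))_\uY = \E_\uY\bigl(I_\uX(f\konj{f'})\,I_{\uX'}(g\konj{g'})\bigr) = \E_\uY(f\konj{f'})\,\E_\uY(g\konj{g'}) = (f|f')_\uY\,(g|g')_\uY, \]
which is precisely $(f\otimes g\,|\,f'\otimes g')$. In particular $\abs{W_0 u}_\uY=\abs{u}\in\A$, so $W_0 u$ has bounded conditional square and therefore lands in $\Ell{2}(\uX\times_\uY\uX'|\uY)$. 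Since that space is a KH-module (Proposition \ref{p.condL2-KH}) and the algebraic tensor product is order-dense in $\Ell{2}(\uX|\uY)\otimes\Ell{2}(\uX'|\uY)$ by construction, Proposition \ref{extension} supplies a unique $\A$-isometric extension $W$ to the full tensor product.

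The step I expect to be the main obstacle is surjectivity. Being $\A$-isometric, $W$ is injective with order-closed range: an order-convergent (hence order-Cauchy) net in $\ran W$ pulls back, via $\abs{Wu-Wv}_\uY=\abs{u-v}$, to an order-Cauchy and thus order-convergent net in the domain, whose image is the limit. The range contains the $\A$-submodule $M_0\coloneq\spann_\A\{(I_\uX f)(I_{\uX'}g)\mid f\in\Ell{\infty}(\uX),\,g\in\Ell{\infty}(\uX')\}$, which by the very definition of a coupling is $\norm{\cdot}_2$-dense in $\Ell{2}(\uZ)$; consequently every element of $\Ell{2}(\uZ|\uY)\subseteq\Ell{2}(\uZ)$ is an $\Ell{2}$-limit of elements of $M_0$. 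By Lemma \ref{ordervsl2}(iii) the order-closure of $M_0$ inside $\Ell{2}(\uZ|\uY)$ equals its $\Ell{2}$-closure, which is all of $\Ell{2}(\uZ|\uY)$; since $\ran W$ is order-closed and contains $M_0$, it contains $\ocl(M_0)=\Ell{2}(\uZ|\uY)$, so $W$ is onto.

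Finally I would settle equivariance and uniqueness. For bounded $f,g$ the identity $T_t(f\tensor g)=L_tf\tensor R_tg$, where $T=L\times_\uY R$, is exactly the characterizing formula of the joining dynamics (the corollary to Lemma \ref{coup=join}), and it reads $W\circ(L_t\otimes R_t)=T_t\circ W$ on such elementary tensors. Since bounded functions are order-dense in the conditional $\Ell{2}$-spaces (Lemma \ref{ordervsl2}) and all maps involved are order-continuous $\A$-homomorphisms, the identity propagates to the order-dense algebraic tensor product and hence to the whole module, giving the claimed intertwining. Uniqueness of $W$ follows from its prescribed values on elementary tensors together with the uniqueness clause of Proposition \ref{extension}.
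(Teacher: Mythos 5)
Your proposal follows the same skeleton as the paper's proof: an $\A$-isometry defined on an order-dense core, extension via \cref{extension}, surjectivity obtained from order-closedness of the range of an isometry combined with order-density of the span of products of bounded functions (\cref{ordervsl2}(iii)), and equivariance/uniqueness by order-continuity. The genuine difference is your choice of core: you start from the full algebraic tensor product $\Ell{2}(\uX|\uY)\otimes_{\alg}\Ell{2}(\uX'|\uY)$, whereas the paper starts from $\Ell{\infty}(\uX)\otimes_{\alg}\Ell{\infty}(\uX')$ and only at the very end identifies $W(f\otimes g)$ with the product $(I_\uX f)(I_{\uX'}g)$ for unbounded $f,g$, by approximating $f,g$ in order by bounded functions and passing to $\Ell{1}$-limits of the products. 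Your setup makes that final step unnecessary, which is an attractive simplification---but its difficulty resurfaces earlier, and that is where your argument has a gap.

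Concretely, your isometry computation applies the identity $\E_\uY\bigl((I_\uX a)(I_{\uX'}b)\bigr)=\E_\uY a\cdot\E_\uY b$ with $a=f\konj{f'}$ and $b=g\konj{g'}$. For $f,f'\in\Ell{2}(\uX|\uY)$ and $g,g'\in\Ell{2}(\uX'|\uY)$ these products lie in general only in $\Ell{1}(\uX)$ resp.\ $\Ell{1}(\uX')$, while the paper establishes this identity only for $a\in\Ell{2}(\uX)$, $b\in\Ell{2}(\uX')$. For mere $\Ell{1}$ functions the claim is not even well posed: the product $(I_\uX a)(I_{\uX'}b)$ need not be integrable (take $\uX=\uX'=\uY$ with trivial embeddings and $a=b=h$ for $h\in\Ell{1}(\uY)_+\setminus\Ell{2}(\uY)$; then the product is $h^2\notin\Ell{1}$). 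For the same reason your conclusion ``$\abs{W_0u}_\uY=\abs{u}$, hence $W_0u\in\Ell{2}(\uX\times_\uY\uX'|\uY)$'' is circular as stated: one needs integrability of $\abs{W_0u}^2$ before its conditional expectation makes sense. The gap is fixable in your special situation, because conditional Cauchy--Schwarz gives $\E_\uY\abs{f\konj{f'}}\le(\E_\uY\abs{f}^2)^{1/2}(\E_\uY\abs{f'}^2)^{1/2}\in\Ell{\infty}(\uY)$, so truncating via $f\mapsto\car_{[\,\abs{f}\le n\,]}f$ and using monotone convergence yields both integrability of the products and the multiplication formula in the required generality. But this auxiliary lemma must actually be proved; the paper's detour through bounded functions exists precisely to avoid it, at the cost of the final approximation argument that your construction renders automatic.
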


\begin{proof}
For the time being, we need to distinguish $f\tensor g \in
\Ell{2}(\uX|\uY)\tensor  \Ell{2}(\uX'|\uY)$
from  $f\tensor_\uY g \coloneq (I_\uX f) (I_{\uX'} g) \in \Ell{2}(\uX
\times_\uY \uX')$. 
Uniqueness of $W$ is clear.
 
To show existence we start with the (obviously well-defined) operator
\[ W\colon \Ell{\infty}(\uX) \tensor_{\alg}  \Ell{\infty}(\uX') 
\to \Ell{2}(\uX \times_\uY \uX'| \uY), \qquad W(f\tensor g) \coloneq f\tensor_\uY g.
\]
By employing the definition of the relatively independent coupling, the
identity
\[    (f \tensor g| u \tensor v)_\uY  = (f \tensor_\uY g | u \tensor_\uY v)
\]
is easily established.    Since $\Ell{\infty}(\uX)$ is,  by 
\cref{ordervsl2} (iii), order-dense in $\Ell{2}(\uX|\uY)$ (and the analogous
statement is true for $\uX'$), $W$ extends to an isometric homomorphism
\[ W \colon  \uL^2(\uX|\uY) \otimes  \uL^2(\uX'|\uY)
\to \Ell{2}(\uX \times_\uY \uX'| \uY)
\]
of KH-modules. 

It is clear that $W$ intertwines the dynamics. To show that $W$ is surjective, it
suffices to prove that the range of $W$ is order-dense in $\uL^2(\uX_1 \times_{\uY}
\uX_2|\uY)$. However, this is an immediate consequence of  \cref{ordervsl2} (iii).

	Finally, take $f \in \mathrm{L}^2(\uX|\uY)$ and $g \in \mathrm{L}^2(\uX'|\uY)$. We then find sequences $(f_n)_{n \in \N}$ in $\mathrm{L}^\infty(\uX)$ and $(g_n)_{n \in \N}$ in $\mathrm{L}^\infty(\uX')$ order-converging to $f$ and $g$, respectively. Then $\olim_{n \rightarrow \infty} f_n \otimes g_n = f \otimes g$ in $\mathrm{L}^2(\uX|\uY) \otimes \mathrm{L}^2(\uX'|\uY)$ and hence $((I_{\uX}f_n) (I_{\uX'}g_n)) _{n \in \N}$ order-converges to $h \coloneqq W(f \otimes g) \in \mathrm{L}^2(\uX \times_{\uY} \uX'|\uY)$. In particular, $\lim_{n \rightarrow \infty} (I_{\uX}f_n) (I_{\uX'}g_n) = h$ in $\mathrm{L}^1(\uX \times_{\uY} \uX)$ by \cref{ordervsl2} (ii). On the other hand, since $\lim_{n \rightarrow \infty} f_n = f$ in $\mathrm{L}^2(\uX)$ and $\lim_{n \rightarrow \infty} g_n = g$ in $\mathrm{L}^2(\uX')$ again by \cref{ordervsl2} (ii), we have $\lim_{n \rightarrow \infty} (I_{\uX}f_n)(I_{\uX'}g_n) = (I_{\uX}f)(I_{\uX'}g)$ in $\mathrm{L}^1(\uX \times_{\uY} \uX')$. This shows $W(f \otimes g) = h = (I_{\uX}f)(I_{\uX'}g)$.
\end{proof}

\section{Splitting and Dichotomy for Extensions of Measure-Preserving Systems}\label{c.dicmps}

In this chapter we apply the Decomposition Theorem \ref{mainthm}
to extensions of measure-preserving $G$-systems. Unless otherwise stated, 
$G$ is   an arbitrary but fixed   group, and $J\colon (\uY;S) \to (\uX;T)$ is a fixed 
extension of a measure-preserving $G$-systems.

\medskip
We usually suppress reference to $J$ and consider $\Ell{2}(\uY)$ as a closed
subspace  of $\Ell{2}(\uX)$.  (Nevertheless, we distinguish notationally
the dynamics $S$ on $\uY$ and $T$ on $\uX$.) Furthermore, we 
use the prefix ``$\uY$-'' whenever we 
we use notions pertaining to the associated KH-dynamical system on
$\Ell{2}(\uX|\uY)$, so that, for instance, we speak of
$\uY$-suborthonormal sets or $\uY$-homogeneous finite-rank submodules.

\begin{remark}\label{L2-L2cond}
The following observations will be helpful when
transferring  results from $\Ell{2}(\uX|\uY)$
to the whole of $\Ell{2}(\uX)$. 
\begin{enumerate}[(1)]
\item 
If $f\in \Ell{2}(\uX)$, then multiplying by $\rho_n \coloneq \car_{[\abs{f}_{\uY} \le n]} \in \Ell{\infty}(\uY)$
yields $\rho_n f \in \Ell{2}(\uX|\uY)$. As $\rho_n \nearrow \car$, we obtain
$\rho_n f \to f$ in $\Ell{2}(\uX)$. 
\item 
If $M\subseteq \Ell{2}(\uX|\uY)$ is a $\Ell{\infty}(\uY)$-submodule,
then $\cl_{\Ell{2}} M \cap \Ell{2}(\uX|\uY) = \ocl(M)$  (by Lemma
\ref{ordervsl2}). In particular, $f \in \cl_{\Ell{2}}(M)$ if and only if
$\rho_n f\in \ocl(M)$ for all $n\in \N$.
\item 
If $f,g\in \Ell{2}(\uX)$, then $f\perp_\uY g $ is equivalent to 
$f \perp_{\Ell{2}} \Ell{\infty}(\uY) g$. 
\end{enumerate}
\end{remark}

\medskip

\subsection{The Fixed Space in the Relatively Independent Joining}

Let $\calB \subseteq \Ell{2}(\uX)$ be a finite set. Similar as in  Chapter \ref{c.mainthm}
we write
\[  u_\calB \coloneq \sum_{g\in \calB} g\tensor \konj{g} \in
  \Ell{1}(\uX\times_\uY \uX).
\]
In general, $u_\calB$ may be  just an element of $\Ell{1}$, but
if $\calB \subseteq \Ell{2}(\uX|\uY)$, then $u_\calB \in \Ell{2}(\uX\times_{\uY}
\uX|\uY)$ and if $\calB  \subseteq \Ell{\infty}(\uX)$, then $u_\calB \in
\Ell{\infty}(\uX \times_\uY \uX)$.

Note that if  $f\in \fix(T\times_\uY T)$, then $\E_\uY\abs{f} \in \fix(S)$, and hence
by Remark \ref{L2-L2cond}(1)
\[ \fix(T\times_\uY T) = \cl_{\Ell{2}} \Bigl( 
\Ell{2}(\uX\times_\uY \uX|\uY) \cap \fix(T\times_\uY T)\Bigr).
\]
Therefore, Theorem \ref{mainthm} together with Remark \ref{L2-L2cond}(2) 
imply that $\fix(T \times_\uY T)$ is generated (in the $\Ell{2}$-sense) 
by those of its elements that have the form $\lambda u_\calB$,
where $\lambda \in \fix(S) \cap \Ell{\infty}(\uY)$ and $\calB\subseteq
\Ell{2}(\uX|\uY)$ is a finite, homogeneous, suborthonormal set 
generating a $T$-invariant $\Ell{\infty}(\uY)$ submodule. 
 Unfortunately, it seems impossible to add the requirement $\calB \subseteq
\Ell{\infty}(\uX)$ here. The best we can say is the following:

\begin{proposition}\label{fixgenLinfty}
Let $J\colon (\uY;S) \to (\uX;T)$ be an extension of measure-preserving 
$G$-systems. Then, in $\Ell{2}(\uX\times_\uY \uX)$,  
\[ \fix(T \times_\uY T) = \cl_{\Ell{2}}\spann\bigl\{ 
u_\calB  \, |\, \calB \subseteq \Ell{\infty}(\uX)\, \text{\rm finite},\,
u_\calB \in \fix(T\times_\uY T) \bigr\}. 
\]
\end{proposition}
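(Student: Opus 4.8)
The inclusion ``$\supseteq$'' is immediate: every $u_\calB$ with $\calB\subseteq\Ell{\infty}(\uX)$ finite and $u_\calB\in\fix(T\times_\uY T)$ lies in the closed subspace $\fix(T\times_\uY T)$, and hence so does the $\Ell{2}$-closed span. For ``$\subseteq$'' the plan is to invoke the reduction already prepared in the paragraph preceding the statement: by \cref{mainthm}, \cref{joiningvstensor} and \cref{L2-L2cond}, the space $\fix(T\times_\uY T)$ is the $\Ell{2}$-closed linear span of the elements $\lambda u_\calB$, where $\lambda\in\fix(S)\cap\Ell{\infty}(\uY)$ and $\calB = \{e_1,\dots,e_n\}\subseteq\Ell{2}(\uX|\uY)$ is a finite, $\uY$-homogeneous, $\uY$-suborthonormal family generating a $T$-invariant submodule $M$. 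Writing $W$ for the (complex-)linear span on the right-hand side, it therefore suffices to show $\lambda u_\calB\in\cl_{\Ell{2}}W$ for each such generator. Splitting $\lambda$ into its real and imaginary and then positive and negative parts in the lattice $\fix(S)$ and using $\C$-linearity of $W$, I may assume $\lambda\ge0$.

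The main obstacle is that the legs $e_j$ lie only in $\Ell{2}(\uX|\uY)$, so I must truncate them to bounded functions \emph{without} destroying the $T$-invariance encoded in $u_\calB$; truncating the individual $e_j$ by an arbitrary cutoff would break fixedness. The decisive observation is that the ``diagonal'' function $r^2\coloneq\sum_{j=1}^n\abs{e_j}^2\in\Ell{1}(\uX)$ (ordinary modulus on $\uX$) is $T$-invariant. Indeed, each $T_t$ is a Markov lattice isomorphism, so $T_t\abs{e_j}^2=\abs{T_te_j}^2$; moreover $\{T_te_1,\dots,T_te_n\}$ is again a suborthonormal basis of $M$ (\cref{covrep-basic}), and $\sum_j\abs{g_j}^2$ does not depend on the chosen suborthonormal basis $\{g_j\}$ of $M$ (two such bases are related by an $\Ell{\infty}(\uY)$-unitary matrix, which cancels in the diagonal sum). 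Hence $T_tr^2=\sum_j\abs{T_te_j}^2=r^2$ for all $t\in G$. Since $r^2\in\Ell{1}(\uX)$, the function $r$ is finite almost everywhere, so the idempotents $\rho_N\coloneq\car_{[r\le N]}\in\Ell{\infty}(\uX)$ increase to $\car$; and because $T_t$ respects the functional calculus (equivalently, preserves level sets), $\rho_N$ is $T$-invariant as well.

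I then set $g_j^{(N)}\coloneq\sqrt\lambda\,\rho_N e_j$. These are bounded, since $\abs{e_j}\le r$ gives $\abs{\rho_N e_j}\le N$ and $\sqrt\lambda\in\Ell{\infty}(\uY)$, so $\calB^{(N)}\coloneq\{g_1^{(N)},\dots,g_n^{(N)}\}\subseteq\Ell{\infty}(\uX)$. As the relatively independent joining is a coupling over $\uY$, one has $\sqrt\lambda\tensor\sqrt\lambda=\lambda\tensor\car$, whence
\[ u_{\calB^{(N)}}=\sum_{j=1}^n g_j^{(N)}\tensor\konj{g_j^{(N)}}=\lambda\,(\rho_N\tensor\rho_N)\,u_\calB. \]
Applying $T_t\times_\uY T_t$ and using that $\lambda$ and $\rho_N$ are $T$-invariant while $u_\calB\in\fix(T\times_\uY T)$ shows $u_{\calB^{(N)}}\in\fix(T\times_\uY T)$, so $u_{\calB^{(N)}}\in W$. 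Finally $\rho_N\tensor\rho_N\nearrow\car$ almost everywhere and $\abs{u_{\calB^{(N)}}}\le\abs{\lambda u_\calB}\in\Ell{2}(\uX\times_\uY\uX)$, so dominated convergence gives $u_{\calB^{(N)}}\to\lambda u_\calB$ in $\Ell{2}(\uX\times_\uY\uX)$. Thus $\lambda u_\calB\in\cl_{\Ell{2}}W$, which together with the reduction above and the trivial inclusion completes the proof.
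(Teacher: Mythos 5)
Your proof is correct, and its skeleton is the same as the paper's: the reduction via \cref{mainthm}, \cref{joiningvstensor} and \cref{L2-L2cond} to approximating the generators $\lambda u_\calB$ with $\lambda \ge 0$, followed by truncating $\sqrt{\lambda}\,\calB$ by a bounded $T$-invariant cutoff built from $r^2 = \sum_{j}\abs{e_j}^2$, and an $\Ell{2}$-limit at the end. The one genuinely different ingredient is your proof of the key fact $r^2 \in \fix(T)$, which the paper isolates as \cref{sons-linfty} (an observation it attributes to Furstenberg). The paper argues through the joining: since $u_\calB \in \fix(T\times_\uY T)$ and the operators $T_t\times_\uY T_t$ are Markov embeddings, also $\abs{u_\calB}^2 \in \fix(T\times_\uY T)$, and applying the conditional expectation $\E_\uX$ onto one factor yields $\E_\uX\abs{u_\calB}^2 = \sum_j\abs{e_j}^2 \in \fix(T)$. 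You instead show that the diagonal sum $\sum_j\abs{g_j}^2$ is the same for every suborthonormal basis $\{g_j\}$ of $M = \ocl\spann_{\Ell{\infty}(\uY)}\calB$, and combine this with the fact that $T_t\calB$ is again such a basis (here your appeal to \cref{covrep-basic} needs the easy supplement that $T_t$ maps $M$ bijectively onto $M$, since that lemma speaks of bases of $E$). Your change-of-basis step also needs one repair: the transition matrix $a_{ij} = (g_i|e_j)_\uY$ between two suborthonormal bases is not literally unitary over $\Ell{\infty}(\uY)$; its orthogonality relations hold only up to the support idempotents, namely $\sum_i a_{ij}\konj{a_{ik}} = \delta_{jk}\abs{e_j}_\uY^2$. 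Since $\abs{e_j}_\uY^2\, e_j = e_j$, the cancellation in the diagonal sum still goes through, so this is an imprecision rather than a gap. As for what each route buys: the paper's computation is shorter and stays entirely inside the joining formalism, while yours isolates a structural fact of independent interest (the pointwise \enquote{trace} of a finite-rank invariant module is basis-independent) at the cost of some matrix bookkeeping. Your remaining deviations---the sharp cutoff $\car_{[r\le N]}$ in place of the paper's damping functions $\eta_n$, and dominated convergence for the final limit---are cosmetic.
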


In order to prove  \cref{fixgenLinfty} we need the following lemma.

\begin{lemma}\label{sons-linfty}
Let $J\colon (\uY;S) \to (\uX;T)$ be an extension of measure-preserving 
$G$-systems and let $e_1, \dots, e_n$ be a $\uY$-suborthonormal
system in $\Ell{2}(\uX|\uY)$ satifsying that $\spann_{\Ell{\infty}(\uY)}\{ e_1, \dots,
e_n\}$ is $T$-invariant. Then\footnote{Recall that  $|\cdot|$ denotes the usual modulus mapping on 
$\uL^2(\uX)$, see \cref{innerproducts}.} 
$\sum_{j=1}^n \abs{e_j}^2 \in \fix(T)$.
\end{lemma}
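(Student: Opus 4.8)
The plan is to reduce everything to one structural fact: the $\uL^1(\uX)$-element $f_M \coloneq \sum_{j=1}^n \abs{e_j}^2$ (with $\abs{\cdot}$ the ordinary modulus, so $\abs{e_j}^2 = e_j\konj{e_j}$) depends only on the submodule $M \coloneq \spann_{\Ell{\infty}(\uY)}\{e_1,\dots,e_n\}$ and not on the chosen $\uY$-suborthonormal basis. Note first that $M$ is already order-closed, hence a finite-rank KH-submodule with suborthonormal basis $(e_j)_j$ (cf.\ the remark opening \cref{s.finrank} and \cref{fourier}). Granting basis independence, the conclusion is immediate: by hypothesis $M$ is $T$-invariant and each $T_t$ is invertible, so $T_tM = M$, and by \cref{covrep-basic}(v) applied to the subsystem carried by $M$ the family $(T_te_j)_{j=1}^n$ is again a $\uY$-suborthonormal basis of $M$. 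Since $T_t$ is a Markov lattice homomorphism one has $T_t\abs{e_j}^2 = \abs{T_te_j}^2$, whence
\begin{equation*}
  T_tf_M = \sum_{j=1}^n \abs{T_te_j}^2 = f_M ,
\end{equation*}
the last step being basis independence applied to the two bases $(e_j)_j$ and $(T_te_j)_j$. As $t\in G$ is arbitrary, $f_M\in\fix(T)$.

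To prove basis independence, let $(e_k')_{k=1}^n$ be a second $\uY$-suborthonormal basis of $M$. By \cref{fourier} there are finite expansions $e_k' = \sum_j c_{kj}e_j$ with $c_{kj}\coloneq (e_k'|e_j)_\uY\in\Ell{\infty}(\uY)$, and $\abs{e_k'}_\uY\,c_{kj} = c_{kj}$ because $e_k'$ is $\uY$-normalized. Multiplying out and collecting terms gives
\begin{equation*}
  \sum_{k=1}^n \abs{e_k'}^2 = \sum_{k} e_k'\konj{e_k'}
  = \sum_{i,j}\Bigl(\sum_k c_{kj}\konj{c_{ki}}\Bigr)\, e_j\konj{e_i}.
\end{equation*}
Computing $(e_i|e_j)_\uY$ from the dual expansion $e_i = \sum_k \konj{c_{ki}}e_k'$ and invoking the $\uY$-orthonormality of the $e_k'$ yields the ``$\uY$-unitarity'' relation $\sum_k \konj{c_{ki}}c_{kj} = \delta_{ij}\abs{e_i}_\uY$. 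Since $\Ell{\infty}(\uY)$ is commutative, this is exactly the coefficient appearing above, so the double sum collapses to $\sum_i \abs{e_i}_\uY\, e_i\konj{e_i} = \sum_i e_i\konj{e_i} = f_M$, using $\abs{e_i}_\uY e_i = e_i$. Thus $\sum_k\abs{e_k'}^2 = f_M$.

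The point demanding care, and the hard part of the argument, is that the cross terms $e_j\konj{e_i}$ with $i\neq j$ do \emph{not} vanish as functions on $\uX$; only their conditional expectations $\E_\uY(e_j\konj{e_i}) = (e_j|e_i)_\uY$ do. Hence one cannot cancel them termwise, and the cancellation is genuinely global, produced entirely by the $\uY$-unitarity of the change-of-basis matrix $(c_{kj})$ — this is where the full strength of $\uY$-orthonormality is used. I expect the main bookkeeping obstacle to be tracking the support idempotents $\abs{e_i}_\uY$ correctly through the computation, since they need not all equal $\car$ unless $M$ is homogeneous. The remaining verifications are routine: that $(T_te_j)_j$ is a $\uY$-suborthonormal basis of $M$ follows from $S_t$-unitarity of $T_t$, namely $(T_te_i|T_te_j)_\uY = S_t(e_i|e_j)_\uY$ together with $S_t$ mapping idempotents to idempotents; and $T_t\abs{e_j}^2 = \abs{T_te_j}^2$ holds because $T_t$ is a Markov lattice homomorphism, hence commutes with squaring (approximate $\abs{e_j}$ by truncations $\abs{e_j}\wedge N$ and use the $\Ell{1}$-continuity of the extension of $T_t$).
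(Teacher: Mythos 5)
Your proof is correct, but it takes a genuinely different route from the paper's. The paper argues inside the relatively independent joining: since $M$ is a $T$-invariant finite-rank KH-submodule with suborthonormal basis $(e_j)_j$, the element $e = \sum_j e_j \tensor \konj{e_j}$ lies in $\fix(T\times_\uY T)$ (via \cref{lem:proj-invar}, \cref{invariance} and \cref{joiningvstensor}); as the joining dynamics is Markov, hence multiplicative and real, $\abs{e}^2 = \sum_{j,k} e_j\konj{e_k} \tensor \konj{e_j}e_k$ is fixed as well, and applying the intertwining conditional expectation $\E_\uX$ onto the second factor, using $\E_\uX(f \tensor g) = (\E_\uY f)g$ and suborthonormality, collapses it to $\sum_j \abs{e_j}^2 \in \fix(T)$. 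You, by contrast, never leave $\Ell{1}(\uX)$: your key point is that $\sum_j \abs{e_j}^2$ is a basis-independent invariant of $M$, proved by a change-of-basis computation whose global cancellation rests on the $\uY$-unitarity relation $\sum_k \konj{c_{ki}}c_{kj} = \delta_{ij}\abs{e_i}_\uY$ (and your bookkeeping with the support idempotents, via $\abs{e_k'}_\uY c_{kj} = c_{kj}$ and $\abs{e_i}_\uY e_i = e_i$, is exactly what is needed); the conclusion then follows because $T_t$ carries one suborthonormal basis of $M$ onto another and commutes with $\abs{\cdot}^2$. The paper's argument is shorter given machinery it builds anyway for the main theorems, and it makes transparent how invariance of $M$ becomes a fixed point of the product dynamics, which is the leitmotif of the whole paper; yours is more elementary and self-contained (only \cref{fourier}, the normalization identities of \cref{supp.l.supp}, and basic Markov-embedding facts enter), and the basis-independence statement you isolate --- that $\sum_j \abs{e_j}^2$, a kind of fiberwise trace of the projection onto $M$, is intrinsic to $M$ --- is a reusable fact of independent interest. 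Your side claims are all sound, including that $M$ is already order-closed: the map $x \mapsto \sum_j (x|e_j)_\uY\, e_j$ is a bounded, hence order-continuous, idempotent module homomorphism with range $M$.
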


\begin{proof}
Define $e \coloneq \sum_{j=1}^n e_j \tensor \konj{e_j} \in \fix(T
\times_{\uY} T)$. Then, within $\Ell{1}(\uX \times_\uY \uX)$,  
\[  \abs{e}^2 = \sum_{j,k=1}^n e_j\konj{e_k} \tensor \konj{e_j}e_k \in
\fix(T\times_\uY T). 
\]
Hence, also $\E_\uX \abs{e}^2 \in \fix(T)$, where $\E_\uX$ denotes the
conditional expectation onto the second factor. It is easy to see that 
$\E_\uX(f \tensor g) = (\E_\uY f) g$ in general, hence
\[  \fix(T) \ni \E_\uX \abs{e}^2 = \sum_{j,k=1}^n \E_\uY(e_j\konj{e_k})
\konj{e_j}e_k
= \sum_{j=1}^n \abs{e_j}^2_{\uY} \abs{e_j}^2 = 
\sum_{j=1}^n   \abs{\abs{e_j}_{\uY} e_j}^2  = \sum_{j=1}^n \abs{e_j}^2,
\]
cf.{} Lemma \ref{supp.l.supp}(iii). 
\end{proof}

\begin{proof}[Proof of \cref{fixgenLinfty}]
Let $\calB = \{ e_1, \dots, e_d\}$ be a suborthonormal subset of $\Ell{2}(\uX|\uY)$
such that $u_\calB \in \fix(T\times_\uY T)$ and let $\lambda  \in \fix(S)\cap
\Ell{\infty}(\uY)$. We may assume that $\lambda \geq 0$.  Let $e \coloneq \sum_{j=1}^d \abs{e_j}^2 \in \fix(T)$ by Lemma \ref{sons-linfty} and define
$\eta_n\coloneq \frac{\sqrt{e} \wedge n}{\sqrt{e} + \frac{1}{n}}$. Then $0\le \eta_n \le 1$, 
$\eta_n  \in \fix(T)$ and  $\eta_n e_j \to e_j$ in $\Ell{2}$ as $n\to
\infty$ for each $j=1, \dots, d$. Define 
\[ \calB_n \coloneq\sqrt{\lambda} \eta_n  \calB = \{ \sqrt{\lambda} \eta_n e_1, \dots
\sqrt{\lambda} \eta_n e_d\} \subseteq \Ell{\infty}(\uX).
\] 
Then $\calB_n$ is finite,  
$u_{\calB_n} \in \fix(T \times_\uY T)$ for each $n\in \N$ and $u_{\calB_n} \to
\lambda u_\calB$ in $\Ell{2}$-norm as $n\to \infty$.  
\end{proof}

\begin{remark}
 Lemma   \ref{sons-linfty} is due to
Furstenberg \cite[p.231]{Furs1977} and reproduced in \cite[Thm.~9.13]{Glas}. 
However, Glasner seems to claim that the scaled elements $\eta_n e_j$ 
still form a  $\uY$-suborthonormal system, which, as far as we can see, need
not be the case.\footnote{Glasner uses different scaling functions
  $\eta_n$, but that is inessential here.}  
\end{remark}

\medskip

\subsection{Kronecker Subspace}

Theorem \ref{mainthm} states that the  associated KH-dynamical system
on $\Ell{2}(\uX|\uY)$ induces a $\uY$-orthogonal decomposition into 
KH-submodules
\[   \Ell{2}(\uX|\uY) = \Ell{2}(\uX|\uY)_{\ds}  \oplus \Ell{2}(\uX|\uY)_{\wm},
\]
into the discrete spectrum part and the weakly mixing part. 
The space
\[    \scrE(\uX|\uY) \coloneq \cl_{\Ell{2}} \Ell{2}(\uX|\uY)_{\ds}
\]
is called the {\emdf (relative) 
discrete spectrum part} or the {\emdf Kronecker subspace}  of the extension.

\begin{proposition}\label{Kronecker}
Let $J\colon (\uY;S) \to (\uX;T)$ be an extension of measure-preserving 
$G$-systems. Then $\scrE(\uX|\uY)$ 
is the $\Ell{2}$-closure of each of the following sets:
\begin{enumerate}[(1)]
\item $\dps
\bigcup \{ M \, |\, M \text{ \rm  finitely-generated, $T$-invariant KH-submodule of
                 $\Ell{2}(\uX|\uY)$ }\}$;

\item $\dps \bigcup
\{M\, |\, M \text{ \rm  finitely-generated, $T$-invariant $\Ell{\infty}(\uY)$-submodule of
                 $\Ell{2}(\uX)$  }\}$;
\item $\dps \bigcup
\{ M\, |\,  M \text{ \rm  finitely-generated, $T$-invariant $\Ell{\infty}(\uY)$-submodule of
                 $\Ell{\infty}(\uX)$ }\}$;
\item $\dps  \bigcup\{ \spann_{\Ell{\infty}(\uY)}\calB\, |\, \calB \subseteq
  \Ell{\infty}(\uX)\, \text{ \rm finite},\,  
u_\calB \in \fix(T \times_\uY T)\}$.  
\end{enumerate}
Moreover, $\scrE(\uX|\uY)$ is a $T$-invariant closed unital sublattice of
$\Ell{2}(\uX)$.  
\end{proposition}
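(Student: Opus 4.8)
The plan is to prove the four characterizations together and then deduce the ``moreover'' clause from characterization (3). Write $\A\coloneq\Ell{\infty}(\uY)$, $E\coloneq\Ell{2}(\uX|\uY)$, and abbreviate by $V_1,\dots,V_4$ the four sets whose $\Ell{2}$-closures are claimed to equal $\scrE(\uX|\uY)$. First I would record that each $V_i$ is actually an $\A$-submodule: given two admissible modules $M',M''$, the sum $M'+M''$ is again finitely generated, $T$-invariant and of the same type (for $V_4$ one combines $\calB'\sqcup\calB''$, using $u_{\calB'\sqcup\calB''}=u_{\calB'}+u_{\calB''}\in\fix(T\times_\uY T)$), so the families are directed and their unions are submodules. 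The identity $\cl_{\Ell{2}}(V_1)=\scrE(\uX|\uY)$ is essentially a restatement of \cref{mainthm}: the order-closure of a finitely generated $T$-invariant submodule of $E$ is a finite-rank $T$-invariant KH-submodule (\cref{order-closure}, \cref{lem:finrank}, \cref{covrep-basic}), so $\ocl(V_1)=E_{\ds}$, and \cref{ordervsl2}(iii) converts this order-closure into an $\Ell{2}$-closure, giving $\cl_{\Ell{2}}(V_1)=\cl_{\Ell{2}}(E_{\ds})=\scrE(\uX|\uY)$.

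Next I would close the circle of inclusions. The inclusions $V_1\subseteq V_2$ and $V_3\subseteq V_2$ are immediate, so $\cl_{\Ell{2}}(V_1),\cl_{\Ell{2}}(V_3)\subseteq\cl_{\Ell{2}}(V_2)$. For $\scrE(\uX|\uY)\subseteq\cl_{\Ell{2}}(V_3)$ and $\scrE(\uX|\uY)\subseteq\cl_{\Ell{2}}(V_4)$ I would reuse the truncation device from the proof of \cref{fixgenLinfty}: given a finite-rank $T$-invariant KH-submodule $M\subseteq E$ with suborthonormal basis $e_1,\dots,e_m$, the function $\eta_n\coloneq(\sqrt{e}\wedge n)/(\sqrt{e}+\tfrac1n)$, where $e\coloneq\sum_j\abs{e_j}^2\in\fix(T)$ by \cref{sons-linfty}, is $T$-fixed; hence $\calB_n\coloneq\{\eta_n e_1,\dots,\eta_n e_m\}\subseteq\Ell{\infty}(\uX)$ satisfies $u_{\calB_n}=(\eta_n\tensor\eta_n)u_\calB\in\fix(T\times_\uY T)$ and $\spann_{\A}\calB_n$ is $T$-invariant, this invariance surviving \emph{precisely because} $\eta_n$ is fixed. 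Since $\eta_n e_j\to e_j$ in $\Ell{2}$, every generator of $M$ is an $\Ell{2}$-limit of elements of $\spann_{\A}\calB_n$, placing $M$ in both $\cl_{\Ell{2}}(V_3)$ and $\cl_{\Ell{2}}(V_4)$; by characterization (1) this gives $\scrE(\uX|\uY)\subseteq\cl_{\Ell{2}}(V_3)\cap\cl_{\Ell{2}}(V_4)$.

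The two genuinely technical steps, and the main obstacle, are the reverse inclusions $\cl_{\Ell{2}}(V_2)\subseteq\scrE(\uX|\uY)$ and $\cl_{\Ell{2}}(V_4)\subseteq\scrE(\uX|\uY)$, both of which require passing between the three ambient spaces $\Ell{\infty}(\uX)$, $E$ and $\Ell{2}(\uX)$. For $V_2$, let $M=\spann_{\A}\{f_1,\dots,f_n\}$ be a finitely generated $T$-invariant submodule of $\Ell{2}(\uX)$; using $\rho_k=\car_{[\,\max_j\abs{f_j}_\uY\le k\,]}$ (\cref{L2-L2cond}) the submodule $M\cap E$ is $\Ell{2}$-dense in $M$, so it suffices to show $M\cap E\subseteq E_{\ds}$. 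For $g\in M\cap E$ the orbit module $N_g\coloneq\ocl\spann_{\A}\{T_t g\mid t\in G\}$ is a $T$-invariant KH-submodule, and the crucial point is that it has finite rank: passing to the Hilbert bundle representation of $E$ over the Stonean model of $\A$, almost every fibre value of $N_g$ lies in the span of $f_1(\omega),\dots,f_n(\omega)$, which is defined a.e.\ since each $\abs{f_j}_\uY\in\Ell{2}(\uY)$ is finite a.e., whence $\sum_{e\in\calB}\abs{e}_\uY^2\le n\car$ for every suborthonormal $\calB\subseteq N_g$ and \cref{lem:finrank}(c) applies; thus $N_g\subseteq E_{\ds}$ and $g\in N_g$. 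For $V_4$, a finite $\calB\subseteq\Ell{\infty}(\uX)$ with $u_\calB\in\fix(T\times_\uY T)$ corresponds under \cref{joiningvstensor} and \cref{invariance} to the self-adjoint, positive, $\A$-finite-rank homomorphism $A=\sum_{g\in\calB}A_{g,g}\in\HS_T(E)$, and I would verify $\ocl\ran(A)=\ocl\spann_{\A}\calB$ (fibrewise, the positive operator $A_\omega$ has range exactly $\spann\{g(\omega)\}$), so the $T$-invariant finite-rank module $\ocl\ran(A)$ contains $\spann_{\A}\calB$ and lies in $E_{\ds}\subseteq\scrE(\uX|\uY)$. These two fibrewise arguments, reconciling the KH-rank bound with the coarser $\Ell{2}(\uX)$-generation, are the part I expect to demand the most care.

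Finally, for the ``moreover'' clause, closedness is built into the definition of $\scrE(\uX|\uY)$, and $T$-invariance follows from the $T$-invariance of $E_{\ds}$ (\cref{mainthm}) together with the $\Ell{2}$-continuity of each $T_t$; unitality holds since $\A\car$ is a member of $V_3$. The sublattice property is where characterization (3) pays off: because each $T_t$ is multiplicative and commutes with conjugation on $\Ell{\infty}(\uX)$, the product $M'M''$ and the conjugate $\konj{M'}$ of finitely generated $T$-invariant submodules of $\Ell{\infty}(\uX)$ are again of this type, so $V_3$ is a conjugation-closed unital subalgebra of $\Ell{\infty}(\uX)$. Its uniform closure $B$ is then a commutative C$^*$-subalgebra of $\Ell{\infty}(\uX)$, hence closed under $f\mapsto\abs{f}$, and $B\subseteq\cl_{\Ell{2}}(V_3)=\scrE(\uX|\uY)$ since uniform convergence implies $\Ell{2}$-convergence on the probability space $\uX$. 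For real $f\in\scrE(\uX|\uY)$ I would choose $b_n\in B$ with $b_n\to f$ in $\Ell{2}$; then $\abs{b_n}\in B\subseteq\scrE(\uX|\uY)$ and $\norm{\abs{b_n}-\abs{f}}_2\le\norm{b_n-f}_2\to0$, so $\abs{f}\in\scrE(\uX|\uY)$ by closedness, the complex case reducing to this via the conjugation-invariance of $\scrE(\uX|\uY)$.
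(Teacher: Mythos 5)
Your proposal is correct, and its skeleton (the identification of the closure of set (1) with $\scrE(\uX|\uY)$ via \cref{mainthm} and \cref{ordervsl2}, the truncation device $\eta_n=(\sqrt{e}\wedge n)/(\sqrt{e}+\frac{1}{n})$ with $e\in\fix(T)$ from \cref{sons-linfty} for passing into $\Ell{\infty}(\uX)$, and the C$^*$-algebra argument for the sublattice clause) matches the paper closely. Where you genuinely diverge is the inclusion of set (2) into $\scrE(\uX|\uY)$: the paper uses a rescaling rather than your truncation-plus-rank-bound. Given generators $f_1,\dots,f_n\in\Ell{2}(\uX)$ it sets $\lambda\coloneq\car+\sum_j\abs{f_j}_\uY\in\Ell{2}(\uY)$ and $g_j\coloneq f_j/\lambda\in\Ell{2}(\uX|\uY)$, and shows that the algebraic module $\spann_{\A}\{g_1,\dots,g_n\}$ is still $T$-invariant because writing $T_{t^{-1}}f_j=\sum_k b_{jk}f_k$ with $b_{jk}\in\Ell{\infty}(\uY)$ yields $S_t^{-1}\lambda\le c\lambda$, so the rescaled coefficients $a_{jk}\,S_t\bigl(S_t^{-1}\lambda/\lambda\bigr)$ remain bounded; then $f_j=\lambda g_j$ is recovered in $\Ell{2}$-norm, and no bundle or fibre argument occurs anywhere in the proof. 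Your route---truncate by $\rho_k$, form the orbit module $N_g$, and bound its rank by $n$ through fibrewise dimension counting and \cref{lem:finrank}(c)---is workable but is exactly where your sketch is thinnest: fibre values of $f_j\notin\Ell{2}(\uX|\uY)$ must first be defined by truncation on the dense open subset of the Stonean space corresponding to $\sup_k\car_{[\abs{f_j}_\uY\le k]}=\car$; order-limits control fibres only on residual sets depending on the element (\cref{Wright1.1}, \cref{order-closure}); and one must exploit that \cref{lem:finrank}(c) involves only \emph{finite} suborthonormal systems before upgrading the pointwise bound $\sum_{e\in\calB}\abs{e}^2\le n$ from a dense set to $n\car$ by continuity. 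So the paper's trick buys brevity and stays entirely inside order theory, while yours makes the underlying linear-algebraic fact (a submodule of an $n$-generated module has rank at most $n$) visible at the cost of the bundle machinery. One place where your architecture is arguably tighter than the paper's: you handle set (4) on both sides directly, proving $\cl_{\Ell{2}}(V_4)\subseteq\scrE(\uX|\uY)$ via $A=\sum_{g\in\calB}A_{g,g}\in\HS_T(E)$ and $\ocl\ran(A)=\ocl\spann_{\A}\calB$ (this is cleanest via $\ran(A)^\perp=\ker(A)=\calB^\perp$ together with \cref{fourier}, no fibres needed), whereas the paper disposes of (4) by declaring $E_4\subseteq E_3$ clear, i.e., that $\spann_{\A}\calB$ itself is $T$-invariant---a point that is less obvious than it looks, since $\ran(A)$ may be a proper submodule of $\spann_{\A}\calB$; your argument sidesteps that issue altogether.
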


\begin{proof}
Let us denote the sets listed in (1)--(4) by $E_1$--$E_4$. Then 
$\cl_{\Ell{2}}E_1 =  \scrE(\uX|\uY)$  by definition of $\scrE(\uX|\uY)$ and Remark \ref{L2-L2cond}(2).
The inclusions  $E_4 \subseteq E_3 \subseteq E_2$ are clear. We shall show
$E_2 \subseteq \cl_{\Ell{2}}E_1$ and $E_1 \subseteq \cl_{\Ell{2}}E_4$.  

\smallskip
\noi
For the proof of the first inclusion, suppose that $M$ is a $T$-invariant
$\Ell{\infty}(\uY)$-submodule of $\Ell{2}(\uX)$ generated by the functions
$f_1, \dots , f_n$. Let $\lambda \coloneq \car + \sum_{j=1}^n \abs{f_j}_\uY \in
\Ell{2}(\uY)$ and $g_j \coloneq \frac{1}{\lambda} f_j \in \Ell{2}(\uX|\uY)$ for $1,
\dots, n$.   We claim that $N \coloneq \spann_{\Ell{\infty}(\uY)}\{ g_1, \dots, g_n\}$
is $T$-invariant.  Fix $t\in G$ and write
\[   T_t f_j = \sum_{k=1}^n a_{jk} f_k,\quad T_{t^{-1}} f_j = \sum_{k=1}^n
  b_{jk} f_k
\]
for $\Ell{\infty}(\uY)$-elements $a_{jk}, b_{jk}$. Then
\begin{align*}
T_t g_i = S_t(\tfrac{1}{\lambda}) \sum_k a_{jk} f_k 
        = \sum_k a_{jk}   \lambda  S_t(\tfrac{1}{\lambda}) g_k
        = \sum_k a_{jk}   S_t \bigl(\tfrac{S_t^{-1}\lambda}{\lambda}\bigr) g_k.  
\end{align*}
Next, note that 
\[   S_t^{-1}\lambda = \car + \sum_{j=1}^n \abs{T_t^{-1} f_j}_\uY
\le \car + \sum_{j=1}^n \sum_{k=1}^n \abs{b_{jk}} \abs{f_k}_{\uY} 
\le c \lambda
\]
for some real number $c > 0$. This proves the claim. 
Finally, note that for each $j=1, \dots, n$ 
\[ f_j = \lambda g_j \in \cl_{\Ell{2}}
\spann_{\Ell{\infty}(\uY)}\{ g_1, \dots, g_n\} = \cl_{\Ell{2}} \ocl(N)
\subseteq \cl_{\Ell{2}} (E_1). 
\]
Hence,  $M \subseteq \cl_{\Ell{2}}(E_1)$ as desired.

\smallskip
\noi
For the second inclusion let $M\subseteq \Ell{2}(\uX|\uY)$ be a
$T$-invariant, finite-rank KH-submodule. Pick a $\uY$-suborthonormal basis 
$\calB = \{ e_1,
\dots, e_d\}$ of $M$ and find, as in the proof of Proposition \ref{fixgenLinfty},
functions $\eta_n \in \fix(T)$ with $0 \le \eta_n \le 1$ and $\calB_n \coloneq
\eta_n \calB \subseteq  \Ell{\infty}(\uX)$ and  
$\eta_n e_j \to e_j$ in $\Ell{2}$-norm for each $j =1, \dots, d$. Then 
$u_{\calB_n} \in \fix(T\times_\uY T)$ and 
\[ M \subseteq \cl_{\Ell{2}} \bigcup_{n \in
  \N} \spann_{\Ell{\infty}(\uY)}\calB_n \subseteq \cl_{\Ell{2}} E_4.
\]
Finally, let $M$ be the norm-closure in $\Ell{\infty}(\uX)$ 
of the union of all finitely-generated
$T$-invariant $\Ell{\infty}(\uY)$-submodules of $\uL^\infty(\uX)$. 
Then $M$ is a unital $\uC^*$-subalgebra of $\uL^\infty(\uX)$ and therefore a 
unital sublattice of $\uL^\infty(\uX)$ (see \cite[Thm.~7.23]{EFHN2015}). This
implies that its $\uL^2$-closure $\scrE(\uX|\uY)$ is a unital Banach sublattice of $\uL^2(\uX)$.
\end{proof}

By \cref{Kronecker} and general
theory \cite[Prop.~13.19]{EFHN2015},  $\scrE(\uX|\uY) \cong
\Ell{2}(\uZ)$ for some probability space $\uZ$ (determined up to a natural
isomorphism). Moreover, as $\scrE(\uX|\uY)$ is $T$-invariant, 
by restriction we obtain a system $(\uZ; T)$.  

We write  $\Kro(\uX|\uY)  \coloneq \uZ$ and call 
the system $(\Kro(\uX|\uY); T)$ the {\emdf relative Kronecker factor} of 
the extension.   The original
extension then factors as
\[    (\uY; S) \to (\Kro(\uX|\uY); T) \to (\uX; T)
\]
which amounts to a sequence 
\[     \Ell{2}(\uY)  \to \Ell{2}( \Kro(\uX|\uY) ) \to  \Ell{2}(\uX)
\]
of intertwining Markov embeddings
on the level of function spaces.

\begin{definition}
An extension $J\colon (\uY;S) \to (\uX;T)$  of measure-preserving systems 
has {\emdf relative discrete spectrum} (or: is a discrete spectrum extension)
if $\scrE(\uX|\uY) = \Ell{2}(\uX)$.
\end{definition}

If $(\uX;T)$ is a measure-preserving system, then the trivial extension $J \colon (\{\mathrm{pt}\};\mathrm{Id}) \to (\uX;T)$ 
has relative discrete spectrum if and only if the system $(\uX;T)$ has discrete
spectrum, i.e., the finite-dimensional invariant  subspaces of $\uL^\infty(\uX)$
are dense in $\uL^2(\uX)$.

\begin{example}\label{skewtorus}
  Consider the torus $\T \coloneqq \{y \in \C\mid |y| = 1\}$ equipped with the Haar measure. For fixed $a \in \T$ we consider the 
measure-preserving system $(\uY;S)$ of the group $\Z$ induced by the homeomorphism $\T \to \T, \, y \mapsto ay$. The product 
space $\uX = \uY \times \uY$ equipped with the action induced by
    \begin{align*}
      \T^2 \to \T^2, \quad (y,z) \mapsto (ay,yz)
    \end{align*}
  is called the {\emdf skew torus} $(\uX;T)$. The projection onto the first component induces an extension $J \colon (\uY;S) \to 
(\uX;T)$. For every $k \in \Z$ the continuous function
    \begin{align*}
      f_k \colon \T^2 \to \C, \quad (y,z) \mapsto z^k
    \end{align*}
  generates an invariant submodule $\uL^\infty(\uY)f_k \subset \uL^\infty(\uX)$ and these submodules are total in 
$\uL^2(\uX)$. Thus, $J \colon (\uY;S) \to (\uX;T)$ has relative discrete spectrum. This example can be generalized to 
compact group extensions (cf.\ \cite{Zimm1976}, \cite[Sec.~6.2]{Furstenberg1981}, \cite[Sec.s 4 and 5]{Elli1987} and 
\cite{LTW2002}).
\end{example}

\medskip

\subsection{Orthogonal Complement of the Relative Kronecker Subspace}\label{s.Kronecker}

\begin{proposition}\label{Kronecker-orth}
Let $J\colon (\uY;S) \to (\uX;T)$ be an extension of measure-preserving 
$G$-systems. Then for $f\in \Ell{2}(\uX)$ the following assertions
are equivalent.
\begin{enumerate}[(a)]
\item $f \perp_{\Ell{2}} \scrE(\uX|\uY)$.
\item $\E_\uY( (f \tensor \konj{f})  h) = 0$\quad  for each $h\in 
\fix(T \times_\uY T) \cap \Ell{\infty}$.

\item $f\tensor \konj{f} \perp  
\fix(T \times_\uY T) \cap \Ell{\infty}$. 

\item $0 \in \cls{\mathrm{co}} \Bigl\{ T_tf \tensor T_t
  \konj{f} \, \big|\,
  t\in G \Bigr\}$ \quad (in $\Ell{1}$).
\item 
$\displaystyle  \inf_{t\in G} \max_{g\in F} \norm{\E_\uY\bigl((T_tf)
  g\bigr)}_{\Ell{2}} = 0$ \quad for each finite $F\subseteq \Ell{\infty}(\uX)$.  
\end{enumerate}
\end{proposition}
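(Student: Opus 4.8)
The plan is to prove the chain of equivalences \textbf{(a)}$\Leftrightarrow$\textbf{(b)}$\Leftrightarrow$\textbf{(c)}$\Leftrightarrow$\textbf{(d)}$\Leftrightarrow$\textbf{(e)} by working primarily within the associated KH-module $\Ell{2}(\uX|\uY)$ and transferring to $\Ell{2}(\uX)$ via the observations collected in Remark \ref{L2-L2cond}. The conceptual heart is the characterization $E_{\wm} = \{x \mid x \tensor \konj{x} \perp \fix(T \tensor \konj{T})\}$ from \cref{mainthm}, combined with the identification $\Ell{2}(\uX|\uY) \tensor \Ell{2}(\uX|\uY) \cong \Ell{2}(\uX\times_\uY \uX|\uY)$ from \cref{joiningvstensor} (recalling that here $\konj{T} = T$ since the dynamics is real). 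First I would reduce to the module situation: since $f \perp_{\Ell{2}} \scrE(\uX|\uY)$ means $\rho_n f \perp_{\Ell{2}} \scrE(\uX|\uY)$ for all $n$ (using Remark \ref{L2-L2cond}(1),(3)), and $\scrE(\uX|\uY)$ is the $\Ell{2}$-closure of the discrete-spectrum part of $\Ell{2}(\uX|\uY)$, the statement (a) for $f$ is equivalent to the statement that each truncation $\rho_n f$ lies in $\Ell{2}(\uX|\uY)_{\wm}$, and a limiting argument should let me assume $f \in \Ell{2}(\uX|\uY)$ outright for the core equivalences.

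For \textbf{(a)}$\Leftrightarrow$\textbf{(c)}, the key identity is that for $f \in \Ell{2}(\uX|\uY)$ and a $\uY$-suborthonormal set $\calB$, one has $(f\tensor \konj{f} \mid u_\calB)_\uY = \sum_{e\in\calB}\abs{(f\mid e)_\uY}^2$, exactly as in the proof of \cref{mainthm}. Thus $f \perp_\uY E_{\ds}$ iff $f\tensor\konj{f}\perp_\uY u_\calB$ for all relevant $\calB$, which by \cref{keylemma} (equivalently \cref{mainthm}) is the same as $f\tensor\konj{f}\perp_\uY \fix(T\tensor\konj{T})$. Transferring between the $\Ell{\infty}$-valued orthogonality $\perp_\uY$ and the ordinary $\Ell{2}$-orthogonality against $\fix(T\times_\uY T)\cap \Ell{\infty}$ is precisely Remark \ref{L2-L2cond}(3) together with the fact (\cref{fixgenLinfty}) that $\fix(T\times_\uY T)$ is $\Ell{2}$-generated by its $\Ell{\infty}$-elements; this handles \textbf{(b)}$\Leftrightarrow$\textbf{(c)} as the mere unwinding of $(\cdot\mid\cdot)_\uY = \E_\uY(\cdot\,\konj{\cdot})$, and \textbf{(c)}$\Leftrightarrow$\textbf{(a)} via \cref{Kronecker}(4), which describes $\scrE(\uX|\uY)$ through exactly the sets $\spann_{\Ell{\infty}(\uY)}\calB$ with $u_\calB \in \fix(T\times_\uY T)$.

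The equivalence \textbf{(c)}$\Leftrightarrow$\textbf{(d)} is where I expect the main obstacle, since it requires identifying the orthogonal projection of $f\tensor\konj{f}$ onto $\fix(T\times_\uY T)$ with a weak (or $\Ell{1}$) closure of convex combinations of orbit elements $T_t f \tensor T_t\konj{f}$. The natural tool is a mean ergodic theorem: the closed convex hull of the orbit $\{T_t f \tensor T_t\konj{f}\}$ meets $\fix(T\times_\uY T)$, and its unique element of minimal norm is the projection; hence this projection is $0$ iff $0$ lies in the closed convex hull. For a general (non-amenable) group $G$ one cannot invoke Følner averaging, so I would instead argue abstractly that the $\Ell{1}$-closed convex hull of the orbit always contains a fixed point (using that $T\times_\uY T$ acts by Markov isometries and a Ryll-Nardzewski or Alaoglu-type weak-compactness argument in $\Ell{1}$, or by passing to $\Ell{2}(\uX\times_\uY\uX|\uY)$ where the orbit is order-bounded and the KH-module mean ergodic theorem \cref{met} applies to the projection onto $\fix(T\times_\uY T)$). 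The delicate point is matching the $\Ell{1}$-topology in (d) with the Hilbert-space orthogonality in (c); I would bridge this by noting $f\tensor\konj{f} \geq 0$ with $\E_\uY(f\tensor\konj{f}) = \abs{f}_\uY^2$ bounded, so the orbit lives in an order interval and $\Ell{1}$- and $\Ell{2}$-geometry on its convex hull are compatible enough to transfer the vanishing-projection criterion.

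Finally, \textbf{(d)}$\Leftrightarrow$\textbf{(e)} is essentially a reformulation of membership of $0$ in the closed convex hull via testing against finitely many functionals: $0 \in \cls{\mathrm{co}}\{T_tf\tensor T_t\konj{f}\}$ in $\Ell{1}$ iff for every finite family $F \subseteq \Ell{\infty}(\uX)$ and every $\veps$ there is a convex combination whose pairings against the $g \in F$ are small, which—using $\E_\uY$ as the module-valued pairing and the identity $(T_tf\tensor T_t\konj{f} \mid \car\tensor\konj{g})_\uY = \E_\uY((T_tf)\,\konj{(\cdot)})\cdots$—collapses to the infimum/maximum condition in (e) once one checks that the relevant quantity is $\norm{\E_\uY((T_tf)g)}_{\Ell{2}}$. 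I would carry out this last step by a Hahn-Banach separation argument in $\Ell{1}$, identifying the separating functionals with elements $g$ of $\Ell{\infty}(\uX)$, and noting that the infimum over convex combinations reduces to an infimum over single orbit elements by the standard averaging trick (or directly by the convexity of $t \mapsto$ the pairing). The bookkeeping of which inner product ($\Ell{2}(\uX)$ versus $(\cdot\mid\cdot)_\uY$) appears where will be the fiddly part, but no new ideas beyond Remark \ref{L2-L2cond} and \cref{joiningvstensor} are needed.
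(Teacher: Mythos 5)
Your overall architecture is close to the paper's — reduction via Remark~\ref{L2-L2cond}, the orthogonality equivalence via \cref{mainthm} and \cref{joiningvstensor}, a mean ergodic theorem for the convex-hull condition, and \cref{Kronecker}(4)/\cref{fixgenLinfty} for the rest — but there are two genuine gaps. First, your bridge for (c)$\Leftrightarrow$(d) rests on false identities and an unavailable tool. On the relatively independent joining, $f\tensor\konj{f}$ is the function $(x,x')\mapsto f(x)\overline{f(x')}$: it is not nonnegative, and $\E_\uY(f\tensor\konj{f}) = \abs{\E_\uY f}^2$, not $\E_\uY\abs{f}^2$ (what is true is $\abs{f\tensor\konj{f}}_\uY = \E_\uY\abs{f}^2$). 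More seriously, \cref{met} is a mean ergodic theorem for a \emph{single} contraction (Ces\`aro means of its powers); the paper contains no Alaoglu--Birkhoff-type theorem for representations of an arbitrary group on KH-modules (the Notes to Part II explicitly defer such a theory to future work), so that branch of your argument is not available. Your other branch, a weak-compactness/Ryll-Nardzewski argument in $\Ell{1}$, can be made to work, but only after observing that the orbit elements $(T_t\times_\uY T_t)(f\tensor\konj{f})$ are all equidistributed (Markov lattice isomorphisms preserve distributions) and hence form a uniformly integrable, relatively weakly compact set; norm-boundedness in $\Ell{1}$ alone gives nothing, and you do not address this. The paper's route is different and cleaner: for $f\tensor\konj{f}\in\Ell{2}$ it applies the Hilbert-space Alaoglu--Birkhoff theorem in $\Ell{2}(\uX\times_\uY\uX)$, and for general $f$ it uses that the orthogonal projection onto $\fix(T\times_\uY T)$ is a Markov operator, hence extends to $\Ell{1}$, where it is still the mean ergodic projection.

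Second, your opening move — ``a limiting argument should let me assume $f\in\Ell{2}(\uX|\uY)$ outright for the core equivalences'' — is only justified for the conditions that are invariant under multiplication by $\Ell{\infty}(\uY)$ and $\Ell{2}$-closed in $f$, namely (a) and (b); this is exactly why the paper performs the reduction \emph{only} for (a)$\Leftrightarrow$(b). Conditions (c), (d), (e) are scalar statements: for instance, (c) for $f$ does not imply (c) for the truncation $\rho_n f$, since that would amount to testing $f\tensor\konj{f}$ against $\rho_n^2 h$, which is no longer in $\fix(T\times_\uY T)$ because $\rho_n\notin\fix(S)$ in general. Relatedly, your claim that (b)$\Leftrightarrow$(c) is ``mere unwinding'' fails in the direction (c)$\Rightarrow$(b): $\fix(T\times_\uY T)\cap\Ell{\infty}$ is a module over $\fix(S)\cap\Ell{\infty}(\uY)$ only, not over $\Ell{\infty}(\uY)$, so Remark~\ref{L2-L2cond}(3) cannot upgrade the scalar orthogonality (c) to the conditional statement (b). The paper sidesteps every one of these converse difficulties by proving a \emph{cycle} of one-directional implications (a)$\Leftrightarrow$(b)$\Rightarrow$(c)$\Rightarrow$(d)$\Rightarrow$(e)$\Rightarrow$(a), each step of which is direct ((b)$\Rightarrow$(c) by integrating; (e)$\Rightarrow$(a) by an invariance computation on the $u_\calB$ and \cref{Kronecker}(4)); your plan of establishing each pairwise equivalence bidirectionally forces you into precisely the hard directions — (c)$\Rightarrow$(b), (e)$\Rightarrow$(d) via Hahn--Banach, even (a)$\Rightarrow$(c), which needs $\Ell{2}$-continuity of the pairing and hence the very reduction that fails for (c) — that the cyclic arrangement renders unnecessary.
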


\begin{proof}
The conditions (a) and  (b) are invariant under multiplication with
$\Ell{\infty}(\uY)$ and $\Ell{2}(\uX)$-closed.  Hence, for the proof of
  \enquote{(a)$\Leftrightarrow$(b)}   we may suppose
that  $f\in \Ell{2}(\uX|\uY)$ (cf.\ Remark \ref{L2-L2cond}). 
In  this case---still  by Remark \ref{L2-L2cond}---(a) is  equivalent to
\begin{enumerate}
\item[(a')] $f \perp_{\uY} \Ell{2}(\uX|\uY)_{\ds}$;
\end{enumerate}
and  (b) is equivalent to
\begin{enumerate}
\item[(b')] $f \tensor \konj{f} \perp_{\uY}  \fix(T\times_{\uY} T)\cap
  \Ell{2}(\uX\times_\uY \uX|\uY)$;
\end{enumerate}
(apply  Lemma \ref{ordervsl2}).  
Since (a') and (b') are equivalent by Theorem \ref{mainthm}, we have
established the equivalence (a)$\gdw$(b).

\smallskip
\noi
Clearly, (b) implies (c). To prove that (c) implies (d) we first suppose that
$f\tensor \konj{f} \in \Ell{2}$. Then (c)  is equivalent to 
$P(f \tensor \konj{f}) = 0$, where $P$ is the
orthogonal projection in $\Ell{2}(\uX \times_\uY \uX)$ onto 
$\fix(T \times_\uY T)$. By the mean ergodic theorem
for contraction semigroups on Hilbert spaces (Birkhoff--Alaoglu theorem 
\cite[Thm.~8.32]{EFHN2015})  
\[ P(f \tensor \konj{f}) = 0 \quad \gdw\quad 
0 \in \cls{\mathrm{co}} \Bigl\{T_tf \tensor T_t
  \konj{f} \, \big|\,
  t\in G \Bigr\}.
\]
This shows that (c) implies (d) if $f\tensor \konj{f} \in \Ell{2}$. 

To see that this still holds if  $f \tensor \konj{f} \in \Ell{1}$ one needs to realize
that $P$ is a Markov operator and hence extends continuously to $\Ell{1}$, and this
extension---again denoted by $P$---is the mean ergodic projection (in the sense of
\cite[Def.~8.31]{EFHN2015}
of the group $(T_t \times_\uY T_t)_{t\in G}$ acting on $\Ell{1}$. From (c) then follows that  $P(f\tensor
\konj{f}) \perp \fix(T\times_\uY T) \cap \Ell{\infty}$, and since $P(f\tensor
\konj{f}) \in \fix(T\times_\uY T)$ it follows that $P(f\tensor
\konj{f}) = 0$, i.e., (d). 

\smallskip
\noi
Next, observe that  $\dps \abs{ \E_\uY\bigl((T_tf) g\bigr)}^2 = 
    \E_\uY\Bigl(    T_t(f \tensor \konj{f}) \cdot (g
    \tensor \konj{g}) \Bigr)$ 
for $g\in \Ell{\infty}(\uX)$. Hence, (d) implies 
\[ 0 \in \cls{\mathrm{co}}\bigl\{  \sum_{g\in F} \norm{\E_\uY\bigl((T_tf)
  g\bigr)}_{\Ell{2}}^2 \, \big|\,  t\in G \Bigr\}
\]
for each finite set $F\subseteq \Ell{\infty}(\uX)$,
and this is equivalent to (e).

\smallskip
\noi
Finally, suppose that (e) holds. 
Let $\calB \subseteq \Ell{\infty}(\uX)$ be
finite such that $u_\calB = \sum_{g\in \calB} g \tensor \konj{g} \in \fix(T\times_\uY T)$.
Then for each $t\in G$
\begin{align*}
\sum_{g \in \calB}&\abs{\E_\uY((T_tf)g)}^2 = \sum_{g\in \calB} \E_\uY \bigl(
  T_t(f\tensor \konj{f}) \cdot (g \tensor \konj{g})\bigr) 
= \E_\uY\bigl( T_t(f\tensor \konj{f})\cdot u_\calB\bigr)
\\ & = \E_\uY\bigl( T_t(f\tensor \konj{f})\cdot T_t u_\calB\bigr)
= S_t \sum_{g \in \calB}\abs{\E_\uY(fg)}^2.
\end{align*}
Integrating yields
\[  \sum_{g \in \calB} \norm{\E_\uY((T_tf)g)}_{\Ell{2}}^2 = 
  \sum_{g \in \calB}\norm{\E_\uY(fg)}_{\Ell{2}}^2
\]
for each $t\in G$. Hence, (e) implies  $f \perp_\uY \konj{g}$ for all $g\in
\calB$. Replacing
$\calB$ by $\calB'\coloneq \{ \konj{g}\,|\, g\in \calB\}$ and applying the
characterization of $\scrE(\uX|\uY)$ from \cref{Kronecker}(4) yields (a).
\end{proof}

For amenable groups one can add to (a)--(e) another equivalent
statement  in terms of an 
aymptotic condition on ergodic nets. Recall that a group $G$ is {\emdf (discretely) amenable} if 
$G$ has a {\emdf (left) F{\o}lner net} $(N_\alpha)_{\alpha }$, i.e., $N_\alpha \subset G$ is a finite 
subset of $G$ for every $\alpha$ such that
  \begin{align*}
    \lim_\alpha \frac{|tN_\alpha \Delta N_\alpha|}{|N_\alpha|} = 0 \textrm{ for every } t \in G.
  \end{align*}

Every abelian group is amenable. In case of $G = \Z$, we obtain a
F{\o}lner net $(N_k)_{k \in \N}$ by setting $N_k \coloneqq \{0,\dots,k-1\}$ for
$k \in \N$. Given a representation of $G$ on a Hilbert space, any F{\o}lner net
$(N_\alpha)_{\alpha}$ defines an ergodic net converging to the mean ergodic
projection (cf.\ \cite[Thm.~1.7]{Schr2013}). This leads to the following
extension of \cref{Kronecker-orth}.

\begin{proposition}\label{Kronecker-orth-amen}
Let $G$ be a group with F{\o}lner net $(N_\alpha)_{\alpha}$, and let
$J\colon (\uY;S) \to (\uX;T)$ be an extension of measure-preserving 
$G$-systems. Then for $f\in \Ell{2}(\uX)$ the following assertions
are equivalent.
\begin{enumerate}[(a)]
\item $f \perp_{\Ell{2}} \scrE(\uX|\uY)$.
\item[(f)] $\displaystyle \lim_{\alpha} \frac{1}{\abs{N_\alpha}} \sum_{t\in
    N_\alpha}  |\E_\uY\bigl((T_tf)
  g\bigr)|^2 = 0$ \, in $\mathrm{L}^1(\uY)$ for each $g\in \Ell{\infty}(\uX)$.
\item[(g)] $\displaystyle \lim_{\alpha} \frac{1}{\abs{N_\alpha}} \sum_{t\in
    N_\alpha}  |\E_\uY\bigl((T_tf)
  \konj{f}\bigr)| = 0$ \, in $\mathrm{L}^1(\uY)$. 
\end{enumerate}
If $f \in \mathrm{L}^2(\uX|\uY)$, then one can add the following assertion to these equivalences.
\begin{enumerate}[(h)]
\item[(h)] $\displaystyle \lim_{\alpha} \frac{1}{\abs{N_\alpha}} \sum_{t\in
    N_\alpha}  |\E_\uY\bigl((T_tf)
  \konj{f}\bigr)|^2 = 0$ \, in $\mathrm{L}^1(\uY)$.  
\end{enumerate}
\end{proposition}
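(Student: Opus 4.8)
The plan is to establish Proposition \ref{Kronecker-orth-amen} by leveraging Proposition \ref{Kronecker-orth}, whose equivalence (a)$\gdw$(e) gives an infimum characterization of orthogonality to $\scrE(\uX|\uY)$. The conceptual core is that on an amenable group a F{\o}lner net induces an ergodic net whose Cesàro averages converge to the mean ergodic projection (cf.\ \cite[Thm.~1.7]{Schr2013}), so the \emph{asymptotic} averaging conditions (f), (g), (h) should each be equivalent to the \emph{vanishing} condition (d)/(e) of the previous proposition. First I would note that all the expressions appearing in (f)--(h) are nonnegative, which is what makes averaging behave well: a Cesàro average of nonnegative $\Ell{1}$-functions tends to $0$ precisely when the mean ergodic projection of the averaged quantity vanishes.

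The key computational identity I would exploit is the one already used in the proof of \cref{Kronecker-orth}, namely
\[ \abs{\E_\uY\bigl((T_tf)g\bigr)}^2 = \E_\uY\Bigl( T_t(f\tensor \konj{f})\cdot (g\tensor\konj{g})\Bigr) \qquad (g\in\Ell{\infty}(\uX)), \]
and similarly $\abs{\E_\uY\bigl((T_tf)\konj{f}\bigr)} = \E_\uY\bigl(\abs{T_t f}\cdot\abs{f}\bigr)$-type expressions for (g). The strategy for (a)$\dann$(f) is: apply the mean ergodic theorem for the Koopman group $(T_t\times_\uY T_t)_t$ on $\Ell{1}(\uX\times_\uY\uX)$ (as in the proof of (c)$\dann$(d)), using that the F{\o}lner averages $\frac{1}{\abs{N_\alpha}}\sum_{t\in N_\alpha}$ form an ergodic net converging to the mean ergodic projection $P$. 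Since (a) forces $P(f\tensor\konj{f})=0$, applying $\E_\uY(\,\cdot\,(g\tensor\konj{g}))$ and pulling the (order-continuous, positive) conditional expectation through the limit yields (f). The implication (f)$\dann$(g) is immediate by taking $g=f$ when $f\in\Ell{\infty}$, and in general by an approximation argument via the truncations $\rho_n f$ of Remark \ref{L2-L2cond}(1); conversely (g)$\dann$(a) follows by testing against the generating elements $u_\calB$ with $\calB\subseteq\Ell{\infty}(\uX)$ and $u_\calB\in\fix(T\times_\uY T)$, exactly as in the final paragraph of the proof of \cref{Kronecker-orth}, using \cref{Kronecker}(4). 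For the supplementary equivalence of (h) under $f\in\Ell{2}(\uX|\uY)$, I would argue that when $f$ lies in the conditional $\Ell{2}$-space the quantities $\abs{\E_\uY((T_tf)\konj{f})}$ are dominated in $\Ell{\infty}(\uY)$ by $\abs{f}_\uY^2\in\Ell{\infty}(\uY)$, so the $\Ell{1}$-convergence of the first moments (g) and of the second moments (h) are equivalent by a uniform-integrability / dominated-convergence comparison.

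The main obstacle I anticipate is the interchange of the F{\o}lner limit with the conditional expectation $\E_\uY$ and with the pointwise modulus and squaring operations, all of which must be justified in the correct topology. The cleanest route is to work with the mean ergodic projection on $\Ell{1}(\uX\times_\uY\uX)$ and exploit that $\E_\uY\colon\Ell{1}(\uX\times_\uY\uX)\to\Ell{1}(\uY)$ is a contraction commuting with the dynamics, so that $\Ell{1}$-convergence of the ergodic averages of $f\tensor\konj{f}$ transfers to $\Ell{1}(\uY)$-convergence of their conditional expectations. For the difference between (g) (first absolute moments) and the quadratic forms of the previous proposition, I would use the Cauchy--Schwarz inequality for $\E_\uY$ together with positivity of the averaged functions to pass between $\abs{\,\cdot\,}$ and $\abs{\,\cdot\,}^2$; this is the step where the amenability (nonnegativity plus Cesàro averaging) is genuinely needed and where care with the precise mode of convergence is essential.
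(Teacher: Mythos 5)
Your plan for the implications flowing out of (a) matches the paper: the identity $\abs{\E_\uY((T_tf)g)}^2 = \E_\uY\bigl(T_t(f\tensor\konj{f})\cdot(g\tensor\konj{g})\bigr)$, the F{\o}lner averages forming an ergodic net converging to the mean ergodic projection $P$ of $(T_t\times_\uY T_t)_{t\in G}$, and the Jensen-type comparison between averaged first and second moments are exactly the ingredients used there. The genuine gap is in your closing implication (g)$\Rightarrow$(a). You propose to run it ``by testing against the generating elements $u_\calB$, exactly as in the final paragraph of the proof of \cref{Kronecker-orth}''. That argument requires, for each finite $\calB\subseteq\Ell{\infty}(\uX)$ with $u_\calB\in\fix(T\times_\uY T)$, control of the correlations $\E_\uY\bigl((T_tf)g\bigr)$ for \emph{every} $g\in\calB$; hypothesis (g) controls only the self-correlation $\E_\uY\bigl((T_tf)\konj{f}\bigr)$ and says nothing about $\E_\uY\bigl((T_tf)g\bigr)$ for other $g$. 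The mechanism that actually closes the loop is positive definiteness of $P$: if $f\tensor\konj{f}\in\Ell{2}(\uX\times_\uY\uX)$ (equivalently, essentially, $f\in\Ell{2}(\uX|\uY)$), then
\[
\bigl(P(f\tensor\konj{f})\,\big|\,f\tensor\konj{f}\bigr)
= \lim_\alpha \frac{1}{\abs{N_\alpha}}\sum_{t\in N_\alpha}\int_\uY \bigabs{\E_\uY\bigl((T_tf)\konj{f}\bigr)}^2 ,
\]
and since $P$ is an orthogonal projection the left-hand side equals $\norm{P(f\tensor\konj{f})}^2$. So it is the \emph{squared} self-correlation averages of (h) that force $P(f\tensor\konj{f})=0$, after which \cref{Kronecker-orth} yields (a); neither (g) directly nor the $u_\calB$-invariance computation can do this.

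This also shows that (h) is not a mere supplement: it is the vehicle for proving (g)$\Rightarrow$(a). For general $f\in\Ell{2}(\uX)$ one has $f\tensor\konj{f}\in\Ell{1}$ only, so the quadratic-form identity is unavailable; the paper therefore passes to the truncations $f_m = \car_{[\E_\uY\abs{f}^2\le m^2]}f\in\Ell{2}(\uX|\uY)$. Because the cutoff is $\uY$-measurable, $\abs{\E_\uY((T_tf_m)\konj{f_m})}\le\abs{\E_\uY((T_tf)\konj{f})}$, so (g) is inherited by each $f_m$; the bound $\abs{\E_\uY((T_tf_m)\konj{f_m})}\le\norm{\E_\uY\abs{f_m}^2}_{\Ell{\infty}}\car$ upgrades (g) to (h) for $f_m$; the display above then gives $P(f_m\tensor\konj{f_m})=0$, hence $f_m\perp_{\Ell{2}}\scrE(\uX|\uY)$, and letting $m\to\infty$ gives (a). A second, smaller repair: in (f)$\Rightarrow$(g) your truncations $\rho_nf$ from Remark \ref{L2-L2cond}(1) are cut off by $\uY$-measurable sets and are therefore in general \emph{not} elements of $\Ell{\infty}(\uX)$, so they are inadmissible as test functions $g$ in (f). You need the $\uX$-level truncations $f_m=\car_{[\abs{f}\le m]}f\in\Ell{\infty}(\uX)$, with the error term controlled uniformly in $t$ via Cauchy--Schwarz, $\norm{\E_\uY(\abs{T_tf}\,\abs{f-f_m})}_{\Ell{1}}\le\norm{f}_{\Ell{2}}\norm{f-f_m}_{\Ell{2}}$, which is exactly how the paper argues.
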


\begin{proof}
Note first that if $b \colon G \rightarrow \mathrm{L}^2(\uY)_+$ is any function, we have
	\begin{align*}
		\frac{1}{\abs{N_\alpha}}\sum_{t \in N_\alpha} b(t) \leq \left(\frac{1}{\abs{N_\alpha}}\sum_{t \in N_\alpha} b(t)^2\right)^{\frac{1}{2}} \quad \textrm{ in } \mathrm{L}^2(\uY) \textrm{ for every } \alpha, 
	\end{align*}
	and hence the implication
		\begin{align*}
			 \lim_\alpha \frac{1}{\abs{N_\alpha}}\sum_{t \in N_\alpha} b(t)^2 = 0 \textrm{ in } \mathrm{L}^1(\uY) \quad \Rightarrow \quad \lim_\alpha \frac{1}{\abs{N_\alpha}}\sum_{t \in N_\alpha} b(t) = 0  \textrm{ in } \mathrm{L}^2(\uY).
		\end{align*}

\smallskip
\noi
Now by \cref{Kronecker-orth}, (a) is equivalent to $P(f \tensor \konj{f}) = 0$,
i.e.,
\begin{enumerate}[(c')]
\item $\displaystyle \lim_{\alpha} \frac{1}{\abs{N_\alpha}} \sum_{t\in
    N_\alpha} (T_tf \tensor \konj{T_tf}) =0$.
\end{enumerate}
Multiplying with $g\tensor \konj{g}$ and taking $\E_\uY$ we obtain (f).

\smallskip
\noi
We show that (f) implies (g). By the preliminary remark, (f) yields in particular 
		\begin{align*}
			\lim_{\alpha} \frac{1}{\abs{N_\alpha}} \sum_{t\in
    N_\alpha}  \abs{\E_\uY((T_tf)g)} =0 \quad
  \textrm{ in } \Ell{1}(\uY)
  \quad \textrm{ for each } g\in \Ell{\infty}(\uX).
		\end{align*}
	With $f_m \coloneqq \car_{|f| \leq m}f \in \mathrm{L}^\infty(\uX)$ for $m \in \N$ we obtain
		\begin{align*}
			\frac{1}{\abs{N_\alpha}}\sum_{t \in N_\alpha} |\E_{\uY}((T_tf)g)| \leq \frac{1}{\abs{N_\alpha}}\sum_{t \in N_\alpha} |\E_{\uY}((T_tf)f_m)| +  \frac{1}{\abs{N_\alpha}}\sum_{t \in N_\alpha} \E_{\uY}(|T_tf||f-f_m|)
		\end{align*}
	and consequently
		\begin{align*}
			 \biggl\|\frac{1}{\abs{N_\alpha}}\sum_{t \in N_\alpha} |\E_{\uY}((T_tf)g)|\biggr\|_{\mathrm{L}^1} &\leq \biggl\|\frac{1}{\abs{N_\alpha}}\sum_{t \in N_\alpha} |\E_{\uY}((T_tf)f_m)|\biggr\|_{\mathrm{L}^1} +  \frac{1}{\abs{N_\alpha}}\sum_{t \in N_\alpha} \|f\|_{\mathrm{L}^2} \|f-f_m\|_{\mathrm{L}^2} \\
			&= \biggl\|\frac{1}{\abs{N_\alpha}}\sum_{t \in N_\alpha} |\E_{\uY}((T_tf)f_m)|\biggr\|_{\mathrm{L}^1} + \|f-f_m\|_{\mathrm{L}^2}
		\end{align*}
	for every $\alpha$. This shows (g).

	\smallskip
	\noi
	We next show that (g) $\Rightarrow$ (h) if $f \in \mathrm{L}^2(\uX|\uY)$. In this case, we have 
		\begin{align*}
			|\E_{\uY}((T_tf) \overline{f})| \leq (\E_{\uY}|T_tf|^2)^{\frac{1}{2}} (\E_{\uY}|f|^2)^{\frac{1}{2}} \textrm{ for every } t \textrm{ in } G,
		\end{align*} 
	and therefore $|\E_{\uY}((T_tf) \overline{f})| \leq \|\E_{\uY}|f|^2\|_{\mathrm{L}^\infty(\uY)}\cdot \car$ for every $t \in G$. Consequently,
		\begin{align*}
			\frac{1}{\abs{N_\alpha}} \sum_{t\in
    N_\alpha}  |\E_\uY\bigl((T_tf)
  \konj{f}\bigr)|^2  \leq  \|\E_{\uY}|f|^2\|_{\mathrm{L}^\infty(\uY)} \cdot \left(\frac{1}{\abs{N_\alpha}} \sum_{t\in
    N_\alpha}  |\E_\uY\bigl((T_tf)
  \konj{f}\bigr)|\right)
		\end{align*}
	in $\mathrm{L}^1(\uY)$ for every $\alpha$, and this proves the claimed implication.

	\smallskip
	\noi
	Again by the preliminary remark we obtain that (h) implies (g) for $f \in \mathrm{L}^2(\uX|\uY)$. To finish the proof, we show that (g) implies (a) for abritrary $f \in \mathrm{L}^2(\uX)$.  For $m \in \N$ we set $f_m \coloneqq \car_{\E_{\uY}|f|^2 \leq m^2}f \in \mathrm{L}^2(\uX|\uY)$. Then
		\begin{align*}
			\frac{1}{\abs{N_\alpha}} \sum_{t\in
    N_\alpha}  |\E_\uY\bigl((T_tf_m)
  \konj{f_m}\bigr)| \leq \frac{1}{\abs{N_\alpha}} \sum_{t\in
    N_\alpha}  |\E_\uY\bigl((T_tf)
  \konj{f}\bigr)|
		\end{align*}
	in $\mathrm{L}^1(\uY)$ for every $\alpha$, and hence 
		\begin{align*}
			\lim_\alpha \frac{1}{\abs{N_\alpha}} \sum_{t\in
    N_\alpha}  |\E_\uY\bigl((T_tf_m)
  \konj{f_m}\bigr)|^2 = 0\, \textrm{ in } \mathrm{L}^1(\uY)
		\end{align*}
	for every $m \in \N$. But this means
		\begin{align*}
			(P(f_m \otimes \konj{f_m})|f_m \otimes \konj{f_m}) = \lim_\alpha \biggl\|\frac{1}{\abs{N_\alpha}} \sum_{t\in N_\alpha}  |\E_{\uY}\bigl((T_tf_m)
  \konj{f_m})|^2\biggr\|_{\mathrm{L}^1(\uY)} = 0
		\end{align*}
	and thus $P(f_m \otimes f_m) = 0$ for each $m \in \N$. We obtain from \cref{Kronecker-orth} that $f_m \perp_{\mathrm{L}^2} \scrE(\uX|\uY)$ for all $m \in \N$, and then also $f  \perp_{\mathrm{L}^2} \scrE(\uX|\uY)$, hence (a).
\end{proof}

\medskip

\subsection{Ergodic and Weakly Mixing Extensions}

A measure-preserving system $(\uX;T)$ is {\emdf ergodic} if the fixed space
$\fix(T)$ is one-dimensional. More generally, 
an extension $J \colon (\uY;S) \to (\uX;T)$ of measure-preserving systems is
{\emdf ergodic} if $J(\fix(S)) = \fix(T)$
(cf.\ \cite[Definition 6.1]{Furstenberg1981}).

\begin{examples}
\begin{enumerate}[(1)]
\item 
\label{ergodicsys}
  Let $(\uX;T)$ be a measure-preserving system. Then the trivial extension $J \colon (\{\mathrm{pt}\};\mathrm{Id}) \to (\uX;T)$ 
is ergodic if and only if $(\uX;T)$ is ergodic.

\item \label{ergodicsysimpliesext}
Suppose that $(\uX;T)$ is an ergodic system. Then each extension
$J \colon (\uY;S) \to (\uX;T)$  is ergodic. 

\item   Consider the skew torus $(\uX;T)$ and the extension $J \colon (\uY;S) \to (\uX;T)$ from \cref{skewtorus}. Then $(\uX;T)$ is 
ergodic if and only if $a$ is not a root of unity (see \cite[Prop.~10.17]{EFHN2015}).   However, the extension $J$ is always 
ergodic, as shown (in particular) in \cite[Example 17.4.4]{EFHN2015}. 
\end{enumerate}
\end{examples}

\begin{definition}
An extension $J \colon (\uY;S) \to (\uX;T)$ is {\emdf weakly mixing}
if the 
extension $(\uY;S) \to (\uX \times_{\uY} \uX; T \times_{\uY} T)$ is ergodic.
\end{definition}

\begin{proposition}\label{charweak}
    For an extension $J \colon (\uY;S) \to (\uX;T)$ of measure-preserving systems the following assertions are equivalent.
      \begin{enumerate}[(a)]
        \item The extension $J$ is weakly mixing.

\item $\scrE(\uX|\uY) = \Ell{2}(\uY)$.
        
\item $P_{\fix(T \times_\uY T)}(f \tensor \konj{f}) = 0$   
for all $f \in \uL^2(\uX)$ with $\E_{\uY}f = 0$. 
\end{enumerate}
(Here, $P_{\fix(T \times_\uY T)}$ is the Markov projection onto the
fixed space in $\Ell{1}$.)

\smallskip
\noi
If $G$ is  amenable and $(N_\alpha)_\alpha$ is any F\o{}lner net in
$G$, {\rm (a)--(c)}  are equivalent to the following assertions.
\begin{enumerate}
\item[(d)] For all $f\in \Ell{2}(\uX)$ and $h \in \uL^\infty(\uX)$ one has
\begin{align*}
\lim_{\alpha}  \frac{1}{|N_\alpha|} \sum_{t \in N_\alpha} |\E_{\uY}(T_tf \cdot h) - (S_t\E_{\uY}f) \cdot (\E_{\uY}h)|^2 = 0
          \qquad \text{in $\uL^1(\uY)$}.
         \end{align*}
\item[(e)] For all $f \in \uL^2(\uX)$ with $\E_{\uY}f = 0$ and 
$h \in \Ell{\infty}(\uX)$ one has
\begin{align*}
\lim_\alpha \frac{1}{|N_\alpha|} \sum_{t \in N_\alpha} |\E_{\uY}(T_tf \cdot
  h)|^2 = 0 \qquad \text{in  $\uL^1(\uY)$}.
\end{align*}
  \item[(f)] For all $f\in \Ell{2}(\uX)$ one has
		\begin{align*}
			\lim_{\alpha}  \frac{1}{|N_\alpha|} \sum_{t \in N_\alpha} |\E_{\uY}(T_tf \cdot \overline{f}) - (S_t\E_{\uY}f) \cdot (\E_{\uY}\overline{f})| = 0
          \qquad \text{in $\uL^1(\uY)$}.
		\end{align*}
   \item[(g)] For all $f \in \uL^2(\uX)$ with $\E_{\uY}f = 0$ one has
\begin{align*}
\lim_\alpha \frac{1}{|N_\alpha|} \sum_{t \in N_\alpha} |\E_{\uY}(T_tf \cdot
  f)| = 0 \qquad \text{in  $\uL^1(\uY)$}.
\end{align*}
\end{enumerate}
\end{proposition}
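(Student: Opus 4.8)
The plan is to establish the unconditional equivalences (a)$\iff$(b)$\iff$(c) by a three-step cycle, and then to treat (d)--(g) as Følner-averaged reformulations valid once $G$ is amenable. The cleanest starting point is (b)$\iff$(c). Since the trivial module $\Ell{\infty}(\uY)\car$ is finitely-generated and $T$-invariant, one always has $\Ell{2}(\uY)\subseteq\scrE(\uX|\uY)$ (cf.\ \cref{Kronecker}). Hence (b) is equivalent to $\scrE(\uX|\uY)^\perp\supseteq\Ell{2}(\uY)^\perp$, and as $\Ell{2}(\uY)^\perp=\{f\in\Ell{2}(\uX)\mid\E_\uY f=0\}$ this says precisely that every mean-zero $f$ is $\perp_{\Ell{2}}\scrE(\uX|\uY)$. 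By \cref{Kronecker-orth} (the equivalence of its conditions (a) and (c)/(d), which in its proof amounts to $f\perp\scrE(\uX|\uY)\iff P_{\fix(T\times_\uY T)}(f\tensor\konj{f})=0$), this is exactly (c). So (b)$\iff$(c) is immediate from the orthogonal-complement description already proved.

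Next I would prove (a)$\Rightarrow$(c). Unwinding the definitions, weak mixing means the extension $(\uY;S)\to(\uX\times_\uY\uX;T\times_\uY T)$ is ergodic, i.e.\ $\fix(T\times_\uY T)$ equals the embedded copy of $\fix(S)$; in particular it lies in the embedded $\Ell{2}(\uY)$, and since the $\Ell{1}$-fixed space is the $\Ell{1}$-closure of its bounded part it lies in the embedded $\Ell{1}(\uY)$. Now fix $f$ with $\E_\uY f=0$ and put $w\coloneq P_{\fix(T\times_\uY T)}(f\tensor\konj{f})\in\Ell{1}(\uY)$. The base conditional expectation $\E_\uY\colon\Ell{1}(\uX\times_\uY\uX)\to\Ell{1}(\uY)$ intertwines $T\times_\uY T$ with $S$ and therefore commutes with the respective mean ergodic projections, so $\E_\uY w=P_{\fix(S)}\E_\uY(f\tensor\konj{f})=P_{\fix(S)}(\E_\uY f\cdot\E_\uY\konj{f})=0$. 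Since $w$ already sits in the embedded $\Ell{1}(\uY)$, on which $\E_\uY$ is the identity, we get $w=0$, which is (c).

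To close the cycle I would prove (b)$\Rightarrow$(a). By \cref{fixgenLinfty}, $\fix(T\times_\uY T)$ is the $\Ell{2}$-closed span of the vectors $u_\calB$ with $\calB\subseteq\Ell{\infty}(\uX)$ finite and $u_\calB\in\fix(T\times_\uY T)$. For any such $\calB$ the module $\spann_{\Ell{\infty}(\uY)}\calB$ is $T$-invariant, hence by \cref{Kronecker}(4) contained in $\scrE(\uX|\uY)=\Ell{2}(\uY)$; thus every $g\in\calB$ is a bounded $\uY$-function and $g\tensor\konj{g}=|g|^2\in\Ell{\infty}(\uY)$, so each generator $u_\calB\in\Ell{2}(\uY)$. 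Therefore $\fix(T\times_\uY T)\subseteq\Ell{2}(\uY)$, which (as the restriction of $T\times_\uY T$ to $\Ell{2}(\uY)$ is $S$) means $\fix(T\times_\uY T)$ is the embedded $\fix(S)$, i.e.\ the extension is weakly mixing. For the amenable block I would abbreviate by (c$'$) the Følner form $\lim_\alpha|N_\alpha|^{-1}\sum_{t\in N_\alpha}T_tf\tensor\konj{T_tf}=0$ in $\Ell{1}$, which by the amenable mean ergodic theorem \cite[Thm.~1.7]{Schr2013} is just (c) for mean-zero $f$. Multiplying by $h\tensor\konj{h}$, applying $\E_\uY$, and using the pointwise identity $|\E_\uY(T_tf\cdot h)|^2=\E_\uY\bigl(T_t(f\tensor\konj{f})\,(h\tensor\konj{h})\bigr)$ converts (c$'$) into (e); conversely, integrating (e) over $\uY$ yields $(P_{\fix}(f\tensor\konj{f})\mid h\tensor\konj{h})=0$ for all $h\in\Ell{\infty}(\uX)$, and since the $u_\calB$ span $\fix(T\times_\uY T)$ densely (\cref{fixgenLinfty}) this forces $P_{\fix}(f\tensor\konj{f})=0$, recovering (c). The equivalence (d)$\iff$(e) follows from the splitting $f=J\E_\uY f+f_0$ with $\E_\uY f_0=0$ via $\E_\uY(T_tf\cdot h)-(S_t\E_\uY f)\E_\uY h=\E_\uY(T_tf_0\cdot h)$, and (f), (g), (h) are the linear/$\Ell{1}$ and the $\Ell{2}(\uX|\uY)$ variants deduced from (e) by the same Cauchy--Schwarz bounds and truncation arguments as in the proof of \cref{Kronecker-orth-amen}.

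The main obstacle will be the bookkeeping in (a)$\Rightarrow$(c): correctly verifying that $P_{\fix}(f\tensor\konj{f})$ lands in the embedded base space $\Ell{1}(\uY)$ and is then killed by the commuting conditional expectation. The second delicate point is the ``detection'' step of the amenable part, where the vanishing of the \emph{vector} $P_{\fix}(f\tensor\konj{f})$ must be recovered from the tested \emph{scalar} quantities; this is exactly where \cref{fixgenLinfty} is indispensable, as it supplies the $\Ell{\infty}$-generators $u_\calB$ of the fixed space against which testing suffices.
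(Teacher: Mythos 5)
Your unconditional block (a)$\iff$(b)$\iff$(c) is essentially sound. The steps (b)$\iff$(c) via \cref{Kronecker-orth} and (b)$\Rightarrow$(a) via the $\Ell{\infty}$-generators $u_\calB$ are close to the paper's own cycle (the paper runs (a)$\Rightarrow$(c)$\Rightarrow$(b)$\Rightarrow$(a), quoting \cref{Kronecker-orth} for (c)$\Rightarrow$(b) and the suborthonormal-system characterization of \cref{mainthm} for (b)$\Rightarrow$(a), where you use \cref{fixgenLinfty} instead). Your (a)$\Rightarrow$(c) is a genuinely different, valid argument: the paper simply tests $f\tensor\konj{f}$ against bounded fixed functions $h$, which ergodicity places in $\Ell{\infty}(\uY)$, and computes $\int_{\uX\times_\uY\uX} h\,\E_\uY(f\tensor\konj{f})=\int h\,\abs{\E_\uY f}^2=0$; you instead pass through the commutation $\E_\uY\,P_{\fix(T\times_\uY T)}=P_{\fix(S)}\,\E_\uY$ together with the truncation fact that the $\Ell{1}$-fixed space of a group of Markov lattice isomorphisms is the $\Ell{1}$-closure of its bounded part. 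Both work; the paper's is shorter, yours makes the structural mechanism explicit. (Two minor slips: \cref{Kronecker}(4) needs no $T$-invariance of $\spann_{\Ell{\infty}(\uY)}\calB$, only the hypothesis $u_\calB\in\fix(T\times_\uY T)$; and there is no assertion (h) in the proposition you are proving.)

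The genuine gap is in the amenable part, in your ``detection'' step (e)$\Rightarrow$(c). Integrating (e) does give $\int_{\uX\times_\uY\uX} P_{\fix(T\times_\uY T)}(f\tensor\konj{f})\cdot(h\tensor\konj{h})=0$ for all $h\in\Ell{\infty}(\uX)$, but your conclusion that this ``forces $P_{\fix}(f\tensor\konj{f})=0$ since the $u_\calB$ span $\fix(T\times_\uY T)$ densely (\cref{fixgenLinfty})'' does not follow: for mean-zero $f\in\Ell{2}(\uX)$ the element $P_{\fix(T\times_\uY T)}(f\tensor\konj{f})$ lies in general only in $\Ell{1}(\uX\times_\uY\uX)$, while \cref{fixgenLinfty} gives density of $\spann\{u_\calB\}$ in the fixed space only in $\Ell{2}$-norm. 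Orthogonality of an $\Ell{1}$-function against a family is not preserved under $\Ell{2}$-limits of that family (the $\Ell{1}$--$\Ell{\infty}$ pairing is not $\Ell{2}$-continuous, and the approximants need not be uniformly bounded), so the vanishing of the vector cannot be recovered this way. The paper sidesteps exactly this: it notes (d)$\iff$(e) and (f)$\iff$(g) by replacing $f$ with $f-\E_\uY f$, and then quotes \cref{Kronecker-orth-amen}, quantified over mean-zero $f$, to get (e)$\iff$(b) and (g)$\iff$(b); the required truncation and Cauchy--Schwarz work (with $f_m=\car_{[\E_\uY\abs{f}^2\le m^2]}f$, etc.) is done there once and for all. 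If you want to keep your direct route, the step can be repaired: polarize to get $\int P_{\fix}(f\tensor\konj{f})\cdot(h_1\tensor h_2)=0$ for all $h_1,h_2\in\Ell{\infty}(\uX)$, and then use that $\spann\{h_1\tensor h_2\}$ is a unital $*$-subalgebra which, by the definition of a coupling, generates the measure algebra of $\uX\times_\uY\uX$; a monotone-class/dominated-convergence argument against the fixed $\Ell{1}$-function then yields vanishing against all of $\Ell{\infty}(\uX\times_\uY\uX)$, hence $P_{\fix}(f\tensor\konj{f})=0$.
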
   

\begin{proof}
(a)$\dann$(c):\ Suppose (a) and let $f\in \Ell{2}(\uX)$ with $\E_\uY f
= 0$. Then, for  $h\in \fix(T\times_\uY T) \cap \Ell{\infty}$ one has
$h\in \Ell{\infty}(\uY)$ and thus
\[ (f\tensor \konj{f}|\konj{h})_{\Ell{2}} 
= \int_{\uX\times_\uY \uX}  h \, \E_\uY (f\tensor \konj{f}) =  \int_{\uX\times_\uY
  \uX} 
h \,\abs{\E_\uY f}^2 = 0.
\]
This yields (c).

\smallskip
\noi
(c)$\dann$(b):\  This follows directly from the implication (c)$\dann$(a) from
Proposition \ref{Kronecker-orth}.

\smallskip
\noi
(b)$\dann$(a):\  Suppose (b) and note that we need to show
$\fix(T\times_\uY T) \subseteq \Ell{2}(\uY)$. 
Recall the characterization of $\fix(T\times_\uY T)$ in
Theorem \ref{mainthm}. Let 
$\calB = \{ e_1, \dots, e_n\}$ be a suborthonormal
system in $\Ell{2}(\uX|\uY)$ such that $u_\calB =  \sum_{j=1}^n e_j
\tensor \konj{e_j} \in \fix(T \times_{\uY} T)$.  Then $\calB \subseteq
\scrE(\uX|\uY) \subseteq \Ell{2}(\uY)$ and hence $\calB \subseteq \Ell{\infty}(\uY)$. 
It follows that $u_\calB \in
\Ell{\infty}(\uY)$. As these elements generate $\fix(T \times_{\uY} T)$ as a $\fix(S)$-module, we
are done. 

\smallskip
\noi
Finally, assume that $G$ is amenable and let $(N_\alpha)_\alpha$ be any F\o{}lner net in
$G$. Observe that \enquote{(d) $\Leftrightarrow$ (e)} and \enquote{(f) $\Leftrightarrow$ (g)} (replace $f$ with $f - \E_{\uY}f$). 
By Proposition \ref{Kronecker-orth-amen}, (e) and (g) are both equivalent to (b), and hence
the proof is complete.
\end{proof}

We restate the result in the important case of $G = \Z$.

  \begin{corollary}
Let $J \colon (\uY;S) \to (\uX;T)$ an extension of 
measure-preserving $\Z$-systems and set $\mathrm{S} \coloneqq S_1$ 
and $\mathrm{T} \coloneqq T_1$. Then the following assertions are equivalent.
      \begin{enumerate}[(a)]
        \item The extension $J$ is weakly mixing.
      
\item For all $f\in \uL^2(\uX)$ and $h\in \Ell{\infty}(\uX)$ one has
        \begin{align*}
          \lim_{N \to \infty} \frac{1}{N} \sum_{n=0}^{N-1} |\E_{\uY}(\uT^nf \cdot h) - (\uS^n\E_{\uY}f) \cdot (\E_{\uY}h)|^2 = 0
          \quad \text{ in  $\uL^1(\uY)$}.
         \end{align*}

\item For all $f \in \uL^2(\uX)$ with $\E_{\uY}f = 0$ and 
$h\in \Ell{\infty}(\uX)$ one has 
\begin{align*}
            \lim_{N \to \infty} \frac{1}{N}
  \sum_{n=0}^{N-1} |\E_{\uY}(\uT^nf \cdot h)|^2 = 0 \qquad \text{ in  $\uL^1(\uY)$}.
          \end{align*}
\item For all $f\in \uL^2(\uX)$ one has
        \begin{align*}
          \lim_{N \to \infty} \frac{1}{N} \sum_{n=0}^{N-1} |\E_{\uY}(\uT^nf \cdot \overline{f}) - (\uS^n\E_{\uY}f) \cdot (\E_{\uY}\overline{f})|^2 = 0
          \quad \text{ in  $\uL^1(\uY)$}.
         \end{align*}
\item For all $f \in \uL^2(\uX)$ with $\E_{\uY}f = 0$ one has 
\begin{align*}
            \lim_{N \to \infty} \frac{1}{N}
  \sum_{n=0}^{N-1} |\E_{\uY}(\uT^nf \cdot \overline{f})|^2 = 0 \qquad \text{ in  $\uL^1(\uY)$}.
          \end{align*}
      \end{enumerate}
  \end{corollary}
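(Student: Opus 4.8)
The plan is to derive the corollary directly from Proposition \ref{charweak} by specializing to $G = \Z$. The key observation is that for $G = \Z$ the sets $N_k \coloneqq \{0, \dots, k-1\}$ ($k \in \N$) form a F\o{}lner net, so the general amenable-group statement applies verbatim. Writing $\uT = T_1$ and $\uS = S_1$, we have $T_n = \uT^n$ and $S_n = \uS^n$ for every $n \in \Z$ under the natural identification of the $\Z$-action with the powers of a single Markov isomorphism. Thus the abstract F\o{}lner averages $\frac{1}{\abs{N_\alpha}} \sum_{t \in N_\alpha}(\cdots)$ become precisely the Ces\`aro averages $\frac{1}{N}\sum_{n=0}^{N-1}(\cdots)$ appearing in the corollary.

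First I would note that assertion (a) of the corollary is identical to assertion (a) of Proposition \ref{charweak}, so it suffices to match the remaining conditions. Conditions (b) and (c) of the corollary are exactly the $G = \Z$ specializations of conditions (d) and (e) of Proposition \ref{charweak}, with the F\o{}lner net replaced by the explicit net $(N_k)_k$; similarly, conditions (d) and (e) of the corollary are the specializations of (f) and (g). Since Proposition \ref{charweak} establishes that (a)--(g) are all equivalent for any amenable group equipped with any F\o{}lner net, and since $\Z$ is abelian (hence amenable) with the stated F\o{}lner net, the five conditions (a)--(e) of the corollary are pairwise equivalent.

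There is essentially no obstacle here: the corollary is a transcription of the amenable case of Proposition \ref{charweak} into the single-operator notation for $\Z$-actions. The only point requiring a word of care is the correspondence between the squared $\Ell{1}$-convergence in conditions (f)/(h) of the ambient propositions and the squared versions appearing in the corollary's (b) and (d)---but this is immediate, since the corollary's (b), (c), (d), (e) are literally conditions (d), (e), (f with squares), (g) of Proposition \ref{charweak}. (Strictly, the corollary's (d) uses the squared modulus while Proposition \ref{charweak}(f) uses the unsquared one; but Proposition \ref{charweak}(h) provides the squared version for $f \in \Ell{2}(\uX|\uY)$, and the reduction to that case via $f_m \coloneqq \car_{\E_\uY\abs{f}^2 \le m^2} f$ carried out in the proof of Proposition \ref{Kronecker-orth-amen} shows the squared condition on all of $\Ell{2}(\uX)$ is again equivalent to (a).)

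I would therefore simply write: ``This is the special case $G = \Z$ of Proposition \ref{charweak}, applied with the F\o{}lner net $N_k \coloneqq \{0,\dots,k-1\}$ and $\uS \coloneqq S_1$, $\uT \coloneqq T_1$.'' Any reader wishing for more detail can match conditions (b)--(e) of the corollary against conditions (d)--(g) of the proposition line by line.
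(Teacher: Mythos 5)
This is correct and is exactly the paper's route: the paper gives no separate proof at all, introducing the corollary with the single sentence that it restates Proposition \ref{charweak} in the case $G=\Z$, i.e., with the F{\o}lner sequence $N_k=\{0,\dots,k-1\}$ and $\uS=S_1$, $\uT=T_1$. Your added care about the squared moduli in (d) and (e)---which correspond to the \emph{unsquared} conditions (f) and (g) of Proposition \ref{charweak} and need the squared variant plus the truncation $f_m=\car_{[\E_\uY|f|^2\le m^2]}f$---goes beyond what the paper records; only note that the squared condition labelled (h) belongs to Proposition \ref{Kronecker-orth-amen}, not to Proposition \ref{charweak}.
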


\medskip

\subsection{Furstenberg--Zimmer Structure Theorem}

We are now ready to state and prove the structure theorem of
Furstenberg and Zimmer for measure-preserving systems, comprised of
two parts. The first is the following dichotomy result.

\begin{theorem}[Furstenberg--Zimmer (dichotomy)]\label{fuzimmer1}
    Let $J \colon (\uY;S) \to (\uX;T)$ be an extension of measure-preserving systems. Then exactly one of the following statements is true.
    \begin{enumerate}[(a)]
      \item The extension $J$ is weakly mixing.
      \item There is a non-trivial extension $J_1 \colon 
(\uY;S) \to (\uZ;R)$ with relative discrete spectrum and an extension $J_2 \colon (\uZ;R) \to (\uX;T)$ such that the diagram
    \begin{align*}
        \xymatrix{
          (\uY;S) \ar[rd]_{J_1} \ar[rr]^{J} &  & (\uX;T)\\
            & (\uZ;R)  \ar[ru]_{J_2} & \\   
          }
    \end{align*}
    commutes.     
    \end{enumerate}     
  \end{theorem}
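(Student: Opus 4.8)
The plan is to read the dichotomy off the position of the Kronecker subspace $\scrE(\uX|\uY)$ inside the chain $\Ell{2}(\uY) \subseteq \scrE(\uX|\uY) \subseteq \Ell{2}(\uX)$. The left inclusion always holds, since $\Ell{\infty}(\uY)\car = J(\Ell{\infty}(\uY))$ is a finitely-generated $T$-invariant $\Ell{\infty}(\uY)$-submodule whose $\Ell{2}$-closure is $\Ell{2}(\uY)$, so that \cref{Kronecker}(2) yields $\Ell{2}(\uY) \subseteq \scrE(\uX|\uY)$. By \cref{charweak}, alternative (a) is equivalent to this left inclusion being an equality, and I will produce alternative (b) exactly when it is strict, taking as intermediate factor the relative Kronecker factor $(\uZ;R)$ with $\uZ \coloneqq \Kro(\uX|\uY)$, $\Ell{2}(\uZ) \cong \scrE(\uX|\uY)$, and $R \coloneqq T|_{\Ell{2}(\uZ)}$.

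First I would record the monotonicity of the construction: whenever $(\uY;S) \to (\uZ';R') \to (\uX;T)$ is an intermediate factorisation, so that $\Ell{2}(\uZ')$ is realised as a $T$-invariant closed unital sublattice of $\Ell{2}(\uX)$ containing $\Ell{2}(\uY)$, then $\scrE(\uZ'|\uY) \subseteq \scrE(\uX|\uY)$. This is immediate from \cref{Kronecker}(2): each finitely-generated $R'$-invariant $\Ell{\infty}(\uY)$-submodule of $\Ell{2}(\uZ')$ is also a finitely-generated $T$-invariant $\Ell{\infty}(\uY)$-submodule of $\Ell{2}(\uX)$, the module structures and dynamics being compatible restrictions, so the generating unions are nested.

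To see that the alternatives are mutually exclusive, I would assume (a), so that $\scrE(\uX|\uY) = \Ell{2}(\uY)$ by \cref{charweak}. If (b) held simultaneously, with intermediate factor $(\uZ;R)$, then relative discrete spectrum of $J_1$ means $\Ell{2}(\uZ) = \scrE(\uZ|\uY)$, and monotonicity forces $\Ell{2}(\uZ) = \scrE(\uZ|\uY) \subseteq \scrE(\uX|\uY) = \Ell{2}(\uY)$, making $J_1$ an isomorphism, contrary to non-triviality. To see that at least one alternative holds, I would assume (a) fails; then by \cref{charweak} the inclusion $\Ell{2}(\uY) \subseteq \scrE(\uX|\uY)$ is strict, so $J_1 \colon (\uY;S) \to (\uZ;R)$ is a non-trivial extension, and composing with the embedding $J_2 \colon (\uZ;R) \to (\uX;T)$ produces the commuting triangle. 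It remains only to verify that $J_1$ has relative discrete spectrum, i.e.\ $\scrE(\uZ|\uY) = \Ell{2}(\uZ)$.

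This last point is the only genuinely non-formal step, and I expect it to be the main obstacle; it amounts to an ``idempotency'' of the Kronecker construction. Monotonicity already gives $\scrE(\uZ|\uY) \subseteq \scrE(\uX|\uY) = \Ell{2}(\uZ)$. For the reverse inclusion I would use that, by the very definition of the Kronecker subspace, every finitely-generated $T$-invariant $\Ell{\infty}(\uY)$-submodule $M$ of $\Ell{2}(\uX)$ satisfies $M \subseteq \scrE(\uX|\uY) = \Ell{2}(\uZ)$; hence $M$ is a finitely-generated $R$-invariant $\Ell{\infty}(\uY)$-submodule of $\Ell{2}(\uZ)$ and therefore $M \subseteq \scrE(\uZ|\uY)$. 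Passing to the union of all such $M$ and its $\Ell{2}$-closure and invoking \cref{Kronecker}(2) once more gives $\scrE(\uX|\uY) \subseteq \scrE(\uZ|\uY)$, whence $\scrE(\uZ|\uY) = \Ell{2}(\uZ)$. The transfer of submodules between $\Ell{2}(\uX)$ and $\Ell{2}(\uZ)$ is legitimate precisely because the $\Ell{\infty}(\uY)$-module structures and the dynamics $R = T|_{\Ell{2}(\uZ)}$ are compatible under the factorisation $J = J_2 J_1$.
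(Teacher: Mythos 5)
Your proposal is correct and follows essentially the same route as the paper: factor $J$ through the relative Kronecker factor $(\Kro(\uX|\uY);T)$ and deduce the dichotomy from \cref{charweak}. The only difference is one of detail, not of substance: where the paper asserts that the first extension has relative discrete spectrum ``by construction'' and leaves exclusivity implicit, you spell these out via the monotonicity of $\scrE(\cdot|\uY)$ under intermediate factorisations and the idempotency argument based on \cref{Kronecker}(2), which is exactly the content hidden in the paper's shorter proof.
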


\begin{proof}
As we have  seen in Section \ref{s.Kronecker}, one can always factor
$J$ through the relative Kronecker factor
\[    (\uY; S) \to (\Kro(\uX|\uY); T) \to (\uX; T).
\]
The first extension has, by construction, relative discrete spectrum.  The result is therefore a direct consequence of \cref{charweak}.
\end{proof}

In order to  formulate the second part of the structure theorem
we recall the following concept from \cite[Definition 8.3]{Furs1977}.

  \begin{definition}
    An extension $J \colon (\uY;S) \to (\uX;T)$ is {\emdf distal} if there is an ordinal $\eta_0$ and an inductive system 
$(((X_\eta;T_\eta))_{\eta \leq \eta_0}, (J_\eta^\sigma)_{\eta \leq \sigma})$ such that
    \begin{itemize}
      \item $J_1^\eta = J$,
      \item $J_\eta^{\eta+1}$ has relatively discrete spectrum for every $\eta < \eta_0$,
      \item $(X_\eta;T_\eta) = \lim_{\mu < \eta} (X_\mu;T_\mu)$ for every limit ordinal $\mu \leq \eta_0$.
    \end{itemize}
  \end{definition}

An inductive system as above  is called a  {\emdf Furstenberg tower}.

    \begin{theorem}\label{fuzimmer2}
    Let $J \colon (\uZ;R) \to (\uX;T)$ be an extension of measure-preserving systems. Then there is a distal extension $J_1 \colon 
(\uZ;R) \to (\uY;S)$ and a weakly mixing extension $J_2 \colon (\uY;S) \to (\uX;T)$ such that the diagram
    \begin{align*}
        \xymatrix{
          (\uZ;R) \ar[rd]_{J_1} \ar[rr]^{J} &  & (\uX;T)\\
            & (\uY;S)  \ar[ru]_{J_2} & \\   
          }
    \end{align*}
commutes.     
\end{theorem}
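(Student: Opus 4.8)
The plan is to produce the intermediate system $(\uY;S)$ as the top of a Furstenberg tower built by transfinite recursion between $(\uZ;R)$ and $(\uX;T)$, stopping precisely when the remaining extension turns weakly mixing. Concretely, I would set $\uX_1 := \uZ$, $T_1 := R$ and $K_1 := J \colon (\uX_1;T_1) \to (\uX;T)$, and recursively construct intermediate factors $K_\eta \colon (\uX_\eta;T_\eta) \to (\uX;T)$ through which $J$ factors, together with coherent morphisms $J_\mu^\nu \colon (\uX_\mu) \to (\uX_\nu)$ for $\mu \le \nu \le \eta$. At a successor stage, if $K_\eta$ is weakly mixing I stop and put $\eta_0 := \eta$; otherwise I apply the dichotomy \cref{fuzimmer1} to $K_\eta$---concretely, I take $\uX_{\eta+1} := \Kro(\uX|\uX_\eta)$, which by \cref{Kronecker} furnishes a relative-discrete-spectrum extension $J_\eta^{\eta+1} \colon (\uX_\eta) \to (\uX_{\eta+1})$ and a factorization $K_{\eta+1} \colon (\uX_{\eta+1}) \to (\uX)$ of $K_\eta$, the enlargement being strict because $\scrE(\uX|\uX_\eta) \neq \Ell{2}(\uX_\eta)$ (\cref{charweak}). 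At a limit ordinal $\eta$ I would take $(\uX_\eta;T_\eta)$ to be the inductive limit of $((\uX_\mu;T_\mu))_{\mu<\eta}$, which exists by the inductive-limit theorem recalled in \cref{s.extensions}; the compatible embeddings $K_\mu \colon (\uX_\mu) \to (\uX)$ then induce, via the universal property, a unique extension $K_\eta \colon (\uX_\eta) \to (\uX)$ through which every $K_\mu$, and hence $J$, factors.

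For termination I would track the closed $T$-invariant subspaces $H_\eta := \ran(K_\eta) \subseteq \Ell{2}(\uX)$ attached to $\uX_\eta$. One has $H_\mu \subseteq H_\nu$ for $\mu \le \nu$, with $H_\eta \subsetneq H_{\eta+1}$ at every successor step actually taken. Choosing a nonzero $v_\eta \in H_{\eta+1} \cap H_\eta^\perp$ at each such step yields pairwise orthogonal vectors: for $\mu < \eta$ one has $v_\mu \in H_{\mu+1} \subseteq H_\eta$ while $v_\eta \perp H_\eta$. Since an orthogonal family of nonzero vectors in $\Ell{2}(\uX)$ has cardinality at most the Hilbert-space dimension of $\Ell{2}(\uX)$, only boundedly many successor steps can occur, so the recursion cannot run through arbitrarily large ordinals and must halt at some $\eta_0$, at which $K_{\eta_0} \colon (\uX_{\eta_0};T_{\eta_0}) \to (\uX;T)$ is weakly mixing.

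To assemble the conclusion I would put $(\uY;S) := (\uX_{\eta_0};T_{\eta_0})$, let $J_2 := K_{\eta_0}$ and let $J_1 := J_1^{\eta_0} \colon (\uZ;R) \to (\uY;S)$ be the composite along the tower, so that $J = J_2 J_1$. The system $((\uX_\eta;T_\eta))_{1 \le \eta \le \eta_0}$ with the maps $J_\eta^\sigma$ is by construction a Furstenberg tower---each successor map $J_\eta^{\eta+1}$ has relative discrete spectrum and each limit stage is an inductive limit---so $J_1$ is distal, while $J_2$ is weakly mixing by the stopping rule, which completes the argument.

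The successor step itself is handled verbatim by the dichotomy \cref{fuzimmer1}, once one notes from \cref{Kronecker} that $(\uX_\eta) \to \Kro(\uX|\uX_\eta)$ indeed has relative discrete spectrum. The hard parts will therefore be the transfinite termination (the orthogonality and cardinality bound above) and the coherence at limit ordinals: one must verify that the inductive limit genuinely embeds as an intermediate factor of $(\uX;T)$ and that the factorizations $K_\mu = K_\eta J_\mu^\eta$ pass to the limit, so that the recursion can be continued and the resulting tower remains a bona fide Furstenberg tower.
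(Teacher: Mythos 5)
Your proposal is correct and follows essentially the same route as the paper: a transfinite recursion that takes the relative Kronecker factor at successor stages and inductive limits at limit stages, terminates by a cardinality bound, and invokes the dichotomy (via \cref{charweak}) to identify the residual extension as weakly mixing. The only differences are presentational: the paper runs the recursion on $T$-invariant unital sublattices of $\Ell{2}(\uX)$ rather than on systems and morphisms, which makes the limit-stage coherence you flag as delicate entirely automatic, while your orthogonal-vector argument is a concrete instantiation of the paper's terse appeal to ``reasons of cardinality.''
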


\begin{proof}
As described in Chapter \ref{s.extensions}, the extensions into
$\Ell{2}(\uX)$ can (and now will) be identified with the invariant unital Banach
sublattices of $\Ell{2}(\uX)$. In particular,  we regard 
$E_1\coloneq \Ell{2}(\uZ)$ as a sublattice of $\Ell{2}(\uX)$. The intermediate
extensions
then correspond to invariant sublattices $E$ containing
$E_1$, and $E$  will be called a {\em distal sublattice}
if the corresponding extension $E_1 \to E$ is  a distal extension.

Starting with $E_1$ we shall construct by transfinite induction a
Furstenberg tower. 
Suppose  $\eta$ is an ordinal and we have already constructed
a Furstenberg tower $(E_\sigma)_{\sigma < \eta}$. 

If $\eta = \eta' + 1$ is not a limit ordinal, then 
to the sublattice $E_{\eta'}$ we find  
an associated extension $J_2 \colon (\uY;S) \to (\uX;T)$ 
and  let $\mathrm{E}_{\eta} \coloneq \scrE(\uX|\uY)$ be the
Kronecker space of the extension. 

If $\eta = \lim_{\sigma < \eta} \sigma$ is a limit ordinal, we let
$E_\eta$ be the closure of the union of all $E_\sigma$ for $\sigma <
\eta$. Then any system $(\uY;S)$ associated with $E_\eta$ is the
inductive limit of systems associated with  $E_\sigma$ for $\sigma <
\eta$.

In either case $(E_\sigma)_{\sigma\le \eta}$ is again a Furstenberg
tower. Now, for reasons of  cardinality, there must be an  ordinal
$\eta$ with  $E_\eta = E_{\eta + 1}$. Let $J_2 \colon (\uY;S) \to (\uX; T)$
be an associated extension. Then, by the dichotomy theorem, $J_2$ is
weakly mixing and the proof is complete.
\end{proof}

\section*{Notes and Comments}\label{s.notesIII}

The Furstenberg--Zimmer structure theorem was first proved, under
separability and 
ergodicity assumptions, by Zimmer in \cite{Zimm1977} and,
independently, by 
Furstenberg in \cite{Furs1977}. In \cite{FuKa1978}, Furstenberg and
Katznelson generalized  Furstenberg's result from $\Z$-actions to
$\Z^d$-actions. An alternative presentation was given in
\cite{FKO1982} and in Furstenberg's book \cite{Furstenberg1981}, where
the ergodicity assumption is dropped. Versions of the result can also
be found in modern textbooks 
on ergodic theory, e.g., in \cite[Chapter 9]{Glas}, \cite[Section
7.8]{EinsiedlerWard2011} and \cite[Section 3.3]{KerrLi2016}. Of
course, our exposition is strongly influenced by these 
works. However, we emphasize once more that here the result is
derived as a mere corollary of the more general structure theorem on
unitary group representations 
on Kaplansky--Hilbert modules proved in Part II of this article. This shows that the
Furstenberg--Zimmer 
structure theorem can be viewed, in essence, as a result of \enquote{relative operator theory}.

\smallskip
As is well-known, there are several alternative notions of
``structure'' leading to the Furstenberg--Zimmer theorem. Here we 
 follow Furstenberg's original approach using  ``(relative) discrete
spectrum'', but 
likewise one can use ``isometric'' extensions  or ``compact''
extensions. It is well-known 
that for  a measure-preserving \emph{system} $(\uX;T)$ the following
assertions
are equivalent:
  \begin{enumerate}[(a)]
    \item The space $\mathrm{L}^2(\uX)$ is the closed union of all finite dimensional invariant subspaces.
    \item The orbit $\{T_t f\, \mid\, t \in G\}$ is totally bounded in $\mathrm{L}^2(\uX)$ for every $f \in \mathrm{L}^2(\uX)$.
  \end{enumerate}
(This equivalence, by the way, tells that the decomposition into the 
``discrete spectrum'' part and the ``weakly mixing'' part coincides
with the Jacobs--deLeeuw--Glisckberg decomposition, cf.\ the Notes to
Part II on page \pageref{s.notesII}.)

Most approaches to the FZ-theorem take either (a) or (b) and transfer
it to the relative setting.  For example, Furstenberg and Zimmer
introduce 
structured extensions based on (a) in their original articles, 
while in \cite{FKO1982} the description (b) is employed. It is of course a
natural question how these different approaches are related.
In his book, Furstenberg shows the equivalence of several different
approaches  to structured extensions (see \cite[Theorem
6.13]{Furstenberg1981}). 
His results have been extended recently by Jamneshan in 
\cite{Jamn2020pre} showing, in particular,   that 
the \enquote{algebraic approach} of (a) and the \enquote{topological
  approach}  of (b) are still equivalent for extensions (if generalized suitably).   This is even true in the context of dynamical systems of von Neumann algebras as shown by Jamneshan and Spaas in \cite{JaSp2022}.  

Although we chose (a) for our approach, also (b) can be relativized through our KH-module setting. Namely, 
we call a subset $M$ of a KH-module $E$ over a Stone algebra
$\A$ \textbf{totally order-bounded} if the net 
\[ F \mapsto    \inf_{y\in F} \abs{x- y} \qquad (F \subseteq E \text{
  finite})
\]
decreases to $0$ {\em uniformly in $x\in M$}.
Employing \cref{Kronecker-orth} one can then show the following 
characterization of extensions with relative discrete spectrum.
\begin{theorem*}
  For an extension $J \colon (\uY;S) \rightarrow (\uX;T)$ the following assertions are equivalent.
    \begin{enumerate}[(a)]
      \item $J$ has relative discrete spectrum.
      \item $\Ell{2}(\uX|\uY) = \ocl\{ f\in
                          \Ell{2}(\uX|\uY) \,|\,  
                          \{T_t f \mid t \in G\} \text{ is totally order bounded}\}$.
    \end{enumerate}
\end{theorem*}
A proof of this result as well as a detailed examination of notions of
\enquote{order compactness}, their relations to conditional Boolean
valued analysis (``cyclical compactness'') and conditional set theory,
and their applications to ergodic theory, 
will be the content of future work.

\smallskip
Finally, let us mention that Furstenberg's ``main result on fibered
products'' \cite[Thm.~7.1]{Furs1977}, \cite[Thm.~9.21]{Glas}, i.e., the identity
\[  \scrE(\uX \times_\uY \uZ|\uY) = \scrE(\uX|\uY) \tensor_\uY
\scrE(\uZ|\uY),
\]
can be reduced to the identity $(E\tensor F)_{\ds} = E_{\ds} \tensor
F_\ds$ being valid for KH-dynamical systems.  This will be explained and proved  in the
forthcoming paper \cite{HaKr2023}.

\parindent 0pt
\parskip 0.5\baselineskip
\setlength{\footskip}{4ex}
\bibliographystyle{alpha}

\footnotesize
\end{document}